\DeclareMathOperator*{\esssup}{ess\,sup}
\DeclareMathOperator*{\essinf}{ess\,inf}
\DeclareMathOperator*{\leb}{\mathrm{Leb}}
\newcommand{\dleb}{d\mathrm{Leb}}
\newtheorem{theorem}{Theorem}[section] 
\theoremstyle{definition}
\newtheorem{remark}[theorem]{Remark}
\newtheorem{definition}[theorem]{Definition}
\newtheorem{example}[theorem]{Example}
\theoremstyle{plain}
\newtheorem{lemma}[theorem]{Lemma} 
\newtheorem{corollary}[theorem]{Corollary}
\newtheorem{proposition}[theorem]{Proposition}
\newtheorem{thmx}{Theorem}
\newtheorem{step}{Step}
\newtheorem{stp}{Step}
\newcommand{\textoverline}[1]{$\overline{\mbox{#1}}$}
\newcommand {\Sec}[1] {Section~\ref{#1}}
\newcommand {\Step}[1] {Step~\ref{#1}} 
\newcommand {\ex}[1] {Example~\ref{#1}}
\newcommand {\fig}[1] {Figure~\ref{#1}}
\newcommand {\eqn}[1] {(\ref{#1})} 
\newcommand {\thrm}[1] {Theorem~\ref{#1}} 
\newcommand {\cor}[1] {Corollary~\ref{#1}} 
\newcommand {\prop}[1] {Proposition~\ref{#1}} 
\newcommand {\dfn}[1] {Definition~\ref{#1}} 
\newcommand {\rem}[1] {Remark~\ref{#1}} 
\newcommand {\lem}[1] {Lemma~\ref{#1}}
\newcommand{\jp}[1]{\textcolor{black}{#1}}
\newcommand{\jps}[1]{\textcolor{black}{#1}}
\DeclareMathOperator*{\BV}{BV}
\newcommand{\norm}[1]{\left\lVert #1 \right\rVert}
\DeclareMathOperator*{\Var}{var}
\DeclareMathOperator*{\var}{var}
\newcommand\restr[2]{{% we make the whole thing an ordinary symbol
  \left.\kern-\nulldelimiterspace % automatically resize the bar with \right
  #1 % the function
  \vphantom{\big|} % pretend it's a little taller at normal size
  \right|_{#2} % this is the delimiter
  }}
\pgfplotsset{compat=1.18}
\begin{document}

\title{\textbf{AVERAGING FOR RANDOM METASTABLE SYSTEMS}}

\author{Cecilia Gonz\'alez-Tokman\thanks{School of Mathematics and Physics, University of Queensland, St Lucia QLD 4072, Australia. \\
\texttt{cecilia.gt@uq.edu.au}}, Joshua Peters \thanks{School of Mathematics and Physics, University of Queensland, St Lucia QLD 4072, Australia. \\
\texttt{joshua.peters@uq.net.au} \jps{(corresponding author)}. }}

\maketitle

\begin{abstract}\noindent
Random metastability occurs when an externally forced or noisy system possesses more than one state of apparent equilibrium. This work investigates a class of random dynamical systems, arising from perturbing a one-dimensional piecewise smooth expanding map of the interval with two invariant subintervals, each supporting a unique ergodic absolutely continuous invariant measure. Upon perturbation, this invariance is destroyed, allowing trajectories to randomly switch between subintervals. We show that the invariant density of the randomly perturbed system may be approximated by an explicit convex combination of the two initially invariant densities, obtained by averaging. Further, we also identify the limit of the second Oseledets space, or coherent structure, as the perturbation shrinks to zero. Our results are applied to random paired tent maps over ergodic, measure-preserving, and invertible driving systems. Finally, we provide generalisations to systems admitting more than two initially invariant sets.       
\end{abstract}

\newpage
\begingroup
\hypersetup{linkcolor=[rgb]{0.0,0.0,0.0}}
\tableofcontents
\endgroup
\newpage
\noindent

\section{Introduction}	
\allowdisplaybreaks
Metastability characterises systems possessing more than one state of apparent equilibrium. Such systems appear in numerous examples of natural phenomena. In molecular dynamics, transitions between conformations are rare events allowing one to describe the resulting macroscopic dynamical behaviour through a flipping process of metastable states \cite{conformations,conformations2}. Concerning oceanic flows, metastable systems have been used to study slow mixing regions of the ocean (called gyres), which have contributed to phenomena such as the \textit{Great Pacific Garbage Patch} \cite{FSN,FSS,FSM,FPE,DFH}. Random metastable systems arise when such transition patterns or ocean currents are subjected to external forces (e.g. changes in chemical potentials or wind patterns, respectively).  
\\
\\
The first description of metastability can be traced back to the work of van't Hoff \cite{H_Edynamique} in the study of chemical reaction-rate theory. Concerning particle systems, Kramers developed a model for chemical reactions in a double-well potential, based on Brownian motion \cite{K_Brownian}. In this direction, Lebowitz and Penrose proposed a rigorous theory of metastability in \cite{PL_Waals}, where the authors investigated the escape rate and lifetime of metastable states. Building on this work, Sewell provided the first axiomatisation of metastabilty in the context of quantum mechanics \cite{S_quantum}. For dynamical systems, Freidlin and Wentzell introduced the concept of large deviations on the path space to analyse the long-term behaviour of dynamical systems influenced by random perturbations \cite{FW_RPDS}. This approach aims to identify the most likely path between metastable states, and was later adapted by Cassandro, Galves, Olivieri and Vares to study interacting particle systems \cite{CGOV_metastable}. This resulted in a variety of results related to Markovian lattice models \cite{OV_Largedev}. Davies developed spectral techniques in \cite{D_Stab,D_icing,D_MarkovI,D_MarkovII,D_spec} to study metastability of Markov processes. In this case, the spectrum of generators for reversible Markov processes were investigated. These results were further developed by Gaveau and Schulman \cite{BS_nonequ}; and Gaveau and Moreau \cite{GM_meta}. Closely related to spectral methods is the potential-theoretic approach to metastability, initiated by Bovier, Eckhoff, Gayrard and Klein \cite{BEGK_Meta}. This approach focuses on sequences of visits to metastable sets, aiming to analyse hitting times and return probabilities. Finally is the martingale approach to metastability introduced by Beltr\'{a}n and Landim \cite{markov1,markov2}. For further details on the history of metastability, we refer the reader to \cite{BH_Metastability,Landim_mon,OV_Largedev} and the references therein.
\\
\\
In the context of deterministic systems, Keller and Liverani pioneered in \cite{KL_escape} the study of metastability through a dynamical systems approach. From this perspective, they provide a precise definition of metastability using the spectrum of the Perron-Frobenius operator. If the dynamical system admits a unique invariant density, then metastability is characterised entirely through the second eigenvalue of the Perron-Frobenius operator. Namely, its second eigenvalue is close to but strictly less than $1$, giving rise to a dynamical system possessing slow mixing properties equipped with almost invariant (metastable) states. Such systems may emerge by adding a small perturbation to a system with two ergodic invariant measures supported on two invariant subintervals. The perturbation is made in such a way that a \textit{hole} appears, allowing the initially invariant sets to communicate, making the system ergodic on the entire space.  
\\
\\
In the setting of piecewise expanding metastable systems, \cite{GTHW_metastable} provides a rigorous approximation for the leading and second eigenfunctions of the Perron-Frobenius operator for small hole sizes. A similar asymptotic result for the leading eigenfunction is obtained in the setting of intermittent metastable systems in \cite{wael_intermittent}. Continuing in the direction of piecewise expanding metastable systems, \cite{SD} obtains an approximation for the diffusion coefficient (or variance) of the Central Limit Theorem, admitted by \cite{Liv_CLTDet}. In particular, the authors reveal that for small hole sizes, the invariant measure and diffusion coefficient for the infinite-dimensional system may be approximated by the map's induced finite state Markov chain. Many other properties of deterministic metastable systems have been investigated including escape rates \cite{gibbs}, extreme value laws \cite{hitting}, and their relationship with open systems \cite{open}. 
\\
\\
For random dynamical systems, metastability is characterised in a slightly different manner. As opposed to referring to the spectral picture of the Perron-Frobenius operator, we look to its Oseledets decomposition, discovered by Oseledets in \cite{Oseledets} and later developed by Froyland, Lloyd, and Quas in \cite{FLQ_coherent,FLQ_semi} for Perron-Frobenius operator cocycles. Here, instead of studying eigenvalues with corresponding eigenfunctions, we look to Lyapunov exponents with corresponding Oseledets spaces. In this setting, the top Lyapunov exponent is zero with an associated Oseledets space spanned by the random invariant density of the system. Metastability is then characterised by the second Lyapunov exponent being negative but close to zero. The corresponding second Oseledets space provides a so-called coherent structure which decays asymptotically at a slow rate according to the second Lyapunov exponent.
\\
\\
In the direction of random metastable systems, \cite{BS_rand} provides a generalisation of the celebrated Keller-Liverani escape rate formulae \cite{KL_escape}. They illustrate that their techniques can be used to approximate annealed invariant densities of metastable systems subject to i.i.d. random perturbations. Such invariant densities are fixed points of a averaged Perron-Frobenius operator. In \cite{GS_entropy}, Froyland and Stancevic study the connection between Perron-Frobenius operator cocycles of metastable systems and escape rates of random maps. Since random metastable systems are characterised by an asymptotic quantity, namely the second Lyapunov exponent, constructing examples of such systems proves to be a difficult task. In fact, \cite{GTQ_stab} demonstrates that small perturbations of some random systems can result in drastic changes in the Lyapunov exponents of the Perron-Frobenius operator cocycle. More recently, focus has been placed on the approximation of the second Lyapunov exponent for piecewise expanding maps. In \cite{Crimmins}, Crimmins provides general conditions for quasi-compact Perron-Frobenius operator cocycles to admit an Oseledets decomposition that depends continuously on perturbations. This result generalises the famous Keller-Liverani perturbation theory \cite{stab_spec}, to the random setting. In the class of random metastable systems, in \cite{horan_lin} and \cite{Horan}, Horan provides an upper estimate on the second Lyapunov exponent for so-called random paired tent maps. This estimate is refined in \cite{GTQ_quarantine} where it is shown that the top two Lyapunov exponents are both simple, and the only exceptional exponents outside a readily computed range.
\subsection{Statement of main results}
In this paper, we consider a Perron-Frobenius operator cocycle of random paired metastable systems driven by an ergodic, invertible and measure-preserving transformation $\sigma$ of the probability space $(\Omega,\mathcal{F},\mathbb{P})$. The initial system $T^0$ is deterministic and admits two unique ergodic absolutely continuous invariant measures ($\mu_L$ and $\mu_R$) with associated invariant densities ($\phi_L$ and $\phi_R$) supported on two invariant subintervals ($I_L$ and $I_R$). We consider small perturbations of $T^0$, denoted $T_\omega^\varepsilon$, which introduce random holes into our system (defined as $H_{\star,\omega}^\varepsilon:=(T_\omega^\varepsilon)^{-1}(I_\star^c)\cap I_\star$ for $\star\in\{L,R\}$ and $\omega\in\Omega$), allowing trajectories to randomly switch between the initially invariant subintervals (see \fig{fig:ptm}), giving rise to a unique ergodic absolutely continuous invariant measure on $I_L\cup I_R$. The perturbations are made so that $\mu_\star(H_{\star,\omega}^\varepsilon) = \varepsilon\beta_{\star,\omega}+o_{\varepsilon\to 0}(\varepsilon)$ where $\beta_{\star}\in L^{\infty}(\mathbb{P})$. Juxtaposed to \cite{BS_rand}, instead of obtaining annealed (or averaged) results, we strive for quenched (or fibre-wise) descriptions of the functions spanning the top and second Oseledets spaces. This approach is advantageous as fluctuations in the functions spanning these spaces are not featured in the annealed setting. 
\\
\\
Our first main result shows that the limiting random invariant density is a non-random convex combination of $\phi_L$ and $\phi_R$, with weights depending only on perturbations through averaged quantities. The statement of \thrm{thrm:phi_lims} follows. The class of random dynamical systems for which our results apply are described in \Sec{sec:map+pert}. 
\begin{thmx} Let $\{(\Omega,\mathcal{F},\mathbb{P},\sigma,\BV([-1,1]),\mathcal{L}^\varepsilon)\}_{\varepsilon\geq 0}$ be a \jp{family} of random dynamical systems of paired metastable maps $T_\omega^\varepsilon:[-1,1]\to [-1,1]$ satisfying \hyperref[list:I1]{\textbf{(I1)}}-\hyperref[list:I6]{\textbf{(I6)}} and \hyperref[list:P1]{\textbf{(P1)}}-\hyperref[list:P7]{\textbf{(P7)}} (see \Sec{sec:map+pert}). Denote by $(\phi_\omega^\varepsilon)_{\omega\in\Omega}$ the random invariant density of $(T_\omega^\varepsilon)_{\omega\in\Omega}$. If $\int_{\Omega}\beta_{L,\omega} +\beta_{R,\omega}\, d\mathbb{P}(\omega)\neq 0$, then as $\varepsilon\to 0$,
    $$\phi_\omega^\varepsilon \stackrel{L^1}{\to} \phi_{}^0:= \frac{\int_{\Omega} \beta_{R,\omega}\, d\mathbb{P}(\omega)}{\int_{\Omega}\beta_{L,\omega}+\beta_{R,\omega} \, d\mathbb{P}(\omega)} \phi_L +\frac{\int_{\Omega} \beta_{L,\omega}\, d\mathbb{P}(\omega)}{\int_{\Omega}\beta_{L,\omega}+\beta_{R,\omega} \, d\mathbb{P}(\omega)}\phi_R$$
    uniformly over $\omega\in\Omega$ away from a $\mathbb{P}$-null set. 
    \label{thrm:A}
\end{thmx}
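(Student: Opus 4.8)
The plan is to work with the Perron--Frobenius operator cocycle $\mathcal{L}^\varepsilon$ acting on $\BV([-1,1])$ and exploit the (assumed, from the perturbation conditions \hyperref[list:P1]{\textbf{(P1)}}--\hyperref[list:P7]{\textbf{(P7)}}) uniform Lasota--Yorke inequalities to extract $L^1$-limit points of the family $(\phi_\omega^\varepsilon)_{\varepsilon>0}$. First I would fix a full-measure set of $\omega$ on which the random invariant density $\phi_\omega^\varepsilon$ exists and is normalised, and record that the uniform Lasota--Yorke bounds give a uniform-in-$\varepsilon$ bound on $\BV$-norms; by Helly's selection theorem every sequence $\varepsilon_n\to 0$ has a subsequence along which $\phi_\omega^{\varepsilon_n}$ converges in $L^1$, and the limit $\psi_\omega$ is nonnegative, of integral one, and supported on $I_L\cup I_R$. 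The next step is to show any such limit is $T^0$-invariant fibrewise, i.e.\ $\mathcal{L}^0_\omega\psi_\omega=\psi_{\sigma\omega}$: this follows from $\mathcal{L}^\varepsilon_\omega\to\mathcal{L}^0_\omega$ in the appropriate operator norm (from $\BV$ to $L^1$, a Keller--Liverani-type estimate that should be part of the standing assumptions) together with the $\BV$-boundedness of $\phi_\omega^\varepsilon$. Since $T^0$ leaves $I_L$ and $I_R$ invariant and has a unique a.c.i.m.\ on each, ergodicity forces $\psi_\omega = a_\omega \phi_L + (1-a_\omega)\phi_R$ for some measurable $a_\omega\in[0,1]$; invariance under the cocycle and ergodicity of $\sigma$ then force $a_\omega$ to be $\mathbb{P}$-a.s.\ constant, say $a_\omega \equiv a$.

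It remains to identify the constant $a$, and this is where the averaging enters and where the main work lies. The idea is to track the \emph{net mass flux} between $I_L$ and $I_R$ to first order in $\varepsilon$. Writing $m_{L,\omega}^\varepsilon := \int_{I_L}\phi_\omega^\varepsilon\,\dleb$ and similarly $m_{R,\omega}^\varepsilon$, invariance gives an exact balance equation: the mass leaving $I_L$ at time $\omega$ equals $\int_{H_{L,\omega}^\varepsilon}\phi_\omega^\varepsilon\,\dleb$ and the mass entering $I_L$ equals the corresponding integral over the part of $I_R$ mapped into $I_L$. Using $\phi_\omega^\varepsilon \to \phi_\omega^0 = a\phi_L+(1-a)\phi_R$ in $L^1$, the fact that $\mu_L(H_{L,\omega}^\varepsilon)=\varepsilon\beta_{L,\omega}+o(\varepsilon)$ with $\beta_{L}\in L^\infty(\mathbb{P})$ (and the analogous statement on the right), and absolute continuity of $\mu_\star$, one shows the outgoing flux from $I_L$ is $\varepsilon\, a\, \beta_{L,\omega} + o(\varepsilon)$ and the incoming flux to $I_L$ is $\varepsilon\,(1-a)\,\beta_{R,\omega}+o(\varepsilon)$, uniformly in $\omega$. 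Averaging the $\varepsilon$-order balance over $\mathbb{P}$ (the $\sigma$-invariance of $\mathbb{P}$ lets the time-shift terms cancel) yields
\[
a\int_\Omega \beta_{L,\omega}\,d\mathbb{P}(\omega) = (1-a)\int_\Omega \beta_{R,\omega}\,d\mathbb{P}(\omega),
\]
and since $\int_\Omega \beta_{L,\omega}+\beta_{R,\omega}\,d\mathbb{P}(\omega)\neq 0$ this determines $a = \dfrac{\int_\Omega \beta_{R,\omega}\,d\mathbb{P}(\omega)}{\int_\Omega \beta_{L,\omega}+\beta_{R,\omega}\,d\mathbb{P}(\omega)}$, matching the claimed formula.

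Finally, because the limit $a$ is the same for every subsequence, the full family converges: $\phi_\omega^\varepsilon \to \phi_\omega^0$ in $L^1$. For the \emph{uniformity over $\omega$} away from a null set, I would upgrade the subsequential $L^1$-convergence to a statement with $\omega$-uniform rate by noting that the relevant $o(\varepsilon)$ error terms in the flux estimates are controlled uniformly in $\omega$ (using $\beta_\star\in L^\infty$ and the uniform $\BV$ bounds, so that $\|\phi_\omega^\varepsilon\|_{L^1}$-differences can be estimated via the uniform Lasota--Yorke constants and a Gr\"onwall-type argument along orbits of $\sigma$), and invoking that the equivariant family is unique. The main obstacle is precisely this last uniformity and the rigorous justification that the first-order flux balance holds with a genuinely $o(\varepsilon)$ remainder that survives integration over $\Omega$ --- i.e.\ controlling the interaction between the $L^1$-convergence of $\phi_\omega^\varepsilon$ and the shrinking holes $H_{\star,\omega}^\varepsilon$ well enough that the cross terms are negligible; the spectral/Lasota--Yorke input and the $L^\infty$-control on $\beta_\star$ are what make this go through.
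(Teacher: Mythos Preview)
Your skeleton --- uniform Lasota--Yorke bounds, $L^1$-compactness, accumulation points of the form $a_\omega\phi_L+(1-a_\omega)\phi_R$, and a mass-flux balance across the holes to identify the weight --- matches the paper's, and you correctly flag the delicate step of controlling $\int_{H_{\star,\omega}^\varepsilon}\phi_\omega^\varepsilon$ over shrinking holes from mere $L^1$-convergence (the paper handles this via a separate lemma, adapted from Bahsoun--Saussol, giving uniform-in-$\omega$ convergence of $\phi_\omega^{\tilde\varepsilon}$ to $p_\omega\phi_\star$ \emph{on the holes}, using (I5)--(I6) and (P3)). Where you diverge from the paper is in how you extract the value of $a$: you argue first that $a_\omega$ is $\sigma$-invariant and hence constant, then integrate the flux balance once over $\Omega$. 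The paper never argues constancy a priori; it writes your balance as a one-step recursion
\[
p_\omega^{\tilde\varepsilon}=p_{\sigma^{-1}\omega}^{\tilde\varepsilon}\bigl(1-\tilde\varepsilon(\beta_{L,\sigma^{-1}\omega}+\beta_{R,\sigma^{-1}\omega})\bigr)+\tilde\varepsilon\,\beta_{R,\sigma^{-1}\omega}+o(\tilde\varepsilon),
\]
iterates it backward $K\sim t/\tilde\varepsilon$ times, and proves a dedicated averaging lemma (a nine-step argument via Birkhoff and a moving-average theorem) showing the resulting sum converges to $\int\beta_R\,d\mathbb{P}\big/\int(\beta_L+\beta_R)\,d\mathbb{P}$ for $\mathbb{P}$-a.e.\ $\omega$, so that constancy emerges as a \emph{conclusion}. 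Your route would be considerably shorter if it worked.

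The gap is in your ergodicity step. Helly gives, for each fixed $\omega$, a subsequence along which $\phi_\omega^{\varepsilon_n}$ converges, but the subsequence may depend on $\omega$. To conclude $\mathcal{L}^0\psi_\omega=\psi_{\sigma\omega}$ (and hence $a_{\sigma\omega}=a_\omega$) you need $\phi_\omega^{\tilde\varepsilon}$ and $\phi_{\sigma\omega}^{\tilde\varepsilon}$ to converge along the \emph{same} subsequence for a.e.\ $\omega$, which fiberwise compactness does not provide and which the Crimmins-type input only gives at the level of the two-dimensional Oseledets \emph{space}, not the individual density inside it. Without constancy in hand, integrating your balance over $\Omega$ yields only the single scalar relation $\lim_{\varepsilon\to 0}\int_\Omega p_\omega^\varepsilon(\beta_{L,\omega}+\beta_{R,\omega})\,d\mathbb{P}=\int_\Omega\beta_{R,\omega}\,d\mathbb{P}$, which cannot determine a possibly $\omega$-dependent limit. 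The paper's backward-iteration bypasses this issue entirely because it computes the limit \emph{pointwise in $\omega$} and finds it $\omega$-independent; your argument would need an additional step (e.g.\ producing a common subsequence, or exploiting that $|p_\omega^\varepsilon-p_{\sigma^k\omega}^\varepsilon|\le Ck\varepsilon$ uniformly to propagate a single-fiber limit along the orbit and then invoke Birkhoff) before the averaging-over-$\Omega$ shortcut becomes legitimate.
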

\noindent \thrm{thrm:A} also provides us with an approximation of the limiting random invariant measures for the Markov chains in random environments driven by an ergodic, invertible and measure-preserving transformation $\sigma:\Omega\to \Omega$ with transition matrices $(P_\omega^\varepsilon)_{\omega\in\Omega}$, where
$$P_\omega^\varepsilon := \begin{pmatrix} 1- \varepsilon \beta_{L,\omega} & \varepsilon\beta_{R,\omega} \\ \varepsilon \beta_{L,\omega} & 1- \varepsilon \beta_{R,\omega} \end{pmatrix}.$$
For fixed $\omega\in\Omega$, entries of $P_\omega^\varepsilon$ describe the transition probabilities between two states $S_L$ and $S_R$ as indicated in \fig{fig:mc}.
\begin{figure}[H]
    \centering
    \begin{tikzpicture}[scale = 6, node distance=4cm,->,>=latex,auto,
  every edge/.append style={thick}]
  \node[state] (1) {$S_L$};
  \node[state] (2) [right of=1] {$S_R$};  
  \path (1) edge[loop left]  node{$1-\varepsilon\beta_{L,\omega}$} (1)
            edge[bend left]  node{$\varepsilon\beta_{L,\omega}$}   (2)
        (2) edge[loop right] node{$1-\varepsilon\beta_{R,\omega}$}  (2)
            edge[bend left] node{$\varepsilon\beta_{R,\omega}$}     (1);
\end{tikzpicture}
    \caption{Transition probabilities between $S_L$ and $S_R$ for a fixed $\omega\in\Omega$.}
    \label{fig:mc}
\end{figure}
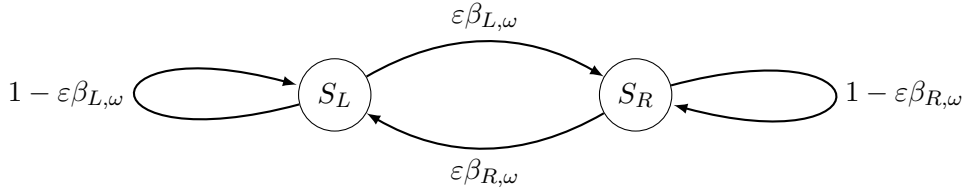\noindent
When $\varepsilon=0$, $P_\omega^0:= I$ and the states $S_L$ and $S_R$ are invariant, giving rise to two unique ergodic invariant probability measures $\mu_L = \delta_L$ and $\mu_R=\delta_R$.\footnote{We note that $\delta_L = \begin{pmatrix} 1 \\ 0\end{pmatrix}$ and $\delta_R = \begin{pmatrix} 0 \\ 1\end{pmatrix}$.} We are interested in the behaviour of these Markov chains as $\varepsilon\to 0$. Indeed, if for $\star\in\{L,R\}$, $\beta_{\star}\in L^\infty(\mathbb{P})$, and $\int_{\Omega}\beta_{L,\omega} +\beta_{R,\omega}\, d\mathbb{P}(\omega)\neq 0$, then a consequence of \thrm{thrm:A} is that as $\varepsilon\to 0$, the random invariant measure of the matrix cocycle $(P_\omega^\varepsilon)_{\omega\in\Omega}$ converges to
$$\mu^0:=\begin{pmatrix}
    \frac{\int_{\Omega} \beta_{R,\omega}\, d\mathbb{P}(\omega)}{\int_{\Omega}\beta_{L,\omega}+\beta_{R,\omega} \, d\mathbb{P}(\omega)}\\
    \frac{\int_{\Omega} \beta_{L,\omega}\, d\mathbb{P}(\omega)}{\int_{\Omega}\beta_{L,\omega}+\beta_{R,\omega} \, d\mathbb{P}(\omega)}
\end{pmatrix}$$
uniformly over $\omega\in\Omega$ away from a $\mathbb{P}$-null set. We refer the reader to \rem{rem:markov-connection} where we further address the connection between the infinite dimensional system and its induced Markov chain.\\
\\
In addition to \thrm{thrm:phi_lims}, we provide a quenched description of the function spanning the second Oseledets space as $\varepsilon\to 0$. Here, \jp{through \lem{lem:cont}, we obtain that the second Oseledets space is one-dimensional}, and show that a similar approximation proposed in \cite{GTHW_metastable} can be made in the random setting. The statement of \thrm{thrm:cont2nd} follows. The proof of this result may be found in \Sec{sec:2nd_space}. 
\begin{thmx} Let $\{(\Omega,\mathcal{F},\mathbb{P},\sigma,\BV([-1,1]),\mathcal{L}^\varepsilon)\}_{\varepsilon\geq 0}$ be a \jp{family} of random dynamical systems of paired metastable maps $T_\omega^\varepsilon:[-1,1]\to [-1,1]$ satisfying \hyperref[list:I1]{\textbf{(I1)}}-\hyperref[list:I6]{\textbf{(I6)}} and \hyperref[list:P1]{\textbf{(P1)}}-\hyperref[list:P7]{\textbf{(P7)}} (see \Sec{sec:map+pert}). Denote by $(\psi_\omega^\varepsilon)_{\omega\in\Omega}$ the family of functions spanning the second Oseledets spaces of $(\mathcal{L}_\omega^\varepsilon)_{\omega\in\Omega}$. Choose the sign of $\psi_\omega^\varepsilon$ such that $\int_{I_L}\psi_\omega^\varepsilon\, \dleb(x)> 0$ for $\mathbb{P}$-a.e. $\omega \in\Omega$, then as $\varepsilon\to 0$,
    $$\psi_\omega^\varepsilon \stackrel{L^1}{\to} \psi_{}^0:= \frac{1}{2} \phi_L -\frac{1}{2}\phi_R$$   
for $\mathbb{P}$-a.e. $\omega\in\Omega$.
     \label{thrm:B}
\end{thmx}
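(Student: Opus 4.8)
The approach I would take is to locate $\psi_\omega^\varepsilon$ inside the two-dimensional ``slow'' Oseledets bundle $V_\omega^\varepsilon$ (the sum of the leading and second Oseledets spaces of $\mathcal{L}_\omega^\varepsilon$) and then pin down its direction by two soft identities together with the sign convention. The one substantial input needed is that $V_\omega^\varepsilon$ converges, as $\varepsilon\to 0$ and uniformly over $\omega$ outside a $\mathbb P$-null set, to the non-random plane $\mathrm{span}\{\phi_L,\phi_R\}\subset L^1$; this follows from the perturbation-theoretic estimates underpinning \thrm{thrm:A} (random Keller--Liverani / Crimmins theory for the metastable cocycle, together with simplicity of the top two Lyapunov exponents). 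Granting it, I would write $\psi_\omega^\varepsilon = a_\omega^\varepsilon\phi_L + b_\omega^\varepsilon\phi_R + r_\omega^\varepsilon$ with $(a_\omega^\varepsilon,b_\omega^\varepsilon)$ bounded and $\norm{r_\omega^\varepsilon}_{L^1}\to 0$, so that the claim reduces to $a_\omega^\varepsilon\to\tfrac12$ and $b_\omega^\varepsilon\to-\tfrac12$.

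First I would show $\int_{-1}^{1}\psi_\omega^\varepsilon\,\dleb = 0$ for $\mathbb P$-a.e.\ $\omega$. Since $T_\omega^\varepsilon$ is a self-map of $[-1,1]$, each transfer operator $\mathcal{L}_\omega^\varepsilon$ preserves Lebesgue integrals; by equivariance of the second Oseledets bundle one has $\mathcal{L}_\omega^{\varepsilon(n)}\psi_\omega^\varepsilon = c_\omega^{\varepsilon(n)}\,\psi_{\sigma^n\omega}^\varepsilon$ with $\tfrac1n\log|c_\omega^{\varepsilon(n)}|\to\lambda_2^\varepsilon<0$ (the second Lyapunov exponent), and therefore $|\int_{-1}^1\psi_\omega^\varepsilon\,\dleb| = |c_\omega^{\varepsilon(n)}|\,|\int_{-1}^1\psi_{\sigma^n\omega}^\varepsilon\,\dleb| \le |c_\omega^{\varepsilon(n)}|\,\norm{\psi_{\sigma^n\omega}^\varepsilon}_{L^1}\to 0$. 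Integrating the decomposition of $\psi_\omega^\varepsilon$ over $[-1,1]$ then yields $a_\omega^\varepsilon + b_\omega^\varepsilon = o(1)$, and over $I_L$ it yields $\int_{I_L}\psi_\omega^\varepsilon\,\dleb = a_\omega^\varepsilon + o(1)$. Next I would invoke the $L^1$-normalisation $\norm{\psi_\omega^\varepsilon}_{L^1}=1$: because $\phi_L$ and $\phi_R$ are probability densities with disjoint supports, $1 = |a_\omega^\varepsilon| + |b_\omega^\varepsilon| + o(1)$, which with $b_\omega^\varepsilon = -a_\omega^\varepsilon+o(1)$ forces $|a_\omega^\varepsilon|\to\tfrac12$. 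The sign convention $\int_{I_L}\psi_\omega^\varepsilon\,\dleb>0$ rules out $a_\omega^\varepsilon\to-\tfrac12$, so $a_\omega^\varepsilon\to\tfrac12$, $b_\omega^\varepsilon\to-\tfrac12$, and hence $\psi_\omega^\varepsilon \stackrel{L^1}{\to} \tfrac12\phi_L-\tfrac12\phi_R$ for $\mathbb P$-a.e.\ $\omega$.

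The main obstacle is exactly the input taken for granted above: proving that the two-dimensional slow bundle $V_\omega^\varepsilon$ genuinely collapses onto $\mathrm{span}\{\phi_L,\phi_R\}$, with errors uniform enough in $\omega$ to give convergence as $\varepsilon\to0$ (rather than along a subsequence) and off a single exceptional null set. This is the perturbation-theoretic core of the paper and should be available from the analysis leading to \thrm{thrm:A}; if that analysis controls only the leading Oseledets space, the required upgrade to the full two-dimensional bundle uses the same mechanism. The remaining work --- the integral-zero identity, the bookkeeping with disjoint supports, and the sign --- is routine, and the only bookkeeping point is to intersect the exceptional null sets coming from the multiplicative ergodic theorem, from the convergence of $V_\omega^\varepsilon$, and from \thrm{thrm:A} into one $\sigma$-invariant full-measure set.
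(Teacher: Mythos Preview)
Your proposal is correct and follows essentially the same route as the paper: the perturbation-theoretic input (here packaged as \lem{lem:cont}, itself relying on Crimmins' theory) places every accumulation point of $\psi_\omega^\varepsilon$ in $\mathrm{span}\{\phi_L,\phi_R\}$, and then the two constraints $\int_I\psi_\omega^\varepsilon=0$ and $\|\psi_\omega^\varepsilon\|_{L^1}=1$, together with the disjoint supports of $\phi_L,\phi_R$ and the sign convention, pin down the coefficients as $\pm\tfrac12$.

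The one noteworthy difference is in how you establish $\int_I\psi_\omega^\varepsilon=0$. You iterate \emph{forward}: from $\mathcal{L}_\omega^{\varepsilon(n)}\psi_\omega^\varepsilon=c_\omega^{\varepsilon(n)}\psi_{\sigma^n\omega}^\varepsilon$ and integral preservation you get $|\int_I\psi_\omega^\varepsilon|\le|c_\omega^{\varepsilon(n)}|$, and since $\|\psi_{\sigma^n\omega}^\varepsilon\|_{\BV}\ge\|\psi_{\sigma^n\omega}^\varepsilon\|_{L^1}=1$ one has $\limsup_n\tfrac1n\log|c_\omega^{\varepsilon(n)}|\le\lambda_2^\varepsilon<0$, so $c_\omega^{\varepsilon(n)}\to 0$. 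The paper instead iterates \emph{backward} in \lem{lem:pert2nd}, writes $\rho_{\sigma^{-n}\omega}^{\varepsilon(n)}$ as a ratio of integrals, and invokes Poincar\'e recurrence to derive a contradiction from a hypothetical positive-measure set where the integral is nonzero. Your argument is shorter and avoids the recurrence step; the paper's version has the side benefit of identifying $\int_\Omega\log|\rho_\omega^\varepsilon|\,d\mathbb{P}=\lambda_2^\varepsilon$ along the way. Either works.
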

\noindent
\jp{Finally, we extend \thrm{thrm:A} to random metastable systems with $m\geq 2$ initially invariant sets. We highlight that this result builds on \cite{GTHW_metastable}, providing an explicit system of linear equations whose solution describes the limiting invariant density for random metastable systems with multiple initially invariant sets. As discussed in \Sec{sec:mstate}, one can naturally extend assumptions \hyperref[list:I1]{\textbf{(I1)}}-\hyperref[list:I6]{\textbf{(I6)}} and \hyperref[list:P1]{\textbf{(P1)}}-\hyperref[list:P7]{\textbf{(P7)}} to conditions {\textoverline{\textbf{(I1)}}}-{\textoverline{\textbf{(I6)}}} and {\textoverline{\textbf{(P1)}}}-{\textoverline{\textbf{(P7)}}}, yielding the following result  (\thrm{thrm:m-invdens}), whose proof may be found in \Sec{sec:mstate}.
\begin{thmx}
     Let $\{(\Omega,\mathcal{F},\mathbb{P},\sigma,\BV(I),\mathcal{L}^\varepsilon)\}_{\varepsilon\geq 0}$ be a \jp{family} of random dynamical systems of  metastable maps $T_\omega^\varepsilon:I\to I$ satisfying {\textoverline{\textbf{(I1)}}}-{\textoverline{\textbf{(I6)}}} and {\textoverline{\textbf{(P1)}}}-{\textoverline{\textbf{(P7)}}} (see \Sec{sec:mstate}). Suppose that $\Delta,N : \Omega \to M_{m\times m}(\mathbb{R})$ are as in \eqn{eqn:Delta} and \eqn{eqn:Mdecomp}. Denote by $(\phi_\omega^\varepsilon)_{\omega\in\Omega}$ the random invariant density of $(T_\omega^\varepsilon)_{\omega\in\Omega}$. If $\int_\Omega \Delta_\omega \, d\mathbb{P}(\omega)$ is invertible, then there exists a unique vector $v^0 = \begin{pmatrix}
      c_1 & c_2 & \cdots & c_m   
     \end{pmatrix}^T\in\mathbb{R}^m$ with $c_i\geq 0$ for each $i\in\{1,\cdots ,m\}$ and $\sum_{i=1}^m c_i=1$, such that as $\varepsilon\to 0$,
     $$\phi_\omega^{{\varepsilon}} \stackrel{L^1}{\to} \phi^0= \sum_{i=1}^m c_{i} \phi_i$$
     {uniformly} over $\omega\in\Omega$ away from a $\mathbb{P}$-null set. Further, $v^0$ is the solution to
     \begin{equation*}
         \left(I- \left(\int_{\Omega} \Delta_\omega\, d\mathbb{P}(\omega)\right)^{-1}\int_{\Omega}N_\omega\, d\mathbb{P}(\omega)\right)v^0 = 0
     \end{equation*}
     that satisfies $\sum_{i=1}^m (v^0)_i=1$.
    \label{thrm:C}
 \end{thmx}}
\noindent
We structure the paper as follows. In \Sec{sec:prelim} we introduce relevant definitions and results related to random dynamical systems and Perron-Frobenius operators. The initial system and the perturbations made to it are introduced in \Sec{sec:map+pert}. Here we discuss how this gives rise to random paired metastable systems. \Sec{sec:cont} is dedicated to showing that in the context of \cite{Crimmins}, the Oseledets decomposition for the Perron-Frobenius operator cocycle depends continuously on perturbations. This allows us to assume in \Sec{sec:inv-density} that any accumulation point of the random invariant density for the perturbed system is given by a convex combination of the initially invariant densities, allowing us to prove \thrm{thrm:A}. In \Sec{sec:2nd_space} we prove \thrm{thrm:B}, providing a characterisation of the second Oseledets space and thus a description of the corresponding coherent structure.   \jp{In \Sec{sec:mstate} we present \thrm{thrm:C}, an extension of \thrm{thrm:A} to the case where the initial system admits $m\geq2$ initially invariant sets.} Finally, in \Sec{sec:example}, we apply our results to Horan's random paired tent maps. 
\section{Preliminaries}
In this section, we collate definitions and results relevant to this paper. Primarily, we introduce transfer operator techniques for random dynamical systems.   
\begin{definition}
\sloppy A \textit{semi-invertible random dynamical system} is a tuple $(\Omega,\mathcal{F},\mathbb{P},\sigma,X,\mathcal{L})$, where the base $\sigma:\Omega\to \Omega$ is an \textit{invertible}, measure-preserving transformation of the probability space $(\Omega,\mathcal{F},\mathbb{P})$, $(X,\| \cdot \|)$ is a Banach space, and $\mathcal{L}:\Omega\to L(X)$ is a (strongly measurable) family of bounded linear operators of $X$, called the generator.\footnote{Here $L(X)$ denotes the space of bounded linear operators on the Banach space $X$.} 
\label{def:seminvertrds}
\end{definition}
\begin{remark}
    For convenience, whenever we refer to a random dynamical system, we assume it is semi-invertible. That is, the base map $\sigma:\Omega\to \Omega$ is invertible, but its generators need not be. 
\end{remark}
\noindent
In general, $X$ can be any given Banach space. In this paper, we will be interested in $X$ being the Banach space of functions with \textit{bounded variation}.
\begin{definition}
Let $(S,\mathcal{D},\mu)$ be a measure space with $S=[a,b]$. The space $\BV_\mu(S)$ is called the space of \textit{bounded variation} on $S$ where
$$||f||_{\BV_\mu(S)} = \inf_{\tilde{f} = f\, \mu-\mathrm{a.e.}}\var(\tilde{f})+||\tilde{f}||_{L^1(\mu)}$$
and $\var(f)$ is the total variation of $f$ over $S$;
    $$\var(f) = \sup\left\{ \sum_{i=1}^n |f(x_i)-f(x_{i-1})| \ \ \bigg| \ \ n\geq 1, \ a\leq x_0 <x_1<\cdots < x_n\leq b \right\}.$$
    \label{dfn:BV}
\end{definition}\noindent
Elements of $\BV_\mu(S)$, denoted $[f]_\mu$, are equivalence classes of functions with bounded variation on $S$. In this paper, we consider functions $f\in \BV(S)$ with norm $\|f\|_{\BV(S)}=\var(f)+\|f\|_{L^1(\mu)}$ and emphasise that $f$ is identified through a representative of minimal variation from the equivalence class $[f]_\mu$. 
\\
\\
\noindent
We associate with a \textit{non-singular transformation} $T$ a unique \textit{Perron-Frobenius operator}. Since it describes the evolution of ensembles of points, or densities, such operators serve as a powerful tool when studying the statistical behaviour of trajectories of $T$. 
\begin{definition}
    A measurable function $T:I\to I$ on a measure space $(I,\mathcal{D},\mu)$ is a \textit{non-singular transformation} if $\mu(T^{-1}(D))=0$ for all $D\in \mathcal{D}$ such that $\mu(D)=0$.
    \label{dfn:non-sing}
\end{definition}

\begin{definition}
\label{def:pfoperator}
    Let $(I,\mathcal{D},\mu)$ be a measure space and $T:I\to I$ be a non-singular transformation. \jp{The unique operator $\mathcal{L}_T:L^1(\mu)\to L^1(\mu)$ satisfying
    $$\int_D\mathcal{L}_T(f)(x)\, d\mu(x) = \int_{T^{-1}(D)}f(x)\, d\mu(x)$$
    for all $f\in L^1(\mu)$ and $D\in\mathcal{D}$ is called the \textit{Perron-Frobenius operator} (associated with $T$).}
\end{definition}\noindent
In certain cases, $\mathcal{L}_T$ may be restricted (or extended) to a bounded linear operator on another Banach space (e.g. $X=\BV(I)$) in which case the operators are still referred to as Perron-Frobenius operators. 
\begin{remark}
\sloppy For notational purposes we denote by $(\mathcal{L}_\omega)_{\omega\in\Omega}$ the family of Perron-Frobenius operators associated with the family of non-singular transformations $(T_\omega)_{\omega\in\Omega}$.
\end{remark}\noindent
Combining \dfn{def:seminvertrds} and \dfn{def:pfoperator}, one can construct a random dynamical system from a family of Perron-Frobenius operators $(\mathcal{L}_{\omega})_{\omega\in\Omega}$ associated with the set of non-singular transformations $(T_\omega)_{\omega\in\Omega}.$ This forms a Perron-Frobenius operator cocycle.

\begin{example}[Perron-Frobenius operator cocycle]
Consider a random dynamical system $(\Omega,\mathcal{F},\mathbb{P},\sigma,X,\mathcal{L})$, where $\sigma:\Omega\to \Omega$ is an invertible, ergodic and measure-preserving transformation, and its generator $\mathcal{L}:\Omega \to L(X)$ takes values on the Perron-Frobenius operators associated with the non-singular transformations $T_\omega$ of the measure space $(I,\mathcal{D},\mu)$, given by $\omega \mapsto \mathcal{L}_{\omega}$. This gives rise to a Perron-Frobenius operator cocycle,
$$(n,\omega)\mapsto \mathcal{L}_{\omega}^{(n)}\jp{:=}\mathcal{L}_{\sigma^{n-1}\omega}\circ \cdots \circ \mathcal{L}_{\omega}.$$
Here the evolution of a density $f$ is governed by a cocycle of Perron-Frobenius operators driven by the base dynamics $\sigma:\Omega\to \Omega$. That is, if $f$ represents the initial mass distribution in the system, then $\mathcal{L}_\omega^{(n)}$ describes the mass distribution after the application of \jp{$T_\omega^{(n)}:=T_{\sigma^{n-1}\omega}\circ \cdots \circ T_\omega$}. 
\end{example}
\noindent
The \textit{random fixed points} of the Perron-Frobenius operator are of interest in random dynamical systems. 
\begin{definition}
Let $(\Omega,\mathcal{F},\mathbb{P},\sigma,X,\mathcal{L})$ be a random dynamical system, with associated non-singular transformations $T_\omega:I\to I$. A family $(\mu_\omega)_{\omega\in\Omega}$ is called a \textit{random invariant measure} for $(T_\omega)_{\omega\in \Omega}$ if $\mu_\omega$ is a probability measure on $I$, for any Borel measurable subset $D$ of $I$ the map $\omega \mapsto \mu_\omega(D)$ is measurable, and 
$$\mu_\omega(T^{-1}_\omega(D)) = \mu_{\sigma\omega}(D)$$
for $\mathbb{P}$-a.e. $\omega\in \Omega$. A family $(h_\omega)_{\omega\in\Omega}$ is called a \textit{random invariant density} for $(T_\omega)_{\omega\in \Omega}$ if $h_\omega\geq 0$, $h_\omega\in L^1(\mu)$, $||h_\omega||_{L^1(\mu)}=1$, the map $\omega\mapsto h_\omega$ is measurable, and 
$$\mathcal{L}_\omega h_\omega = h_{\sigma\omega}$$
for $\mathbb{P}$-a.e. $\omega\in \Omega$. We also say $(h_\omega)_{\omega\in\Omega}$ is a \textit{random fixed point} of $(\mathcal{L}_\omega)_{\omega\in\Omega}$.
\label{def:RIM-RID}
\end{definition}
\begin{remark}
    If the random invariant measure $(\mu_\omega)_{\omega\in\Omega}$ is absolutely continuous with respect to Lebesgue, we refer to it as a random absolutely continuous invariant measure (RACIM). In this case, its density $(h_\omega)_{\omega\in\Omega} = (\frac{d\mu_\omega}{d\leb})_{\omega\in\Omega}$ is a random invariant density.
    \label{rem:RACIM}
\end{remark}

\begin{definition}
The cocycle $(\mathcal{L}_\omega)_{\omega\in\Omega}$ on a Banach space $X$ is \textit{strongly measurable} if for any fixed $f\in X$, $\omega\mapsto\mathcal{L}_\omega f$ is $(\mathcal{F}_\Omega,\mathcal{F}_X)$-measurable.
\label{dfn:strong-measurable}
\end{definition}\noindent
In this definition, $\mathcal{F}_\Omega$ denotes the $\sigma$-algebra over $\Omega$, and $\mathcal{F}_X$ denotes the Borel $\sigma$-algebra on $X$.
\begin{remark}
 \sloppy     In the case that $(\mathcal{L}_\omega)_{\omega\in\Omega}$ is {strongly measurable} we call $(\Omega,\mathcal{F},\mathbb{P},\sigma,X,\mathcal{L})$ a strongly measurable random dynamical system. 
\end{remark}
\noindent
 In some cases one cannot obtain strong measurability of $(\mathcal{L}_\omega)_{\omega\in\Omega}$ in which case one can hope that the mapping $\omega\mapsto \mathcal{L}_\omega$ is $\mathbb{P}$\textit{-continuous}, a concept introduced by Thieullen in \cite{IOC_TP}. This gives rise to a \textit{$\mathbb{P}$-continuous random dynamical system}.

 \begin{definition}
      Let $(\Omega,\mathcal{F},\mathbb{P})$ be a Borel probability space and $(Y, \tau)$ a topological space. A mapping $\mathcal{L}:\Omega \to Y$ is said to be \textit{$\mathbb{P}$-continuous} if $\Omega$ can be expressed as a countable union of Borel sets such that the restriction of $\mathcal{L}$ to each of them is continuous. 
      \label{def:P-cont}
 \end{definition}
\begin{remark}
    Let $(\Omega,\mathcal{F},\mathbb{P},\sigma,X,\mathcal{L})$ be a random dynamical system. If its generators $\mathcal{L}:\Omega \to L(X)$, given by $\omega\mapsto\mathcal{L}_\omega$, are $\mathbb{P}$-continuous with respect to the norm topology on $L(X)$, then the cocycle $(\mathcal{L}_\omega)_{\omega\in\Omega}$ is called $\mathbb{P}$-continuous, and the tuple $(\Omega, \mathcal{F}, \mathbb{P}, \sigma, X , \mathcal{L})$ is a \textit{$\mathbb{P}$-continuous random dynamical system}. 
\end{remark}
\noindent
The asymptotic behaviour of the spectral picture for the Perron-Frobenius operator cocycle is of great interest when studying the statistical properties of random systems. 
\begin{definition}
The \textit{index of compactness} of an operator $\mathcal{L}_\omega$ denoted $\alpha(\mathcal{L}_\omega)$, is the infimum of those real numbers $t$ such that the image of the unit ball in $X$ under $\mathcal{L}_\omega$ may be covered by finitely many balls of radius $t$.
\label{def:IndexOfCompactness}
\end{definition}\noindent
The index of compactness provides a notion of `how far' an operator is from being compact. This definition was extended by Thieullen to random compositions of operators in \cite{IOC_TP}.
\begin{definition}
The \textit{asymptotic index of compactness} for the cocycle $(\mathcal{L}_\omega)_{\omega\in\Omega}$ on $X$ is
$$\kappa(\omega):=\lim_{n\to \infty} \frac{1}{n}\log \alpha(\mathcal{L}_\omega^{(n)}).$$
\label{def:AssymptoticIndexOfCompactness}
\end{definition}\noindent
\jp{We call the cocycle \textit{quasi-compact} if its \textit{maximal Lyapunov exponent}, $$\lambda_1(\omega) :=\lim_{n\to \infty} \frac{1}{n}\log||\mathcal{L}_\omega^{(n)}||,$$ is strictly larger than its asymptotic index of compactness. If $\int_\Omega \log||\mathcal{L}_\omega||\, d\mathbb{P}(\omega)<\infty$, by the Kingman sub-additive ergodic theorem, these limits exist and are constant for $\mathbb{P}$-a.e. $\omega\in\Omega$ if the driving system $\sigma:\Omega\to \Omega$ is ergodic {\cite{Kingman}}.} Under some additional assumptions on the random dynamical system, one can obtain a spectrum of Lyapunov exponents through {multiplicative ergodic theorems}. One example is \jp{the} \textit{Oseledets decomposition} \jp{which} splits $X$ into $\omega$-dependent subspaces which decay/expand according to its associated Lyapunov exponent\jps{s} $\lambda_i(\omega)$. Again, these are constant $\mathbb{P}$-a.e. when $\sigma$ is ergodic. 
\begin{definition}
    Consider a random dynamical system $\mathcal{R}=(\Omega,\mathcal{F},\mathbb{P},\sigma,X,\mathcal{L})$. An \textit{Oseledets splitting} for $\mathcal{R}$ consists of isolated (exceptional) Lyapunov exponents 
    $$\infty>\lambda_1>\lambda_2>\cdots>\lambda_\ell>\kappa\geq -\infty,$$
    where the index $\ell\geq 1$ is allowed to be finite or countably infinite, and Oseledets subspaces $V_1(\omega),\dots, V_\ell(\omega), W(\omega)$ such that for all $i=1,\dots, \ell$ and $\mathbb{P}$-a.e. $\omega\in\Omega$ we have
   \begin{itemize}
    \item[(a)] $\dim(V_i(\omega))=m_i<\infty$;
    \item[(b)] $\mathcal{L}_\omega V_i(\omega)=V_i(\sigma\omega)$ and $\mathcal{L}_\omega W(\omega)\subseteq W(\sigma\omega)$;
    \item[(c)] $V_1(\omega)\oplus\cdots \oplus V_\ell(\omega)\oplus W(\omega)=X$;
    \item[(d)] for $f\in V_i(\omega)\setminus\{0\}$, $\lim_{n\to\infty}\frac{1}{n}\log||\mathcal{L}_\omega^{(n)}f||= \lambda_i$;
    \item[(e)] for $f\in W(\omega)\setminus\{0\}$, $\lim_{n\to\infty}\frac{1}{n}\log||\mathcal{L}_\omega^{(n)}f||\leq \kappa$.
\end{itemize}
\label{def:Osc-splitting}
\end{definition}

\begin{theorem}[{\cite[Theorem 17]{FLQ_semi}}]
Let $\Omega$ be a Borel subset of a separable complete metric space, $\mathcal{F}$ the Borel $\sigma$-algebra and $\mathbb{P}$ a Borel probability. Let $X$ be a Banach space and consider a random dynamical system $\mathcal{R}=(\Omega,\mathcal{F},\mathbb{P},\sigma,X,\mathcal{L})$ with base transformation $\sigma:\Omega\to \Omega$ an ergodic homeomorphism, and suppose that the generator $\mathcal{L} : \Omega \to L(X)$ is $\mathbb{P}$-continuous and satisfies
$$\int_\Omega \log^+ ||\mathcal{L}_\omega||\, d\mathbb{P}(\omega)<\infty.$$
If $\kappa<\lambda_1$, $\mathcal{R}$ admits a unique $\mathbb{P}$-continuous Oseledets splitting.
\label{the:OsceledetsDecomp}
\end{theorem}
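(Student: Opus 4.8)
I should note at the outset that the final statement is the semi-invertible multiplicative ergodic theorem of Froyland--Lloyd--Quas, quoted here as \cite[Theorem 17]{FLQ_semi}, so what I can offer is the strategy on which its proof rests rather than an independent argument. The proof combines Kingman's subadditive ergodic theorem (to produce the scalar Lyapunov data) with a geometric construction of finite-dimensional equivariant subspaces that exploits invertibility of the base $\sigma$; quasi-compactness, $\kappa<\lambda_1$, is what makes the ``fast'' part of the spectrum finite-dimensional and hence amenable to this construction.

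First I would pin down the scalar data. Since $\int\log^+\|\mathcal{L}_\omega\|\,d\mathbb{P}(\omega)<\infty$ and $\sigma$ is ergodic and measure preserving, Kingman's subadditive ergodic theorem applied to $n\mapsto\log\|\mathcal{L}_\omega^{(n)}\|$ yields the $\mathbb{P}$-a.e.\ constant top exponent $\lambda_1$. Running Kingman instead on $n\mapsto\log\|\wedge^k\mathcal{L}_\omega^{(n)}\|$ over the $k$-th exterior power of $X$ produces singular-value-type growth rates $\mu_k$, and the exceptional exponents $\lambda_1>\lambda_2>\cdots$ together with their multiplicities $m_i$ are read off from the successive increments $\mu_k-\mu_{k-1}$. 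The quasi-compactness hypothesis $\kappa<\lambda_1$ forces only finitely many of these increments to exceed any fixed level above $\kappa$, and each such exponent has finite multiplicity; this delivers the finite list $\lambda_1>\cdots>\lambda_\ell>\kappa$ and the dimensions $m_i<\infty$ of Definition~\ref{def:Osc-splitting}(a).

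Next is the geometric heart of the argument: constructing the equivariant fast subspaces. For the top exponent I would build $V_1(\omega)$ as the limit, in the Grassmannian of $X$, of the span of the $m_1$ most-expanded directions of the pulled-back composition $\mathcal{L}_{\sigma^{-n}\omega}^{(n)}$; the spectral gap $\lambda_1>\lambda_2$ (equivalently $\mu_{m_1}-\mu_{m_1+1}>0$ on exterior powers) makes these $m_1$-dimensional subspaces a Cauchy sequence, and it is precisely invertibility of $\sigma$ that allows one to push forward from the distant past to define them consistently at $\omega$. One then verifies $\mathcal{L}_\omega V_1(\omega)=V_1(\sigma\omega)$, that the cocycle restricted to $V_1$ grows at the exact rate $\lambda_1$, and --- crucially for the theorem's conclusion --- that $\omega\mapsto V_1(\omega)$, equivalently the equivariant projection $\Pi_1(\omega)$ onto $V_1(\omega)$ along its equivariant complement, is $\mathbb{P}$-continuous; the latter follows from $\mathbb{P}$-continuity of $\mathcal{L}$ via a Lusin-type argument writing $\Omega$ as a countable union of Borel pieces on which the relevant limiting objects vary continuously. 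Restricting the cocycle to the complementary equivariant bundle and iterating this construction peels off $\lambda_2,\dots,\lambda_\ell$ in turn; the leftover bundle is $W(\omega)$, for which $\mathcal{L}_\omega W(\omega)\subseteq W(\sigma\omega)$ and the subexponential bound of item (e) hold. Uniqueness is then automatic: membership of $f$ in $V_i(\omega)$ is characterised intrinsically by $\lim_n\frac1n\log\|\mathcal{L}_\omega^{(n)}f\|=\lambda_i$ (and no faster rate appearing in its forward orbit), so any two $\mathbb{P}$-continuous splittings with properties (a)--(e) must coincide $\mathbb{P}$-a.e.

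The main obstacle is the fast-subspace construction and its regularity in the semi-invertible category. Because the generators $\mathcal{L}_\omega$ need not be injective, one cannot mimic the classical invertible-operator Oseledets proof by intersecting a forward and a backward Lyapunov filtration; the compactness needed to extract finite-dimensional limits of the pulled-back ellipsoids must instead be squeezed out of quasi-compactness alone, which is the delicate technical core of \cite{FLQ_semi}. The other delicate point is upgrading the measurable selections produced by the limiting procedure to genuinely $\mathbb{P}$-continuous ones, so that the Oseledets splitting is $\mathbb{P}$-continuous as asserted; this is exactly where the $\mathbb{P}$-continuity hypothesis on the generator is used in an essential way.
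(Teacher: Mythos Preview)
You correctly identify at the outset that this theorem is not proved in the paper at all: it is quoted verbatim as \cite[Theorem 17]{FLQ_semi} and used as a black-box tool in the preliminaries, with no proof or even proof sketch provided. There is therefore nothing in the paper to compare your proposal against. Your outline of the Froyland--Lloyd--Quas argument (Kingman on exterior powers for the scalar data, pulling back from the past via invertibility of $\sigma$ to build the fast equivariant subspaces, quasi-compactness to make those finite-dimensional, and $\mathbb{P}$-continuity of the generator to get $\mathbb{P}$-continuity of the splitting) is a reasonable high-level summary of how that result is established, but assessing its fidelity would require comparison with \cite{FLQ_semi} itself rather than with the present paper.
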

\noindent
We refer to the set of all $\lambda_i$ as the Lyapunov spectrum of the Perron-Frobenius operator cocycle $(\mathcal{L}_\omega)_{\omega\in\Omega}$. We recall that in \thrm{the:OsceledetsDecomp} the Lyapunov spectrum and asymptotic index of compactness are constant for $\mathbb{P}$-a.e. $\omega\in\Omega$ since $\sigma$ is ergodic. When the Oseledets splitting of $(\mathcal{L}_\omega)_{\omega\in\Omega}$ \jp{satisfies uniform hyperbolicity conditions}, by adopting the same terminology of \cite{Crimmins}, we call this a \textit{hyperbolic Oseledets splitting}. \jp{Let $\mathbb{X}=\Omega\times X$, and define the map $\pi:\mathbb{X}\to\Omega$ as the projection onto $\Omega$. Denote by $\mathrm{End}(\mathbb{X},\sigma)$ the set of all bounded linear endomorphisms of $\mathbb{X}$ covering $\sigma$, i.e.
$$\mathrm{End}(\mathbb{X},\sigma)=\{\mathcal{L}:\mathbb{X}\to \mathbb{X} \ | \ \pi\circ \mathcal{L} = \sigma \circ \pi \ \mathrm{and} \ f\mapsto \tau_{\sigma\omega}(\mathcal{L}(\omega,f))\in L(X{})\}$$
where $\tau_\omega:\pi^{-1}(\omega)\to X$ is given by $\tau_\omega(\omega,f)=f$. Let $\mathcal{G}(X)$ denote the Grassmanian of $X$ (the set of all closed, \jps{complemented} subspaces of $X$). For $E,F\in\mathcal{G}(X)$, let $\Pi_{E||F}$ be the projection onto $E$ parallel to $F$ so that the image of $\Pi_{E||F}$ is $E$ and $\mathrm{ker}(\Pi_{E||F})=F$. For a fixed $d\in\mathbb{Z}^+$ we let $\mathcal{G}_d(X)$ and $\mathcal{G}^d(X)$ denote the set of closed $d$-dimensional and $d$-codimensional subspaces of $X$ respectively.
\begin{definition} 
    Suppose that $\mathcal{L}\in \mathrm{End}(\mathbb{X},\sigma)$, $d\in\mathbb{Z}^+$, $0\leq \mu<\lambda$, $(E_\omega)_{\omega\in\Omega}\in \mathcal{G}_d(X)^\Omega$ and $(F_\omega)_{\omega\in\Omega}\in \mathcal{G}^d(X)^\Omega$. We say that the family of subspaces $(E_\omega)_{\omega\in\Omega}$ and $(F_\omega)_{\omega\in\Omega}$ form a \textit{$(\lambda,\mu,d)$-hyperbolic splitting} for $\mathcal{L}$, and that $\mathcal{L}$ has a \textit{hyperbolic splitting} of index $d$, if there exists constants $C_{\lambda},C_\mu,\Theta>0$ such that:
    \begin{enumerate}
    \item[(a)] For every $\omega\in\Omega$ we have $E_\omega \oplus F_\omega = X$ and
    $$\max\{\|\Pi_{F_\omega||E_\omega} \|,\|\Pi_{E_\omega||F_\omega} \|\}\leq \Theta.$$
    \item[(b)] For each $\omega\in\Omega$ we have $\mathcal{L}_\omega E_\omega = E_{\sigma\omega}$. Moreover, for every $n\in\mathbb{N}$ and $f\in E_\omega$ we have 
    $$||\mathcal{L}_\omega^{(n)}f||\geq C_\lambda \lambda^n ||f||.$$
    \item[(c)] For each $\omega\in\Omega$ we have $\mathcal{L}_\omega F_\omega \subseteq F_{\sigma\omega}$ and for every $n\in\mathbb{N}$ we have 
    $$||\mathcal{L}_\omega^{(n)}|_{F_\omega}||\leq C_\mu \mu^n.$$
    \end{enumerate}
    \label{def:hyperbolic-splitting}
\end{definition}}
\jp{\begin{remark}
    In the deterministic setting ($\mathcal L_\omega =\mathcal L_0$ for $\mathbb{P}$-a.e. $\omega \in \Omega$), if $\mathcal L_0$ is quasi-compact, the random dynamical system $(\Omega, \mathcal{F}, \mathbb{P}, \sigma, X, \mathcal{L})$ has a hyperbolic Oseledets splitting (see \cite[Example 5.6]{Crimmins}). Furthermore, if the leading eigenvalue of $\mathcal L_0$ is 1, with multiplicity $d\geq 1$, and $\mathcal L_0$ has no other eigenvalues of modulus one, then the random dynamical system has a hyperbolic Oseledets splitting of index $d$.  \label{rmk:free-splitting}
\end{remark}} 
\noindent As opposed to studying the asymptotic index of compactness \jp{directly}, one can often prove that the Perron-Frobenius operator cocycle is quasi-compact by showing the collection $(\mathcal{L}_\omega)_{\omega\in\Omega}$ satisfies a \textit{uniform Lasota-Yorke inequality}.
\begin{definition}
    We say that $\mathcal{L}_\omega:(X,\| \cdot\|)\to (X,\| \cdot\|)$ satisfies a \textit{uniform Lasota-Yorke inequality} with constants $C_1,C_2,r,R>0$ and $0<r<R\leq 1$, if for every $\omega\in \Omega$, $f\in X$ and $n\in \mathbb{N}$ we have 
    $$||\mathcal{L}_\omega^{(n)} f||\leq C_1r^n||f||+C_2 R^n|f|$$
    where $|\cdot|$ is a weak norm on $(X,\| \cdot\|)$.\footnote{Recall that $|\cdot|$ is a weak norm on $(X,\|\cdot\|)$ if there exists a constant $C>0$ such that $|f|\leq C \|f\|$ for all $f\in X$.} 
    \label{def:ULY}
\end{definition}
\label{sec:prelim} \noindent
In the proceeding sections, we will consider perturbations of Perron-Frobenius operator cocycles. One way to quantify the size of perturbations is through the \textit{operator triple norm}.
\begin{definition}
    Let $\mathcal{L}_\omega :(X,\| \cdot\|)\to (X,\| \cdot\|)$ where $X$ is a Banach space equipped strong and weak norm $\|\cdot\|$ and $|\cdot|$, respectively. The \textit{operator triple norm} of $\mathcal{L}_\omega$ is
    $$|||\mathcal{L}_\omega|||:=\sup_{||f||=1}|\mathcal{L}_\omega f|.$$
    \label{def:triple-norm}
\end{definition} \noindent
Finally, we make use of Landau notation throughout the paper. In the following definitions we consider functions $f,g:\mathbb{R}\to \mathbb{R}$.
\begin{definition}
     We write $f(x)=O_{x\to a}(g(x))$ if there exists $M,\delta>0$ such that for all $x$ satisfying $|x-a|<\delta$,
     $$|f(x)|\leq M|g(x)|.$$
     \label{def:O}
\end{definition}
\begin{definition}
    We write $f(x)=o_{x\to a}(g(x))$ if for all $C>0$ there exists $\delta>0$ such that for all $x$ satisfying $|x-a|<\delta$,
    $$|f(x)|\leq C|g(x)|.$$
    \label{def:o}
\end{definition}
\begin{remark}
   In many situations the constants $C,M$ involved in the above asymptotic approximations may depend on a second variable, say $\omega$. In this case we write $f(x)=O_{\omega,x\to a}(g(x))$ and $f(x)=o_{\omega,x\to a}(g(x))$, respectively.
\end{remark}

\section{Paired metastable systems and their perturbations}
\label{sec:map+pert}
In this section, we introduce a class of random dynamical systems with two metastable states. We define these maps as perturbations of an autonomous system possessing two initially invariant intervals $I_L$ and $I_R$, which both support a unique absolutely continuous invariant measure (ACIM). Upon perturbation, so-called \textit{random holes} emerge, allowing trajectories to switch between $I_L$ and $I_R$. This gives rise to systems with a unique random absolutely continuous invariant measure, describing the long-term statistical behaviour of paired metastable systems.    

\subsection{The initial system}
Let $I=[-1,1]$ be equipped with the Borel $\sigma$-algebra $\mathcal{B}$, and Lebesgue measure $\leb$. Suppose that $T^0: I\to I$ is a piecewise $C^2$ uniformly expanding map with two invariant subintervals. This means $T^0$ satisfies the following conditions.
\begin{itemize}
    \item[\textbf{(I1)}] Piecewise $C^2$.\\
    There exists a critical set $\mathcal{C}^0 = \{-1=c_{0}<c_{1}<\cdots < c_{d}=1\}$ such that for each $i=0,\dots,d-1$, the map $T^0|_{(c_i,c_{i+1})}$ extends to a $C^2$ function $\hat{T}_i^0$ on a neighbourhood of $[c_i,c_{i+1}]$.  \label{list:I1}
    \end{itemize}
    \begin{itemize}    
    \item[\textbf{(I2)}] Uniform expansion.\\
    $$\inf_{x\in I\setminus \mathcal{C}^0}|(T^0)^\prime(x)|>1.$$ \label{list:I2}
    \end{itemize}
    \begin{itemize}
    \item[\textbf{(I3)}] Existence of boundary point.\\
    There is a boundary point $b\in(-1,1)$ such that the sets $I_L := [-1,b]$ and $I_R:=[b,1]$ are invariant under $T^0$.\footnote{That is for $\star\in\{ L,R \}$, $(T^0)^{-1}(I_\star)\subseteq I_\star$.} \label{list:I3}
    \end{itemize}
    Denote by $\mathcal{L}^0$ the Perron-Frobenius operator associated with $T^0$ acting on $(\BV(I),||\cdot||_{\BV(I)})$ with weak norm $||\cdot||_{L^1(\leb)}$.
    \begin{remark}
     Thanks to \cite{EG_LY}, conditions \hyperref[list:I1]{\textbf{(I1)}} and \hyperref[list:I2]{\textbf{(I2)}} ensure that the Perron-Frobenius operator associated with $T^0$ acting on $(\BV(I),||\cdot||_{\BV(I)})$ satisfies a Lasota-Yorke inequality. This fact will be used when we wish to emphasise that $\mathcal{L}^0$ is quasi-compact. \label{rem:LY0}
    \end{remark}\noindent
Note that the dynamics of the initial system is autonomous, thus for $\star\in\{L,R\}$, the existence of an ACIM of bounded variation for $T^0|_{I_\star}$ is guaranteed by the classical work of Lasota and Yorke \cite{LY_acim}. We assume in addition the following. 
\begin{itemize}
    \item[\textbf{(I4)}] Unique ACIMs on initially invariant sets.\\
    For $\star \in \{L,R\}$, $T^0|_{I_\star}$ has only one ACIM $\mu_\star$, whose density is denoted by $\phi_\star:= d\mu_\star/\dleb$.
    \label{list:I4}
\end{itemize}
\hyperref[list:I4]{\textbf{(I4)}} implies that all ACIMs of $T^0$ may be expressed as convex combinations of those ergodic measures supported on $I_L$ and $I_R$. Conditions guaranteeing that \hyperref[list:I4]{\textbf{(I4)}} is satisfied are outlined in \cite[Theorem 1]{LY_acim}.
\\
\\
\jp{For $k\in\mathbb{N}$, let $T^{0\, (k)}:= \overbrace{T^0\circ \cdots \circ \ T^0}^{k\text{-times}}$}. Define the set of all points belonging to $H^0:=(T^0)^{-1}(\{b\})\setminus \{b\}$ as infinitesimal holes.
\begin{itemize}
    \item[\textbf{(I5)}] No return of the critical set to infinitesimal holes.\\
    For every $k>0$, $T^{0 \,(k)}(\mathcal{C}^0)\cap H^0 = \emptyset$.
    \label{list:I5}
\end{itemize}
As discussed in \cite[Section 2.1]{GTHW_metastable}, condition \hyperref[list:I5]{\textbf{(I5)}} is essential to ensure that for $\star\in \{L,R\}$, each unique invariant density $\phi_\star$, guaranteed by \hyperref[list:I4]{\textbf{(I4)}}, is continuous at all points in $I_\star \cap H^0$. Finally, we require that for $\star\in \{L,R\}$, the invariant densities $\phi_\star$ are positive at each point in $I_\star \cap H^0$. 
\begin{itemize}
    \item[\textbf{(I6)}] Positive ACIMs at infinitesimal holes.\\
   $\phi_L$ is positive at each of the points in $H^0\cap I_L$, and $\phi_R$ is positive at each of the points in $H^0\cap I_R$.
   \label{list:I6}
\end{itemize}
Condition \hyperref[list:I6]{\textbf{(I6)}} is satisfied if, for example, the maps $T^0|_{I_\star}$ are weakly covering for $\star\in\{L,R\}$ \cite{Liverani_DOC}.\footnote{A piecewise expanding map $T:I\to I$ with critical set $\mathcal{S} = \{-1=s_{0}<s_{1}<\cdots < s_{d}=1\}$ is weakly covering if there is some $N\in\mathbb{N}$ such that for every $i$, $\cup_{k=0}^N T^{(k)}([s_i,s_{i+1}])=I$.}\\

\subsection{The perturbations}
\label{sec:perts}
In what follows, let $(\Omega,\mathcal{F},\mathbb{P})$ be a probability space. Fix $\varepsilon>0$ and let $\omega\in\Omega$. We consider $C^2$-small perturbations of $T^0:I\to I$, denoted $T^\varepsilon:\Omega\times I \to I$, driven by an ergodic transformation $\sigma:\Omega\to \Omega$.\footnote{For notational convenience, and in a slight abuse of notation, $T^0$ will also denote the \textit{random} map $T^0:\Omega\times I \to I$ which satisfies $T^0_\omega :=T^0$ for every $\omega\in\Omega$.} This means $\sigma:\Omega\to \Omega$ and $T^\varepsilon:\Omega\times I \to I$ satisfy the following.
\begin{itemize}
    \item[\textbf{(P1)}] Ergodic driving and finite range.\\
    $\sigma:\Omega\to \Omega$ is an ergodic, $\mathbb{P}$-preserving, \jp{invertible transformation} of the probability space $(\Omega,\mathcal{F},\mathbb{P})$; for all $\varepsilon\geq0$, the mapping $\omega \mapsto T_{\omega}^\varepsilon$ has finite range; and the skew-product
    $$(\omega,x)\mapsto (\sigma\omega,T_\omega^\varepsilon(x))$$
    is measurable with respect to the product $\sigma$-algebra $\mathcal{F}\otimes \mathcal{B}$ on $\Omega\times I$.
    \label{list:P1}
\end{itemize}
\begin{itemize}
    \item[\textbf{(P2)}] $C^2$-small perturbations.\\
    There exists a critical set $\mathcal{C}^\varepsilon_\omega = \{-1 = c_{0,\omega}^\varepsilon<\cdots < c_{d,\omega}^\varepsilon = 1\}$ such that for each $i=0,\dots,d$, $\omega \mapsto c_{i,\omega}^\varepsilon$ is measurable and $\varepsilon \mapsto c_{i,\omega}^\varepsilon$ is $C^2$. Furthermore, there exists $\delta>0$ such that: 
    \label{list:P2}
\begin{itemize}
    \item[(a)] for $i=1,\dots,d-2$, $[c_{i}+\delta,c_{i+1}-\delta]\subset [c_{i,\omega}^\varepsilon,c_{i+1,\omega}^\varepsilon] \subset [c_{i}-\delta,c_{i+1}+\delta]$,\footnote{In this way, we have a one-to-one correspondence between the critical sets of $T_\omega^\varepsilon$ and $T^0$ given by $\mathcal{C}_\omega^\varepsilon$ and $\mathcal{C}^0$, respectively.}
    \item[(b)] for $i=0,\cdots,d$, $c_{i,\omega}^\varepsilon$ converges uniformly over $\omega\in\Omega$ away from a  $\mathbb{P}$-null set to its corresponding point $c_{i}\in \mathcal{C}^0$ as $\varepsilon\to 0$, and
    \item[(c)] for $i=0,\dots,d-1$, there is a $C^2$ extension $\hat{T}_{i,\omega}^\varepsilon:[c_{i}-\delta,c_{i+1}+\delta]\to \mathbb{R}$ of $T_\omega^\varepsilon|_{(c_{i,\omega}^\varepsilon,c_{i+1,\omega}^\varepsilon)}$ that converges in $C^2$, and uniformly over $\omega\in\Omega$ away from a  $\mathbb{P}$-null set to the $C^2$ extension $\hat{T}_{i}^0:[c_{i}-\delta,c_{i+1}+\delta]\to \mathbb{R}$ of $T^0|_{(c_{i},c_{i+1})}$ as $\varepsilon\to 0$.
\end{itemize}
\end{itemize}
Denote by $\mathcal{L}_\omega^\varepsilon$ the Perron-Frobenius operator associated with $T_\omega^\varepsilon$ acting on $(\BV(I),||\cdot||_{\BV(I)})$ with weak norm $||\cdot||_{L^1(\leb)}$. 
\begin{remark}
  We impose \hyperref[list:P1]{\textbf{(P1)}} to ensure the mapping $\omega\mapsto \mathcal{L}_\omega^\varepsilon$ is $\mathbb{P}$-continuous (recall \dfn{def:P-cont}). This is satisfied through the relaxed condition that $\omega \mapsto T_{\omega}^\varepsilon$ has countable range. In our setting, to ensure uniform convergence of certain quantities, we instead require the stricter condition that $\omega \mapsto T_{\omega}^\varepsilon$ has finite range (as discussed in \cite[Remark 3.10]{BS_rand}).
\end{remark}

\begin{remark}
    Thanks to \cite[Proposition 3.12]{FGTQ_LYmapStab} (which follows from \cite[Lemma 13]{K_StochStab}) and \cite[Example 5.2]{EG_LY}, \hyperref[list:P2]{\textbf{(P2)}} asserts that 
     $$\lim_{\varepsilon\to 0} \esssup_{\omega\in\Omega} ||| \mathcal{L}_\omega^\varepsilon - \mathcal{L}^0 |||=0$$
     where $|||\cdot|||$ denotes the $\BV-L^1$ triple norm (see \dfn{def:triple-norm}).  
     \label{rem:p1-imp-trip}
\end{remark} 
\noindent
\noindent
For $\star\in \{L,R\}$ we define the holes $H_{\star,\omega}^\varepsilon$ as the set of all points mapping from $I_\star$ to $I_\star^c$ under one iteration of $T_\omega^\varepsilon$. Namely, $H_{\star,\omega}^\varepsilon:= I_\star \cap (T_\omega^\varepsilon)^{-1}(I_\star^c)$. 
\begin{itemize}
    \item[\textbf{(P3)}] Convergence of holes.\\
    For $\star\in\{L,R\}$, $H_{\star,\omega}^\varepsilon$ is a union of finitely many intervals, and as $\varepsilon\to 0$, $H_{\star,\omega}^\varepsilon$ converges to $H^0\cap I_{\star}$ (in the Hausdorff metric) uniformly over $\omega\in\Omega$ away from a $\mathbb{P}$-null set.\footnote{Recall that the set $H^0=(T^0)^{-1}(\{b\})\setminus \{b\}$ consists of infinitesimal holes.}
    \label{list:P3}
\end{itemize}
Since the holes are themselves functions of $\omega\in\Omega$, we place some constraints on how the measures of them behave across each fibre.   
\begin{itemize}
    \item[\textbf{(P4)}] Measure of holes.\\
    For $\star\in\{L,R\}$, $\mu_\star(H_{\star,\omega}^\varepsilon)=\varepsilon \beta_{\star,\omega}+o_{\varepsilon\to 0}(\varepsilon)$ where $\beta_\star \in  L^\infty(\mathbb{P})$.\footnote{We emphasise that the error in the measure of the hole is independent of $\omega\in\Omega$.}
    \label{list:P4}
\end{itemize}
\noindent
To ensure the system depends continuously on perturbations, we require the perturbed system to admit a hyperbolic Oseledets splitting (see \dfn{def:hyperbolic-splitting}). For this, we assume the following.
\begin{itemize}
    \item[\textbf{(P5)}] Uniform Lasota-Yorke inequality.\\
    The Perron-Frobenius operator associated with $T_\omega^\varepsilon$, denoted $\mathcal{L}_\omega^\varepsilon$, acting on \linebreak $(\BV(I),||\cdot||_{\BV(I)})$ with weak norm $||\cdot||_{L^1(\leb)}$ satisfies a uniform Lasota-Yorke inequality across both $\omega\in\Omega$ and $\varepsilon>0$ (see \dfn{def:ULY}).
    \label{list:P5}
\end{itemize}
Further, we require $T_\omega^\varepsilon$ to admit a unique random absolutely continuous invariant measure (see \rem{rem:RACIM}).
\begin{itemize}
    \item[\textbf{(P6)}] Unique RACIM.\\
    For $\varepsilon>0$, $(T_\omega^\varepsilon)_{\omega\in\Omega}$ has only one RACIM $(\mu_\omega^\varepsilon)_{\omega\in\Omega}$, with density $(\phi_\omega^\varepsilon)_{\omega\in\Omega}:=\left(d\mu_\omega^\varepsilon/\dleb\right)_{\omega\in\Omega}$.
    \label{list:P6}
\end{itemize}
Cases in which \hyperref[list:P6]{\textbf{(P6)}} is satisfied are outlined in \cite{Unique-RACIM}. Finally, as discussed in \cite[Section 2.4]{GTHW_metastable}, we enforce a condition that ensures no holes emerge near the boundary. 
\begin{itemize}
    \item[\textbf{(P7)}] Boundary condition.
    \begin{itemize}
        \item[(a)] If $b\notin \mathcal{C}_0$, then $T^0(b)=b$ and for all $\varepsilon>0$ and $\mathbb{P}$-a.e. $\omega\in\Omega$, $T_\omega^\varepsilon(b)=b$. 
        \item[(b)] If $b\in \mathcal{C}_0$, then $T^0(b^-)<b<T^0(b^+)$ and for all $\varepsilon>0$ and $\mathbb{P}$-a.e. $\omega\in\Omega$, $b\in \mathcal{C}_\omega^\varepsilon$.\footnote{We denote by  $T^0(b^\mp)$ the left and right limits of $T^0(x)$ as $x\to b$, respectively.}
    \end{itemize}
    \label{list:P7}
\end{itemize}

\noindent
Throughout the remainder of the paper we assume that the conditions of \Sec{sec:map+pert} are satisfied. 
\section{Continuity of Oseledets Decomposition}
\label{sec:cont}
In this section, we address whether the Oseledets projections and Lyapunov exponents of Perron-Frobenius operator cocycles of paired metastable maps are continuous with respect to the perturbations described in \Sec{sec:map+pert}. This result relies on Crimmins' random perturbation theory \cite{Crimmins}, a random analogue of the renowned Keller-Liverani perturbation theory \cite{stab_spec}. The main result of this section allows us to express the limiting functions spanning the top two Oseledets spaces for paired metastable systems as linear combinations of the initially invariant densities.   
\\
\\
\noindent
The results of \cite{Crimmins} require the Banach space on which the Perron-Frobenius operator is acting to be separable. In our setting, for all $\varepsilon\geq 0$, we consider the Perron-Frobenius operator acting on $\BV$ which is not separable. As described in \cite[Appendix 2.B]{thermoformalism}, this separability assumption is used throughout \cite{Crimmins} to obtain measurability of certain objects. It is argued in \cite{thermoformalism} that Crimmins' stability result may be applied to the non-separable Banach space $\BV$ under the alternative condition \hyperref[list:P1]{\textbf{(P1)}}. We are interested in applying \cite[Theorem A]{Crimmins} to paired metastable systems. As opposed to considering a \jp{family} of separable strongly measurable linear random dynamical systems, as in the statement of \cite[Theorem A]{Crimmins}, \hyperref[list:P1]{\textbf{(P1)}} allows us to apply \cite[Theorem A]{Crimmins} to a sequence of $\mathbb{P}$-continuous random dynamical systems whilst ensuring the conclusions of the result still hold. Therefore, with support of \cite[Appendix 2.B]{thermoformalism}, under \hyperref[list:P1]{\textbf{(P1)}}, \cite[Theorem A]{Crimmins} asserts that if:
\begin{itemize}
    \item[1.] $(\Omega,\mathcal{F},\mathbb{P},\sigma, \BV(I),\mathcal{L}^0)$ has a hyperbolic Oseledets splitting and is $\|\cdot\|_{L^1(\leb)}$-bounded (see \dfn{def:hyperbolic-splitting});
    \item[2.] the set $\{(\Omega,\mathcal{F},\mathbb{P},\sigma, \BV(I),\mathcal{L}^\varepsilon)\}_{\varepsilon\geq 0}$ satisfies a uniform Lasota-Yorke inequality (see \dfn{def:ULY}); and 
    \item[3.] $\lim_{\varepsilon\to 0} \esssup_{\omega\in\Omega} ||| \mathcal{L}_\omega^\varepsilon - \mathcal{L}^0 |||=0$ where $|||\cdot|||$ denotes the $\BV-L^1$ triple norm (see \dfn{def:triple-norm}),
\end{itemize}
then $\mathcal{L}_\omega^\varepsilon$ has an Oseledets splitting for sufficiently small $\varepsilon$, and the Lyapunov exponents and Oseledets projections of $\mathcal{L}_\omega^\varepsilon$ converge to those of $\mathcal{L}^0$ as $\varepsilon\to 0$. 
\\
\\
\noindent
In this paper, we use the above to guarantee that the Oseledets spaces and Lyapunov exponents of $\mathcal{L}_\omega^\varepsilon$ depend continuously on perturbations. We refer the reader to \cite{Crimmins} for the precise statements of relevant results.
\begin{remark}
Under \hyperref[list:P1]{\textbf{(P1)}}, for all $\varepsilon\geq 0$, if $\omega \mapsto T_\omega^\varepsilon$ has finite range, then the map $\omega\mapsto \mathcal{L}_\omega^\varepsilon$ becomes $\mathbb{P}$-continuous. If $\omega \mapsto T_\omega^\varepsilon$ had an uncountable range for $\mathbb{P}$-a.e. $\omega\in\Omega$, then there is no way to ensure $\omega\mapsto \mathcal{L}_\omega^\varepsilon$ is $\mathbb{P}$-continuous into $L(\BV)$ since the Perron-Frobenius operators are discrete in the $\BV$ operator norm topology, allowing us to force arbitrary sets to be measurable. Consequently, \cite{Horan} points out that any set in $\Omega$ could be made measurable if $\mathcal{L}_\omega^\varepsilon$ is measurable, which is generally untrue.      
\end{remark}
\noindent
\noindent
We aim to prove that \jp{in $L^1$}, the accumulation points of the functions spanning the top and second Oseledets spaces are always the same so that they admit a limit as $\varepsilon\to 0$. We begin by showing that the Lyapunov exponents and Oseledets projections of $\mathcal{L}_\omega^\varepsilon$ depend continuously on perturbations. 
\begin{lemma}
 Let $\{(\Omega,\mathcal{F},\mathbb{P},\sigma,\BV(I),\mathcal{L}^\varepsilon)\}_{\varepsilon\geq 0}$ be a \jp{family} of random dynamical systems of paired metastable maps $T_\omega^\varepsilon:I\to I$ satisfying \hyperref[list:I1]{\textbf{(I1)}}-\hyperref[list:I6]{\textbf{(I6)}} and \hyperref[list:P1]{\textbf{(P1)}}-\hyperref[list:P7]{\textbf{(P7)}}. Then the Lyapunov exponents and Oseledets projections of $\mathcal{L}_\omega^\varepsilon$ converge to those of $\mathcal{L}^0$ as $\varepsilon\to 0$. In particular, for all $\varepsilon>0$ sufficiently small:
 \begin{itemize}
     \item[(a)] the top Oseledets space of $\mathcal{L}_\omega^\varepsilon$ is one-dimensional, and spanned by $\phi_\omega^\varepsilon\in \BV(I)$ with associated Lyapunov exponent $\lambda_1^\varepsilon=0$ of multiplicity one; and
     \item[(b)] the second Oseledets space of $\mathcal{L}_\omega^\varepsilon$ is one-dimensional, and spanned by $\psi_\omega^\varepsilon\in \BV(I)$ with associated Lyapunov exponent $\lambda_2^\varepsilon<0$ of multiplicity one.
 \end{itemize} 
     Further, as $\varepsilon\to 0$, $\lambda_2^\varepsilon\to0$, and the spaces spanned by $\phi_\omega^{{\varepsilon}}$ and $\psi_\omega^{{\varepsilon}}$ converge {uniformly} over $\omega\in\Omega$ to the plane spanned by $\phi_L$ and $\phi_R$ away from a $\mathbb{P}$-null set. Any accumulation point of the sequences $\phi_\omega^{{\varepsilon}}$ and $\psi_\omega^{{\varepsilon}}$, denoted $\phi_\omega^0$ and $\psi_\omega^0$, respectively, lie in $\BV(I)$ and satisfy $\phi_\omega^0,\psi_\omega^0 \in \mathrm{span} \{\phi_L, \phi_R\}$ where this convergence holds in $L^1$, and for $\mathbb{P}$-a.e. $\omega\in\Omega$.   
 \label{lem:cont}
\end{lemma}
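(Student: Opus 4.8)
The plan is to deduce Lemma~\ref{lem:cont} from the three bullet-pointed hypotheses verified just above it (the $\mathbb{P}$-continuous version of \cite[Theorem A]{Crimmins}), together with the structural facts about $\mathcal{L}^0$ coming from \hyperref[list:I1]{\textbf{(I1)}}--\hyperref[list:I6]{\textbf{(I6)}}. First I would check the input hypotheses of Crimmins' theorem: condition~3 is exactly Remark~\ref{rem:p1-imp-trip}, condition~2 is \hyperref[list:P5]{\textbf{(P5)}}, and condition~1 requires showing that $(\Omega,\mathcal{F},\mathbb{P},\sigma,\BV(I),\mathcal{L}^0)$ has a hyperbolic Oseledets splitting up to dimension $2$ and is $\|\cdot\|_{L^1}$-bounded. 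The boundedness is immediate since $\mathcal{L}^0$ is a Perron-Frobenius operator, hence $L^1$-contracting. For the hyperbolic splitting, I would use Remark~\ref{rem:LY0}: the Lasota-Yorke inequality plus quasi-compactness of $\mathcal{L}^0$ on $\BV(I)$, combined with \hyperref[list:I4]{\textbf{(I4)}} (which forces all ACIMs of $T^0$ to be convex combinations of $\mu_L,\mu_R$), shows the eigenvalue $1$ of $\mathcal{L}^0$ has a two-dimensional eigenspace $\mathrm{span}\{\phi_L,\phi_R\}$ and the rest of the spectrum inside a disc of radius $<1$. Since the base dynamics is autonomous for $\varepsilon=0$, the Lyapunov spectrum is just $\log|{\rm spectrum}|$, giving top exponent $\lambda_1=0$ of multiplicity $2$ and all other exponents negative; setting $V_1(\omega)=\mathrm{span}\{\phi_L,\phi_R\}$, $E_1=V_1$, $F_1=$ the rest yields a hyperbolic splitting up to dimension $2$.

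Next I would invoke the $\mathbb{P}$-continuous version of \cite[Theorem A]{Crimmins} as stated in the excerpt: for all sufficiently small $\varepsilon$, $\mathcal{L}^\varepsilon_\omega$ has an Oseledets splitting, and the Lyapunov exponents and Oseledets projections converge to those of $\mathcal{L}^0$ as $\varepsilon\to 0$. Convergence of the projection onto the fast (top) space---which at $\varepsilon=0$ has rank $2$---forces, for $\varepsilon$ small, the sum of multiplicities of exponents near $0$ to be exactly $2$. To split this $2$ into the claimed $1+1$ structure of parts (a) and (b), I would argue: the top exponent of any Perron-Frobenius cocycle with RACIM is $\lambda_1^\varepsilon=0$ (since $\mathcal{L}^\varepsilon_\omega\phi^\varepsilon_\omega=\phi^\varepsilon_{\sigma\omega}$ and $\|\phi^\varepsilon_\omega\|_{L^1}=1$, so $\frac1n\log\|(\mathcal{L}^\varepsilon)^{(n)}_\omega\phi^\varepsilon_\omega\|\to 0$), and by \hyperref[list:P6]{\textbf{(P6)}} the RACIM is unique, hence (via ergodicity of $\sigma$ and a standard argument, e.g. using that two independent random fixed points would give a non-ergodic invariant measure) the top Oseledets space is one-dimensional with multiplicity-one exponent $0$, spanned by $\phi^\varepsilon_\omega$. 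The remaining multiplicity-$1$ piece of the rank-$2$ fast space is then a second Oseledets space $V_2$, one-dimensional, spanned by some $\psi^\varepsilon_\omega\in\BV(I)$, with exponent $\lambda_2^\varepsilon$; projection convergence gives $\lambda_2^\varepsilon\to\lambda_1=0$, and for $\varepsilon>0$ the fact that the perturbed system is genuinely mixing on $I_L\cup I_R$ (the hole makes it ergodic, so $1$ is a simple eigenvalue of the averaged-type picture) forces $\lambda_2^\varepsilon<0$. The convergence of the direct sum $\mathrm{span}\{\phi^\varepsilon_\omega,\psi^\varepsilon_\omega\}$ to $\mathrm{span}\{\phi_L,\phi_R\}$, uniformly over $\omega$ away from a null set, is exactly the convergence of the rank-$2$ Oseledets projection supplied by Crimmins' theorem, using that the $\varepsilon=0$ fast space is $\omega$-independent and equal to $\mathrm{span}\{\phi_L,\phi_R\}$.

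Finally, for the accumulation-point claim: take any sequence $\varepsilon_k\to 0$; since $\{\phi^{\varepsilon_k}_\omega\}$ and $\{\psi^{\varepsilon_k}_\omega\}$ lie (for fixed $\omega$) in the image of the unit ball under operators satisfying a uniform Lasota-Yorke inequality, they are uniformly bounded in $\BV(I)$, hence---by Helly's selection theorem / compactness of the $\BV$ unit ball in $L^1$---precompact in $L^1$; extract a subsequential $L^1$-limit $\phi^0_\omega$ (resp. $\psi^0_\omega$), which by lower semicontinuity of variation lies in $\BV(I)$. Because the space $\mathrm{span}\{\phi^{\varepsilon_k}_\omega,\psi^{\varepsilon_k}_\omega\}$ converges to $\mathrm{span}\{\phi_L,\phi_R\}$ and these are finite-dimensional, the $L^1$-limit of any unit-norm element of the perturbed span must lie in $\mathrm{span}\{\phi_L,\phi_R\}$, giving $\phi^0_\omega,\psi^0_\omega\in\mathrm{span}\{\phi_L,\phi_R\}$ for $\mathbb{P}$-a.e. $\omega$. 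The main obstacle I anticipate is the bookkeeping around reconciling Crimmins' hypotheses with the non-separable space $\BV(I)$ and the $\mathbb{P}$-continuous (rather than strongly measurable) setting---but the excerpt already delegates this to \cite[Appendix 2.B]{thermoformalism} under \hyperref[list:P1]{\textbf{(P1)}}, so the real work is the clean separation of the limiting rank-$2$ fast space into the one-dimensional top space (uniqueness of the RACIM, \hyperref[list:P6]{\textbf{(P6)}}) and the one-dimensional second space with strictly negative exponent for $\varepsilon>0$, which is where the metastability input genuinely enters.
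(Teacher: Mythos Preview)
Your proposal is correct and follows essentially the same approach as the paper: verify the three hypotheses of the $\mathbb{P}$-continuous version of \cite[Theorem A]{Crimmins} via \hyperref[list:P1]{\textbf{(P1)}}, \hyperref[list:P5]{\textbf{(P5)}}, Remark~\ref{rem:LY0} and Remark~\ref{rem:p1-imp-trip}, invoke that theorem, and then use \hyperref[list:I4]{\textbf{(I4)}} and \hyperref[list:P6]{\textbf{(P6)}} to resolve the rank-$2$ limiting fast space into the $1+1$ structure of parts (a) and (b). The only cosmetic difference is that the paper obtains $\lambda_2^\varepsilon<0$ directly from \hyperref[list:P6]{\textbf{(P6)}} (multiplicity of $\lambda_1^\varepsilon=0$ is one, so any other exponent is strictly smaller), which is cleaner than your appeal to mixing, and it identifies the accumulation points as lying in $\mathrm{span}\{\phi_L,\phi_R\}$ by noting they are $\mathcal{L}^0$-invariant (hence in that span by \hyperref[list:I4]{\textbf{(I4)}}) rather than via convergence of the projection---but both routes are valid.
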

\begin{remark}
Due to \hyperref[list:P5]{\textbf{(P5)}}, the operator $\mathcal{L}_\omega^\varepsilon$ satisfies a uniform Lasota-Yorke inequality across both $\omega\in\Omega$ and $\varepsilon>0$. Further, since $\BV$ is compactly embedded in $L^1$,  for $\mathbb{P}$-a.e. $\omega\in\Omega$ one can choose a sequence of values $\tilde{\varepsilon}$ converging to $0$ such that the sequences $\phi_\omega^{\tilde{\varepsilon}}$ and $\psi_\omega^{\tilde{\varepsilon}}$ converge in $L^1$ to some function $\phi_\omega^0$ and $\psi_\omega^0$.
\label{rem:subseq}
\end{remark}
\begin{proof}[Proof of \lem{lem:cont}]
Equip the Banach space $(\BV(I),||\cdot||_{\BV(I)})$ with weak norm $||\cdot||_{L^1(\leb)}$. Due to \hyperref[list:P1]{\textbf{(P1)}}, $\{(\Omega,\mathcal{F},\mathbb{P},\sigma, \BV(I),\mathcal{L}^\varepsilon)\}_{\varepsilon\geq 0}$ is a \jp{family} of $\mathbb{P}$-continuous random dynamical systems of paired metastable maps satisfying the conditions mentioned in \Sec{sec:map+pert}. Therefore, thanks to \rem{rmk:free-splitting}, \rem{rem:LY0}, \rem{rem:p1-imp-trip}, and the above discussion, conditions \hyperref[list:I1]{\textbf{(I1)}}-\hyperref[list:I6]{\textbf{(I6)}} and \hyperref[list:P1]{\textbf{(P1)}}-\hyperref[list:P7]{\textbf{(P7)}} ensure that \cite[Theorem A]{Crimmins} applies and asserts that $\mathcal{L}_\omega^\varepsilon$ has an Oseledets splitting, and further, the Lyapunov exponents and Oseledets projections of $\mathcal{L}_\omega^\varepsilon$ converge to those of $\mathcal{L}^0$ as $\varepsilon\to 0$. \jp{We note that in our setting, due to \rem{rmk:free-splitting}, $(\Omega,\mathcal{F},\mathbb{P},\sigma,\BV(I),\mathcal{L}^0)$ has a hyperbolic Oseledets splitting of index $d=2$.} Observe that due to \hyperref[list:I4]{\textbf{(I4)}}, any accumulation point of the sequences $\phi_\omega^{{\varepsilon}}$ and $\psi_\omega^{{\varepsilon}}$, denoted $\phi_\omega^0$ and $\psi_\omega^0$, are invariant under $\mathcal{L}^0$ and therefore may be expressed as a linear combination of $\phi_L$ and $\phi_R$. Thus, by \cite[Theorem A]{Crimmins}, as $\varepsilon\to 0$, $\lambda_2^\varepsilon\to0$, and the spaces spanned by $\phi_\omega^{{\varepsilon}}$ and $\psi_\omega^{{\varepsilon}}$ converge {uniformly} over $\omega\in\Omega$ to the plane spanned by $\phi_L$ and $\phi_R$ away from a $\mathbb{P}$-null set. Further, any accumulation point of the sequences $\phi_\omega^\varepsilon$ and $\psi_\omega^\varepsilon$ lie in $\BV(I)$, and satisfy $\phi_\omega^0,\psi_\omega^0 \in \mathrm{span} \{\phi_L, \phi_R\}$, where this convergence holds in $L^1$ and for $\mathbb{P}$-a.e. $\omega\in\Omega$. \\
\\ \noindent 
Fix $\varepsilon>0$ sufficiently small. We now address (a) and (b). By \cite[Theorem A]{Crimmins}, $\phi_\omega^\varepsilon,\psi_\omega^\varepsilon\in \BV(I)$. For (a), the fact that the top Oseledets space spanned by $\phi_\omega^\varepsilon$ is one-dimensional with $\lambda_1^\varepsilon=0$ follows from \hyperref[list:P6]{\textbf{(P6)}}. For (b), when $\varepsilon=0$, the top Oseledets space is two-dimensional and is spanned by the initially invariant densities, $\phi_L$ and $\phi_R$. From (a), since the top Oseledets space of $\mathcal{L}_\omega^\varepsilon$ is one-dimensional for $\varepsilon>0$ sufficiently small, and both the top and second Oseledets spaces depend continuously on perturbations, it follows that the Oseledets space spanned by $\psi_\omega^\varepsilon$ is one-dimensional. Finally, since $\lambda_2^\varepsilon$ is continuous in $\varepsilon$, converging to $0$ as $\varepsilon\to 0$, and \hyperref[list:P6]{\textbf{(P6)}} ensures that $\lambda_1^\varepsilon$ has multiplicity one, we may conclude that $\lambda_2^\varepsilon<0$.    
\end{proof}
    \begin{remark}
    If both the top and second Oseledets spaces of $\mathcal{L}_\omega^\varepsilon$ and $\mathcal{L}^0$ are one-dimensional, then \cite[Theorem A]{Crimmins} asserts that the sequences $\phi_\omega^{{\varepsilon}}$ and $\psi_\omega^{{\varepsilon}}$ converge in $L^1$ to $\phi_\omega^0$ and $\psi_\omega^0$, uniformly over $\omega\in\Omega$ away from a $\mathbb{P}$-null set. In our setting, since the limiting top Oseledets space is two-dimensional, we may deduce that the spaces spanned by $\phi_\omega^{{\varepsilon}}$ and $\psi_\omega^{{\varepsilon}}$ converge in $L^1$ to the plane spanned by $\phi_L$ and $\phi_R$ uniformly over $\omega\in\Omega$ away from a $\mathbb{P}$-null set. Without any further information, we cannot conclude that any convergent subsequence of $\phi_\omega^{{\varepsilon}}$ and $\psi_\omega^{{\varepsilon}}$ converge uniformly to the precise linear combination lying in $\mathrm{span} \{\phi_L, \phi_R\}$. The existence of such convergent subsequences is discussed in \rem{rem:subseq}.
\end{remark}

\section{Characterisation of the limiting invariant density}
\label{sec:inv-density}
In this section we identify the limiting invariant density for paired metastable systems. Recall that $\phi_\omega^\varepsilon$ denotes the function spanning the top Oseledets space of $\mathcal{L}_\omega^\varepsilon$. Due to \lem{lem:cont}, for $\mathbb{P}$-a.e. $\omega\in\Omega$, any accumulation of $\phi_\omega^{{\varepsilon}}$ may be expressed as a linear combination of $\phi_L$ and $\phi_R$ as $\varepsilon\to 0$. This section is dedicated to further characterising any accumulation point of $\phi_\omega^{{\varepsilon}}$, say $\phi_\omega^0=\lim_{\tilde{\varepsilon}\to 0}\phi_\omega^{\tilde{\varepsilon}}$, by explicitly computing the weights associated with the initially invariant densities, $\phi_L$ and $\phi_R$.\footnote{By \textit{weights} we mean the coefficients multiplied by $\phi_L$ and $\phi_R$ appearing in the statements of \thrm{thrm:A} and \thrm{thrm:B}.} 
\subsection{The weights}
We begin by computing an auxiliary limit. This is crucial when approximating the limiting invariant density for paired metastable systems. In particular, we will find in the proceeding subsection that this limit describes the weights associated with the linear combination of $\phi_L$ and $\phi_R$ appearing in the limiting invariant density.  \\
\\
\noindent
For $\star\in\{L,R\}$, let $\beta_\star\in L^\infty(\mathbb{P})$ (as in \hyperref[list:P4]{\textbf{(P4)}}). Consider the sequences of functions $(\pi_\omega^\varepsilon)_{\varepsilon>0}$ where
\begin{equation}
    \pi_{\omega}^\varepsilon:=\sum_{n=0}^\infty(\varepsilon\beta_{R,\sigma^{-n-1}\omega}+o_{\varepsilon\to 0}(\varepsilon))\prod_{k=0}^{n-1}(1-\varepsilon(\beta_{L,\sigma^{-k-1}\omega}+\beta_{R,\sigma^{-k-1}\omega})).
    \label{eqn:pweps}
\end{equation}
For a fixed $\varepsilon>0$, if $\int_{\Omega }\beta_{L,\omega}+\beta_{R,\omega}\, d\mathbb{P}(\omega)\neq 0$ then the tail end of \eqn{eqn:pweps} decays exponentially with $n\in\mathbb{N}$ and thus converges up to order $o_{\varepsilon\to 0}(\varepsilon)$. We are interested in determining whether $\lim_{\varepsilon\to 0}\pi_{\omega}^\varepsilon$ exists. To provide a positive answer to this question, we require the following result on moving averages. 
\begin{proposition}[{\cite[Theorem 1]{moving_average}}]
Let $(\Omega,\mathcal{F},\mathbb{P})$ be a probability space. Suppose that $\sigma:\Omega\to \Omega$ is as in \hyperref[list:P1]{\textbf{(P1)}}, and $f:\Omega\to\mathbb{R}$ is an $\mathcal{F}$-measurable function. If $a_n=O_{n\to\infty}(n)$, then for $\mathbb{P}$-a.e. $\omega\in\Omega$ 
$$\lim_{n\to \infty} \frac{1}{n}\sum_{k=a_n}^{a_n+n-1}f_{\sigma^k\omega}=\int_{\Omega} f_\omega \, d\mathbb{P}(\omega).$$
\label{prop:movingavg}
\end{proposition}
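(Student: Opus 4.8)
The plan is to reduce the statement to Birkhoff's pointwise ergodic theorem by an exact telescoping of partial sums. By \hyperref[list:P1]{\textbf{(P1)}} the base $\sigma$ is ergodic and $\mathbb{P}$-preserving, and in the applications of interest $f$ is bounded, hence in $L^1(\mathbb{P})$ (an integrability hypothesis that is implicit in the statement). Birkhoff's theorem therefore supplies a $\sigma$-invariant set $\Omega_0$ with $\mathbb{P}(\Omega_0)=1$ such that, writing $\bar f:=\int_\Omega f_\omega\, d\mathbb{P}(\omega)$ and $B_N(\omega):=\sum_{k=0}^{N-1} f_{\sigma^k\omega}$ (with $B_0:=0$), one has $B_N(\omega)/N\to\bar f$ for every $\omega\in\Omega_0$. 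For nonnegative integers $a_n$ the identity
$$\sum_{k=a_n}^{a_n+n-1} f_{\sigma^k\omega}=B_{a_n+n}(\omega)-B_{a_n}(\omega)$$
holds exactly, so
$$\frac{1}{n}\sum_{k=a_n}^{a_n+n-1} f_{\sigma^k\omega}=\frac{a_n+n}{n}\cdot\frac{B_{a_n+n}(\omega)}{a_n+n}-\frac{a_n}{n}\cdot\frac{B_{a_n}(\omega)}{a_n},$$
the last term understood to be $0$ when $a_n=0$.

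Next I would feed in the hypothesis $a_n=O_{n\to\infty}(n)$ from \dfn{def:O}: there are $M,N_0>0$ with $0\le a_n/n\le M$ whenever $n\ge N_0$, and hence $1\le (a_n+n)/n\le M+1$ for such $n$. Since $a_n+n\ge n\to\infty$, for $\omega\in\Omega_0$ the factor $B_{a_n+n}(\omega)/(a_n+n)$ tends to $\bar f$, and being multiplied by the bounded quantity $(a_n+n)/n$ we get $\frac{a_n+n}{n}\big(\frac{B_{a_n+n}(\omega)}{a_n+n}-\bar f\big)\to 0$. The other term is more delicate because $a_n$ need not tend to infinity; I would control it by a dichotomy according to whether $a_n$ is large or small. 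Fix $\omega\in\Omega_0$ and $\epsilon>0$, and choose $N_1=N_1(\omega,\epsilon)$ so that $|B_N(\omega)/N-\bar f|<\epsilon$ for all $N\ge N_1$. When $n\ge N_0$ and $a_n\ge N_1$, the term $\frac{a_n}{n}\big(\frac{B_{a_n}(\omega)}{a_n}-\bar f\big)$ has modulus at most $M\epsilon$; when $0\le a_n<N_1$ it equals $\frac{1}{n}\big(B_{a_n}(\omega)-a_n\bar f\big)$, whose modulus is at most $\frac{1}{n}\big(\max_{0\le m\le N_1}|B_m(\omega)|+N_1|\bar f|\big)$, which tends to $0$. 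Hence $\limsup_{n\to\infty}$ of this term is at most $M\epsilon$, and letting $\epsilon\downarrow 0$ shows it tends to $0$.

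Putting the pieces together, for every $\omega\in\Omega_0$ the two bounded factors contribute $\frac{a_n+n}{n}\bar f-\frac{a_n}{n}\bar f=\bar f$ exactly, up to an error that vanishes, so $\frac{1}{n}\sum_{k=a_n}^{a_n+n-1} f_{\sigma^k\omega}\to\bar f$, as asserted. If one wishes to allow negative $a_n$ (meaningful since \hyperref[list:P1]{\textbf{(P1)}} makes $\sigma$ invertible), the same scheme works after additionally applying Birkhoff's theorem to $\sigma^{-1}$, again ergodic and $\mathbb{P}$-preserving, to handle the quotients $B_{a_n}(\omega)/a_n$ and $B_{a_n+n}(\omega)/(a_n+n)$ along the negative integers. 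I expect the only genuinely non-routine point to be the oscillatory behaviour of $a_n$ in the boundary term just discussed, which is exactly what the dichotomy above is designed to absorb; the remainder is bookkeeping on top of the ergodic theorem.
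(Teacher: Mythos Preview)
The paper does not prove this proposition; it is quoted verbatim as \cite[Theorem 1]{moving_average} and used as a black box in the proof of \lem{lem:weights}. Your argument is correct and supplies a self-contained elementary proof via Birkhoff's theorem and the telescoping identity $B_{a_n+n}-B_{a_n}$, including the careful dichotomy on whether $a_n$ is large or small to handle the boundary term when $a_n$ oscillates. Your remark that integrability of $f$ is implicit is apt: in every application within the paper the relevant function lies in $L^\infty(\mathbb{P})$ (cf.\ \hyperref[list:P4]{\textbf{(P4)}}), so this is harmless. Since the paper offers no proof to compare against, there is nothing further to contrast.
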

\noindent
\jp{\begin{remark}
    Prior to stating the main result of this section, we motivate why one may expect \eqn{eqn:pweps} to play a role in computing the weights for the limiting invariant density. Indeed, for $\star\in\{L,R\}$, let $\beta_{\star}\in  L^\infty(\mathbb{P})$ (as in \hyperref[list:P4]{\textbf{(P4)}}). Consider the 2-state Markov chains in random environments driven by $\sigma:\Omega\to \Omega$ (as in \hyperref[list:P1]{\textbf{(P1)}}), with transition matrices $(P_\omega^\varepsilon)_{\omega\in\Omega}$ where
$$P_\omega^\varepsilon:=\begin{pmatrix} 1-\varepsilon\beta_{L,\omega} & \varepsilon\beta_{R,\omega} \\ \varepsilon\beta_{L,\omega} &  1-\varepsilon\beta_{R,\omega} \end{pmatrix}.$$
\jp{For $n\in\mathbb{N}$ and $\omega\in\Omega$ let $P_\omega^{\varepsilon\,(n)}:= P_{\sigma^{n-1}\omega}^\varepsilon\cdots P_{\sigma\omega}^\varepsilon P_\omega^\varepsilon$}. An inductive argument shows that
\fontsize{10}{10}
\begin{equation}
        P_{\sigma^{-n}\omega}^{\varepsilon\, (n)}= \begin{pmatrix}1- \varepsilon\sum_{k=0}^{n-1}\beta_{L,\sigma^{-k-1}\omega}\prod_{i=0}^{k-1}(1-\varepsilon\gamma_{\sigma^{-i-1}\omega}) & \varepsilon\sum_{k=0}^{n-1}\beta_{R,\sigma^{-k-1}\omega}\prod_{i=0}^{k-1}(1-\varepsilon\gamma_{\sigma^{-i-1}\omega}) \\ \varepsilon\sum_{k=0}^{n-1}\beta_{L,\sigma^{-k-1}\omega}\prod_{i=0}^{k-1}(1-\varepsilon\gamma_{\sigma^{-i-1}\omega}) & 1- \varepsilon\sum_{k=0}^{n-1}\beta_{R,\sigma^{-k-1}\omega}\prod_{i=0}^{k-1}(1-\varepsilon\gamma_{\sigma^{-i-1}\omega})\end{pmatrix}
        \label{eqn:Pn}
    \end{equation}\normalsize
 where $\gamma_\omega:= \beta_{L,\omega} + \beta_{R,\omega}$. Here, we highlight the similarities between the entries of the matrix $\lim_{n\to \infty } P_{\sigma^{-n}\omega}^{\varepsilon\, (n)}$ and \eqn{eqn:pweps}. In turn, and as we will see in \rem{rem:markov-connection}, computing $\lim_{\varepsilon\to 0} \pi_\omega^\varepsilon$ yields the limiting invariant measure of the matrix cocycle $(P_\omega^\varepsilon)_{\omega\in\Omega}$ as $\varepsilon\to 0$ as well. \label{rem:lim-motiv}
\end{remark}}
\noindent
For the sake of presentation, we focus on the limiting behaviour of $\pi_{\sigma \omega}^\varepsilon$ as opposed to $\pi_{\omega}^\varepsilon$ (defined in \eqn{eqn:pweps}). Further, when the upper and lower indices of products and summations take non-integer values, we interpret them as the integer part.
\begin{lemma}
    Let $(\Omega,\mathcal{F},\mathbb{P})$ be a probability space. For $\star\in\{L,R\}$, let $\beta_\star\in L^\infty(\mathbb{P})$ satisfy $\int_{\Omega} \beta_{L,\omega}+\beta_{R,\omega}\, d\mathbb{P}(\omega)\neq 0$. If $\sigma:\Omega\to \Omega$ is as in \hyperref[list:P1]{\textbf{(P1)}}, then for $\mathbb{P}$-a.e. $\omega\in\Omega$
    \begin{align}
              \lim_{\varepsilon\to 0}\pi_{\omega}^\varepsilon&=\frac{\int_{\Omega} \beta_{R,\omega}\, d\mathbb{P}(\omega)}{\int_{\Omega}\beta_{L,\omega}+\beta_{R,\omega} \, d\mathbb{P}(\omega)}.
              \label{eqn:pwlim}
    \end{align}
    \label{lem:weights}
\end{lemma}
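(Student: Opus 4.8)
The plan is to rewrite $\pi_\omega^\varepsilon$ as a weighted average of the sequence $(\beta_{R,\sigma^{-n-1}\omega})_{n\geq 0}$ against the geometric-type weights
$$w_n^\varepsilon := \Bigl(\prod_{k=0}^{n-1}(1-\varepsilon(\beta_{L,\sigma^{-k-1}\omega}+\beta_{R,\sigma^{-k-1}\omega}))\Bigr),$$
and then to recognise that, as $\varepsilon\to 0$, this average concentrates on a window of indices $n$ of order $1/\varepsilon$, so that \prop{prop:movingavg} (with $a_n$ of order $n$) can be invoked to replace the Birkhoff-type sums appearing in both numerator and denominator by the corresponding spatial integrals. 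Concretely, first I would absorb the $o_{\varepsilon\to 0}(\varepsilon)$ error terms: since $\beta_\star\in L^\infty(\mathbb{P})$ and $\int(\beta_L+\beta_R)\,d\mathbb{P}\neq 0$, the tail of \eqn{eqn:pweps} past $n\sim K/\varepsilon$ (for $K$ large) is $O(\varepsilon)$ uniformly, and the accumulated effect of the error terms on the retained part is also $o_{\varepsilon\to 0}(1)$; so it suffices to analyse the ``clean'' sum $\sum_{n=0}^\infty \varepsilon\beta_{R,\sigma^{-n-1}\omega}\prod_{k=0}^{n-1}(1-\varepsilon(\beta_{L,\sigma^{-k-1}\omega}+\beta_{R,\sigma^{-k-1}\omega}))$.

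\textbf{Main steps.} Second, I would take logarithms of the product: $\log w_n^\varepsilon = \sum_{k=0}^{n-1}\log(1-\varepsilon(\beta_{L,\sigma^{-k-1}\omega}+\beta_{R,\sigma^{-k-1}\omega})) = -\varepsilon\sum_{k=0}^{n-1}(\beta_{L,\sigma^{-k-1}\omega}+\beta_{R,\sigma^{-k-1}\omega}) + O(n\varepsilon^2)$, using $\log(1-x)=-x+O(x^2)$ valid for $\varepsilon$ small since the $\beta_\star$ are bounded. On the relevant range $n\lesssim 1/\varepsilon$ we have $n\varepsilon^2 = O(\varepsilon)\to 0$, so $w_n^\varepsilon = \exp\bigl(-\varepsilon\sum_{k=0}^{n-1}(\beta_{L,\sigma^{-k-1}\omega}+\beta_{R,\sigma^{-k-1}\omega})\bigr)(1+o(1))$. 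Third, writing $\bar\beta:=\int_\Omega(\beta_{L,\omega}+\beta_{R,\omega})\,d\mathbb{P}(\omega)$ (which we may assume $>0$ without loss of generality, negating $\varepsilon$ otherwise, or handling the sign symmetrically), I would apply \prop{prop:movingavg} in the form $\sum_{k=0}^{n-1}(\beta_{L,\sigma^{-k-1}\omega}+\beta_{R,\sigma^{-k-1}\omega}) = n\bar\beta + o(n)$ for $\mathbb{P}$-a.e.\ $\omega$ (note the indices $\sigma^{-k-1}\omega$ are a backward orbit, so one applies the ergodic averaging result to $\sigma^{-1}$, which is also ergodic and measure-preserving, with $a_k=-1$). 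Hence $w_n^\varepsilon \approx e^{-\varepsilon n\bar\beta}$. Fourth, I would substitute $t = \varepsilon n$ and recognise $\varepsilon\sum_{n} \beta_{R,\sigma^{-n-1}\omega}\, e^{-\varepsilon n\bar\beta}$ as a Riemann-sum approximation to $\int_0^\infty \bigl(\text{local average of }\beta_R\text{ near }t/\varepsilon\bigr)e^{-t\bar\beta}\,dt$; invoking \prop{prop:movingavg} once more (now for $\beta_R$, over windows of length $\sim$ fixed fraction of $1/\varepsilon$ at position $a_n\sim t/\varepsilon$, which is $O(\text{window length})$), the local averages converge to $\int_\Omega \beta_{R,\omega}\,d\mathbb{P}(\omega)=:\bar\beta_R$. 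This yields $\lim_{\varepsilon\to 0}\pi_\omega^\varepsilon = \bar\beta_R \int_0^\infty e^{-t\bar\beta}\,dt = \bar\beta_R/\bar\beta$, which is exactly \eqn{eqn:pwlim}.

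\textbf{Main obstacle.} The delicate point is making the double passage to the limit rigorous: the weights $w_n^\varepsilon$ depend on $\varepsilon$ and on $\omega$ simultaneously, and \prop{prop:movingavg} only gives convergence of averages $\frac{1}{n}\sum_{k=a_n}^{a_n+n-1}f_{\sigma^k\omega}$ for a sequence of \emph{integer} window lengths $n\to\infty$ with $a_n=O(n)$, not uniformly in the starting point. So I cannot simply ``plug in''; instead I would discretise the $t$-axis into finitely many blocks $[j\eta,(j+1)\eta)$, on each block bound $w_n^\varepsilon$ above and below by constants $e^{-(j\pm 1)\eta\bar\beta}(1+o(1))$ using the ergodic estimate for the exponent, bound the block-sum of $\varepsilon\beta_{R,\sigma^{-n-1}\omega}$ using \prop{prop:movingavg} applied to that block (whose length $\sim \eta/\varepsilon\to\infty$ and whose offset $\sim j\eta/\varepsilon$ is $O(\text{length})$ for fixed $j$), sum the block contributions, and finally let $\eta\to 0$ to recover the integral $\int_0^\infty e^{-t\bar\beta}\,dt$. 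A uniform tail bound (the block contributions from $j\gtrsim 1/\eta$ are controlled by $\|\beta_\star\|_\infty e^{-cj\eta}$ summably) ensures we may truncate before taking $\eta\to 0$. I would also need to handle the accumulated $(1+o(1))$ multiplicative errors from the logarithm expansion uniformly over the retained blocks, which is fine since there are $O(1/(\eta))$ of them each contributing a factor $1+o_{\varepsilon\to 0}(1)$ with the $o$ depending only on $\eta$, not on $j$. Once this bookkeeping is in place, the result follows; I would then remark that the sign assumption $\bar\beta>0$ is harmless, and note the convergence is in fact for $\mathbb{P}$-a.e.\ $\omega$ as claimed.
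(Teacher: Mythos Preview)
Your proposal is correct and follows essentially the same route as the paper: the paper also expands the logarithm of the product via Birkhoff, discretises the sum into blocks of width $\delta/\varepsilon$ (your $\eta/\varepsilon$), bounds the weight on each block above and below by the appropriate exponential, applies \prop{prop:movingavg} to the $\beta_R$-sum on each block, and then takes the block width to zero and the truncation level to infinity. The only organisational difference is that the paper inserts an intermediate cutoff at $n=\sqrt{t/\varepsilon}$ (handling $n<\sqrt{t/\varepsilon}$ by the crude bound $w_n^\varepsilon\leq 1$) to make the Birkhoff error in the exponent uniform before block-discretising, and the tail past $n=t/\varepsilon$ is shown to be $O(e^{-ct})$ rather than $O(\varepsilon)$ as you wrote, but neither point affects the substance of your argument.
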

\begin{proof}
    The argument is divided into several steps. \jp{In \Step{step:tonep}, we illustrate the behaviour of the products appearing in \eqn{eqn:pweps}; this result is used repeatedly throughout the proof. In \Step{step:B} and \Step{stepp:B}, we establish a bound for the tail of \eqn{eqn:pweps} when $\varepsilon > 0$ is small. This is juxtaposed with \Step{step:C}, where we provide a bound for the initial terms of \eqn{eqn:pweps}. In \Step{step:D} and \Step{step:E}, we derive sharp estimates for the remaining terms of the sum. Taken together, these results lead to the proof of \eqn{eqn:pwlim} in \Step{step:LU} and \Step{step:ULlims}.} \\
    \\
    \noindent We compute $\lim_{\varepsilon\to 0}\pi_{\sigma\omega}^\varepsilon$ from which we can deduce \eqn{eqn:pwlim}. In what follows we recall that $\gamma_\omega= \beta_{L,\omega}+\beta_{R,\omega}$.   
\jp{\begin{step}
Fix $p>0$. For $\varepsilon>0$ sufficiently small, $n\in\mathbb{N}$, $t>0$ and $\mathbb{P}$-a.e. $\omega\in\Omega$
\begin{equation}
    \prod_{k=0}^{n+\left(\frac{t}{\varepsilon}\right)^p-1}(1-\varepsilon \gamma_{\sigma^{-k}\omega})=e^{(n\varepsilon+t^p\varepsilon^{1-p})\left(-\int_{\Omega}\gamma_s \, d\mathbb{P}(s)+o_{\omega,\varepsilon\to 0}(1) \right)}.
    \label{eqn:tonep}
\end{equation}
\label{step:tonep}    
\end{step}
\begin{proof}
Recall that $\gamma\in L^\infty(\mathbb{P})$. Take $\varepsilon>0$ sufficiently small such that $\varepsilon ||\gamma||_{L^\infty(\mathbb{P})}<1$. Then, for all $p>0$, $n\in\mathbb{N}$ and $t>0$
\begin{align*}
    \frac{1}{n\varepsilon+t^p\varepsilon^{1-p}}\log \left(\prod_{k=0}^{n+\left(\frac{t}{\varepsilon}\right)^p-1}(1-\varepsilon \gamma_{\sigma^{-k}\omega}) \right)&= \frac{1}{n\varepsilon+t^p\varepsilon^{1-p}}\sum_{k=0}^{n+\left(\frac{t}{\varepsilon}\right)^p-1}\log(1-\varepsilon \gamma_{\sigma^{-k}\omega})\\
    &=\frac{1}{\varepsilon}\left(n+\left(\frac{t}{\varepsilon}\right)^p\right)^{-1}\sum_{k=0}^{n+\left(\frac{t}{\varepsilon}\right)^p-1}\sum_{j=1}^\infty -\frac{(\varepsilon\gamma_{\sigma^{-k}\omega})^{j}}{j}\\
    &=\left(n+\left(\frac{t}{\varepsilon}\right)^p\right)^{-1}\sum_{k=0}^{n+\left(\frac{t}{\varepsilon}\right)^p-1}\sum_{j=1}^\infty -\frac{\varepsilon^{j-1}\gamma_{\sigma^{-k}\omega}^{j}}{j}\\
    &=\left(n+\left(\frac{t}{\varepsilon}\right)^p\right)^{-1}\sum_{k=0}^{n+\left(\frac{t}{\varepsilon}\right)^p-1} -\gamma_{\sigma^{-k}\omega}+O_{\varepsilon\to 0}(\varepsilon).
\end{align*}
Since $\gamma \in  L^\infty(\mathbb{P})$, by Birkhoff's ergodic theorem, for $\mathbb{P}$-a.e. $\omega\in\Omega$  
\begin{align*}
    \phantom{\iff}& \frac{1}{n\varepsilon+t^p\varepsilon^{1-p}}\log \left(\prod_{k=0}^{n+\left(\frac{t}{\varepsilon}\right)^p-1}(1-\varepsilon \gamma_{\sigma^{-k}\omega}) \right) = -\int_{\Omega} \gamma_s \, d\mathbb{P}(s) +o_{\omega,\varepsilon\to 0}(1). 
    \end{align*}
     Hence for $\mathbb{P}$-a.e. $\omega\in\Omega$  
    $$\prod_{k=0}^{n+\left(\frac{t}{\varepsilon}\right)^p-1}(1-\varepsilon \gamma_{\sigma^{-k}\omega})=e^{(n\varepsilon+t^p\varepsilon^{1-p})\left(-\int_{\Omega}\gamma_s \, d\mathbb{P}(s)+o_{\omega, \varepsilon\to 0}(1) \right)}.$$
\end{proof} }   
\noindent
\Step{step:tonep} describes how the products behave for various values of $p>0$, including the rate of convergence. This proves to be useful as illustrated in the following steps.
\begin{step}
 For all $\varepsilon>0$ sufficiently small, $t>0$ and $\mathbb{P}$-a.e. $\omega\in\Omega$, \jp{there exists $M>0$ such that} 
    \begin{align*}     B_{t,\omega}^\varepsilon&:=\varepsilon\sum_{n=\frac{t}{\varepsilon}}^\infty (\beta_{R,\sigma^{-n}\omega}+o_{\varepsilon\to 0}(1))\prod_{k=0}^{n-1}(1-\varepsilon\gamma_{\sigma^{-k}\omega})\\
        &\leq (M+o_{\varepsilon\to 0}(1))\frac{\varepsilon e^{t\left(-\int_\Omega\gamma_s \, d\mathbb{P}(s)+o_{\omega,\varepsilon\to 0}(1)\right)}}{1-e^{\varepsilon\left(-\int_{\Omega} \gamma_s \, d\mathbb{P}(s)+o_{\omega,\varepsilon\to 0}(1)\right)}}.
    \end{align*}
    
        \label{step:B}   
\end{step}
\begin{proof}
 Since $\beta_R\in L^\infty(\mathbb{P})$, there exists an $M>0$ such that $\beta_{R,\omega}\leq M$ for $\mathbb{P}$-a.e. $\omega\in\Omega$. Thus
        \begin{align*}
            B_{t,\omega}^\varepsilon&\leq (M+o_{\varepsilon\to 0}(1))\varepsilon\sum_{n=0}^\infty \prod_{k=0}^{n+\frac{t}{\varepsilon}-1}(1-\varepsilon\gamma_{\sigma^{-k}\omega})\\
            &=(M+o_{\varepsilon\to 0}(1))\varepsilon\sum_{n=0}^\infty e^{(n\varepsilon+t)\left(-\int_{\Omega}\gamma_s \, d\mathbb{P}(s)+o_{\omega,\varepsilon\to 0}(1) \right)},
        \end{align*}
        where we have utilised \Step{step:tonep} in the last equality \jp{with $p=1$}. \jp{Finally,} we compute the resulting geometric series. We emphasise that this series converges for all $\varepsilon>0$ chosen small such that the error in the exponent can be controlled. Therefore
        \begin{align*}
            B_{t,\omega}^\varepsilon&\leq(M+o_{\varepsilon\to 0}(1))\varepsilon e^{t\left(-\int_{\Omega}\gamma_s \, d\mathbb{P}(s)+o_{\omega,\varepsilon\to 0}(1)\right)} \sum_{n=0}^\infty e^{n\varepsilon\left(-\int_{\Omega}\gamma_s \, d\mathbb{P}(s)+o_{\omega,\varepsilon\to 0}(1)\right)} \\
            &=(M+o_{\varepsilon\to 0}(1))\frac{\varepsilon e^{t\left(-\int_\Omega \gamma_s \, d\mathbb{P}(s)+o_{\omega,\varepsilon\to 0}(1)\right)}}{1-e^{\varepsilon\left(-\int_{\Omega} \gamma_s \, d\mathbb{P}(s)+o_{\omega,\varepsilon\to 0}(1)\right)}}.
        \end{align*}
    
\end{proof}

\begin{step}
For any $t>0$ and $\mathbb{P}$-a.e. $\omega\in\Omega$ 
    $$\limsup_{\varepsilon\to 0}B_{t,\omega}^\varepsilon\leq \frac{2M}{\int_\Omega \gamma_s \, d\mathbb{P}(s)e^{\frac{t}{2}\int_\Omega \gamma_s \, d\mathbb{P}(s)}}.$$
    \label{stepp:B}    
\end{step}
\begin{proof}
Recall that from \Step{step:B} we found that for $\mathbb{P}$-a.e. $\omega\in\Omega$ 
    $$B_{t,\omega}^\varepsilon\leq (M+o_{\varepsilon\to 0}(1))\frac{\varepsilon e^{t\left(-\int_\Omega \gamma_s \, d\mathbb{P}(s)+o_{\omega,\varepsilon\to 0}(1)\right)}}{1-e^{\varepsilon\left(-\int_{\Omega} \gamma_s \, d\mathbb{P}(s)+o_{\omega,\varepsilon\to 0}(1)\right)}}.$$
    For each $\omega\in\Omega$ and $\varepsilon>0$ sufficiently small, $-\int_\Omega \gamma_s \, d\mathbb{P}(s)+o_{\omega,\varepsilon\to 0}(1)$ is bounded above by $-\frac{1}{2}\int_\Omega \gamma_s \, d\mathbb{P}(s)$. Therefore, we find that for \jp{$\mathbb{P}$-a.e. $\omega\in\Omega$}
        \begin{align*}
        \limsup_{\varepsilon\to 0}B_{t,\omega}^\varepsilon&\leq \lim_{\varepsilon\to 0}(M+o_{\varepsilon\to 0}(1))\frac{\varepsilon e^{-\frac{t}{2}\int_\Omega \gamma_s \, d\mathbb{P}(s)}}{1-e^{-\frac{\varepsilon}{2}\int_{\Omega} \gamma_s \, d\mathbb{P}(s)}}   \\
        &=\frac{2M}{\int_\Omega \gamma_s \, d\mathbb{P}(s)e^{\frac{t}{2}\int_\Omega \gamma_s \, d\mathbb{P}(s)}}.
        \end{align*}        
\end{proof}
 \begin{step}
     For all $\varepsilon>0$ sufficiently small, $t>0$ and $\mathbb{P}$-a.e. $\omega\in\Omega$ 
 \begin{align*}  C_{t,\omega}^\varepsilon&:=\varepsilon\sum_{n=0}^{\sqrt{\frac{t}{\varepsilon}}-1} (\beta_{R,\sigma^{-n}\omega}+o_{\varepsilon\to 0}(1))\prod_{k=0}^{n-1}(1-\varepsilon\gamma_{\sigma^{-k}\omega})\\
        &\leq {\varepsilon}\sum_{n=0}^{\sqrt{\frac{t}{\varepsilon}}-1} (\beta_{R,\sigma^{-n}\omega}+o_{\varepsilon\to 0}(1)).
 \end{align*}
 \label{step:C}
 \end{step}
\begin{proof}
This follows immediately from the fact that for $\varepsilon>0$ sufficiently small we have for each $k=0,\dots,n-1$ and $\mathbb{P}$-a.e. $\omega\in\Omega$, $0\leq(1-\varepsilon\gamma_{\sigma^{-k}\omega})\leq 1$. Thus for $\mathbb{P}$-a.e. $\omega\in\Omega$
\begin{align*}
C_{t,\omega}^\varepsilon&\leq\varepsilon\sum_{n=0}^{\sqrt{\frac{t}{\varepsilon}}-1} (\beta_{R,\sigma^{-n}\omega}+o_{\varepsilon\to 0}(1)).
\end{align*}
\end{proof}

\begin{step}
For all $\varepsilon>0$ sufficiently small, $t>0$ and $\mathbb{P}$-a.e. $\omega\in\Omega$
 \begin{align*}  D_{t,\omega}^\varepsilon&:=\varepsilon\sum_{n=\sqrt{\frac{t}{\varepsilon}}}^{{\frac{t}{\varepsilon}}-1} (\beta_{R,\sigma^{-n}\omega}+o_{\varepsilon\to 0}(1))\prod_{k=0}^{n-1}(1-\varepsilon\gamma_{\sigma^{-k}\omega})\\
        &\leq {{\varepsilon}{}}e^{\sqrt{t\varepsilon}\left(-\int_\Omega \gamma_s \, d\mathbb{P}(s) 
+o_{\omega,\varepsilon\to 0}(1) \right)}\sum_{n=0}^{{\frac{t}{\varepsilon}}-1} \Big((\beta_{R,\sigma^{-n-\sqrt{\frac{t}{\varepsilon}}}\omega}+o_{\varepsilon\to 0}(1))\\
&\quad \times e^{n\varepsilon\left(-\int_\Omega \gamma_s \, d\mathbb{P}(s)+o_{\omega,\varepsilon\to 0}(1)\right)}\Big)\\
&=:e^{\sqrt{t\varepsilon}\left(-\int_\Omega \gamma_s \, d\mathbb{P}(s) 
+o_{\omega,\varepsilon\to 0}(1) \right)} E_{t,\omega}^\varepsilon.
\end{align*}
\label{step:D}    
\end{step}
\begin{proof}
This follows by reindexing the partial sum and applying \Step{step:tonep} \jp{with $p=\frac{1}{2}$}. Indeed
\begin{align}
    D_{t,\omega}^\varepsilon&= \varepsilon\sum_{n=0}^{{\frac{t}{\varepsilon}}-\sqrt{\frac{t}{\varepsilon}}-1} (\beta_{R,\sigma^{-n-\sqrt{\frac{t}{\varepsilon}}}\omega}+o_{\varepsilon\to 0}(1))\prod_{k=0}^{n+\sqrt{\frac{t}{\varepsilon}}-1}(1-\varepsilon\gamma_{\sigma^{-k}\omega}) \nonumber \\
    &\leq \varepsilon\sum_{n=0}^{{\frac{t}{\varepsilon}}-1} (\beta_{R,\sigma^{-n-\sqrt{\frac{t}{\varepsilon}}}\omega}+o_{\varepsilon\to 0}(1))\prod_{k=0}^{n+\sqrt{\frac{t}{\varepsilon}}-1}(1-\varepsilon\gamma_{\sigma^{-k}\omega}) \label{eqn:Dup} \\ 
    &= \varepsilon\sum_{n=0}^{{\frac{t}{\varepsilon}}-1}(\beta_{R,\sigma^{-n-\sqrt{\frac{t}{\varepsilon}}}\omega}+o_{\varepsilon\to 0}(1))e^{(n\varepsilon+\sqrt{t\varepsilon})\left(-\int_\Omega \gamma_s \, d\mathbb{P}(s)+o_{\omega,\varepsilon\to 0}(1)\right)} \nonumber \\
    &={{\varepsilon}{}}e^{\sqrt{t\varepsilon}\left(-\int_\Omega \gamma_s \, d\mathbb{P}(s) 
+o_{\omega,\varepsilon\to 0}(1) \right)}\sum_{n=0}^{{\frac{t}{\varepsilon}}-1} \Big((\beta_{R,\sigma^{-n-\sqrt{\frac{t}{\varepsilon}}}\omega}+o_{\varepsilon\to 0}(1))\nonumber \\
&\quad \times e^{n\varepsilon\left(-\int_\Omega \gamma_s \, d\mathbb{P}(s)+o_{\omega,\varepsilon\to 0}(1)\right)}\Big).\nonumber
\end{align}
Letting 
\begin{equation}
    E_{t,\omega}^\varepsilon:=\varepsilon\sum_{n=0}^{{\frac{t}{\varepsilon}}-1} (\beta_{R,\sigma^{-n-\sqrt{\frac{t}{\varepsilon}}}\omega}+o_{\varepsilon\to 0}(1))e^{n\varepsilon\left(-\int_\Omega \gamma_s \, d\mathbb{P}(s)+o_{\omega,\varepsilon\to 0}(1)\right)}
    \label{eqn:E}
\end{equation}
we obtain our result.     
\end{proof}
\noindent
Finding sharp estimates for $E_{t,\omega}^\varepsilon$ is crucial in determining the limiting behaviour of $\pi_{\sigma\omega}^\varepsilon$ when $\varepsilon$ is small. Rather than studying the entire sum \eqn{eqn:E}, we fix some $\delta>0$ and split \eqn{eqn:E} into $t/\delta$ sums, each containing $\delta/\varepsilon$ terms, as illustrated in \fig{fig:sumsplit}. 
\begin{figure}[ht]
  \centering
  \begin{tikzpicture}[scale=0.85, transform shape]
    \foreach \x/\y in {0/0, 2/\frac{\delta}{\varepsilon}-1, 4/\cdots, 6/\frac{1}{\varepsilon}-\frac{\delta}{\varepsilon},8/\frac{1}{\varepsilon}-1, 10/\cdots, 12/\cdots, 14/\frac{t}{\varepsilon}-1}
      \draw[ultra thick] (\x,0.25) -- (\x,-0.25) node[below]{$\y$};
    \draw[ultra thick] (0,0)--(14,0);
    
    \draw[thick,decorate,decoration={brace,amplitude=10pt}] (0,0.35) -- (2,0.35)
      node[midway,above,yshift=9pt] {$\frac{\delta}{\varepsilon}$ terms};
      \draw[thick,decorate,decoration={brace,amplitude=10pt}] (6,0.35) -- (8,0.35)
      node[midway,above,yshift=9pt] {$\frac{\delta}{\varepsilon}$ terms};
  \end{tikzpicture}
    \caption{Splitting of $E_{t,\omega}^\varepsilon$}
    \label{fig:sumsplit}
\end{figure}
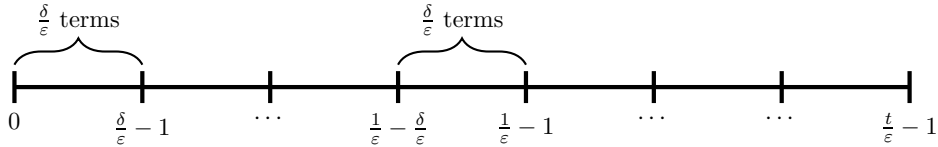
\noindent
\\
In other words, we consider the splitting of the sum in the following way: 
$$\sum_{n=0}^{\frac{t}{\varepsilon}-1}\bullet = \sum_{n=0}^{\frac{\delta}{\varepsilon}-1}\bullet+\sum_{n=\frac{\delta}{\varepsilon}}^{\frac{2\delta}{\varepsilon}-1}\bullet+\cdots +\sum_{n=\frac{t-\delta}{\varepsilon}}^{\frac{t}{\varepsilon}-1}\bullet.$$
With this, we obtain bounds on $E_{t,\omega}^{\varepsilon}$ (which may depend on $\delta$) as follows.

\begin{step}
Fix $t,\delta>0$ and let $\varepsilon>0$ be sufficiently small, then for $\mathbb{P}$-a.e. $\omega\in \Omega$
    \begin{align}
    E_{t,\omega}^{\varepsilon} &\leq \varepsilon \sum_{j=0}^{\frac{t}{\delta}-1}e^{j\delta\left(-\int_\Omega \gamma_s \, d\mathbb{P}(s) + o_{\omega,\varepsilon\to 0}(1)\right)}\sum_{n=\frac{j\delta}{\varepsilon}}^{\frac{(j+1)\delta}{\varepsilon}-1}(\beta_{R,\sigma^{-n-\sqrt{\frac{t}{\varepsilon}}}\omega}+o_{\omega,\varepsilon\to 0}(1))\\
    E_{t,\omega}^{\varepsilon} &\geq \varepsilon e^{(\delta-\varepsilon)\left(-\int_\Omega \gamma_s \, d\mathbb{P}(s) + o_{\omega,\varepsilon\to 0}(1)\right)} \sum_{j=0}^{\frac{t}{\delta}-1}\Big(e^{j\delta\left(-\int_\Omega \gamma_s \, d\mathbb{P}(s) + o_{\omega,\varepsilon\to 0}(1)\right)}\nonumber \\
    &\quad  \times \sum_{n=\frac{j\delta}{\varepsilon}}^{\frac{(j+1)\delta}{\varepsilon}-1}(\beta_{R,\sigma^{-n-\sqrt{\frac{t}{\varepsilon}}}\omega}+o_{\omega,\varepsilon\to 0}(1))\Big),
    \end{align}
     where $E_{t,\omega}^\varepsilon$ is as in \eqn{eqn:E}. 
    \label{step:E}    
\end{step}
\begin{proof}
 Observe that for all $\alpha,\beta\in\mathbb{R}^+$, if $\alpha\leq n \leq \beta$ then for sufficiently small $\varepsilon>0$, 
    \begin{equation}
        e^{\beta \varepsilon\left(-\int_\Omega \gamma_s \, d\mathbb{P}(s) + o_{\omega,\varepsilon\to 0}(1)\right)}\leq e^{n\varepsilon\left(-\int_\Omega \gamma_s \, d\mathbb{P}(s) + o_{\omega,\varepsilon\to 0}(1)\right)}\leq e^{\alpha \varepsilon\left(-\int_\Omega \gamma_s \, d\mathbb{P}(s) + o_{\omega,\varepsilon\to 0}(1)\right)}.
        \label{eqn:expest}
    \end{equation}
    The result follows by bounding each sum containing $\delta/\varepsilon$ terms through this estimate. Indeed, following the splitting of the series illustrated in \fig{fig:sumsplit}
    \begin{align*}
        E_{t,\omega}^\varepsilon&=\varepsilon \sum_{j=0}^{\frac{t}{\delta}-1}\sum_{n=\frac{j\delta}{\varepsilon}}^{\frac{(j+1)\delta}{\varepsilon}-1}(\beta_{R,\sigma^{-n-\sqrt{\frac{t}{\varepsilon}}}\omega}+o_{\varepsilon\to 0}(1))e^{n\varepsilon\left(-\int_\Omega \gamma_s \, d\mathbb{P}(s) + o_{\omega,\varepsilon\to 0}(1)\right)}.
    \end{align*}
    Thus, through \eqn{eqn:expest} we have 
    \begin{align*}
        E_{t,\omega}^\varepsilon&\leq \varepsilon \sum_{j=0}^{\frac{t}{\delta}-1}\sum_{n=\frac{j\delta}{\varepsilon}}^{\frac{(j+1)\delta}{\varepsilon}-1}(\beta_{R,\sigma^{-n-\sqrt{\frac{t}{\varepsilon}}}\omega}+o_{\varepsilon\to 0}(1))e^{\frac{j\delta}{\varepsilon}\varepsilon\left(-\int_\Omega \gamma_s \, d\mathbb{P}(s) + o_{\omega,\varepsilon\to 0}(1)\right)}\\
        &=\varepsilon \sum_{j=0}^{\frac{t}{\delta}-1}e^{j\delta\left(-\int_\Omega \gamma_s \, d\mathbb{P}(s) + o_{\omega,\varepsilon\to 0}(1)\right)}\sum_{n=\frac{j\delta}{\varepsilon}}^{\frac{(j+1)\delta}{\varepsilon}-1}(\beta_{R,\sigma^{-n-\sqrt{\frac{t}{\varepsilon}}}\omega}+o_{\varepsilon\to 0}(1)).
    \end{align*}
    For the lower estimate,
    \begin{align*}
        E_{t,\omega}^\varepsilon&\geq \varepsilon \sum_{j=0}^{\frac{t}{\delta}-1}\sum_{n=\frac{j\delta}{\varepsilon}}^{\frac{(j+1)\delta}{\varepsilon}-1}(\beta_{R,\sigma^{-n-\sqrt{\frac{t}{\varepsilon}}}\omega}+o_{\varepsilon\to 0}(1))e^{\left(\frac{(j+1)\delta}{\varepsilon}-1\right)\varepsilon\left(-\int_\Omega \gamma_s \, d\mathbb{P}(s) + o_{\omega,\varepsilon\to 0}(1)\right)}\\
        &=\varepsilon \sum_{j=0}^{\frac{t}{\delta}-1} e^{\left((j+1)\delta-{\varepsilon}\right)\left(-\int_\Omega \gamma_s \, d\mathbb{P}(s) + o_{\omega,\varepsilon\to 0}(1)\right)}
 \sum_{n=\frac{j\delta}{\varepsilon}}^{\frac{(j+1)\delta}{\varepsilon}-1}(\beta_{R,\sigma^{-n-\sqrt{\frac{t}{\varepsilon}}}\omega}+o_{\omega,\varepsilon\to 0}(1))\\
        &=\varepsilon e^{(\delta-\varepsilon)\left(-\int_\Omega \gamma_s \, d\mathbb{P}(s) + o_{\omega,\varepsilon\to 0}(1)\right)} \sum_{j=0}^{\frac{t}{\delta}-1}\Big( e^{j\delta\left(-\int_\Omega \gamma_s \, d\mathbb{P}(s) + o_{\omega,\varepsilon\to 0}(1)\right)}\\
        & \quad \times\sum_{n=\frac{j\delta}{\varepsilon}}^{\frac{(j+1)\delta}{\varepsilon}-1}(\beta_{R,\sigma^{-n-\sqrt{\frac{t}{\varepsilon}}}\omega}+o_{\varepsilon\to 0}(1))\Big).
    \end{align*}    
\end{proof}
\noindent
The previous steps now provide sharp estimates on $\pi_{\sigma\omega}^\varepsilon$ (defined on a different fibre in \eqn{eqn:pweps}) which allow us to compute $\lim_{\varepsilon\to 0} \pi_{\sigma\omega}^\varepsilon$ explicitly.
\begin{step}
    For all $t,\delta>0$ and $\mathbb{P}$-a.e. $\omega\in\Omega$ 
    \begin{align}
   L_{t,\delta}\leq \lim_{\varepsilon\to 0}\pi_{\sigma\omega}^\varepsilon \leq U_{t,\delta}
   \label{eqn:ineqp}
\end{align}
where
\begin{align}
     L_{t,\delta}&:=\delta e^{-\delta \int_\Omega \gamma_s \, d\mathbb{P}(s)}\int_\Omega \beta_{R,s} \, d\mathbb{P}(s)\frac{1-e^{-t\int_\Omega \gamma_s \, d\mathbb{P}(s)}}{1-e^{-\delta\int_\Omega \gamma_s \, d\mathbb{P}(s) }} \label{eqn:L}\\
     U_{t,\delta}&:= \delta \int_\Omega \beta_{R,s} \, d\mathbb{P}(s) \frac{1-e^{-t\int_\Omega \gamma_s \, d\mathbb{P}(s)}}{1-e^{-\delta\int_\Omega \gamma_s \, d\mathbb{P}(s) }}+\frac{2M}{\int_\Omega \gamma_s \, d\mathbb{P}(s)e^{\frac{t}{2}\int_\Omega \gamma_s \, d\mathbb{P}(s)}}. \label{eqn:U}
\end{align}
\label{step:LU}
\end{step}
\begin{proof}
    Firstly we focus on the upper estimate. Observe that we can express \eqn{eqn:pweps} as 
\begin{align*}
    \pi_{\sigma\omega}^\varepsilon&=\varepsilon\sum_{n=0}^{\infty} (\beta_{R,\sigma^{-n}\omega}+o_{\varepsilon\to 0}(1))\prod_{k=0}^{n-1}(1-\varepsilon\gamma_{\sigma^{-k}\omega})\\
    &=\varepsilon\sum_{n=0}^{\sqrt{\frac{t}{\varepsilon}}-1} (\beta_{R,\sigma^{-n}\omega}+o_{\varepsilon\to 0}(1))\prod_{k=0}^{n-1}(1-\varepsilon\gamma_{\sigma^{-k}\omega})+\varepsilon\sum_{n=\sqrt{\frac{t}{\varepsilon}}}^{{\frac{t}{\varepsilon}}-1}\Big( (\beta_{R,\sigma^{-n}\omega}+o_{\varepsilon\to 0}(1))\\
    &\quad \times \prod_{k=0}^{n-1}(1-\varepsilon\gamma_{\sigma^{-k}\omega})\Big)
    +\varepsilon\sum_{n=\frac{t}{\varepsilon
    }}^{\infty} (\beta_{R,\sigma^{-n}\omega}+o_{\varepsilon\to 0}(1))\prod_{k=0}^{n-1}(1-\varepsilon\gamma_{\sigma^{-k}\omega})\\
    &=C_{t,\omega}^\varepsilon+D_{t,\omega}^\varepsilon+B_{t,\omega}^\varepsilon.
\end{align*}
Thus, by Step \ref{step:B}, \ref{step:C}, \ref{step:D} and \ref{step:E}
\begin{align*}
 \pi_{\sigma\omega}^\varepsilon&\leq \varepsilon\sum_{n=0}^{\sqrt{\frac{t}{\varepsilon}}-1} (\beta_{R,\sigma^{-n}\omega}+o_{\varepsilon\to 0}(1)) + e^{\sqrt{t\varepsilon}\left(-\int_\Omega \gamma_s \, d\mathbb{P}(s) 
+o_{\omega,\varepsilon\to 0}(1) \right)} E_{t,\omega}^\varepsilon \\
&\quad +(M+o_{\varepsilon\to 0}(1))\frac{\varepsilon e^{t\left(-\int_\Omega \gamma_s \, d\mathbb{P}(s)+o_{\omega,\varepsilon\to 0}(1)\right)}}{1-e^{\varepsilon\left(-\int_{\Omega} \gamma_s \, d\mathbb{P}(s)+o_{\omega,\varepsilon\to 0}(1)\right)}} \\
&\leq \sqrt{t\varepsilon}\sqrt{\frac{\varepsilon}{t}}\sum_{n=0}^{\sqrt{\frac{t}{\varepsilon}}-1} (\beta_{R,\sigma^{-n}\omega}+o_{\varepsilon\to 0}(1)) +\varepsilon e^{\sqrt{t\varepsilon}\left(-\int_\Omega \gamma_s \, d\mathbb{P}(s) 
+o_{\omega,\varepsilon\to 0}(1) \right)}\\
& \quad \times \bigg(\sum_{j=0}^{\frac{t}{\delta}-1}e^{j\delta\left(-\int_\Omega \gamma_s \, d\mathbb{P}(s) + o_{\omega,\varepsilon\to 0}(1)\right)}\sum_{n=\frac{j\delta}{\varepsilon}}^{\frac{(j+1)\delta}{\varepsilon}-1}(\beta_{R,\sigma^{-n-\sqrt{\frac{t}{\varepsilon}}}\omega}+o_{\varepsilon\to 0}(1))\bigg)\\
&\quad+(M+o_{\varepsilon\to 0}(1))\frac{\varepsilon e^{t\left(-\int_\Omega \gamma_s \, d\mathbb{P}(s)+o_{\omega,\varepsilon\to 0}(1)\right)}}{1-e^{\varepsilon\left(-\int_{\Omega} \gamma_s \, d\mathbb{P}(s)+o_{\omega,\varepsilon\to 0}(1)\right)}}.
\end{align*}
We may now take $\varepsilon\to 0$. In doing so we utilise Birkhoff's ergodic theorem on the first term, \prop{prop:movingavg} on the second term and \Step{stepp:B} on the third term to find that for all $t,\delta>0$ and $\mathbb{P}$-a.e. $\omega\in\Omega$
\begin{align*}
    \lim_{\varepsilon\to 0}\pi_{\sigma\omega}^\varepsilon &\leq 0+ \lim_{\varepsilon\to 0}\delta e^{\sqrt{t\varepsilon}\left(-\int_\Omega \gamma_s \, d\mathbb{P}(s) 
+o_{\omega,\varepsilon\to 0}(1) \right)}\sum_{j=0}^{\frac{t}{\delta}-1}\bigg(e^{j\delta\left(-\int_\Omega \gamma_s \, d\mathbb{P}(s) + o_{\omega,\varepsilon\to 0}(1)\right)} \\
&\quad \times \frac{\varepsilon}{\delta} \sum_{n=\frac{j\delta}{\varepsilon}}^{\frac{(j+1)\delta}{\varepsilon}-1}(\beta_{R,\sigma^{-n-\sqrt{\frac{t}{\varepsilon}}}\omega}+o_{\varepsilon\to 0}(1))\bigg)
+\frac{2M}{\int_\Omega \gamma_s \, d\mathbb{P}(s)e^{\frac{t}{2}\int_\Omega \gamma_s \, d\mathbb{P}(s)}}\\
&=\delta \int_\Omega \beta_{R,s} \, d\mathbb{P}(s) \sum_{j=0}^{\frac{t}{\delta}-1}e^{-j\delta \int_{\Omega} \gamma_s \, d\mathbb{P}(s)}+\frac{2M}{\int_\Omega \gamma_s \, d\mathbb{P}(s)e^{\frac{t}{2}\int_\Omega \gamma_s \, d\mathbb{P}(s)}}\\
&=\delta \int_\Omega \beta_{R,s} \, d\mathbb{P}(s) \frac{1-e^{-t\int_\Omega \gamma_s \, d\mathbb{P}(s)}}{1-e^{-\delta\int_\Omega \gamma_s \, d\mathbb{P}(s) }}+\frac{2M}{\int_\Omega \gamma_s \, d\mathbb{P}(s)e^{\frac{t}{2}\int_\Omega \gamma_s \, d\mathbb{P}(s)}}\\
&= U_{t,\delta}.
\end{align*}
For the lower estimate, we note that 
\begin{align*}
    \pi_{\sigma\omega}^\varepsilon&\geq \varepsilon\sum_{n=\sqrt{\frac{t}{\varepsilon}}}^{{\frac{t}{\varepsilon}}+\sqrt{\frac{t}{\varepsilon}}-1} (\beta_{R,\sigma^{-n}\omega}+o_{\varepsilon\to 0}(1))\prod_{k=0}^{n-1}(1-\varepsilon\gamma_{\sigma^{-k}\omega})\\
    &=\varepsilon\sum_{n=0}^{{\frac{t}{\varepsilon}}-1} (\beta_{R,\sigma^{-n-\sqrt{\frac{t}{\varepsilon}}}\omega}+o_{\varepsilon\to 0}(1))\prod_{k=0}^{n+\sqrt{\frac{t}{\varepsilon}}-1}(1-\varepsilon\gamma_{\sigma^{-k}\omega}).
\end{align*}
The previous line is precisely the upper bound of $D_{t,\omega}^\varepsilon$ obtained in \Step{step:D} (see \eqn{eqn:Dup}). Therefore, using the lower bound from \Step{step:E}
\begin{align*}
    \pi_{\sigma\omega}^\varepsilon&\geq e^{\sqrt{t\varepsilon}\left(-\int_\Omega \gamma_s \, d\mathbb{P}(s) 
+o_{\omega,\varepsilon\to 0}(1) \right)} E_{t,\omega}^\varepsilon\\
&\geq \varepsilon e^{(\sqrt{t\varepsilon}+\delta-\varepsilon)\left(-\int_\Omega \gamma_s \, d\mathbb{P}(s) + o_{\omega,\varepsilon\to 0}(1)\right)} \sum_{j=0}^{\frac{t}{\delta}-1}\bigg(e^{j\delta\left(-\int_\Omega \gamma_s \, d\mathbb{P}(s) + o_{\omega,\varepsilon\to 0}(1)\right)}\\
&\quad \times \sum_{n=\frac{j\delta}{\varepsilon}}^{\frac{(j+1)\delta}{\varepsilon}-1}(\beta_{R,\sigma^{-n-\sqrt{\frac{t}{\varepsilon}}}\omega}+o_{\varepsilon\to 0}(1))\bigg)\\
&=\delta e^{(\sqrt{t\varepsilon}+\delta-\varepsilon)\left(-\int_\Omega \gamma_s \, d\mathbb{P}(s) + o_{\omega,\varepsilon\to 0}(1)\right)} \sum_{j=0}^{\frac{t}{\delta}-1}\bigg(e^{j\delta\left(-\int_\Omega \gamma_s \, d\mathbb{P}(s) + o_{\omega,\varepsilon\to 0}(1)\right)} \\
&\quad \times \frac{\varepsilon}{\delta}\sum_{n=\frac{j\delta}{\varepsilon}}^{\frac{(j+1)\delta}{\varepsilon}-1}(\beta_{R,\sigma^{-n-\sqrt{\frac{t}{\varepsilon}}}\omega}+o_{\varepsilon\to 0}(1))\bigg).
\end{align*}
We emphasise the similarity between the above line and the upper bound of $D_{t,\omega}^\varepsilon$ obtained from both \Step{step:D} and \Step{step:E}. In particular, these bounds are identical up to a factor of $\exp((\delta-\varepsilon)(-\int_\Omega \gamma_s \, d\mathbb{P}(s)+o_{\omega,\varepsilon\to 0}(1)))$. Therefore, by the same argument as in the derivation of $U_{t,\delta}$ (applying \prop{prop:movingavg}), for $\mathbb{P}$-a.e. $\omega\in\Omega$
\begin{align*}
    \lim_{\varepsilon\to 0} \pi_{\sigma\omega}^\varepsilon &\geq \delta e^{-\delta \int_\Omega \gamma_s \, d\mathbb{P}(s)}\int_\Omega \beta_{R,s} \, d\mathbb{P}(s)\frac{1-e^{-t\int_\Omega \gamma_s \, d\mathbb{P}(s)}}{1-e^{-\delta\int_\Omega \gamma_s \, d\mathbb{P}(s) }}\\
    &=L_{t,\delta}.
\end{align*}
\end{proof}
\noindent
We proceed by taking $\delta$ sufficiently small and $t$ sufficiently large. Since $L_{t,\delta}$ and $U_{t,\delta}$ (defined in \eqn{eqn:L} and \eqn{eqn:U}, respectively) are independent of $\omega\in\Omega$, we replace the integration variable $s$ with $\omega$ in the final step to align with the statement of \lem{lem:weights}.  
\begin{step}
    The functions $L_{t,\delta}$ and $U_{t,\delta}$ defined in \eqn{eqn:L} and \eqn{eqn:U} satisfy $$\lim_{(t,\delta)\to(\infty,0)}L_{t,\delta}=\lim_{(t,\delta)\to(\infty,0)}U_{t,\delta}=\frac{\int_{\Omega} \beta_{R,\omega}\, d\mathbb{P}(\omega)}{\int_{\Omega}\beta_{L,\omega}+\beta_{R,\omega} \, d\mathbb{P}(\omega)}$$
    \label{step:ULlims}
\end{step}
\begin{proof}
We focus first on taking the appropriate limits of \eqn{eqn:L}. Indeed
\begin{align*}
\lim_{(t,\delta)\to(\infty,0)}L_{t,\delta}&=  \int_\Omega \beta_{R,\omega} \, d\mathbb{P}(\omega) \lim_{t\to\infty}\frac{1-e^{-t\int_\Omega \gamma_\omega \, d\mathbb{P}(\omega)}}{\int_\Omega \gamma_\omega \, d\mathbb{P}(\omega)}  \\
&=\frac{\int_{\Omega} \beta_{R,\omega}\, d\mathbb{P}(\omega)}{\int_{\Omega}\beta_{L,\omega}+\beta_{R,\omega} \, d\mathbb{P}(\omega)}.   
\end{align*}
Then, by taking limits of \eqn{eqn:U}
\begin{align*}
    \lim_{(t,\delta)\to(\infty,0)} U_{t,\delta}&=  \lim_{t\to\infty}\left( \int_\Omega \beta_{R,\omega} \, d\mathbb{P}(\omega) \frac{1-e^{-t\int_\Omega \gamma_\omega \, d\mathbb{P}(\omega)}}{\int_\Omega \gamma_\omega \, d\mathbb{P}(\omega)}+\frac{2M}{\int_\Omega \gamma_\omega \, d\mathbb{P}(\omega)e^{\frac{t}{2}\int_\Omega \gamma_\omega \, d\mathbb{P}(\omega)}}\right)  \\
&=\frac{\int_{\Omega} \beta_{R,\omega}\, d\mathbb{P}(\omega)}{\int_{\Omega}\beta_{L,\omega}+\beta_{R,\omega} \, d\mathbb{P}(\omega)}.
\end{align*}
\end{proof}
\noindent
The results of \Step{step:tonep}-\Step{step:ULlims} give us \eqn{eqn:pwlim} for $\mathbb{P}$-a.e. $\omega\in\Omega$.
\end{proof}

\begin{remark}
Provided that $\int_\Omega \beta_{L,\omega} + \beta_{R,\omega}\, d\mathbb{P}(\omega)\neq 0$ and $\beta_{\star}\in  L^\infty(\mathbb{P})$ for $\star\in\{L,R\}$, \lem{lem:weights} provides us with the limiting invariant measures (as $\varepsilon\to 0$) of 2-state Markov chains in random environments driven by $\sigma:\Omega\to \Omega$ (as in \hyperref[list:P1]{\textbf{(P1)}}), with transition matrices $(P_\omega^\varepsilon)_{\omega\in\Omega}$ \jp{given in \rem{rem:lim-motiv}}. For a fixed $\varepsilon>0$, the columns of the matrix $\lim_{n\to \infty}P_{\sigma^{-n}\omega}^{\varepsilon\, (n)}$ give rise to the random invariant measure of $(P_\omega^\varepsilon)_{\omega\in\Omega}$. We are interested in the behaviour of this measure as $\varepsilon\to 0$. \jp{Due to \eqn{eqn:Pn}, and} recalling the similarities between \eqn{eqn:pweps} and the elements of the matrix $\lim_{n\to \infty}P_{\sigma^{-n}\omega}^{\varepsilon\, (n)}$, \lem{lem:weights} shows that for $\mathbb{P}$-a.e. $\omega\in\Omega$
\begin{equation}
        \lim_{\varepsilon\to 0}\lim_{n\to \infty}P_{\sigma^{-n}\omega}^{\varepsilon\, (n)}=\begin{pmatrix}
        \frac{\int_{\Omega} \beta_{R,\omega}\, d\mathbb{P}(\omega)}{\int_{\Omega}\beta_{L,\omega}+\beta_{R,\omega} \, d\mathbb{P}(\omega)} & \frac{\int_{\Omega} \beta_{R,\omega}\, d\mathbb{P}(\omega)}{\int_{\Omega}\beta_{L,\omega}+\beta_{R,\omega} \, d\mathbb{P}(\omega)}\\
    \frac{\int_{\Omega} \beta_{L,\omega}\, d\mathbb{P}(\omega)}{\int_{\Omega}\beta_{L,\omega}+\beta_{R,\omega} \, d\mathbb{P}(\omega)} & \frac{\int_{\Omega} \beta_{L,\omega}\, d\mathbb{P}(\omega)}{\int_{\Omega}\beta_{L,\omega}+\beta_{R,\omega} \, d\mathbb{P}(\omega)}
    \end{pmatrix}.
    \label{eqn:Plim}
    \end{equation}
    Hence, the columns of the matrix in \eqn{eqn:Plim} correspond to the limiting random invariant measure of the matrix cocycle $(P_\omega^\varepsilon)_{\omega\in\Omega}$ as $\varepsilon\to 0$. We refer the reader to, for example \cite[Section 4]{Yin_Zhang_CTMC}, for a similar treatment of this problem in the setting of continuous time Markov chains. As we will find in \Sec{sec:LID}, particularly in the proof of \thrm{thrm:phi_lims}, one can show that \eqn{eqn:Plim} does not only hold for $\mathbb{P}$-a.e. $\omega\in\Omega$, but in fact uniformly over $\omega\in\Omega$ away from a $\mathbb{P}$-null set.
    \label{rem:markov-connection}
\end{remark}

\subsection{The limiting invariant density}
\label{sec:LID}
Using \lem{lem:weights} we can now characterise the limiting random invariant density. 
\begin{theorem}
 Let $\{(\Omega,\mathcal{F},\mathbb{P},\sigma,\BV(I),\mathcal{L}^\varepsilon)\}_{\varepsilon\geq 0}$ be a \jp{family} of random dynamical systems of paired metastable maps $T_\omega^\varepsilon:I\to I$ satisfying \hyperref[list:I1]{\textbf{(I1)}}-\hyperref[list:I6]{\textbf{(I6)}} and \hyperref[list:P1]{\textbf{(P1)}}-\hyperref[list:P7]{\textbf{(P7)}}. If $\int_{\Omega}\beta_{L,\omega} +\beta_{R,\omega}\, d\mathbb{P}(\omega)\neq 0$ then as $\varepsilon\to 0$
    $$\phi_\omega^\varepsilon \stackrel{L^1}{\to} \phi_{}^0:= \frac{\int_{\Omega} \beta_{R,\omega}\, d\mathbb{P}(\omega)}{\int_{\Omega}\beta_{L,\omega}+\beta_{R,\omega} \, d\mathbb{P}(\omega)} \phi_L +\frac{\int_{\Omega} \beta_{L,\omega}\, d\mathbb{P}(\omega)}{\int_{\Omega}\beta_{L,\omega}+\beta_{R,\omega} \, d\mathbb{P}(\omega)}\phi_R$$
    uniformly over $\omega\in\Omega$ away from a $\mathbb{P}$-null set. 
\label{thrm:phi_lims}
\end{theorem}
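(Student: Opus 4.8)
The plan is to combine Lemma~\ref{lem:cont}, which already confines every $L^1$-accumulation point of $(\phi_\omega^\varepsilon)_{\varepsilon>0}$ to the two-dimensional space $\operatorname{span}\{\phi_L,\phi_R\}$, with a direct analysis of how the mass of $\phi_\omega^\varepsilon$ splits between $I_L$ and $I_R$. By Lemma~\ref{lem:cont} and \rem{rem:subseq}, for $\mathbb{P}$-a.e.\ $\omega$ every subsequential $L^1$-limit of $\phi_\omega^\varepsilon$ has the form $a_\omega\phi_L+b_\omega\phi_R$; since $\phi_\omega^\varepsilon\ge 0$, $\norm{\phi_\omega^\varepsilon}_{L^1(\leb)}=1$, and $\phi_L,\phi_R$ are probability densities supported on the essentially disjoint intervals $I_L,I_R$ (meeting only at $b$), one automatically has $a_\omega,b_\omega\ge0$ and $a_\omega+b_\omega=1$. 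Writing $p_\omega^\varepsilon:=\int_{I_L}\phi_\omega^\varepsilon\,\dleb$ (so $\int_{I_R}\phi_\omega^\varepsilon\,\dleb=1-p_\omega^\varepsilon$), along any such subsequence $p_\omega^\varepsilon\to a_\omega$. Thus the theorem reduces to proving $p_\omega^\varepsilon\to c:=\dfrac{\int_\Omega\beta_{R,\omega}\,d\mathbb{P}(\omega)}{\int_\Omega\beta_{L,\omega}+\beta_{R,\omega}\,d\mathbb{P}(\omega)}$, first for $\mathbb{P}$-a.e.\ $\omega$ and then uniformly over $\omega$ off a $\mathbb{P}$-null set.

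The engine is that $p_\omega^\varepsilon$ obeys, to leading order in $\varepsilon$, the invariance recursion of the random Markov chain with transition matrices $(P_\omega^\varepsilon)$. Integrating $\mathcal{L}_\omega^\varepsilon\phi_\omega^\varepsilon=\phi_{\sigma\omega}^\varepsilon$ over $I_L$ and using the defining property of the Perron--Frobenius operator gives $p_{\sigma\omega}^\varepsilon=\int_{(T_\omega^\varepsilon)^{-1}(I_L)}\phi_\omega^\varepsilon\,\dleb$; decomposing $(T_\omega^\varepsilon)^{-1}(I_L)$ according to whether a point lies in $I_L$ or $I_R$, using the definition of $H_{\star,\omega}^\varepsilon$, and using \hyperref[list:P7]{\textbf{(P7)}} to dispose of the single boundary point $b$, one obtains (up to a Lebesgue-null set) $p_{\sigma\omega}^\varepsilon=p_\omega^\varepsilon-\int_{H_{L,\omega}^\varepsilon}\phi_\omega^\varepsilon\,\dleb+\int_{H_{R,\omega}^\varepsilon}\phi_\omega^\varepsilon\,\dleb$. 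The crucial point — and the main obstacle — is the pair of estimates
\[
\int_{H_{L,\omega}^\varepsilon}\phi_\omega^\varepsilon\,\dleb=\varepsilon\beta_{L,\omega}\,p_\omega^\varepsilon+o_{\varepsilon\to0}(\varepsilon),\qquad \int_{H_{R,\omega}^\varepsilon}\phi_\omega^\varepsilon\,\dleb=\varepsilon\beta_{R,\omega}\,(1-p_\omega^\varepsilon)+o_{\varepsilon\to0}(\varepsilon),
\]
with errors uniform in $\omega$. Because the holes have measure $O(\varepsilon)$ yet move with $\varepsilon$, mere $L^1$-convergence of $\phi_\omega^\varepsilon$ does not suffice: one needs local control near $H^0$. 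Here \hyperref[list:I5]{\textbf{(I5)}} and \hyperref[list:P2]{\textbf{(P2)}} force the discontinuities of $\phi_\omega^\varepsilon$ (images of the critical set and of the hole boundaries) to avoid a fixed neighbourhood $U$ of $H^0$, so $\phi_\omega^\varepsilon$ is continuous on $U$ for all small $\varepsilon$; combining this with the uniform bounded-variation bound from \hyperref[list:P5]{\textbf{(P5)}}, the uniform convergence of the top Oseledets plane from Lemma~\ref{lem:cont} (which forces the $\phi_L$-component of $\phi_\omega^\varepsilon|_{I_L}$ to be close to $p_\omega^\varepsilon$), and the continuity and positivity of $\phi_L,\phi_R$ at the points of $H^0$ (\hyperref[list:I5]{\textbf{(I5)}}, \hyperref[list:I6]{\textbf{(I6)}}), one shows $\phi_\omega^\varepsilon=p_\omega^\varepsilon\phi_L+o(1)$ near $H^0\cap I_L$ and $\phi_\omega^\varepsilon=(1-p_\omega^\varepsilon)\phi_R+o(1)$ near $H^0\cap I_R$, uniformly in $\omega$; with \hyperref[list:P3]{\textbf{(P3)}}, \hyperref[list:P4]{\textbf{(P4)}} and $\leb(H_{\star,\omega}^\varepsilon)=O(\varepsilon)$ this yields the displayed estimates and hence the recursion
\[
p_{\sigma\omega}^\varepsilon=(1-\varepsilon\gamma_\omega)\,p_\omega^\varepsilon+\varepsilon\beta_{R,\omega}+r_\omega^\varepsilon,\qquad \sup_{\omega\in\Omega}\lvert r_\omega^\varepsilon\rvert=o_{\varepsilon\to0}(\varepsilon),
\]
with $\gamma_\omega=\beta_{L,\omega}+\beta_{R,\omega}$.

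Next I would solve this recursion by iterating it backwards along the $\sigma$-orbit,
\[
p_\omega^\varepsilon=\Big(\prod_{j=1}^{n}(1-\varepsilon\gamma_{\sigma^{-j}\omega})\Big)p_{\sigma^{-n}\omega}^\varepsilon+\sum_{k=1}^{n}\Big(\prod_{j=1}^{k-1}(1-\varepsilon\gamma_{\sigma^{-j}\omega})\Big)\big(\varepsilon\beta_{R,\sigma^{-k}\omega}+r_{\sigma^{-k}\omega}^\varepsilon\big).
\]
For fixed small $\varepsilon$, the computation behind \Step{step:tonep} shows that for $\mathbb{P}$-a.e.\ $\omega$ these products decay geometrically in $n$, so the first term vanishes as $n\to\infty$ (recall $p_{\sigma^{-n}\omega}^\varepsilon\in[0,1]$), and $\sum_{k\ge1}\prod_{j=1}^{k-1}(1-\varepsilon\gamma_{\sigma^{-j}\omega})=O_{\varepsilon\to0}(1/\varepsilon)$; hence the total contribution of the $r$'s is $O(1/\varepsilon)\cdot o(\varepsilon)=o_{\varepsilon\to0}(1)$ and, reindexing,
\[
p_\omega^\varepsilon=\sum_{n=0}^{\infty}\varepsilon\beta_{R,\sigma^{-n-1}\omega}\prod_{j=1}^{n}(1-\varepsilon\gamma_{\sigma^{-j}\omega})+o_{\varepsilon\to0}(1)=\pi_\omega^\varepsilon+o_{\varepsilon\to0}(1),
\]
where $\pi_\omega^\varepsilon$ is as in \eqn{eqn:pweps} (the $o_{\varepsilon\to0}(\varepsilon)$ terms built into \eqn{eqn:pweps} again sum to $o(1)$). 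Since $\int_\Omega\gamma_\omega\,d\mathbb{P}(\omega)\neq0$, \lem{lem:weights} gives $\lim_{\varepsilon\to0}\pi_\omega^\varepsilon=c$ for $\mathbb{P}$-a.e.\ $\omega$, so $\lim_{\varepsilon\to0}p_\omega^\varepsilon=c$; feeding this back into the reduction, every subsequential $L^1$-limit of $\phi_\omega^\varepsilon$ equals $c\,\phi_L+(1-c)\phi_R=\phi_\omega^0$, whence $\phi_\omega^\varepsilon\stackrel{L^1}{\to}\phi_\omega^0$ for $\mathbb{P}$-a.e.\ $\omega$.

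Finally, for the uniform statement: all error bounds in the previous two paragraphs are uniform over $\omega$, so it only remains to pass through \lem{lem:weights} uniformly, which is exactly as in that lemma's proof, where the two-sided bounds $L_{t,\delta}\le\lim_{\varepsilon\to0}\pi_{\sigma\omega}^\varepsilon\le U_{t,\delta}$ of \Step{step:LU} are independent of $\omega$ and, under the finite-range hypothesis \hyperref[list:P1]{\textbf{(P1)}}, the Birkhoff-/moving-average limits entering them may be arranged to hold uniformly over $\omega$ away from a $\mathbb{P}$-null set; together with the uniform convergence of the top Oseledets plane in Lemma~\ref{lem:cont} this upgrades $\phi_\omega^\varepsilon\to\phi_\omega^0$ to hold uniformly over $\omega\in\Omega$ away from a $\mathbb{P}$-null set. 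The one genuinely delicate ingredient throughout is the uniform near-hole estimate for $\int_{H_{\star,\omega}^\varepsilon}\phi_\omega^\varepsilon\,\dleb$ in the second paragraph; the remainder is bookkeeping around Lemmas~\ref{lem:cont} and~\ref{lem:weights}.
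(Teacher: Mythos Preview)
Your approach is essentially the paper's: reduce to studying $p_\omega^\varepsilon=\mu_\omega^\varepsilon(I_L)$, derive the recursion $p_{\sigma\omega}^\varepsilon=(1-\varepsilon\gamma_\omega)p_\omega^\varepsilon+\varepsilon\beta_{R,\omega}+o(\varepsilon)$ from the invariance of $\mu_\omega^\varepsilon$ together with the hole estimates, iterate backwards, and invoke \lem{lem:weights}. The paper packages the delicate hole estimate as \lem{lem:unif_conv} (imported from \cite{BS_rand}) and its consequences \lem{lem:p-1-p}, \lem{lem:holes}, \cor{cor:holes}; your sketch via the discontinuity set avoiding a neighbourhood of $H^0$ is the same idea in outline.

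There is one genuine wrinkle in your uniformity paragraph. You assert that under the finite-range hypothesis \hyperref[list:P1]{\textbf{(P1)}} ``the Birkhoff-/moving-average limits entering them may be arranged to hold uniformly over $\omega$''. This is not correct: \hyperref[list:P1]{\textbf{(P1)}} only says $\omega\mapsto T_\omega^\varepsilon$ has finite range, not that $\sigma$ or $\omega\mapsto\beta_{\star,\omega}$ does, and even for finitely-valued observables Birkhoff averages along orbits of a general ergodic homeomorphism need not converge uniformly. Consequently your infinite backward iteration (sending $n\to\infty$ first) and the identity $p_\omega^\varepsilon=\pi_\omega^\varepsilon+o(1)$ are only available for $\mathbb{P}$-a.e.\ $\omega$, not uniformly. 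The paper obtains uniformity differently: \lem{lem:p-1-p} (a best-approximation argument using the uniform convergence of the top Oseledets plane from \lem{lem:cont}) gives $p_\omega^{\tilde\varepsilon}=p_\omega+o_{\tilde\varepsilon\to0}(1)$ with the $o(1)$ uniform in $\omega$; then \lem{lem:weights} is applied pointwise to conclude that $p_\omega$ equals the constant $c$ for $\mathbb{P}$-a.e.\ $\omega$. Uniform convergence to a function that is a.e.\ constant yields the claimed uniform statement. You do gesture at the uniform Oseledets convergence at the end, but the clean mechanism is this two-step decoupling rather than any uniform ergodic theorem.
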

\jp{
\begin{remark}
Since our approach relies on perturbative techniques involving a weak norm, $L^1$, and a strong norm, $BV$, as pioneered by Keller and Liverani in \cite{stab_spec} and generalised to the random setting in \cite{Crimmins}, 
our work establishes convergence in the weak norm, $L^1$.
One could aim to strengthen this convergence by working in other function spaces, for instance fractional Sobolev spaces as in \cite{GTQ_semiinvert}. 
\end{remark}
}\noindent
Due to \hyperref[list:I4]{\textbf{(I4)}}, any accumulation point of $\phi_\omega^{{\varepsilon}}$ can be expressed as a (possibly random) convex combination of $\phi_L$ and $\phi_R$. To prove \thrm{thrm:phi_lims} we must first understand the asymptotic behaviour of the measure of the holes created under perturbation. Unless otherwise mentioned, throughout the remainder of this section we assume that the hypotheses of \thrm{thrm:phi_lims} are satisfied. \\
\\
Thanks to \lem{lem:cont} and \rem{rem:subseq}, for $\mathbb{P}$-a.e. $\omega\in\Omega$ we can choose a sequence of values $\tilde{\varepsilon}$ converging to $0$ such that $\phi_\omega^{\tilde{\varepsilon}}$ converges in $L^1$ to some function, which we denote by $\phi_\omega^0$. We first prove that the subsequence $\phi_\omega^{\tilde{\varepsilon}}$ converges uniformly over the holes away from a $\mathbb{P}$-null set. This was done in \cite[Lemma 3.14]{BS_rand} for finite $\Omega$ and i.i.d. driving. Upon close inspection of the proofs, the condition that $\Omega$ is finite is used to guarantee that the mapping $\omega\mapsto T_\omega^{\tilde{\varepsilon}}$ is finite for all ${\tilde{\varepsilon}}\geq 0$. This is satisfied in our setting due to \hyperref[list:P1]{\textbf{(P1)}}. Adapting \cite[Lemma 3.14]{BS_rand} to our notation we have the following. 
\begin{lemma}
In the setting of \thrm{thrm:phi_lims}, let $\phi_\omega^0$ be an accumulation point of $\phi_\omega^{{\varepsilon}}$ along the subsequence $\tilde{\varepsilon}\to 0$. That is, assume that
\begin{equation}
 \phi_\omega^0 = \lim_{\tilde{\varepsilon}\to 0} \phi_\omega^{\tilde{\varepsilon}}.   
 \label{eqn:lim-subseq}
\end{equation}
Then there exists $p:\Omega\to [0,1]$ such that
\begin{itemize}
    \item[(a)] $\phi_\omega^0 = p_\omega \phi_L + (1-p_\omega)\phi_R$,
    \item[(b)] $\lim_{{\tilde{\varepsilon}}\to 0} \esssup_{\omega\in\Omega} \sup_{x\in H_{L,\omega}^{\tilde{\varepsilon}}}|\phi_\omega^{\tilde{\varepsilon}}(x) - p_\omega \phi_L(x)|=0$,
    \item[(c)] $\lim_{{\tilde{\varepsilon}}\to 0} \esssup_{\omega\in\Omega} \sup_{x\in H_{R,\omega}^{\tilde{\varepsilon}}}|\phi_\omega^{\tilde{\varepsilon}}(x) - (1-p_\omega) \phi_R(x)|=0$.
\end{itemize}
\label{lem:unif_conv}
\end{lemma}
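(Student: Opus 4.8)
The plan is to obtain part~(a) directly from Lemma~\ref{lem:cont} and to concentrate on the uniform-on-holes statements (b)--(c), which adapt \cite[Lemma 3.14]{BS_rand}: there the finiteness of $\Omega$ is used only to ensure that $\omega\mapsto T_\omega^{\tilde\varepsilon}$ has finite range, and in our setting this is supplied instead by \hyperref[list:P1]{\textbf{(P1)}}.

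For (a): by Lemma~\ref{lem:cont} the accumulation point $\phi_\omega^0$ lies in $\mathrm{span}\{\phi_L,\phi_R\}$. Passing to a further subsequence along which $\phi_\omega^{\tilde\varepsilon}\to\phi_\omega^0$ Lebesgue-a.e.\ gives $\phi_\omega^0\ge0$, and since $\|\phi_\omega^0\|_{L^1}=1$ while $\phi_L,\phi_R$ have unit mass and essentially disjoint supports $I_L,I_R$, we must have $\phi_\omega^0=p_\omega\phi_L+(1-p_\omega)\phi_R$ with $p_\omega:=\int_{I_L}\phi_\omega^0\,\dleb\in[0,1]$; measurability of $\omega\mapsto p_\omega$ follows because $\omega\mapsto\phi_\omega^0$ is an $L^1$-limit of the measurable maps $\omega\mapsto\phi_\omega^{\tilde\varepsilon}$.

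For (b), with (c) obtained by the symmetric argument on $I_R$: I would first extract from Lemma~\ref{lem:cont} the quantitative statement $\lim_{\varepsilon\to0}\esssup_{\omega}\|\mathbf 1_{I_L}\phi_\omega^\varepsilon-a_\omega^\varepsilon\phi_L\|_{L^1}=0$, where $a_\omega^\varepsilon:=\int_{I_L}\phi_\omega^\varepsilon\,\dleb\in[0,1]$; this uses that the planes $\mathrm{span}\{\phi_\omega^\varepsilon,\psi_\omega^\varepsilon\}$ converge to $\mathrm{span}\{\phi_L,\phi_R\}$ uniformly over $\omega$ and that $\phi_R$ vanishes on $I_L$. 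Next I would fix a neighbourhood $U$ of $H^0\cap I_L$ disjoint from the forward orbit of the critical set; by \hyperref[list:I5]{\textbf{(I5)}}, \hyperref[list:P2]{\textbf{(P2)}} and \hyperref[list:P7]{\textbf{(P7)}}, $\phi_L$ and each $\phi_\omega^\varepsilon$ (for all small $\varepsilon$) restrict to $U$ as functions Lipschitz with a constant uniform in $\omega$ and $\varepsilon$ --- the usual regularity of one-dimensional invariant densities away from critical orbits, obtained by iterating $\phi=\mathcal L\phi$ with bounded distortion, and this is precisely where \hyperref[list:I5]{\textbf{(I5)}} is used (it also yields, via \hyperref[list:I6]{\textbf{(I6)}}, that $\phi_L$ is bounded above and below on $U$). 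Because uniformly Lipschitz functions tending to $0$ in $L^1(U)$ tend to $0$ uniformly on $U$, the two displays combine to give $\sup_{x\in U}|\phi_\omega^\varepsilon(x)-a_\omega^\varepsilon\phi_L(x)|\to0$ uniformly over $\omega$; and by \hyperref[list:P3]{\textbf{(P3)}}, $H_{L,\omega}^\varepsilon\subseteq U$ for all small $\varepsilon$ and $\mathbb P$-a.e.\ $\omega$.

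It remains to replace $a_\omega^{\tilde\varepsilon}$ by $p_\omega$: along the given subsequence $a_\omega^{\tilde\varepsilon}=\int_{I_L}\phi_\omega^{\tilde\varepsilon}\,\dleb\to\int_{I_L}\phi_\omega^0\,\dleb=p_\omega$, so (b) follows from $|\phi_\omega^{\tilde\varepsilon}(x)-p_\omega\phi_L(x)|\le|\phi_\omega^{\tilde\varepsilon}(x)-a_\omega^{\tilde\varepsilon}\phi_L(x)|+|a_\omega^{\tilde\varepsilon}-p_\omega|\,\phi_L(x)$ on $U$. To make this uniform in $\omega$, as the $\esssup$ requires, I would integrate $\mathcal L_\omega^\varepsilon\phi_\omega^\varepsilon=\phi_{\sigma\omega}^\varepsilon$ over $I_L$ to get the scalar recursion $a_{\sigma\omega}^\varepsilon=a_\omega^\varepsilon-\int_{H_{L,\omega}^\varepsilon}\phi_\omega^\varepsilon\,\dleb+\int_{H_{R,\omega}^\varepsilon}\phi_\omega^\varepsilon\,\dleb$, evaluate the hole integrals using the uniform bound just proved (and $|H_{\star,\omega}^\varepsilon|=O(\varepsilon)$ uniformly, from \hyperref[list:P4]{\textbf{(P4)}} and the lower bound on $\phi_\star$ over $U$) to reach $a_{\sigma\omega}^\varepsilon=a_\omega^\varepsilon\bigl(1-\varepsilon(\beta_{L,\omega}+\beta_{R,\omega})\bigr)+\varepsilon\beta_{R,\omega}+o_{\varepsilon\to0}(\varepsilon)$ with $\omega$-independent error, unroll it backwards in $\omega$, and match the result with $\pi_\omega^\varepsilon$ of \eqn{eqn:pweps} so that Lemma~\ref{lem:weights} supplies the common limit. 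The main obstacle is the uniform-Lipschitz regularity of the perturbed densities on a fixed neighbourhood of $H^0\cap I_L$ and the consequent $L^1\!\to$uniform upgrade: a bounded-variation bound alone allows arbitrarily thin spikes, so it is the careful use of \hyperref[list:I5]{\textbf{(I5)}} --- no return of the critical set to the infinitesimal holes --- that makes the $L^1$ information strong enough to pin down the values of $\phi_\omega^{\tilde\varepsilon}$ on the holes.
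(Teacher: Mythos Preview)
Your opening observation --- that \cite[Lemma 3.14]{BS_rand} carries over once one notes that the finiteness of $\Omega$ there is used only to ensure $\omega\mapsto T_\omega^{\tilde\varepsilon}$ has finite range, which \hyperref[list:P1]{\textbf{(P1)}} already supplies --- \emph{is} the paper's proof. The paper adds only that the minimal-expansion-greater-than-$2$ hypothesis of \cite{BS_rand} (used there to obtain a uniform Lasota--Yorke inequality) is replaced in this setting by \hyperref[list:I1]{\textbf{(I1)}}, \hyperref[list:I2]{\textbf{(I2)}} and \hyperref[list:P5]{\textbf{(P5)}}.

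Everything after your opening paragraph goes beyond what \lem{lem:unif_conv} requires and in fact inverts the paper's logical order. Your final paragraph --- setting up the scalar recursion for $a_\omega^\varepsilon$, unrolling it, and matching against $\pi_\omega^\varepsilon$ via \lem{lem:weights} --- is precisely the content of \lem{lem:p-1-p}, \cor{cor:holes} and the proof of \thrm{thrm:phi_lims}, all of which take \lem{lem:unif_conv} as an \emph{input}. At the stage of \lem{lem:unif_conv} the paper does not attempt to show $a_\omega^{\tilde\varepsilon}\to p_\omega$ uniformly, nor to identify $p_\omega$; that is the purpose of \lem{lem:p-1-p} (see also \rem{rem:unif}). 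The uniformity in (b)--(c) is obtained directly from the argument of \cite{BS_rand}, not via the recursion. Your route could perhaps be made self-contained, but it would effectively collapse \lem{lem:unif_conv} through \thrm{thrm:phi_lims} into a single argument; the Lipschitz-on-$U$ regularity you invoke for the \emph{random} densities $\phi_\omega^\varepsilon$, uniformly in $\omega$ and $\varepsilon$, would also need more careful justification than the autonomous intuition you sketch.
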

\begin{proof}
    Identical to that of \cite[Lemma 3.14]{BS_rand} replacing the assumption of finite $\Omega$ with $\omega\mapsto T_\omega^{\tilde{\varepsilon}}$ being finite. \jp{Although \cite[Lemma 3.14]{BS_rand} assumes that $\sigma$ is the left shift on the full shift over a finite alphabet, the proof relies only on the finiteness of the range and does not exploit any specific properties of the driving dynamics, so, with \hyperref[list:P1]{\textbf{(P1)}}, it extends directly to our setting}. We emphasise that \cite{BS_rand} assumes the initial system has minimal expansion greater than 2, which is used to obtain a uniform Lasota-Yorke inequality for the perturbed system. Thanks to \rem{rem:LY0}, this is satisfied in our setting by enforcing \hyperref[list:I1]{\textbf{(I1)}}, \hyperref[list:I2]{\textbf{(I2)}} and \hyperref[list:P5]{\textbf{(P5)}}.   
\end{proof}\noindent
With this, we have a better understanding of how the sequence $\phi_\omega^{\tilde{\varepsilon}}$ behaves over the holes. Further, we know that any accumulation point of $\phi_\omega^{{\varepsilon}}$ is a convex combination of the initially invariant densities. \\
\\
\noindent
Through \lem{lem:unif_conv} we construct a convergent subsequence of $\phi_\omega^\varepsilon$ converging in $L^1$ on $I_\star$ (for $\star\in\{L,R\}$), and uniformly over $\omega\in\Omega$ away from a $\mathbb{P}$-null set.
\begin{lemma}
    In the setting of \thrm{thrm:phi_lims}, if $p_\omega$ is as in \lem{lem:unif_conv}, then for $\mathbb{P}$-a.e. $\omega\in\Omega$
    \begin{align}
         p_\omega^{\tilde{\varepsilon}}:=\mu_\omega^{\tilde{\varepsilon}}(I_L)&= p_\omega  +o_{{\tilde{\varepsilon}}\to 0}(1), \label{eqn:measIL}\\
    1-p_\omega^{\tilde{\varepsilon}}:=\mu_\omega^{\tilde{\varepsilon}}(I_R)&= 1-p_\omega +o_{{\tilde{\varepsilon}}\to 0}(1). \label{eqn:measIR}
    \end{align}
    \begin{proof}
     We prove that \eqn{eqn:measIL} holds from which one can obtain \eqn{eqn:measIR}. For all $\tilde{\varepsilon}>0$, let $\hat{\phi}_\omega^{\tilde{\varepsilon}}:=\hat{p}_\omega^{\tilde{\varepsilon}}\phi_L+(1-\hat{p}_\omega^{\tilde{\varepsilon}})\phi_R$ be the best approximation of $\phi_\omega^{\tilde\varepsilon}$ in $\mathrm{span}\{\phi_L,\phi_R\}$ along the subsequence $\tilde{\varepsilon}\to 0$, \jp{that is also a density}. That is, let $\hat{p}_\omega^{\tilde{\varepsilon}}$ be such that $$||\phi_\omega^{\tilde{\varepsilon}}-\hat{\phi}_\omega^{\tilde{\varepsilon}}||_{L^1(\leb)}:=\inf_{\theta \in [0,1]}||\phi_\omega^{\tilde{\varepsilon}}-\theta \phi_L-(1-\theta)\phi_R||_{L^1(\leb)}.$$
     By \lem{lem:cont}, $\|\phi_\omega^{\tilde{\varepsilon}}- \hat{\phi}_\omega^{\tilde{\varepsilon}}\|_{L^1(\leb)}=o_{\tilde{\varepsilon}\to 0}(1)$. Thus,
     \begin{align}
        o_{\tilde{\varepsilon}\to 0}(1) &= \int_{I_L} |\phi_\omega^{\tilde{\varepsilon}}-\hat{p}_\omega^{\tilde{\varepsilon}} \phi_L|\, \dleb(x)\nonumber\\
        &\geq \left| \int_{I_L}\phi_\omega^{\tilde{\varepsilon}}\, \dleb(x) - \hat{p}_\omega^{\tilde{\varepsilon}} \int_{I_L} \phi_L\, \dleb(x) \right| \nonumber\\
        &=|p_\omega^{\tilde{\varepsilon}}-\hat{p}_\omega^{\tilde{\varepsilon}}| \label{eqn:p-phat}
     \end{align}
     Further,
     \begin{align}
         \sup_{x\in H_{L,\omega}^{\tilde{\varepsilon}}}|\hat{\phi}_\omega^{\tilde{\varepsilon}}(x) - p_\omega \phi_L(x)|&= \sup_{x\in H_{L,\omega}^{\tilde{\varepsilon}}}|(\hat{p}_\omega^{\tilde{\varepsilon}} - p_\omega )\phi_L(x)|\nonumber\\
         &=|\hat{p}_\omega^{\tilde{\varepsilon}} - p_\omega |\sup_{x\in H_{L,\omega}^{\tilde{\varepsilon}}}|\phi_L(x)|. \label{eqn:wwlo}
     \end{align}
    Since $\hat{\phi}_\omega^{\tilde{\varepsilon}}$ is the best approximation of $\phi_\omega^{\tilde{\varepsilon}}$ in $\mathrm{span}\{\phi_L,\phi_R\}$ \jp{that is also a density}, due to \lem{lem:unif_conv}(b), $$\sup_{x\in H_{L,\omega}^{\tilde{\varepsilon}}}|\hat{\phi}_\omega^{\tilde{\varepsilon}}(x) - p_\omega \phi_L(x)|=o_{\tilde{\varepsilon}\to 0}(1).$$ Recall that by \hyperref[list:P3]{\textbf{(P3)}}, as $\tilde{\varepsilon}\to 0$, $H_{L,\omega}^{\tilde{\varepsilon}}$ converges (in the Hausdorff metric) to the set of infinitesimal holes $H^0\cap I_{L}$ uniformly over $\omega\in\Omega$ away from a $\mathbb{P}$-null set. This, together with conditions \hyperref[list:I5]{\textbf{(I5)}} and \hyperref[list:I6]{\textbf{(I6)}} imply that there exists a constant $K>0$ such that $\sup_{x\in H_{L,\omega}^{\tilde{\varepsilon}}}|\phi_L(x)|=K+o_{\tilde{\varepsilon}\to 0}(1)$. Therefore, from \eqn{eqn:wwlo},
    \begin{align*}
        o_{\tilde{\varepsilon}\to 0}(1) =\sup_{x\in H_{L,\omega}^{\tilde{\varepsilon}}}|\hat{\phi}_\omega^{\tilde{\varepsilon}}(x) - p_\omega \phi_L(x)| = |\hat{p}_\omega^{\tilde{\varepsilon}} - p_\omega |(K+o_{\tilde{\varepsilon}\to 0}(1)).
    \end{align*}
    Thus $|\hat{p}_\omega^{\tilde{\varepsilon}} - p_\omega |=o_{\tilde{\varepsilon}\to 0}(1)$, which due to \eqn{eqn:p-phat} implies that $p_\omega^{\tilde{\varepsilon}}=p_\omega + o_{\tilde{\varepsilon}\to 0}(1)$.\label{lem:p-1-p}
    \end{proof}
\end{lemma}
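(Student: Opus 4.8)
The plan is to reduce the two asymptotics to the $L^1$-convergence of $\phi_\omega^{\tilde\varepsilon}$ already in hand and to the structural form of the accumulation point. First I would observe that, since $\mu_\omega^{\tilde\varepsilon}$ is absolutely continuous with density $\phi_\omega^{\tilde\varepsilon}$, we have $p_\omega^{\tilde\varepsilon}=\int_{I_L}\phi_\omega^{\tilde\varepsilon}\,\dleb(x)$, and since $\leb(I_L\cap I_R)=\leb(\{b\})=0$ while $I_L\cup I_R=I$, also $\mu_\omega^{\tilde\varepsilon}(I_R)=1-p_\omega^{\tilde\varepsilon}$; hence it suffices to prove \eqn{eqn:measIL}, with \eqn{eqn:measIR} following by complementation. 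Then I would invoke the two facts available along the subsequence $\tilde\varepsilon\to0$ fixed in \lem{lem:unif_conv}: by \rem{rem:subseq}, $\phi_\omega^{\tilde\varepsilon}\to\phi_\omega^0$ in $L^1(\leb)$ for $\mathbb{P}$-a.e.\ $\omega$, and by \lem{lem:unif_conv}(a), $\phi_\omega^0=p_\omega\phi_L+(1-p_\omega)\phi_R$ with $p_\omega\in[0,1]$.

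The core computation is then short. Since $\phi_L=d\mu_L/\dleb$ is supported on $I_L$ with $\int_{I_L}\phi_L\,\dleb=\mu_L(I_L)=1$, while $\phi_R=d\mu_R/\dleb$ is supported on $I_R$ and hence $\leb$-a.e.\ zero on $I_L$, integration over $I_L$ gives $\int_{I_L}\phi_\omega^0\,\dleb(x)=p_\omega$, so
\[
\bigl|p_\omega^{\tilde\varepsilon}-p_\omega\bigr|=\Bigl|\int_{I_L}\bigl(\phi_\omega^{\tilde\varepsilon}-\phi_\omega^0\bigr)\,\dleb(x)\Bigr|\le\bigl\|\phi_\omega^{\tilde\varepsilon}-\phi_\omega^0\bigr\|_{L^1(\leb)}=o_{\tilde\varepsilon\to0}(1),
\]
which is \eqn{eqn:measIL}. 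If instead one wishes to use only that $\phi_\omega^{\tilde\varepsilon}$ becomes $L^1$-close to $\mathrm{span}\{\phi_L,\phi_R\}$, rather than that its a.e.\ $L^1$-limit is the specific element $\phi_\omega^0$, I would introduce the best approximation $\hat\phi_\omega^{\tilde\varepsilon}=\hat p_\omega^{\tilde\varepsilon}\phi_L+(1-\hat p_\omega^{\tilde\varepsilon})\phi_R$ of $\phi_\omega^{\tilde\varepsilon}$ in the segment joining $\phi_L$ and $\phi_R$: then $\|\phi_\omega^{\tilde\varepsilon}-\hat\phi_\omega^{\tilde\varepsilon}\|_{L^1(\leb)}=o_{\tilde\varepsilon\to0}(1)$ by \lem{lem:cont}, and integrating over $I_L$ (where $\phi_R$ vanishes a.e.) gives $|p_\omega^{\tilde\varepsilon}-\hat p_\omega^{\tilde\varepsilon}|=o_{\tilde\varepsilon\to0}(1)$; it then remains to show $\hat p_\omega^{\tilde\varepsilon}\to p_\omega$, which I would obtain by restricting to the hole $H_{L,\omega}^{\tilde\varepsilon}\subseteq I_L$, on which $\hat\phi_\omega^{\tilde\varepsilon}=\hat p_\omega^{\tilde\varepsilon}\phi_L$, so that
\[
\sup_{x\in H_{L,\omega}^{\tilde\varepsilon}}\bigl|\hat\phi_\omega^{\tilde\varepsilon}(x)-p_\omega\phi_L(x)\bigr|=\bigl|\hat p_\omega^{\tilde\varepsilon}-p_\omega\bigr|\sup_{x\in H_{L,\omega}^{\tilde\varepsilon}}|\phi_L(x)|,
\]
and then dividing: the left side is $o_{\tilde\varepsilon\to0}(1)$ by \lem{lem:cont} together with the uniform-over-holes convergence $\phi_\omega^{\tilde\varepsilon}\to p_\omega\phi_L$ of \lem{lem:unif_conv}(b), while $\sup_{x\in H_{L,\omega}^{\tilde\varepsilon}}|\phi_L(x)|=K+o_{\tilde\varepsilon\to0}(1)$ for some $K>0$ since, by \hyperref[list:P3]{\textbf{(P3)}}, $H_{L,\omega}^{\tilde\varepsilon}\to H^0\cap I_L$ in the Hausdorff metric and, by \hyperref[list:I5]{\textbf{(I5)}}--\hyperref[list:I6]{\textbf{(I6)}}, $\phi_L$ is continuous and strictly positive there.

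I do not expect any hard estimate here; the difficulty is entirely in the bookkeeping. One must fix the subsequence $\tilde\varepsilon$ — and with it $\phi_\omega^0$ and $p_\omega$, all of which may depend on $\omega$ — once and for all, and make sure every $o_{\tilde\varepsilon\to0}(1)$ that is combined is taken along that same subsequence and interpreted for $\mathbb{P}$-a.e.\ $\omega$; and, in the second route, one must check that the denominator $\sup_{x\in H_{L,\omega}^{\tilde\varepsilon}}|\phi_L(x)|$ does not degenerate to $0$, which is precisely the role of \hyperref[list:I5]{\textbf{(I5)}}--\hyperref[list:I6]{\textbf{(I6)}} together with \hyperref[list:P3]{\textbf{(P3)}}. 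Once these are in place, \eqn{eqn:measIR} is immediate from $\mu_\omega^{\tilde\varepsilon}(I_R)=1-p_\omega^{\tilde\varepsilon}$.
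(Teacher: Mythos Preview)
Your second route is essentially the paper's proof, step for step: introduce the best $L^1$-approximant $\hat\phi_\omega^{\tilde\varepsilon}=\hat p_\omega^{\tilde\varepsilon}\phi_L+(1-\hat p_\omega^{\tilde\varepsilon})\phi_R$, get $|p_\omega^{\tilde\varepsilon}-\hat p_\omega^{\tilde\varepsilon}|=o(1)$ by integrating the $L^1$-error over $I_L$, then pin down $|\hat p_\omega^{\tilde\varepsilon}-p_\omega|$ via the identity on the hole and the positivity of $\phi_L$ there coming from \hyperref[list:P3]{\textbf{(P3)}}, \hyperref[list:I5]{\textbf{(I5)}}, \hyperref[list:I6]{\textbf{(I6)}}.

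Your first route is a genuinely shorter alternative, and it is correct for the lemma as literally stated: once $\phi_\omega^{\tilde\varepsilon}\to\phi_\omega^0$ in $L^1$ along the fixed subsequence, integrating over $I_L$ immediately gives $p_\omega^{\tilde\varepsilon}\to p_\omega$. The reason the paper takes the longer road is uniformity in $\omega$. Your first argument feeds in the $L^1$-convergence to the \emph{specific} limit $\phi_\omega^0$, which \rem{rem:subseq} and \lem{lem:cont} only supply for $\mathbb{P}$-a.e.\ $\omega$; by the paper's convention the resulting error would be $o_{\omega,\tilde\varepsilon\to0}(1)$. By contrast, every input to the second route---the distance from $\phi_\omega^{\tilde\varepsilon}$ to $\mathrm{span}\{\phi_L,\phi_R\}$ in \lem{lem:cont}, and the $\esssup_\omega$ control over the holes in \lem{lem:unif_conv}(b)---is uniform over $\omega$ away from a null set, and this is exactly what the paper records in \rem{rem:unif} and then exploits in the proof of \thrm{thrm:phi_lims} (e.g.\ when bounding $p_{\sigma^{-t/\tilde\varepsilon}\omega}^{\tilde\varepsilon}$ along an $\omega$-orbit). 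So: your short route proves the lemma; the paper's route proves the lemma \emph{and} the uniformity needed downstream.
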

\begin{remark}
By \lem{lem:cont}, we know that any accumulation point of $\phi_\omega^\varepsilon$ along the subsequence $\tilde{\varepsilon}\to 0$ converges in $L^1$ to $\phi_\omega^0$ for $\mathbb{P}$-a.e. $\omega\in\Omega$. \lem{lem:p-1-p} reveals that this convergence is in fact uniform over $\omega\in\Omega$ away from a $\mathbb{P}$-null set. \label{rem:unif}  
\end{remark}
\begin{lemma}
In the setting of \thrm{thrm:phi_lims}, if $p_\omega$ is as in \lem{lem:unif_conv}, then for $\mathbb{P}$-a.e. $\omega\in\Omega$
\begin{align}
    \mu_\omega^{\tilde{\varepsilon}}(H_{L,\omega}^{\tilde{\varepsilon}})&= p_\omega \mu_L(H_{L,\omega}^{\tilde{\varepsilon}}) +o_{{\tilde{\varepsilon}}\to 0}(1)\mu_L(H_{L,\omega}^{\tilde{\varepsilon}}), \label{eqn:measWL}\\
    \mu_\omega^{\tilde{\varepsilon}}(H_{R,\omega}^{\tilde{\varepsilon}})&= (1-p_\omega) \mu_R(H_{R,\omega}^{\tilde{\varepsilon}}) +o_{{\tilde{\varepsilon}}\to 0}(1)\mu_R(H_{R,\omega}^{\tilde{\varepsilon}}). \label{eqn:measWR}
\end{align}
\label{lem:holes}
\end{lemma}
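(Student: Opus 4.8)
The plan is to compare the two measures of the hole directly, using the uniform convergence of densities over the holes from \lem{lem:unif_conv}(b), and then to convert the resulting Lebesgue-measure estimate into a $\mu_L$-measure estimate via the positivity of $\phi_L$ near the infinitesimal holes. If $\leb(H_{L,\omega}^{\tilde{\varepsilon}})=0$ the claim is trivial, so I assume otherwise throughout.

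First I would write $\mu_\omega^{\tilde{\varepsilon}}$ and $\mu_L$ in terms of their densities $\phi_\omega^{\tilde{\varepsilon}}$ and $\phi_L$ and apply the triangle inequality: using that the minimal-variation representative is used throughout,
\[
\bigl|\mu_\omega^{\tilde{\varepsilon}}(H_{L,\omega}^{\tilde{\varepsilon}}) - p_\omega\mu_L(H_{L,\omega}^{\tilde{\varepsilon}})\bigr|
= \Bigl|\int_{H_{L,\omega}^{\tilde{\varepsilon}}}\bigl(\phi_\omega^{\tilde{\varepsilon}}-p_\omega\phi_L\bigr)\,\dleb\Bigr|
\le \leb(H_{L,\omega}^{\tilde{\varepsilon}})\sup_{x\in H_{L,\omega}^{\tilde{\varepsilon}}}\bigl|\phi_\omega^{\tilde{\varepsilon}}(x)-p_\omega\phi_L(x)\bigr|,
\]
and by \lem{lem:unif_conv}(b) the supremum on the right is $o_{\tilde{\varepsilon}\to 0}(1)$, in fact uniformly over $\omega\in\Omega$ away from a $\mathbb{P}$-null set. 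Next I would invoke, exactly as in the proof of \lem{lem:p-1-p}, that \hyperref[list:I5]{\textbf{(I5)}} and \hyperref[list:I6]{\textbf{(I6)}} make $\phi_L$ continuous and strictly positive on a neighbourhood of $H^0\cap I_L$, while \hyperref[list:P3]{\textbf{(P3)}} forces $H_{L,\omega}^{\tilde{\varepsilon}}$ into that neighbourhood for $\tilde{\varepsilon}$ small, uniformly in $\omega$ off a null set; hence there is $K'>0$ with $\phi_L(x)\ge K'$ for all $x\in H_{L,\omega}^{\tilde{\varepsilon}}$. Therefore $\mu_L(H_{L,\omega}^{\tilde{\varepsilon}})=\int_{H_{L,\omega}^{\tilde{\varepsilon}}}\phi_L\,\dleb\ge K'\,\leb(H_{L,\omega}^{\tilde{\varepsilon}})$, i.e. $\leb(H_{L,\omega}^{\tilde{\varepsilon}})\le (K')^{-1}\mu_L(H_{L,\omega}^{\tilde{\varepsilon}})$. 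Substituting into the previous display gives
\[
\bigl|\mu_\omega^{\tilde{\varepsilon}}(H_{L,\omega}^{\tilde{\varepsilon}}) - p_\omega\mu_L(H_{L,\omega}^{\tilde{\varepsilon}})\bigr|\le (K')^{-1}\,o_{\tilde{\varepsilon}\to 0}(1)\,\mu_L(H_{L,\omega}^{\tilde{\varepsilon}})=o_{\tilde{\varepsilon}\to 0}(1)\,\mu_L(H_{L,\omega}^{\tilde{\varepsilon}}),
\]
which is \eqn{eqn:measWL}. The identity \eqn{eqn:measWR} follows verbatim, replacing $L$ by $R$, $p_\omega$ by $1-p_\omega$, and \lem{lem:unif_conv}(b) by \lem{lem:unif_conv}(c).

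I do not expect a genuine obstacle here: the only step that needs care is passing from the Lebesgue estimate to the $\mu_L$ estimate, and this is handled by the uniform lower bound on $\phi_L$ over the shrinking holes — the same mechanism already exploited in \lem{lem:p-1-p}, resting on \hyperref[list:I5]{\textbf{(I5)}}, \hyperref[list:I6]{\textbf{(I6)}} and \hyperref[list:P3]{\textbf{(P3)}}. Everything else is the triangle inequality together with \lem{lem:unif_conv}.
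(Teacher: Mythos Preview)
Your proposal is correct and follows essentially the same route as the paper: both bound $|\mu_\omega^{\tilde\varepsilon}(H_{L,\omega}^{\tilde\varepsilon})-p_\omega\mu_L(H_{L,\omega}^{\tilde\varepsilon})|$ by $\leb(H_{L,\omega}^{\tilde\varepsilon})\cdot\sup_{H_{L,\omega}^{\tilde\varepsilon}}|\phi_\omega^{\tilde\varepsilon}-p_\omega\phi_L|$, invoke \lem{lem:unif_conv}(b) for the $o(1)$, and then use \hyperref[list:I5]{\textbf{(I5)}}, \hyperref[list:I6]{\textbf{(I6)}}, \hyperref[list:P3]{\textbf{(P3)}} to compare $\leb$ and $\mu_L$ on the shrinking holes. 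The only cosmetic difference is that the paper phrases the comparison via Lebesgue's differentiation theorem to bound the ratio $\mu_L(H_{L,\omega}^{\tilde\varepsilon})/\leb(H_{L,\omega}^{\tilde\varepsilon})$ away from zero, whereas you use the equivalent pointwise lower bound $\phi_L\ge K'$ on the holes directly.
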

\begin{proof}
    Due to \hyperref[list:P6]{\textbf{(P6)}}, for each $\omega\in\Omega$, $d\mu_\omega^{\tilde{\varepsilon}}=\phi_\omega^{\tilde{\varepsilon}}\,\dleb$. We establish \eqn{eqn:measWL}, as \eqn{eqn:measWR} follows in an almost identical manner. Observe that for any set
$S_\omega^{\tilde{\varepsilon}}\subseteq I_L$ 
    \begin{align}
      \mu_\omega^{\tilde{\varepsilon}}(S_{\omega}^{\tilde{\varepsilon}})&= \int_{S_{\omega}^{\tilde{\varepsilon}}} \phi_\omega^{\tilde{\varepsilon}}\, \dleb(x) \nonumber \\
      &=p_\omega \int_{S_{\omega}^{\tilde{\varepsilon}}}  \phi_L\, \dleb(x)+\int_{S_{\omega}^{\tilde{\varepsilon}}} \phi_\omega^{\tilde{\varepsilon}} - p_\omega \phi_L \, \dleb(x) \nonumber \\
      &= p_\omega \mu_L(S_{\omega}^{\tilde{\varepsilon}})+O_{{\tilde{\varepsilon}}\to 0}\left(\esssup_{\omega\in\Omega}\sup_{x\in S_{\omega}^{\tilde{\varepsilon}}}|\phi_\omega^{\tilde{\varepsilon}} - p_\omega \phi_L|\right)\leb(S_{\omega}^{\tilde{\varepsilon}}). \label{eqn:meas_O}
    \end{align}
    For \eqn{eqn:measWL}, take $S_\omega^{\tilde{\varepsilon}} = H_{L,\omega}^{\tilde{\varepsilon}}$ in \eqn{eqn:meas_O} and consider the set of infinitesimal holes $H^0\cap I_L=\{h_L^1,\cdots, h_L^J\}$ where $J\in\mathbb{N}$.\footnote{Recall that the set of all points belonging to $H^0:=(T^0)^{-1}(\{b\})\setminus \{b\}$ are referred to as infinitesimal holes.} \jp{Thanks to \hyperref[list:P3]{\textbf{(P3)}}, one can find a finite collection of disjoint intervals $H_{L,\omega}^{1,\tilde{\varepsilon}},\dots,H_{L,\omega}^{J,\tilde{\varepsilon}}$ such that $H_{L,\omega}^{\tilde{\varepsilon}}=\cup_{i=1}^J H_{L,\omega}^{i,\tilde{\varepsilon}}$, where for each $i=1,\dots, J$, $H_{L,\omega}^{i,\tilde{\varepsilon}}\to h_L^i$ in the Hausdorff metric uniformly over $\omega\in\Omega$ away from a $\mathbb{P}$-null set.} Recall that by \hyperref[list:I5]{\textbf{(I5)}}, $\phi_L$ is continuous at all points in $H^0\cap I_L$. Thus by Lebesgue's differentiation theorem 
    \begin{equation}
        \frac{\mu_L(H_{L,\omega}^{\tilde{\varepsilon}})}{\leb(H_{L,\omega}^{\tilde{\varepsilon}})} = \sum_{i=1}^J \frac{\mu_L(H_{L,\omega}^{i,\tilde{\varepsilon}})}{\leb(H_{L,\omega}^{i,\tilde{\varepsilon}})} \frac{\leb(H_{L,\omega}^{i,\tilde{\varepsilon}})}{\leb(H_{L,\omega}^{\tilde{\varepsilon}})}=\sum_{i=1}^J (\phi_L(h_L^i)+o_{\tilde{\varepsilon} \to 0}(1)) \frac{\leb(H_{L,\omega}^{i,\tilde{\varepsilon}})}{\leb(H_{L,\omega}^{\tilde{\varepsilon}})}.
        \label{eqn:meas_ratio}
    \end{equation}
    Note that $\sum_{i=1}^J\frac{\leb(H_{L,\omega}^{i,\tilde{\varepsilon}})}{\leb(H_{L,\omega}^{\tilde{\varepsilon}})}=1$. Further, \hyperref[list:I6]{\textbf{(I6)}} asserts that for each $i=1,\dots, J$, $\phi_L(h_L^i)>0$, and thus one can bound \eqn{eqn:meas_ratio} uniformly above and below over $\omega\in\Omega$. Additionally, by \lem{lem:unif_conv}(b),
    $$\lim_{{\tilde{\varepsilon}}\to 0} \esssup_{\omega\in\Omega}\sup_{x\in H_{L,\omega}^{\tilde{\varepsilon}}}|\phi_\omega^{\tilde{\varepsilon}} - p_\omega \phi_L|=0.$$
   Therefore, utilising \eqn{eqn:meas_O}, for $\mathbb{P}$-a.e. $\omega\in\Omega$
    \begin{align*}   \mu_\omega^{\tilde{\varepsilon}}(H_{L,\omega}^{\tilde{\varepsilon}})
        &=p_\omega \mu_L(H_{L,\omega}^{\tilde{\varepsilon}})+o_{{\tilde{\varepsilon}}\to 0}(1){\mu_L(H_{L,\omega}^{\tilde{\varepsilon}})}.
    \end{align*}
\end{proof}
\begin{corollary}
In the setting of \thrm{thrm:phi_lims}, if $p_\omega$ is as in \lem{lem:unif_conv}, then for $\mathbb{P}$-a.e. $\omega\in\Omega$
\begin{align}
 \mu_\omega^{\tilde{\varepsilon}}(H_{L,\omega}^{\tilde{\varepsilon}})&= p_\omega {\tilde{\varepsilon}} \beta_{L,\omega} +o_{{\tilde{\varepsilon}}\to 0}({\tilde{\varepsilon}}), \label{eqn:measWLe}\\
    \mu_\omega^{\tilde{\varepsilon}}(H_{R,\omega}^{\tilde{\varepsilon}})&= (1-p_\omega) {\tilde{\varepsilon}} \beta_{R,\omega} +o_{{\tilde{\varepsilon}}\to 0}({\tilde{\varepsilon}}). \label{eqn:measWRe}   
\end{align}
\label{cor:holes}
\end{corollary}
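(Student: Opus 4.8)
The plan is to deduce the two identities directly from \lem{lem:holes} and \hyperref[list:P4]{\textbf{(P4)}}, the only work being to track the orders of the various error terms. First I would invoke \hyperref[list:P4]{\textbf{(P4)}} to write, for $\star\in\{L,R\}$, $\mu_\star(H_{\star,\omega}^{\tilde\varepsilon}) = \tilde\varepsilon\beta_{\star,\omega} + o_{\tilde\varepsilon\to 0}(\tilde\varepsilon)$, recalling that the implicit error is independent of $\omega\in\Omega$. Substituting the case $\star = L$ into \eqn{eqn:measWL} of \lem{lem:holes} gives
\[
\mu_\omega^{\tilde\varepsilon}(H_{L,\omega}^{\tilde\varepsilon}) = p_\omega\bigl(\tilde\varepsilon\beta_{L,\omega} + o_{\tilde\varepsilon\to 0}(\tilde\varepsilon)\bigr) + o_{\tilde\varepsilon\to 0}(1)\bigl(\tilde\varepsilon\beta_{L,\omega} + o_{\tilde\varepsilon\to 0}(\tilde\varepsilon)\bigr),
\]
and similarly for $\star = R$ using \eqn{eqn:measWR} and the weight $1-p_\omega$.

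Next I would argue that every summand on the right other than $p_\omega\tilde\varepsilon\beta_{L,\omega}$ is $o_{\tilde\varepsilon\to 0}(\tilde\varepsilon)$. Indeed, since $p_\omega\in[0,1]$ we have $p_\omega\cdot o_{\tilde\varepsilon\to 0}(\tilde\varepsilon) = o_{\tilde\varepsilon\to 0}(\tilde\varepsilon)$; since $\beta_L\in L^\infty(\mathbb{P})$ (in particular $\beta_{L,\omega}$ is finite for $\mathbb{P}$-a.e.\ $\omega$), the product $o_{\tilde\varepsilon\to 0}(1)\cdot\tilde\varepsilon\beta_{L,\omega}$ is $o_{\tilde\varepsilon\to 0}(\tilde\varepsilon)$; and $o_{\tilde\varepsilon\to 0}(1)\cdot o_{\tilde\varepsilon\to 0}(\tilde\varepsilon) = o_{\tilde\varepsilon\to 0}(\tilde\varepsilon)$. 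Collecting these terms yields \eqn{eqn:measWLe}, and the identical computation with $L$ replaced by $R$ and $p_\omega$ by $1-p_\omega$ yields \eqn{eqn:measWRe}.

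There is no real obstacle here; the only point requiring a little care is the bookkeeping of uniformity. The error in \hyperref[list:P4]{\textbf{(P4)}} is $\omega$-independent and those produced by \lem{lem:holes} hold for $\mathbb{P}$-a.e.\ $\omega$, so the resulting identities \eqn{eqn:measWLe}--\eqn{eqn:measWRe} hold for $\mathbb{P}$-a.e.\ $\omega$, exactly as stated; the boundedness of $\beta_L$ and $\beta_R$ is what prevents the cross term $o_{\tilde\varepsilon\to 0}(1)\cdot\tilde\varepsilon\beta_{\star,\omega}$ from degrading the $o(\tilde\varepsilon)$ order, and no further estimates are needed.
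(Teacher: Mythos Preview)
Your proposal is correct and follows essentially the same approach as the paper: invoke \hyperref[list:P4]{\textbf{(P4)}} to expand $\mu_\star(H_{\star,\omega}^{\tilde\varepsilon})$, substitute into the identities of \lem{lem:holes}, and simplify the resulting error terms. You are slightly more explicit than the paper in justifying why each cross term is $o_{\tilde\varepsilon\to 0}(\tilde\varepsilon)$ (using $p_\omega\in[0,1]$ and $\beta_\star\in L^\infty(\mathbb{P})$), but the argument is otherwise identical.
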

\begin{proof}
We only show that \eqn{eqn:measWLe} holds since one can show \eqn{eqn:measWRe} holds in an almost identical manner. By \hyperref[list:P4]{\textbf{(P4)}}, for $\star \in \{L,R\}$, $\mu_\star(H_{\star,\omega}^{\tilde{\varepsilon}})={\tilde{\varepsilon}} \beta_{\star,\omega}+o_{{\tilde{\varepsilon}}\to 0}({\tilde{\varepsilon}})$ where $\beta_\star \in  L^\infty(\mathbb{P})$. Therefore, by \eqn{eqn:measWL} from \lem{lem:holes}
\begin{align*}
    \mu_\omega^{\tilde{\varepsilon}}(H_{L,\omega}^{\tilde{\varepsilon}})&= p_\omega \mu_L(H_{L,\omega}^{\tilde{\varepsilon}}) +o_{{\tilde{\varepsilon}}\to 0}(1)\mu_L(H_{L,\omega}^{\tilde{\varepsilon}})\\
    &=p_\omega ({\tilde{\varepsilon}} \beta_{L,\omega}+o_{{\tilde{\varepsilon}}\to 0}({\tilde{\varepsilon}})) +o_{{\tilde{\varepsilon}}\to 0}(1)({\tilde{\varepsilon}} \beta_{L,\omega}+o_{{\tilde{\varepsilon}}\to 0}({\tilde{\varepsilon}}))\\
    &=p_\omega {\tilde{\varepsilon}} \beta_{L,\omega} +o_{{\tilde{\varepsilon}}\to 0}({\tilde{\varepsilon}}).
\end{align*}
\end{proof}
\noindent
We are now ready to prove \thrm{thrm:phi_lims}.
\begin{proof}[Proof of \thrm{thrm:phi_lims}]
A combination of \lem{lem:cont}, \ref{lem:unif_conv}(a) and \lem{lem:p-1-p} asserts that any accumulation point of $\phi_\omega^{{\varepsilon}}$ along the subsequence $\tilde{\varepsilon}\to 0$ satisfies $\phi_\omega^0=\lim_{\tilde{\varepsilon}\to 0}\phi_\omega^{\tilde{\varepsilon}}$ (as in \eqn{eqn:lim-subseq}) and 
\begin{equation}
    \phi_\omega^{\tilde{\varepsilon}} \stackrel{L^1}{\to} \phi_\omega^0:= p_\omega \phi_L +(1-p_\omega)\phi_R
    \label{eqn:phiwepsL1}
\end{equation}
{uniformly} over $\omega\in\Omega$ away from a $\mathbb{P}$-null set. It remains to determine the weights $p_\omega$ and $1-p_\omega$ of any accumulation point of $\phi_\omega^{{\varepsilon}}$ along the subsequence $\tilde{\varepsilon}\to 0$. We illustrate that these weights are related to the readily computed limit from \lem{lem:weights}. We do this by utilising properties of the random absolutely continuous invariant measure $(\mu_\omega^{\tilde{\varepsilon}})_{\omega\in\Omega}$. Namely, since $(\mu_\omega^{\tilde{\varepsilon}})_{\omega\in\Omega}$ is a RACIM for $(T_\omega^{\tilde{\varepsilon}})_{\omega\in\Omega}$ (see \dfn{def:RIM-RID} and \rem{rem:RACIM})
\begin{align}
    p_\omega^{\tilde{\varepsilon}}=\mu_{ \omega}^{\tilde{\varepsilon}}(I_L)&=\mu_{\sigma^{-1}\omega}^{\tilde{\varepsilon}}((T_{\sigma^{-1}\omega}^{\tilde{\varepsilon}})^{-1}(I_L)) \nonumber\\
    &=\mu_{\sigma^{-1}\omega}^{\tilde{\varepsilon}}((I_L\setminus H_{L,\sigma^{-1}\omega}^{\tilde{\varepsilon}})\cup H_{R,\sigma^{-1}\omega}^{\tilde{\varepsilon}})) \nonumber\\
    &=\mu_{\sigma^{-1}\omega}^{\tilde{\varepsilon}}(I_L)-\mu_{\sigma^{-1}\omega}^{\tilde{\varepsilon}}(H_{L,\sigma^{-1}\omega}^{\tilde{\varepsilon}}) +\mu_{\sigma^{-1}\omega}^{\tilde{\varepsilon}}(H_{R,\sigma^{-1}\omega}^{\tilde{\varepsilon}}) \nonumber \\
    &\stackrel{(\star)}{=}p_{\sigma^{-1}\omega}^{\tilde{\varepsilon}} - p_{\sigma^{-1}\omega}  {\tilde{\varepsilon}} \beta_{L,{\sigma^{-1}\omega} }+ (1-p_{\sigma^{-1}\omega} ){\tilde{\varepsilon}} \beta_{R,{\sigma^{-1}\omega} } +o_{{\tilde{\varepsilon}}\to 0}({\tilde{\varepsilon}}) \nonumber \\
    &\stackrel{(\star\star)}{=}p_{\sigma^{-1}\omega}^{\tilde{\varepsilon}} (1-{\tilde{\varepsilon}}(\beta_{L,{\sigma^{-1}\omega} }+\beta_{R,{\sigma^{-1}\omega} }))+{\tilde{\varepsilon}}\beta_{R,{\sigma^{-1}\omega} }+o_{{\tilde{\varepsilon}}\to 0}(\tilde{\varepsilon}). \label{eqn:psw} 
\end{align}
At $(\star)$ we have used \eqn{eqn:measIL} and \cor{cor:holes}, and at $(\star\star)$ we have used \eqn{eqn:measIL} and \eqn{eqn:measIR}. From \eqn{eqn:psw} we know that for all $n\in\mathbb{Z}$ 
\begin{equation}
    p_{\sigma^{n}\omega}^{\tilde{\varepsilon}}= p_{\sigma^{n-1}\omega}^{\tilde{\varepsilon}}(1-{\tilde{\varepsilon}}(\beta_{L,{\sigma^{n-1}\omega}}+\beta_{R,{\sigma^{n-1}\omega}}))+{\tilde{\varepsilon}}\beta_{R,{\sigma^{n-1}\omega}}+o_{{\tilde{\varepsilon}}\to 0}({\tilde{\varepsilon}}).
    \label{eqn:psnw}
\end{equation}
Fix $t>0$. Inductively using \eqn{eqn:psnw}, we find that for any $K^{\tilde{\varepsilon}}_t\in\mathbb{N}$
\begin{equation}
    p_{\omega}^{\tilde{\varepsilon}}=p_{\sigma^{-K_t^{\tilde{\varepsilon}}}\omega}^{\tilde{\varepsilon}}\prod_{k=0}^{K_t^{\tilde{\varepsilon}}-1}(1-{\tilde{\varepsilon}}\gamma_{\sigma^{-k-1}\omega})+\sum_{n=0}^{K^\varepsilon_t-1}({\tilde{\varepsilon}}\beta_{R,\sigma^{-n-1}\omega}+o_{{\tilde{\varepsilon}}\to 0}({\tilde{\varepsilon}}))\prod_{k=0}^{n-1}(1-{\tilde{\varepsilon}}\gamma_{\sigma^{-k-1}\omega}).\label{eqn:Ksum}
\end{equation}
Choose $K^{\tilde{\varepsilon}}_t=\frac{t}{{\tilde{\varepsilon}}}$. We show that by taking ${\tilde{\varepsilon}}$ arbitrarily small and then $t$ sufficiently large, \eqn{eqn:Ksum} converges to the same limit obtained in \lem{lem:weights} for $\mathbb{P}$-a.e. $\omega\in\Omega$. We recall the sequences of functions $(\pi_\omega^{\tilde{\varepsilon}})_{{\tilde{\varepsilon}}>0}$ where $\pi_\omega^{\tilde{\varepsilon}}$ is given by \eqn{eqn:pweps} (replacing $\varepsilon$ with $\tilde{\varepsilon}$). Then 
\begin{align*}
    \left|\pi_\omega^{\tilde{\varepsilon}} - p_{\omega}^{\tilde{\varepsilon}} \right|&\leq\sum_{n=\frac{t}{{\tilde{\varepsilon}}}}^\infty({\tilde{\varepsilon}}\beta_{R,\sigma^{-n-1}\omega}+o_{{\tilde{\varepsilon}}\to 0}({\tilde{\varepsilon}}))\prod_{k=0}^{n-1}(1-{\tilde{\varepsilon}}(\beta_{L,\sigma^{-k-1}\omega}+\beta_{R,\sigma^{-k-1}\omega}))\\
    &\quad + p_{\sigma^{-\frac{t}{\tilde{\varepsilon}}}\omega}^{\tilde{\varepsilon}}\prod_{k=0}^{\frac{t}{\tilde{\varepsilon}}-1}(1-\tilde{\varepsilon}\gamma_{\sigma^{-k-1}})\\
    &\stackrel{(\star\star\star)}{\leq} (M+o_{\tilde{\varepsilon}\to 0}(1))\frac{\tilde{\varepsilon} e^{t\left(-\int_\Omega \gamma_s \, d\mathbb{P}(s)+o_{\omega,\tilde{\varepsilon}\to 0}(1)\right)}}{1-e^{\tilde{\varepsilon}\left(-\int_{\Omega} \gamma_s \, d\mathbb{P}(s)+o_{\omega,\tilde{\varepsilon}\to 0}(1)\right)}}+p_{\sigma^{-\frac{t}{\tilde{\varepsilon}}}\omega}^{\tilde{\varepsilon}}e^{t\left(-\int_{\Omega}\gamma_s \, d\mathbb{P}(s)+o_{\omega,\tilde{\varepsilon}\to 0}(1) \right)}
\end{align*}
where at $(\star\star\star)$ we have used \Step{step:B} and \Step{step:tonep} (\jp{with $p=1$}) from the proof of \lem{lem:weights} on the first and second term, respectively. Using \Step{stepp:B} from the proof of \lem{lem:weights} on the first term, and noting by \eqn{eqn:measIL} and \lem{lem:unif_conv} that $p_{\sigma^{-\frac{t}{\tilde{\varepsilon}}}\omega}^{\tilde{\varepsilon}} \leq 1+o_{\tilde{\varepsilon}\to 0}(1)$ for $\mathbb{P}$-a.e. $\omega\in\Omega$, by taking $\tilde{\varepsilon}\to 0$ and then $t\to\infty$, $\lim_{\tilde{\varepsilon}\to 0}\left|\pi_\omega^{\tilde{\varepsilon}} - p_{\omega}^{\tilde{\varepsilon}}\right|= 0$ for $\mathbb{P}$-a.e. $\omega\in\Omega$. By \lem{lem:weights}, since $\int_\Omega \beta_{L,\omega}+\beta_{R,\omega}\, d\mathbb{P}(\omega)\neq 0$, we get for $\mathbb{P}$-a.e. $\omega\in\Omega$
\begin{equation}
    \lim_{\tilde{\varepsilon}\to 0}\pi_\omega^{\tilde{\varepsilon}} = \frac{\int_{\Omega} \beta_{R,\omega}\, d\mathbb{P}(\omega)}{\int_{\Omega}\beta_{L,\omega}+\beta_{R,\omega} \, d\mathbb{P}(\omega)}.\label{eqn:limpiproof}
\end{equation}
Since $\lim_{\tilde{\varepsilon}\to 0}\left|\pi_\omega^{\tilde{\varepsilon}} - p_{\omega}^{\tilde{\varepsilon}}\right|= 0$ for $\mathbb{P}$-a.e. $\omega\in\Omega$, \eqn{eqn:limpiproof} coincides with $\lim_{\tilde{\varepsilon}\to 0}p_\omega^{\tilde{\varepsilon}}=p_\omega$. Hence, by \lem{lem:p-1-p} 
$$p_\omega:= \frac{\int_{\Omega} \beta_{R,\omega}\, d\mathbb{P}(\omega)}{\int_{\Omega}\beta_{L,\omega}+\beta_{R,\omega} \, d\mathbb{P}(\omega)}$$
and
$$1-p_\omega:= \frac{\int_{\Omega} \beta_{L,\omega}\, d\mathbb{P}(\omega)}{\int_{\Omega}\beta_{L,\omega}+\beta_{R,\omega} \, d\mathbb{P}(\omega)}$$
uniformly over $\omega\in\Omega$ away from a $\mathbb{P}$-null set, as claimed in the statement of \thrm{thrm:phi_lims}.
\end{proof}\noindent

\begin{remark}
    If we consider the case that $\int_\Omega \beta_{L,\omega}+\beta_{R,\omega}\, d\mathbb{P}(\omega)= 0$ in the statement of \thrm{thrm:phi_lims}, then $\beta_{L,\omega}=\beta_{R,\omega}=0$ for $\mathbb{P}$-a.e. $\omega\in\Omega$. Here, one would require finer information regarding the errors appearing in \lem{lem:holes} to investigate the accumulation points of the random invariant density.  
\end{remark}

\begin{remark}
An interesting feature of this class of paired metastable systems has been uncovered by \thrm{thrm:phi_lims}. That is, the limiting invariant density of such systems depends only on perturbations through averaged quantities (with respect to $\mathbb{P}$). Apart from this, it is independent of the driving system $\sigma$.    
\end{remark}
\begin{remark}
    Comparing \thrm{thrm:phi_lims} to \cite[Theorem 3.4]{BS_rand}, we explicitly compute the weights $p_\omega$ and $1-p_\omega$ associated with the limiting invariant density as opposed to studying the so-called \textit{limiting averaged holes ratio} ($l.a.h.r.$). In the case of paired metastable systems with driving that is not necessarily i.i.d., we find that the weights $p_\omega$ and $1-p_\omega$ may be computed in a similar way to \cite{BS_rand}. Namely,
    \begin{align*}
        l.a.h.r.&:=\lim_{\varepsilon\to 0} \frac{\int_{\Omega} \mu_{R}(H_{R,\omega}^\varepsilon)\, d\mathbb{P}(\omega)}{\int_{\Omega} \mu_{L}(H_{L,\omega}^\varepsilon)\, d\mathbb{P}(\omega)}\\
        &\stackrel{(\star)}{=}\lim_{\varepsilon\to 0} \frac{\int_{\Omega} \beta_{R,\omega} +o_{\varepsilon\to 0}(1)\, d\mathbb{P}(\omega)}{\int_{\Omega} \beta_{L,\omega} +o_{\varepsilon\to 0}(1)\, d\mathbb{P}(\omega)}\\
        &=\frac{\int_{\Omega} \beta_{R,\omega} \, d\mathbb{P}(\omega)}{\int_{\Omega} \beta_{L,\omega} \, d\mathbb{P}(\omega)}\\
        &= \frac{p_\omega}{1-p_\omega}.
    \end{align*}
    Note that at $(\star)$ we have used \hyperref[list:P4]{\textbf{(P4)}}. Taking $\frac{p_\omega}{1-p_\omega}=\frac{\int_{\Omega} \beta_{R,\omega} \, d\mathbb{P}(\omega)}{\int_{\Omega} \beta_{L,\omega} \, d\mathbb{P}(\omega)}$, solving for $p_\omega$ and renormalising gives the weights $p_\omega$ and $1-p_\omega$ determined through \eqn{eqn:pwlim}. In our case, this process is redundant since \eqn{eqn:pwlim} provides us with a readily computed expression for the weights associated with the limiting invariant density.
\end{remark}

\section{Characterisation of the second Oseledets space}
\label{sec:2nd_space}
In this section we generalise \cite[Theorem 2]{GTHW_metastable} to the random setting. We show that under the assumptions listed in \Sec{sec:map+pert} that the asymptotic behaviour of the second Oseledets space can be understood. Further, we show that the limiting function spanning this space can be identified explicitly. 
\begin{lemma}
    In the setting of \thrm{thrm:phi_lims}, for all $\varepsilon>0$ sufficiently small there exists a family of functions $(\psi_\omega^\varepsilon)_{\omega\in\Omega}$ where $\psi_\omega^\varepsilon\in \BV(I)$ satisfies $\mathcal{L}_\omega^\varepsilon \psi_\omega^\varepsilon = \rho_\omega^\varepsilon \psi_{\sigma\omega}^\varepsilon$, \jp{with} $\int_\Omega \log|\rho_\omega^\varepsilon|\, d\mathbb{P}(\omega)<0$, $||\psi_\omega^\varepsilon||_{L^1(\leb)}=1$, and $\int_I \psi_{\omega}^\varepsilon\, \dleb(x) = 0$ for $\mathbb{P}$-a.e. $\omega\in\Omega$.
    \label{lem:pert2nd}
\end{lemma}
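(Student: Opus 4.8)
The plan is to obtain the family $(\psi_\omega^\varepsilon)_{\omega\in\Omega}$ and the scalar cocycle $(\rho_\omega^\varepsilon)_{\omega\in\Omega}$ directly from the Oseledets splitting supplied by \lem{lem:cont}, and then verify the four asserted properties in turn; the only one needing a genuine argument is the negativity of $\int_\Omega\log|\rho_\omega^\varepsilon|\,d\mathbb{P}(\omega)$. First I would fix $\varepsilon>0$ small enough that \lem{lem:cont}(b) applies, so that the second Oseledets space $V_2^\varepsilon(\omega)$ of $\mathcal{L}_\omega^\varepsilon$ is one-dimensional with exponent $\lambda_2^\varepsilon<0$ of multiplicity one. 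By the $\mathbb{P}$-continuity of the Oseledets decomposition (\thrm{the:OsceledetsDecomp}) one may choose a measurable section $\omega\mapsto\psi_\omega^\varepsilon\in\BV(I)\setminus\{0\}$ spanning $V_2^\varepsilon(\omega)$; since $\BV(I)$ embeds into $L^1(\leb)$ and each nonzero element has positive $L^1$-norm, rescaling gives $\|\psi_\omega^\varepsilon\|_{L^1(\leb)}=1$ while preserving measurability. The equivariance in \dfn{def:Osc-splitting}(b) yields $\mathcal{L}_\omega^\varepsilon V_2^\varepsilon(\omega)=V_2^\varepsilon(\sigma\omega)$, and since both sides are lines there is a unique scalar $\rho_\omega^\varepsilon$ with $\mathcal{L}_\omega^\varepsilon\psi_\omega^\varepsilon=\rho_\omega^\varepsilon\psi_{\sigma\omega}^\varepsilon$; it is nonzero because $\mathcal{L}_\omega^\varepsilon$ maps $V_2^\varepsilon(\omega)$ \emph{onto} $V_2^\varepsilon(\sigma\omega)$, and $\omega\mapsto\rho_\omega^\varepsilon$ is measurable. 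Iterating this relation gives $\mathcal{L}_\omega^{\varepsilon\,(n)}\psi_\omega^\varepsilon=\bigl(\prod_{k=0}^{n-1}\rho_{\sigma^k\omega}^\varepsilon\bigr)\psi_{\sigma^n\omega}^\varepsilon$, so taking $L^1(\leb)$-norms (and using $\|\psi_{\sigma^n\omega}^\varepsilon\|_{L^1(\leb)}=1$) yields
\begin{equation*}
\prod_{k=0}^{n-1}|\rho_{\sigma^k\omega}^\varepsilon|=\bigl\|\mathcal{L}_\omega^{\varepsilon\,(n)}\psi_\omega^\varepsilon\bigr\|_{L^1(\leb)}\qquad\text{for all }n\in\mathbb{N}.
\end{equation*}

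To fix the sign of $\int_\Omega\log|\rho_\omega^\varepsilon|\,d\mathbb{P}(\omega)$ I would use that every Perron--Frobenius operator is a weak contraction on $L^1(\leb)$: positivity together with $\int_I\mathcal{L}_\omega^\varepsilon f\,\dleb(x)=\int_I f\,\dleb(x)$ for $f\ge0$ gives $|\mathcal{L}_\omega^\varepsilon g|\le\mathcal{L}_\omega^\varepsilon|g|$, hence $\|\mathcal{L}_\omega^\varepsilon g\|_{L^1(\leb)}\le\|g\|_{L^1(\leb)}$. Taking $g=\psi_\omega^\varepsilon$ gives $|\rho_\omega^\varepsilon|=\|\mathcal{L}_\omega^\varepsilon\psi_\omega^\varepsilon\|_{L^1(\leb)}\le1$, so $\log|\rho_\omega^\varepsilon|\le0$ and $\int_\Omega\log|\rho_\omega^\varepsilon|\,d\mathbb{P}(\omega)$ is well defined in $[-\infty,0]$. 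If this integral equalled $0$ then $|\rho_\omega^\varepsilon|=1$ for $\mathbb{P}$-a.e.\ $\omega$ (a nonpositive integrand with zero integral vanishes a.e.), so by the displayed identity $\|\mathcal{L}_\omega^{\varepsilon\,(n)}\psi_\omega^\varepsilon\|_{L^1(\leb)}=1$ for every $n$ and $\mathbb{P}$-a.e.\ $\omega$; but $\|\cdot\|_{L^1(\leb)}\le\|\cdot\|_{\BV(I)}$ together with \dfn{def:Osc-splitting}(d) and $\lambda_2^\varepsilon<0$ force $\|\mathcal{L}_\omega^{\varepsilon\,(n)}\psi_\omega^\varepsilon\|_{L^1(\leb)}\le\|\mathcal{L}_\omega^{\varepsilon\,(n)}\psi_\omega^\varepsilon\|_{\BV(I)}\to0$, a contradiction. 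Therefore $\int_\Omega\log|\rho_\omega^\varepsilon|\,d\mathbb{P}(\omega)<0$.

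For the vanishing of $\int_I\psi_\omega^\varepsilon\,\dleb(x)$ I would invoke the invariance of the Lebesgue integral under $\mathcal{L}_\omega^\varepsilon$ once more: for every $n$,
\begin{equation*}
\int_I\psi_\omega^\varepsilon\,\dleb(x)=\int_I\mathcal{L}_\omega^{\varepsilon\,(n)}\psi_\omega^\varepsilon\,\dleb(x)=\Bigl(\prod_{k=0}^{n-1}\rho_{\sigma^k\omega}^\varepsilon\Bigr)\int_I\psi_{\sigma^n\omega}^\varepsilon\,\dleb(x),
\end{equation*}
whose modulus is at most $\prod_{k=0}^{n-1}|\rho_{\sigma^k\omega}^\varepsilon|=\|\mathcal{L}_\omega^{\varepsilon\,(n)}\psi_\omega^\varepsilon\|_{L^1(\leb)}\le\|\mathcal{L}_\omega^{\varepsilon\,(n)}\psi_\omega^\varepsilon\|_{\BV(I)}$, and this tends to $0$ as $n\to\infty$ for $\mathbb{P}$-a.e.\ $\omega$ by \dfn{def:Osc-splitting}(d). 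Since the left-hand side does not depend on $n$, we conclude $\int_I\psi_\omega^\varepsilon\,\dleb(x)=0$ for $\mathbb{P}$-a.e.\ $\omega$. I do not expect a deep obstacle here: once \lem{lem:cont} is available the whole argument is a short computation built on the $L^1$-contraction property of transfer operators and the exponential decay in \dfn{def:Osc-splitting}(d); the only point requiring care is the measurability bookkeeping for $\omega\mapsto\psi_\omega^\varepsilon$ and $\omega\mapsto\rho_\omega^\varepsilon$, which is handled by the $\mathbb{P}$-continuity of the Oseledets data guaranteed by \thrm{the:OsceledetsDecomp} and \cite{Crimmins}.
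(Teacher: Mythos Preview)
Your proposal is correct and takes a somewhat different route from the paper's proof, so it is worth contrasting the two.

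For the strict negativity of $\int_\Omega\log|\rho_\omega^\varepsilon|\,d\mathbb{P}(\omega)$, the paper iterates \emph{backwards}, writes $\mathcal{L}_{\sigma^{-n}\omega}^{\varepsilon\,(n)}\psi_{\sigma^{-n}\omega}^\varepsilon=\rho_{\sigma^{-n}\omega}^{\varepsilon\,(n)}\psi_\omega^\varepsilon$, and identifies $\lim_n\tfrac1n\log|\rho_{\sigma^{-n}\omega}^{\varepsilon\,(n)}|$ simultaneously with $\int_\Omega\log|\rho_\omega^\varepsilon|\,d\mathbb{P}$ (via Birkhoff) and with $\lambda_2^\varepsilon$ (via \dfn{def:Osc-splitting}(d)). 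You instead first use the $L^1$-contraction of Perron--Frobenius operators to obtain $|\rho_\omega^\varepsilon|\le1$, so the integral is automatically well defined in $[-\infty,0]$, and then rule out the value $0$ by the forward decay $\|\mathcal{L}_\omega^{\varepsilon\,(n)}\psi_\omega^\varepsilon\|_{L^1}\le\|\mathcal{L}_\omega^{\varepsilon\,(n)}\psi_\omega^\varepsilon\|_{\BV}\to0$. This buys you a cleaner treatment of integrability and avoids any worry about controlling $\|\psi_{\sigma^{n}\omega}^\varepsilon\|_{\BV}$ along the orbit. For the vanishing of $\int_I\psi_\omega^\varepsilon\,\dleb(x)$, the paper argues by contradiction using backward iteration together with Poincar\'e recurrence, whereas your forward argument---bounding $|\int_I\psi_\omega^\varepsilon|$ by $\prod_{k<n}|\rho_{\sigma^k\omega}^\varepsilon|\to0$---is more direct and sidesteps Poincar\'e recurrence entirely. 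In short, both proofs rest on the same Oseledets input from \lem{lem:cont}, but your systematic use of the $L^1$-contraction property makes the argument slightly more elementary, at no cost in generality.
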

\begin{proof}
    For $\varepsilon>0$, the existence of a family of functions $(\psi_\omega^\varepsilon)_{\omega\in\Omega}$, where $\psi_\omega^\varepsilon\in \BV(I)$ satisfies \begin{equation}
        \mathcal{L}_\omega^\varepsilon \psi_\omega^\varepsilon = \rho_\omega^\varepsilon \psi_{\sigma\omega}^\varepsilon
        \label{eqn:Lpsi}
    \end{equation} 
    where $||\psi_\omega^\varepsilon||_{L^1(\leb)}=1$ is guaranteed by \lem{lem:cont}. \jp{For $n\in\mathbb{N}$ and $\omega\in\Omega$ let $\rho_\omega^{\varepsilon\,(n)}:= \rho_{\sigma^{n-1}\omega}^\varepsilon\cdots\rho_{\sigma\omega}^\varepsilon\rho_\omega^\varepsilon$}. To show that $\int_\Omega \log|\rho_\omega^\varepsilon|\, d\mathbb{P}(\omega)<0$ we inductively apply \eqn{eqn:Lpsi} and find that for every $n\in\mathbb{N}$ 
    \begin{align}
        \mathcal{L}_{\sigma^{-n}\omega}^{\varepsilon\, (n)} \psi_{\sigma^{-n}\omega}^\varepsilon= \rho_{\sigma^{-n}\omega}^{\varepsilon\, (n)}\psi_\omega^\varepsilon. \label{eqn:Lnpsi}
    \end{align}
    Since $\psi_\omega^\varepsilon\in \BV(I)$, and by \lem{lem:cont}(b), spans the second Oseledets space of $\mathcal{L}_\omega^\varepsilon$, 
    \begin{align*}
        \int_\Omega \log|\rho_\omega^\varepsilon|\, d\mathbb{P}(\omega)&\stackrel{(\star)}={}\lim_{n\to \infty} \frac{1}{n}\log |\rho_{\sigma^{-n}\omega}^{\varepsilon\, (n)}|= \lim_{n\to \infty} \frac{1}{n}\log ||\rho_{\sigma^{-n}\omega}^{\varepsilon\, (n)}\psi_\omega^\varepsilon||_{\BV}  \\
        &\stackrel{(\star\star)}{=}\lim_{n\to\infty} \frac{1}{n}\log || \mathcal{L}_{\sigma^{-n}\omega}^{\varepsilon\, (n)} \psi_{\sigma^{-n}\omega}^\varepsilon ||_{\BV}=\lambda_2^\varepsilon<0
    \end{align*}
    for $\mathbb{P}$-a.e. $\omega\in\Omega$. At $(\star)$ we have used Birkhoff's ergodic theorem, and at $(\star\star)$ we have used \eqn{eqn:Lnpsi}. It remains to show that $\int_I \psi_{\omega}^\varepsilon\, \dleb(x) = 0$  for $\mathbb{P}$-a.e. $\omega\in\Omega$. Suppose that there exists a set of positive $\mathbb{P}$-measure such that $\int_I \psi_{\omega}^\varepsilon\, \dleb(x) \neq 0$. Using \eqn{eqn:Lnpsi}, the fact that $\mathcal{L}_\omega^\varepsilon$ preserves integrals, and since $\int_I \psi_{\omega}^\varepsilon\, \dleb(x) \neq 0$,
    \begin{equation}
        \rho_{\sigma^{-n}\omega}^{\varepsilon\, (n)} = \frac{\int_I \mathcal{L}_{\sigma^{-n}\omega}^{\varepsilon\, (n)} \psi_{\sigma^{-n}\omega}^\varepsilon\, \dleb(x)}{\int_I \psi_{\omega}^\varepsilon\, \dleb(x)} = \frac{\int_I \psi_{\sigma^{-n}\omega}^\varepsilon\, \dleb(x)}{\int_I \psi_{\omega}^\varepsilon\, \dleb(x)}.
        \label{eqn:rholims}
    \end{equation}
    Since $\lim _{n\to\infty} \frac{1}{n}\log|\rho_{\sigma^{-n}\omega}^{\varepsilon\, (n)}| = \lambda_2^\varepsilon <0$ for $\mathbb{P}$-a.e. $\omega\in\Omega$, we know that $\rho_{\sigma^{-n}\omega}^{\varepsilon\, (n)}= e^{n(\lambda_2^\varepsilon+o_{n\to\infty}(1))}$. Take $n$ sufficiently large such that $e^{2n\lambda_2^\varepsilon}\leq \rho_{\sigma^{-n}\omega}^{\varepsilon\, (n)}\leq e^{\frac{n}{2}\lambda_2^\varepsilon}$. Recalling that $\lambda_2^\varepsilon<0$, $\lim_{n\to\infty} \rho_{\sigma^{-n}\omega}^{\varepsilon\, (n)} = 0$ for $\mathbb{P}$-a.e. $\omega\in\Omega$, and \eqn{eqn:rholims} implies that for $\mathbb{P}$-a.e. $\omega\in\Omega$ 
    \begin{align}
       0 =\lim_{n\to \infty} \rho_{\sigma^{-n}\omega}^{\varepsilon\, (n)} = \frac{1}{\int_I \psi_\omega^\varepsilon\, \dleb(x)}\lim_{n\to \infty}\int_I \psi_{\sigma^{-n}\omega}^\varepsilon\, \dleb(x).
        \label{eqn:limint}
    \end{align}
     Let $\iota_\omega^\varepsilon:=\int_I \psi_{\omega}^\varepsilon\, \dleb(x)$. Clearly, $\iota^\varepsilon:\Omega \to \mathbb{R}$ is measurable. By the Poincar\'e recurrence theorem, since $\eqn{eqn:limint}$ implies that $\lim_{n\to \infty}\iota^\varepsilon_{\sigma^{-n}\omega} = 0$ for $\mathbb{P}$-a.e. $\omega\in\Omega$ then $\iota_\omega^\varepsilon = 0$ for $\mathbb{P}$-a.e. $\omega\in\Omega$ contradicting the fact that $\int_I \psi_{\omega}^\varepsilon\, \dleb(x) \neq 0$ on a set of positive measure.   
\end{proof}
\noindent
Using \lem{lem:pert2nd} we may now characterise the second Oseledets space.  
\begin{theorem}
In the setting of \thrm{thrm:phi_lims}, choose the sign of $\psi_\omega^\varepsilon$ such that $\int_{I_L}\psi_\omega^\varepsilon\, \dleb(x)>0$ for $\mathbb{P}$-a.e. $\omega\in\Omega$. Then as $\varepsilon\to 0$,
$$\psi_\omega^\varepsilon\stackrel{L^1}{\to}\psi_{}^0:= \frac{1}{2}\phi_L - \frac{1}{2}\phi_R$$
\label{thrm:cont2nd}
for $\mathbb{P}$-a.e. $\omega\in\Omega$.
\end{theorem}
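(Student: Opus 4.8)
The plan is to pin down the limiting function purely from the three structural constraints that $\psi_\omega^\varepsilon$ carries, all of which have already been secured in the preceding lemmas: every accumulation point lies in $\mathrm{span}\{\phi_L,\phi_R\}$ (Lemma \ref{lem:cont}), the integral over $I$ vanishes (Lemma \ref{lem:pert2nd}), and $\|\psi_\omega^\varepsilon\|_{L^1(\mathrm{Leb})}=1$ with the prescribed sign. Fix $\omega$ outside the relevant $\mathbb{P}$-null sets. By Lemma \ref{lem:cont} and Remark \ref{rem:subseq}, from any sequence $\tilde\varepsilon\to 0$ one can pass to a further subsequence (not relabelled) with $\psi_\omega^{\tilde\varepsilon}\to\psi_\omega^0$ in $L^1(\mathrm{Leb})$, where $\psi_\omega^0=a_\omega\phi_L+b_\omega\phi_R\in\BV(I)$ for scalars $a_\omega,b_\omega$ that are a priori allowed to depend on $\omega$ and on the subsequence.

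Next I would identify $a_\omega$ and $b_\omega$ by passing each constraint through the $L^1$-limit. Since $\phi_L,\phi_R$ are densities of the probability measures $\mu_L,\mu_R$ we have $\int_I\phi_\star\,\dleb=1$ for $\star\in\{L,R\}$, so $\int_I\psi_\omega^{\tilde\varepsilon}\,\dleb=0$ (Lemma \ref{lem:pert2nd}) forces $a_\omega+b_\omega=0$, i.e. $\psi_\omega^0=a_\omega(\phi_L-\phi_R)$. Because $\phi_L$ is supported on $I_L=[-1,b]$ and $\phi_R$ on $I_R=[b,1]$, which meet only in the Lebesgue-null set $\{b\}$, one has $\|a_\omega(\phi_L-\phi_R)\|_{L^1(\mathrm{Leb})}=|a_\omega|\bigl(\|\phi_L\|_{L^1}+\|\phi_R\|_{L^1}\bigr)=2|a_\omega|$; since $\|\cdot\|_{L^1}$ is $L^1$-continuous and $\|\psi_\omega^{\tilde\varepsilon}\|_{L^1}=1$, this gives $|a_\omega|=\tfrac12$. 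Finally, restricting the $L^1$-convergence to $I_L$ yields $\int_{I_L}\psi_\omega^0\,\dleb=\lim_{\tilde\varepsilon\to 0}\int_{I_L}\psi_\omega^{\tilde\varepsilon}\,\dleb$; the left side equals $a_\omega$ (as $\int_{I_L}\phi_L\,\dleb=1$ and $\int_{I_L}\phi_R\,\dleb=0$), while the right side is $\geq 0$ by the sign normalisation $\int_{I_L}\psi_\omega^{\tilde\varepsilon}\,\dleb>0$. Together with $|a_\omega|=\tfrac12$ this forces $a_\omega=\tfrac12$, $b_\omega=-\tfrac12$, independently of $\omega$ and of the subsequence, so $\psi_\omega^0=\tfrac12\phi_L-\tfrac12\phi_R$. (The same computation also shows $\int_{I_L}\psi_\omega^\varepsilon\,\dleb$ stays bounded away from $0$ for small $\varepsilon$, so the sign convention is genuinely realisable.)

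It then remains to upgrade this to convergence of the full family as $\varepsilon\to 0$. By the uniform Lasota--Yorke inequality \hyperref[list:P5]{\textbf{(P5)}} the functions $(\psi_\omega^\varepsilon)_{0<\varepsilon<\varepsilon_0}$ have uniformly bounded variation and unit $L^1$-norm, hence are relatively compact in $L^1(\mathrm{Leb})$ by the compact embedding $\BV(I)\hookrightarrow L^1(\mathrm{Leb})$ (cf. Remark \ref{rem:subseq}); since every $L^1$-accumulation point equals $\tfrac12\phi_L-\tfrac12\phi_R$ by the previous paragraph, the whole family converges to it, which is the claim. The only step needing real care is this last compactness input --- namely that the second-Oseledets vectors inherit a uniform $\BV$-bound from the uniform Lasota--Yorke estimate and the perturbation theory of \cite{Crimmins}; by contrast the identification of the coefficients is purely algebraic once Lemmas \ref{lem:cont} and \ref{lem:pert2nd} are available, so the substantive work of the theorem resides in those lemmas rather than here.
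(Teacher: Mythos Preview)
Your proposal is correct and follows essentially the same approach as the paper: identify any $L^1$-accumulation point as $a_\omega\phi_L+b_\omega\phi_R$ via \lem{lem:cont}, use $\int_I\psi_\omega^{\tilde\varepsilon}\,\dleb=0$ from \lem{lem:pert2nd} to get $a_\omega+b_\omega=0$, use $\|\psi_\omega^{\tilde\varepsilon}\|_{L^1}=1$ together with the disjoint supports of $\phi_L,\phi_R$ to get $|a_\omega|=\tfrac12$, and resolve the sign via the normalisation $\int_{I_L}\psi_\omega^{\tilde\varepsilon}\,\dleb>0$. Your treatment is in fact slightly more explicit than the paper's in spelling out the sign step and the subsequence-to-full-sequence upgrade via relative compactness.
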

\begin{remark}
In the statement of \thrm{thrm:cont2nd} we assume that one can choose the sign of $\psi_\omega^\varepsilon$ such that $\int_{I_L}\psi_\omega^\varepsilon\, \dleb(x)>0$ for $\mathbb{P}$-a.e. $\omega\in\Omega$. Observe that this is not possible if $\int_{I_L}\psi_\omega^\varepsilon\, \dleb(x)=0$ for $\mathbb{P}$-a.e. $\omega\in\Omega$. However, since $\psi_\omega^\varepsilon$ converges to a non-trivial linear combination of the initially invariant densities, this case never arises.  
\end{remark}
\begin{proof}[Proof of \thrm{thrm:cont2nd}]
     By \lem{lem:cont}, any accumulation point of $\psi_\omega^{{\varepsilon}}$ along the subsequence $\tilde{\varepsilon}\to 0$ satisfies $\psi_\omega^0=\lim_{\tilde{\varepsilon}\to 0}\psi_\omega^{\tilde{\varepsilon}}$ and may be expressed as a linear combination of $\phi_L$ and $\phi_R$. The existence of a convergent subsequence is discussed in \rem{rem:subseq}. It remains to identify the (possibly random) weights associated with $\psi_\omega^0$. By \lem{lem:pert2nd} we know that for all ${\tilde{\varepsilon}}>0$, $\int_I \psi_\omega^{\tilde{\varepsilon}}\, \dleb(x)=0$ for $\mathbb{P}$-a.e. $\omega\in \Omega$. Therefore, 
    \begin{align*}
    c_{1,\omega}+c_{2,\omega} &= \int_I c_{1,\omega}\phi_L + c_{2,\omega}\phi_R\, \dleb(x)=\int_I\psi_\omega^0\, \dleb(x)=0.
    \end{align*}
    To compute $c_{1,\omega}$ and therefore $c_{2,\omega}$, we recall that by \lem{lem:pert2nd}, for all ${\tilde{\varepsilon}}>0$, $||\psi_\omega^{\tilde{\varepsilon}}||_{L^1(\leb)}=1$. Thus,
    \begin{align*} 
    1=\int_I|\psi_\omega^0|\, \dleb(x)&= |c_{1,\omega}|\int_I |\phi_L-\phi_R|\, \dleb(x)\\
    &=|c_{1,\omega}|\left(\int_{I_L}|\phi_L|\, \dleb(x)+\int_{I_R}|\phi_R|\, \dleb(x)\right)\\
    &=2|c_{1,\omega}|\\
    \end{align*}
    Hence, we may conclude that $c_{1,\omega}=\pm \frac{1}{2}$. By assumption, since $\int_{I_L}\psi_\omega^{\tilde{\varepsilon}}\, \dleb(x)>0$ we may identify the unique limiting function corresponding to any accumulation point of $\psi_\omega^\varepsilon$.
\end{proof}

\begin{remark}
We note that the limiting function admitted by \thrm{thrm:cont2nd} is identical to that obtained in \cite[Theorem 2]{GTHW_metastable}. The only difference between our statement and that given in \cite{GTHW_metastable} is that we obtain a family of functions $(\psi_\omega^\varepsilon)_{\omega\in\Omega}$ for $\varepsilon>0$ sufficiently small as opposed to a family of eigenfunctions. Furthermore, by studying such systems in the random setting we obtain fibrewise convergence for $\mathbb{P}$-a.e. $\omega\in\Omega$. 
\end{remark}

\section{The limiting invariant density for multiple initially invariant states}
\label{sec:mstate}
In this section, we consider the setting of \Sec{sec:map+pert} but rather allow the initial system $T^0$ to have $m\geq2$ invariant sets $I_1,\dots,I_m$. Extending \hyperref[list:I4]{\textbf{(I4)}}, for $i\in \{1,\cdots,m\}$, $T^0|_{I_{i}}$ has only one ACIM $\mu_i$, whose density is denoted by $\phi_i:= d\mu_i/\dleb$. Upon perturbation, this invariance is destroyed, allowing points to randomly switch between the initially invariant subintervals. Fix $\varepsilon>0$ and consider $C^2$-small perturbations of $T^0$, denoted $T^\varepsilon:\Omega\times I \to I$, in the same sense as described in \hyperref[list:P2]{\textbf{(P2)}}. For $i,j\in \{1,\cdots,m\}$, let $H_{i,j,\omega}^\varepsilon$ be the set of all points mapping from $I_i$ to $I_j$ under one iteration of $T_\omega^\varepsilon$. That is, $H_{i,j,\omega}^\varepsilon=I_i \cap (T_\omega^\varepsilon)^{-1}(I_j)$. \jp{For $m\geq2$ invariant sets, the infinitesimal holes from \hyperref[list:I3]{\textbf{(I3)}} are defined in terms of the boundary points $\mathfrak{B}=\{b_0,\cdots,b_m\}$. Here, $(b_i)_{i=1}^{m-1}\subset(-1,1),\ b_0=-1$ and $b_m=1$ are such that for $i=1,\dots,m$ the sets $I_i := [b_{i-1},b_i]$ are invariant under $T^0$. We then let $H^0:=(T^0)^{-1}(\mathfrak{B})\setminus \mathfrak{B}$.} This allows us to impose a similar condition to \hyperref[list:P3]{\textbf{(P3)}}, namely $H_{i,j,\omega}^\varepsilon$ is a union of finitely many intervals, and as $\varepsilon\to 0$, $H_{i,j,\omega}^\varepsilon$ converges uniformly (in the Hausdorff metric) over $\omega\in\Omega$ away from a $\mathbb{P}$-null set to $H^0\cap I_i$. We naturally extend \hyperref[list:P4]{\textbf{(P4)}} by assuming that for $i,j\in \{1,\cdots,m\}$, $\mu_{i}(H_{i,j,\omega}^\varepsilon)=\varepsilon\beta_{i,j,\omega}+o_{\varepsilon\to 0}(\varepsilon)$ (where $\beta_{i,j}\in L^{\infty}(\mathbb{P})$), or $\mu_{i}(H_{i,j,\omega}^\varepsilon)=0$ for $\mathbb{P}$-a.e. $\omega\in\Omega$. In a similar manner, one can modify the remaining conditions of \Sec{sec:map+pert} such that they apply to this more general setting. We call these extended conditions {\textoverline{\textbf{(I1)}}}-{\textoverline{\textbf{(I6)}}} and {\textoverline{\textbf{(P1)}}}-{\textoverline{\textbf{(P7)}}}, \jp{however do not state them in length for presentation purposes}.
 \begin{remark}
    Without loss of generality we assume that under one iteration of $T_\omega^\varepsilon$, for some $i\in\{2,\cdots,m-1\}$, points in $I_i$ can only map to neighbouring sets $I_{i-1},I_{i+1}$ or remain in $I_i$. When $i=1$, points in $I_i$ can only map to $I_{i+1}$ or remain in $I_i$, whereas if $i=m$, points in $I_i$ can only map to $I_{i-1}$ or remain in $I_i$. One can guarantee this is satisfied by relabeling the intervals $I_1,\dots,I_m$. With this in mind, $H_{i,j,\omega}^\varepsilon\neq \emptyset$ if and only if 
    \begin{itemize}
        \item $i=1$ and $j=2$
        \item $i\in\{2,\cdots,m-1\}$ and $j=i\pm1$, or
        \item $i=m$ and $j=m-1$.
    \end{itemize}
    \label{rem:neighbour}
\end{remark}   \noindent
We begin by stating the main result of this section. This characterises the limiting invariant density of $(T_\omega^\varepsilon)_{\omega\in\Omega}$, given by $(\phi_\omega^\varepsilon)_{\omega\in\Omega}$, as $\varepsilon\to 0$  and relates it to the system's induced $m$-state Markov chain in a random environment driven by $\sigma:\Omega\to \Omega$. \\
\\
Consider the $m$-state Markov chains in random environments driven by $\sigma:\Omega\to \Omega$, with  transition matrices $(M_\omega^\varepsilon)_{\omega\in\Omega}$ where
\small
\begin{equation}
 M_{\omega}^{{\varepsilon}}:=\begin{pmatrix}
    1-{\varepsilon} \beta_{1,2,\omega} & {\varepsilon}\beta_{2,1,\omega} & 0 & 0 &  \cdots & \cdots & 0 & 0 \\
    {\varepsilon} \beta_{1,2,\omega} & 1-{\varepsilon}(\beta_{2,1,\omega}+\beta_{2,3,\omega}) & {\varepsilon} \beta_{3,2,\omega} & 0 & \cdots  & \cdots & \vdots & \vdots \\ 
    \vdots & \vdots & \vdots & \vdots & \vdots & \vdots & \vdots &\vdots \\
    0 & \cdots & \cdots & \cdots & \cdots  & 0 & {\varepsilon}\beta_{m-1,m,\omega} & 1-{\varepsilon}\beta_{m,m-1,\omega} 
\end{pmatrix}. \label{eqn:M-matrix}
\end{equation}\normalsize
For $i,j\in\{1,\cdots ,m\}$, $(M_{\omega}^{{\varepsilon}})_{ij}$ describes the one-step transition probabilities for the induced $m$-state Markov chain in a random environment, driven by $\sigma:\Omega\to \Omega$, for the map $T_\omega^\varepsilon$ from $I_j$ to $I_i$. For all $\varepsilon>0$, let $\Delta,N:\Omega \to  M_{m\times m}(\mathbb{R})$ be such that  
 \begin{align}
     \mathrm{diag}(M_\omega^{{\varepsilon}})=: I - {\varepsilon} \Delta_\omega \label{eqn:Delta}
 \end{align}
  and 
\begin{equation}
    M_{\omega}^{{\varepsilon}}=:I-{\varepsilon} \Delta_\omega +{\varepsilon} N_\omega.\label{eqn:Mdecomp}
\end{equation} \begin{theorem}
     Let $\{(\Omega,\mathcal{F},\mathbb{P},\sigma,\BV(I),\mathcal{L}^\varepsilon)\}_{\varepsilon\geq 0}$ be a \jp{family} of random dynamical systems of  metastable maps $T_\omega^\varepsilon:I\to I$ satisfying {\textoverline{\textbf{(I1)}}}-{\textoverline{\textbf{(I6)}}} and {\textoverline{\textbf{(P1)}}}-{\textoverline{\textbf{(P7)}}}. Suppose that $\Delta,N : \Omega \to M_{m\times m}(\mathbb{R})$ are as in \eqn{eqn:Delta} and \eqn{eqn:Mdecomp}. If $\int_\Omega \Delta_\omega \, d\mathbb{P}(\omega)$ is invertible, then there exists a unique vector $v^0 = \begin{pmatrix}
      c_1 & c_2 & \cdots & c_m   
     \end{pmatrix}^T\in\mathbb{R}^m$ with $c_i\geq 0$ for each $i\in\{1,\cdots ,m\}$ and $\sum_{i=1}^m c_i=1$, such that as $\varepsilon\to 0$,
     $$\phi_\omega^{{\varepsilon}} \stackrel{L^1}{\to} \phi^0= \sum_{i=1}^m c_{i} \phi_i$$
     {uniformly} over $\omega\in\Omega$ away from a $\mathbb{P}$-null set. Further, $v^0$ is the solution to
     \begin{equation}
         \left(I- \left(\int_{\Omega} \Delta_\omega\, d\mathbb{P}(\omega)\right)^{-1}\int_{\Omega}N_\omega\, d\mathbb{P}(\omega)\right)v^0 = 0\label{eqn:matrix equation}
     \end{equation}\label{thrm:m-invdens}
     that satisfies $\sum_{i=1}^m (v^0)_i=1$.
\end{theorem}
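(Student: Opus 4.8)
The strategy mirrors the proof of \thrm{thrm:phi_lims}: replace the scalar recursion for $\mu_\omega^\varepsilon(I_L)$ by a vector recursion for the weights $\mu_\omega^\varepsilon(I_i)$ governed by the matrices $M_\omega^\varepsilon$ of \eqn{eqn:M-matrix}, and then identify its limit by an ergodic--averaging argument in place of the explicit series of \lem{lem:weights}. Throughout set $Q_\omega:=\Delta_\omega-N_\omega$, so that by \eqn{eqn:Delta} and \eqn{eqn:Mdecomp} we have $M_\omega^\varepsilon=I-\varepsilon Q_\omega$; note that $\mathbf 1^T Q_\omega=0$ (the columns of $M_\omega^\varepsilon$ sum to $1$), that $M_\omega^\varepsilon$ has non-negative entries for small $\varepsilon$, and that $\sup_{\omega}\|Q_\omega\|<\infty$ since every $\beta_{i,j}\in L^\infty(\mathbb P)$.

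First I would re-establish, in the present setting, the analogues of \lem{lem:cont}, \lem{lem:unif_conv}, \lem{lem:p-1-p}, \lem{lem:holes} and \cor{cor:holes}; each carries over with only cosmetic changes, running the $m=2$ argument separately over each $I_i$ and each infinitesimal hole in $H^0\cap I_i$ and invoking the overlined conditions of \Sec{sec:map+pert}. Writing $v_\omega^{\tilde\varepsilon}:=\big(\mu_\omega^{\tilde\varepsilon}(I_1),\dots,\mu_\omega^{\tilde\varepsilon}(I_m)\big)^T$, this produces, along a suitable subsequence $\tilde\varepsilon\to 0$: (i) $v_\omega^{\tilde\varepsilon}\to v_\omega^0:=(p_{1,\omega},\dots,p_{m,\omega})^T$ uniformly over $\omega$ away from a $\mathbb P$-null set, where $\phi_\omega^0=\sum_i p_{i,\omega}\phi_i$ is the corresponding accumulation point of $\phi_\omega^{\tilde\varepsilon}$ (analogue of \rem{rem:unif}); and (ii) $\mu_\omega^{\tilde\varepsilon}(H_{i,j,\omega}^{\tilde\varepsilon})=p_{i,\omega}\,\tilde\varepsilon\beta_{i,j,\omega}+o_{\tilde\varepsilon\to 0}(\tilde\varepsilon)$, uniformly over $\omega$ off a null set.

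Next I would derive the recursion. Since $(\mu_\omega^{\tilde\varepsilon})_{\omega\in\Omega}$ is a RACIM, $\mu_\omega^{\tilde\varepsilon}(I_i)=\mu_{\sigma^{-1}\omega}^{\tilde\varepsilon}\big((T_{\sigma^{-1}\omega}^{\tilde\varepsilon})^{-1}(I_i)\big)$; by \rem{rem:neighbour} the preimage of $I_i$ consists of $I_i$ with its outgoing holes removed together with the incoming holes $H_{j,i,\sigma^{-1}\omega}^{\tilde\varepsilon}$ with $j$ a neighbour of $i$, so (ii), the identifications of \eqn{eqn:M-matrix}, and (i) (to replace $p_{i,\sigma^{-1}\omega}$ by $\mu_{\sigma^{-1}\omega}^{\tilde\varepsilon}(I_i)=(v_{\sigma^{-1}\omega}^{\tilde\varepsilon})_i$ inside the $O(\tilde\varepsilon)$ terms, up to $o(\tilde\varepsilon)$) give, exactly as in the derivation of \eqn{eqn:psnw},
\begin{equation*}
v_\omega^{\tilde\varepsilon}=(I-\tilde\varepsilon Q_{\sigma^{-1}\omega})\,v_{\sigma^{-1}\omega}^{\tilde\varepsilon}+\eta_\omega^{\tilde\varepsilon},\qquad \|\eta_\omega^{\tilde\varepsilon}\|=o_{\tilde\varepsilon\to 0}(\tilde\varepsilon)\ \text{uniformly over }\omega\text{ off a null set}.
\end{equation*}
From this $\|v_\omega^{\tilde\varepsilon}-v_{\sigma^{-1}\omega}^{\tilde\varepsilon}\|\le C\tilde\varepsilon+\|\eta_\omega^{\tilde\varepsilon}\|\to 0$, and letting $\tilde\varepsilon\to 0$ along the subsequence (using (i), with the null set taken $\sigma$-invariant) yields $v_\omega^0=v_{\sigma^{-1}\omega}^0$; ergodicity of $\sigma$ then forces $v_\omega^0\equiv v^0$ to be $\mathbb P$-a.e. constant. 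To identify $v^0$, telescope the recursion with $n=t/\tilde\varepsilon$ for fixed $t>0$:
\begin{equation*}
v_\omega^{\tilde\varepsilon}-v_{\sigma^{-n}\omega}^{\tilde\varepsilon}=-\tilde\varepsilon\sum_{j=1}^{n}Q_{\sigma^{-j}\omega}\,v_{\sigma^{-j}\omega}^{\tilde\varepsilon}+\sum_{j=1}^{n}\eta_{\sigma^{-j+1}\omega}^{\tilde\varepsilon}.
\end{equation*}
The last sum is $n\cdot o_{\tilde\varepsilon\to 0}(\tilde\varepsilon)=t\cdot o_{\tilde\varepsilon\to 0}(1)$; substituting $v_{\sigma^{-j}\omega}^{\tilde\varepsilon}=v^0+o_{\tilde\varepsilon\to 0}(1)$ (uniform in $j$, by (i)) and using $\sup_{\omega}\|Q_\omega\|<\infty$, the first term equals $-t\big(\tfrac1n\sum_{j=1}^{n}Q_{\sigma^{-j}\omega}\big)v^0+t\cdot o_{\tilde\varepsilon\to0}(1)$, and by Birkhoff's ergodic theorem $\tfrac1n\sum_{j=1}^{n}Q_{\sigma^{-j}\omega}\to\int_\Omega\Delta_\omega\,d\mathbb P(\omega)-\int_\Omega N_\omega\,d\mathbb P(\omega)$ as $\tilde\varepsilon\to0$. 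The left side tends to $0$ (again by the uniformity in (i), which controls $v_{\sigma^{-n}\omega}^{\tilde\varepsilon}$ even though $\sigma^{-n}\omega$ wanders), so $t\big(\int_\Omega\Delta_\omega\,d\mathbb P-\int_\Omega N_\omega\,d\mathbb P\big)v^0=0$ for all $t>0$; since $\int_\Omega\Delta_\omega\,d\mathbb P$ is invertible this rearranges to \eqn{eqn:matrix equation}.

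Finally, $v^0$ has non-negative entries summing to $1$, being a limit of probability vectors, and since every subsequential limit of $v_\omega^{\tilde\varepsilon}$ satisfies \eqn{eqn:matrix equation} with the same normalisation, precompactness of $\{\phi_\omega^\varepsilon\}$ in $L^1$ (from the uniform Lasota--Yorke inequality, as in \lem{lem:cont}) forces the full limit $v_\omega^{\tilde\varepsilon}\to v^0$; combining with $\phi_\omega^{\tilde\varepsilon}=\sum_i(v_\omega^{\tilde\varepsilon})_i\phi_i+o_{\tilde\varepsilon\to0}(1)$ in $L^1$ gives the claimed uniform $L^1$-convergence $\phi_\omega^\varepsilon\to\sum_i c_i\phi_i$ with $v^0=(c_1,\dots,c_m)^T$. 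The main obstacle is the bookkeeping: one must propagate \emph{uniformity over $\omega$} (off a fixed null set) through the whole chain of preliminary lemmas and, crucially, through the per-step error $\eta_\omega^{\tilde\varepsilon}=o(\tilde\varepsilon)$, since it is precisely this uniformity that renders the telescoped error $n\cdot o(\tilde\varepsilon)$ with $n=t/\tilde\varepsilon$ negligible and that legitimises $v_{\sigma^{-n}\omega}^{\tilde\varepsilon}\to v^0$. A secondary point to verify directly is that the invertibility hypothesis, together with the path structure of \rem{rem:neighbour}, makes the solution space of \eqn{eqn:matrix equation} one-dimensional, so that $v^0$ is indeed \emph{the} normalised solution as stated.
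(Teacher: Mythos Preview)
Your argument is correct and takes a genuinely different route from the paper's. The paper decomposes $M_\omega^{\tilde\varepsilon}=(I-\tilde\varepsilon\Delta_\omega)+\tilde\varepsilon N_\omega$, iterates the recursion multiplicatively keeping only the \emph{diagonal} factors $(I-\tilde\varepsilon\Delta_{\sigma^{-k}\omega})$, and then applies \lem{lem:weights} coordinate-wise to the resulting series (this is where the diagonality of $\Delta$ is essential for them). Constancy of $v_\omega^0$ in $\omega$ falls out only \emph{after} that computation, since the resulting expression involves integrals over $\Omega$. By contrast, you first obtain $v_\omega^0=v_{\sigma^{-1}\omega}^0$ directly from the one-step recursion and invoke ergodicity to get constancy, and then identify the constant via an \emph{additive} telescoping plus a single application of Birkhoff to the matrix averages $\tfrac1n\sum Q_{\sigma^{-j}\omega}$. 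This bypasses the whole apparatus of \lem{lem:weights} (moving averages, the $t/\varepsilon$--$\sqrt{t/\varepsilon}$ splitting, Steps~1--9) and is both shorter and more conceptual; the paper's route, on the other hand, is more constructive and would in principle yield an explicit closed formula for each $c_i$ in terms of the $\int\beta_{i,j}\,d\mathbb P$.

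Two remarks. First, both arguments ultimately need that the affine subspace $\{v:\ (\int\Delta-\int N)v=0,\ \mathbf 1^Tv=1\}$ is a singleton in order to pass from subsequential limits to the full limit; you flag this explicitly, while the paper speaks of ``the non-trivial solution'' without proof. Under the birth--death (tridiagonal) structure of \rem{rem:neighbour} together with the invertibility of $\int\Delta\,d\mathbb P$ (so all $\int\beta_{i,i\pm1}\,d\mathbb P>0$), this is standard and easy to verify. Second, the uniformity you insist on in (i) is exactly what is needed to control $v_{\sigma^{-n}\omega}^{\tilde\varepsilon}$ with $n=t/\tilde\varepsilon$; the paper achieves the analogous control by a diagonal-subsequence argument (footnote to \eqn{eqn:tilde-subseq}), which your approach renders unnecessary.
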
\noindent
\begin{remark}
    In some cases, the assumption that $\int_\Omega \Delta_\omega\, d\mathbb{P}(\omega)$ is invertible in the statement of \thrm{thrm:m-invdens} may be relaxed. For example, if $\left(\int_\Omega \Delta_\omega\, d\mathbb{P}(\omega)\right)_{ii}=0$ for exactly one $i\in \{1,\cdots, m\}$, then the $i^{\mathrm{th}}$ state of the Markov chain in a random environment driven by $\sigma:\Omega\to \Omega$ with transition matrices $(M_\omega^\varepsilon)_{\omega\in \Omega}$ (as in \eqn{eqn:M-matrix}) is absorbing. In this case, as $\varepsilon\to 0$,
     $$\phi_\omega^{{\varepsilon}} \stackrel{L^1}{\to} \phi^0= \phi_i$$
     {uniformly} over $\omega\in\Omega$ away from a $\mathbb{P}$-null set. With this in mind, together with \thrm{thrm:m-invdens}, for systems with $m=2$ initially invariant sets, the statement of \thrm{thrm:m-invdens} reduces to the statement of \thrm{thrm:phi_lims}.  \label{rem:not-invert}   
\end{remark}
\noindent
The proof of \thrm{thrm:m-invdens} is divided into several lemmata. We first show that the Oseledets decomposition of $\mathcal{L}_\omega^\varepsilon$ depends continuously on perturbations.
\begin{lemma}
In the setting of \thrm{thrm:m-invdens}, the Lyapunov exponents and Oseledets projections of $\mathcal{L}_\omega^\varepsilon$ converge to those of $\mathcal{L}^0$ as $\varepsilon\to 0$. In particular, for all $\varepsilon>0$ sufficiently small, the top Oseledets space of $\mathcal{L}_\omega^\varepsilon$ is one-dimensional, and spanned by $\phi_\omega^\varepsilon\in \BV(I)$ with associated Lyapunov exponent $\lambda_1^\varepsilon=0$ of multiplicity one. Further, as $\varepsilon\to 0$, the space spanned by $\phi_\omega^{{\varepsilon}}$ converges uniformly over $\omega\in\Omega$ away from a $\mathbb{P}$-null set to the plane spanned by $\{\phi_1, \cdots, \phi_m\}$. Any accumulation point of the sequence $\phi_\omega^{{\varepsilon}}$, denoted $\phi_\omega^0$, lies in $\BV(I)$ and satisfies $\phi_\omega^0 \in \mathrm{span} \{\phi_1, \cdots, \phi_m\}$ where this convergence holds in $L^1$, and for $\mathbb{P}$-a.e. $\omega\in\Omega$.   
 \label{lem:cont_mstate}
\end{lemma}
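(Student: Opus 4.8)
The plan is to follow the proof of \lem{lem:cont} almost verbatim, replacing the two-dimensional limiting top Oseledets space by the $m$-dimensional one spanned by $\phi_1,\dots,\phi_m$. Equip $(\BV(I),\|\cdot\|_{\BV(I)})$ with the weak norm $\|\cdot\|_{L^1(\leb)}$. First I would check the three hypotheses recalled in \Sec{sec:cont}, so that \cite[Theorem A]{Crimmins} applies to the sequence $\{(\Omega,\mathcal{F},\mathbb{P},\sigma,\BV(I),\mathcal{L}^\varepsilon)\}_{\varepsilon\geq 0}$ of $\mathbb{P}$-continuous random dynamical systems: (i) $(\Omega,\mathcal{F},\mathbb{P},\sigma,\BV(I),\mathcal{L}^0)$ has a hyperbolic Oseledets splitting (see \dfn{def:hyperbolic-splitting}) and is $\|\cdot\|_{L^1(\leb)}$-bounded --- here the Lasota--Yorke inequality for $\mathcal{L}^0$ (the $m$-state analogue of \rem{rem:LY0}, again via \cite{EG_LY}) gives quasi-compactness, while \textoverline{\textbf{(I4)}}, together with \textoverline{\textbf{(I5)}}--\textoverline{\textbf{(I6)}} as in \Sec{sec:map+pert}, forces the peripheral spectrum of $\mathcal{L}^0$ on $\BV(I)$ to reduce to the eigenvalue $1$ with eigenspace $\mathrm{span}\{\phi_1,\dots,\phi_m\}$ (the $\phi_i$ being linearly independent as the $I_i$ have essentially disjoint interiors), so that $\lambda_1^0=0$ has multiplicity $m$ and $\mathcal{L}^0$ admits a hyperbolic Oseledets splitting with an $m$-dimensional fast space; (ii) the uniform Lasota--Yorke inequality (see \dfn{def:ULY}) across $\omega\in\Omega$ and $\varepsilon>0$, which is the extended condition \textoverline{\textbf{(P5)}}; and (iii) $\lim_{\varepsilon\to 0}\esssup_{\omega\in\Omega}|||\mathcal{L}_\omega^\varepsilon-\mathcal{L}^0|||=0$ (see \dfn{def:triple-norm}), the $m$-state version of \rem{rem:p1-imp-trip}, with \cite{FGTQ_LYmapStab} and \cite{EG_LY} supplying the estimate and \textoverline{\textbf{(P1)}} (finite range) giving the $\mathbb{P}$-continuity of $\omega\mapsto\mathcal{L}_\omega^\varepsilon$.

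With these in place, \cite[Theorem A]{Crimmins} yields, for all $\varepsilon>0$ sufficiently small, an Oseledets splitting for $\mathcal{L}_\omega^\varepsilon$ whose Lyapunov exponents and Oseledets projections converge to those of $\mathcal{L}^0$; in particular $\mathcal{L}_\omega^\varepsilon$ also has a hyperbolic Oseledets splitting, and its $m$-dimensional fast space $E_\omega^\varepsilon$ (the span of all Oseledets spaces of $\mathcal{L}_\omega^\varepsilon$ with exponent above a fixed threshold strictly between $\lambda_{m+1}^0$ and $0$, which has dimension exactly $m$ for small $\varepsilon$ by continuity of the exponents) converges uniformly over $\omega\in\Omega$, away from a $\mathbb{P}$-null set, to $\mathrm{span}\{\phi_1,\dots,\phi_m\}$. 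On the other hand, \textoverline{\textbf{(P6)}} (unique RACIM) forces the top Oseledets space of $\mathcal{L}_\omega^\varepsilon$ to be one-dimensional, spanned by $\phi_\omega^\varepsilon\in\BV(I)$, with $\lambda_1^\varepsilon=0$ of multiplicity one. Since $\mathrm{span}\{\phi_\omega^\varepsilon\}\subseteq E_\omega^\varepsilon$, the space spanned by $\phi_\omega^\varepsilon$ converges uniformly over $\omega$ (away from a $\mathbb{P}$-null set) into that plane. For accumulation points: the uniform Lasota--Yorke inequality bounds $\|\phi_\omega^\varepsilon\|_{\BV(I)}$, so by the compact embedding $\BV(I)\hookrightarrow L^1(\leb)$ one may, for $\mathbb{P}$-a.e.\ $\omega$, extract a subsequence $\tilde\varepsilon\to 0$ with $\phi_\omega^{\tilde\varepsilon}\to\phi_\omega^0$ in $L^1$ and $\phi_\omega^0\in\BV(I)$ (cf. \rem{rem:subseq}), and the uniform convergence of $E_\omega^\varepsilon$ places $\phi_\omega^0\in\mathrm{span}\{\phi_1,\dots,\phi_m\}$. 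Finally $\mathcal{L}^0$ acts as the identity on this plane, so $\phi_\omega^0$ is $\mathcal{L}^0$-invariant (equivalently, pass to the limit in $\mathcal{L}_{\sigma^{-1}\omega}^{\tilde\varepsilon}\phi_{\sigma^{-1}\omega}^{\tilde\varepsilon}=\phi_\omega^{\tilde\varepsilon}$ using (iii) and the uniform $\BV$ bound on the first term and boundedness of $\mathcal{L}^0$ on $L^1$ on the second); combined with $\phi_\omega^0\geq 0$ and $\|\phi_\omega^0\|_{L^1(\leb)}=1$ it is an invariant density of $T^0$, hence by \textoverline{\textbf{(I4)}} a convex combination of $\phi_1,\dots,\phi_m$. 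This gives all the assertions of the lemma and sets up the weights to be pinned down in the remaining lemmata.

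The main obstacle I anticipate is handling the dimension drop cleanly: the limiting top Oseledets space is $m$-dimensional while \textoverline{\textbf{(P6)}} makes the perturbed top space a single line, so one cannot quote ``continuity of the top Oseledets space'' directly. The object that must be tracked is the $m$-dimensional fast space $E_\omega^\varepsilon$ --- equivalently, the Oseledets projection associated with all exponents above a threshold lying strictly between $\lambda_{m+1}^0$ and $0$ --- and one has to verify that \cite[Theorem A]{Crimmins} supplies uniform-in-$\omega$ continuity of precisely this projection and that $\dim E_\omega^\varepsilon=m$ for all small $\varepsilon$. A secondary point requiring care is the verification in (i) that the peripheral spectrum of the deterministic operator $\mathcal{L}^0$ on $\BV(I)$ is exactly $\{1\}$, so that $\mathcal{L}^0$ genuinely has the spectral gap needed for a hyperbolic Oseledets splitting; this is where \textoverline{\textbf{(I5)}}--\textoverline{\textbf{(I6)}} (e.g.\ via weak covering of each $T^0|_{I_i}$) enter, exactly as in the two-state case underlying \lem{lem:cont}.
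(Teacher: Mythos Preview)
Your proposal is correct and takes essentially the same approach as the paper: the paper's own proof simply reads ``Identical to that of \lem{lem:cont} with the inclusion of $m$ initially invariant states implying that $\phi_\omega^0\in\mathrm{span}\{\phi_1,\dots,\phi_m\}$,'' and your write-up is precisely a fleshed-out version of that reduction to \cite[Theorem A]{Crimmins} via \hyperref[list:P1]{\textoverline{\textbf{(P1)}}}, \hyperref[list:P5]{\textoverline{\textbf{(P5)}}}, and the $m$-state analogues of \rem{rem:LY0} and \rem{rem:p1-imp-trip}. One minor correction: the spectral gap for $\mathcal{L}^0$ comes from \hyperref[list:I1]{\textoverline{\textbf{(I1)}}}--\hyperref[list:I4]{\textoverline{\textbf{(I4)}}} alone (uniform expansion plus unique ACIMs on each $I_i$), not from \hyperref[list:I5]{\textoverline{\textbf{(I5)}}}--\hyperref[list:I6]{\textoverline{\textbf{(I6)}}}, which are only used later for the hole estimates.
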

\begin{proof}
    Identical to that of \lem{lem:cont} with the inclusion of $m$ initially invariant states. This \jp{\jps{implies} that, due to \rem{rmk:free-splitting}, $(\Omega,\mathcal{F},\mathbb{P},\sigma,\BV(I),\mathcal{L}^0)$ has a hyperbolic Oseledets splitting of index $d=m$, and further, that $\phi_\omega^0 \in \mathrm{span} \{\phi_1, \cdots, \phi_m\}$.} Recall that the existence of a convergent subsequence is guaranteed by \rem{rem:subseq}. 
\end{proof}\noindent
\lem{lem:cont_mstate} asserts that any accumulation point of $\phi_\omega^\varepsilon$, say along the subsequence $\tilde{\varepsilon}\to 0$, satisfies 
\begin{equation}
    \lim_{\tilde{\varepsilon}\to 0}\phi_\omega^{\tilde{\varepsilon}} = \phi_\omega^0:= \sum_{i=1}^m c_{i,\omega} \phi_i    \label{eqn:mstate-dens}
\end{equation}
in $L^1$ and for $\mathbb{P}$-a.e $\omega\in\Omega$. We aim to show that the accumulation points of $\phi_\omega^\varepsilon$ are always the same, so that $\phi_\omega^\varepsilon$ admits a limit as $\varepsilon\to 0$.\\
\\
\noindent
We replicate the procedure from \Sec{sec:LID} beginning with \lem{lem:p-1-p}, \ref{lem:holes} and \cor{cor:holes}. Recall that by \rem{rem:neighbour}, for $i\in\{2,\cdots,m-1\}$, non-empty holes must be of the form $H_{1,2,\omega}^{\tilde{\varepsilon}},H_{i,i\pm1,\omega}^{\tilde{\varepsilon}},H_{i\pm1,i,\omega}^{\tilde{\varepsilon}}$ and $H_{m,m-1,\omega}^{\tilde{\varepsilon}}$. 
\begin{lemma}
In the setting of \thrm{thrm:m-invdens}, if for $i=1,\dots, m$,  $c_{i,\omega}$ is as in \eqn{eqn:mstate-dens}, then for $\mathbb{P}$-a.e. $\omega\in\Omega$
\begin{align}
    c_{i,\omega}^{\tilde{\varepsilon}}:=\mu_\omega^{\tilde{\varepsilon}}(I_i)&=c_{i,\omega}+o_{\tilde{\varepsilon}\to 0}(1), \label{eqn:measIi}\\
    \mu_\omega^{\tilde{\varepsilon}}(H_{1,2,\omega}^{\tilde{\varepsilon}})&=c_{1,\omega}\tilde{\varepsilon}\beta_{1,2,\omega}+o_{\tilde{\varepsilon}\to 0}(\tilde{\varepsilon}), \label{eqn:H12}\\
    \mu_\omega^{\tilde{\varepsilon}}(H_{m,m-1,\omega}^{\tilde{\varepsilon}})&=c_{m,\omega}\tilde{\varepsilon}\beta_{m,m-1,\omega}+o_{\tilde{\varepsilon}\to 0}(\tilde{\varepsilon}). \label{eqn:Hmm-1}    
    \end{align}
 Further, for $i\in\{2,\cdots,m-1\}$
 \begin{align}
     \mu_\omega^{\tilde{\varepsilon}}(H_{i,i\pm 1,\omega}^{\tilde{\varepsilon}})&=c_{i,\omega}\tilde{\varepsilon}\beta_{i,i\pm 1,\omega}+o_{\tilde{\varepsilon}\to 0}(\tilde{\varepsilon}), \label{eqn:Hii+1}\\
    \mu_\omega^{\tilde{\varepsilon}}(H_{i\pm 1,i,\omega}^{\tilde{\varepsilon}})&=c_{i\pm 1,\omega}\tilde{\varepsilon}\beta_{i\pm 1,i,\omega}+o_{\tilde{\varepsilon}\to 0}(\tilde{\varepsilon}). \label{eqn:Hi+1i}
 \end{align}
 \label{lem:measure of holes}
\end{lemma}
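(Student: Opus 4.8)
The plan is to mimic, essentially verbatim, the arguments already developed in \Sec{sec:LID} for the two-state case, now carried out for each neighbouring pair of initially invariant intervals. The heart of the matter is that \lem{lem:unif_conv}, \lem{lem:holes}, \cor{cor:holes}, and their proofs only ever use local information about the invariant density near the infinitesimal holes, together with the convexity statement coming from \hyperref[list:I4]{\textoverline{\textbf{(I4)}}}; none of this is special to having exactly two states. So I would first state and prove an analogue of \lem{lem:unif_conv}: for any accumulation point $\phi_\omega^0 = \sum_{i=1}^m c_{i,\omega}\phi_i$ of $\phi_\omega^{\tilde\varepsilon}$ along a subsequence, one has, for each $i$,
\[
\lim_{\tilde\varepsilon\to 0}\esssup_{\omega\in\Omega}\ \sup_{x\in H_{i,\cdot,\omega}^{\tilde\varepsilon}}\big|\phi_\omega^{\tilde\varepsilon}(x) - c_{i,\omega}\phi_i(x)\big| = 0,
\]
where $H_{i,\cdot,\omega}^{\tilde\varepsilon}$ denotes the union of all holes leaving $I_i$. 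The proof is identical to that of \cite[Lemma 3.14]{BS_rand}, using \hyperref[list:P5]{\textoverline{\textbf{(P5)}}} for the uniform Lasota-Yorke inequality and \hyperref[list:P1]{\textoverline{\textbf{(P1)}}} for the finiteness of $\omega\mapsto T_\omega^{\tilde\varepsilon}$.

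**Next**, \eqn{eqn:measIi} follows exactly as \eqn{eqn:measIL}-\eqn{eqn:measIR} did in \lem{lem:p-1-p}: let $\hat\phi_\omega^{\tilde\varepsilon}$ be the best $L^1$-approximation of $\phi_\omega^{\tilde\varepsilon}$ in $\mathrm{span}\{\phi_1,\dots,\phi_m\}$, note $\|\phi_\omega^{\tilde\varepsilon}-\hat\phi_\omega^{\tilde\varepsilon}\|_{L^1(\leb)}=o_{\tilde\varepsilon\to0}(1)$ by \lem{lem:cont_mstate}, compare coefficients over $I_i$, and use the positivity of $\phi_i$ at its infinitesimal holes (\hyperref[list:I6]{\textoverline{\textbf{(I6)}}}) together with the uniform hole convergence (\hyperref[list:I5]{\textoverline{\textbf{(I5)}}}, \hyperref[list:P3]{\textoverline{\textbf{(P3)}}}) to pin down $\hat c_{i,\omega}^{\tilde\varepsilon} = c_{i,\omega} + o_{\tilde\varepsilon\to0}(1)$, hence $c_{i,\omega}^{\tilde\varepsilon} = c_{i,\omega}+o_{\tilde\varepsilon\to0}(1)$. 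Then, for the hole estimates \eqn{eqn:H12}-\eqn{eqn:Hi+1i}, I would replicate the computation \eqn{eqn:meas_O}: for any measurable $S_\omega^{\tilde\varepsilon}\subseteq I_i$,
\[
\mu_\omega^{\tilde\varepsilon}(S_\omega^{\tilde\varepsilon}) = c_{i,\omega}\,\mu_i(S_\omega^{\tilde\varepsilon}) + O_{\tilde\varepsilon\to0}\Big(\esssup_{\omega\in\Omega}\sup_{x\in S_\omega^{\tilde\varepsilon}}|\phi_\omega^{\tilde\varepsilon}-c_{i,\omega}\phi_i|\Big)\leb(S_\omega^{\tilde\varepsilon}),
\]
take $S_\omega^{\tilde\varepsilon} = H_{i,j,\omega}^{\tilde\varepsilon}$ for the relevant neighbour $j$, decompose $H_{i,j,\omega}^{\tilde\varepsilon}$ into its finitely many connected components each shrinking to a point of $H^0\cap I_i$, apply Lebesgue differentiation and \hyperref[list:I5]{\textoverline{\textbf{(I5)}}} to control $\mu_i(H_{i,j,\omega}^{\tilde\varepsilon})/\leb(H_{i,j,\omega}^{\tilde\varepsilon})$ uniformly (bounded above and below by \hyperref[list:I6]{\textoverline{\textbf{(I6)}}}), and invoke the analogue of \lem{lem:unif_conv}(b) to kill the error term. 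This yields $\mu_\omega^{\tilde\varepsilon}(H_{i,j,\omega}^{\tilde\varepsilon}) = c_{i,\omega}\,\mu_i(H_{i,j,\omega}^{\tilde\varepsilon}) + o_{\tilde\varepsilon\to0}(1)\,\mu_i(H_{i,j,\omega}^{\tilde\varepsilon})$, and then plugging in the extended \hyperref[list:P4]{\textoverline{\textbf{(P4)}}}, $\mu_i(H_{i,j,\omega}^{\tilde\varepsilon}) = \tilde\varepsilon\beta_{i,j,\omega}+o_{\tilde\varepsilon\to0}(\tilde\varepsilon)$, gives the stated forms \eqn{eqn:H12}, \eqn{eqn:Hmm-1}, \eqn{eqn:Hii+1}, \eqn{eqn:Hi+1i} exactly as in \cor{cor:holes}.

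**The only genuinely new bookkeeping** — and the one point I would be careful about — is the neighbour structure from \rem{rem:neighbour}: for an interior index $i\in\{2,\dots,m-1\}$ the set $I_i$ has two holes going out ($H_{i,i-1,\omega}^{\tilde\varepsilon}$ and $H_{i,i+1,\omega}^{\tilde\varepsilon}$) and receives mass from two neighbours, whereas $i=1$ and $i=m$ are one-sided. This means the analogue of \lem{lem:unif_conv}(b) must be stated over the union of all outgoing holes from $I_i$, and when applying the differentiation-theorem argument the components of $H_{i,i-1,\omega}^{\tilde\varepsilon}$ and $H_{i,i+1,\omega}^{\tilde\varepsilon}$ must be handled together as subsets of the single interval $I_i$ on which $\phi_\omega^{\tilde\varepsilon}\approx c_{i,\omega}\phi_i$; since the density coefficient is the \emph{same} $c_{i,\omega}$ regardless of which neighbour a given hole feeds into, no new difficulty arises. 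I expect this indexing — keeping straight which $c$ multiplies which $\beta$ in $\mu_\omega^{\tilde\varepsilon}(H_{i,j,\omega}^{\tilde\varepsilon})$ — to be the main (and only) obstacle, and it is purely notational; the analytic content is entirely inherited from \Sec{sec:LID}. I would therefore write the proof tersely: prove the uniform-convergence-over-holes lemma once (for $\bigcup_j H_{i,j,\omega}^{\tilde\varepsilon}$), then say \eqn{eqn:measIi} follows as in \lem{lem:p-1-p} and \eqn{eqn:H12}-\eqn{eqn:Hi+1i} follow as in \lem{lem:holes} and \cor{cor:holes}, applied separately to each non-empty hole $H_{i,j,\omega}^{\tilde\varepsilon}$ identified in \rem{rem:neighbour}.
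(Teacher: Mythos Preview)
Your proposal is correct and follows essentially the same approach as the paper's own proof: both observe that \lem{lem:unif_conv} never uses the two-state assumption and therefore extends directly, after which \eqn{eqn:measIi} and the hole estimates follow just as in \lem{lem:p-1-p}, \lem{lem:holes}, and \cor{cor:holes}. Your write-up is in fact more explicit than the paper's (which dispatches the argument in a few sentences), but the underlying structure is identical.
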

\begin{proof}
Note that by {\textoverline{\textbf{(P6)}}}, for $\tilde{\varepsilon}>0$, $(T_\omega^{\tilde{\varepsilon}})_{\omega\in\Omega}$ admits a unique RACIM $(\mu_\omega^{\tilde{\varepsilon}})_{\omega\in\Omega}$. We first check whether \lem{lem:unif_conv} holds. Note that the proof of \lem{lem:unif_conv} never directly uses the fact that the system has two initially invariant states. This guarantees that any accumulation point of $\phi_\omega^\varepsilon$ along the subsequence $\tilde{\varepsilon}\to 0$ converges over each hole and over $\omega\in\Omega$ away from a $\mathbb{P}$-null set. Hence, the remainder of the proof is identical to that of \lem{lem:p-1-p}, \lem{lem:holes} and \cor{cor:holes}. Observe that in the case of two initially invariant states, we could obtain \eqn{eqn:measIL} and \eqn{eqn:measWLe}, from which we could derive \eqn{eqn:measIR} and \eqn{eqn:measWRe}. In the case of $m$ initially invariant sets, one could proceed in a similar manner. 
\end{proof}\noindent
We are now ready to prove \thrm{thrm:m-invdens}. In what follows, let
\begin{equation}
    v_{\omega}^{0}:=\begin{pmatrix}
 c_{1,\omega} \\ \vdots \\ c_{m,\omega}   
\end{pmatrix}\label{eqn:vw0}
\end{equation}
be an accumulation point of
\begin{equation} 
v_{\omega}^{\varepsilon}:=\begin{pmatrix}
 c_{1,\omega}^{\varepsilon} \\ \vdots \\ c_{m,\omega}^{\varepsilon}\end{pmatrix} 
    \label{eqn:vweps}
\end{equation}   
along the subsequence $\tilde{\varepsilon}\to 0$. We emphasise that the weights associated with any accumulation point of $\phi_\omega^\varepsilon$ along the subsequence $\tilde{\varepsilon}\to 0$ in \eqn{eqn:mstate-dens} are precisely the weights defined in \eqn{eqn:vw0}. For $\tilde{\varepsilon}>0$ and $i\in\{1,\cdots,m\}$, the $i^{\mathrm{th}}$ element of $v_\omega^{\tilde{\varepsilon}}$ is given by \eqn{eqn:measIi}. Furthermore, \eqn{eqn:measIi} asserts that as $\tilde{\varepsilon}\to 0$, $c_{i,\omega}^{\tilde{\varepsilon}}$ converges {uniformly} over $\omega\in\Omega$ away from a $\mathbb{P}$-null set to $c_{i,\omega}$.
\begin{proof}[Proof of \thrm{thrm:m-invdens}]
By \lem{lem:cont_mstate} and \lem{lem:measure of holes}, any accumulation point of $\phi_\omega^\varepsilon$ along the subsequence $\tilde{\varepsilon}^\prime\to 0$ satisfies $\phi_\omega^0 = \lim_{\tilde{\varepsilon}^\prime\to0}\phi_\omega^{\tilde{\varepsilon}^\prime}$ and
\begin{align}
    \phi_\omega^{\tilde{\varepsilon}^\prime} \stackrel{L^1}{\to} \phi_\omega^0&= \sum_{i=1}^m c_{i,\omega} \phi_i \nonumber \\
    &:= v_{\omega}^0\cdot \Phi \nonumber
\end{align}
{uniformly} over $\omega\in\Omega$ away from a $\mathbb{P}$-null set.\footnote{As discussed in \rem{rem:unif} and featured in the proof of \thrm{thrm:phi_lims}, \lem{lem:measure of holes} (particularly \eqn{eqn:measIi}) guarantees this convergence is uniform over $\omega\in\Omega$ away from a $\mathbb{P}$-null set.} Here, $v_{\omega}^0$ is as in \eqn{eqn:vw0} and $\Phi:=\begin{pmatrix} \phi_1 & \cdots & \phi_m\end{pmatrix}^T$. For $k\in\mathbb{Z}$, consider the restricted subsequence $\tilde{\varepsilon}\to 0$, along which any accumulation point of $\phi_{\sigma^k\omega}^\varepsilon$ satisfies $\phi_{\sigma^k\omega}^0=\lim_{\tilde{\varepsilon}\to 0}\phi_{\sigma^{k}\omega}^{\tilde{\varepsilon}}$ and
\begin{equation}
    \phi_{\sigma^k\omega}^{\tilde{\varepsilon}} \stackrel{L^1}{\to} \phi_{\sigma^k\omega}^0 = v_{\sigma^k\omega}^0\cdot \Phi\label{eqn:tilde-subseq}
\end{equation}
{uniformly} over $\omega\in\Omega$ away from a $\mathbb{P}$-null set.\footnote{The existence of such a convergent subsequence is guaranteed by using the \textit{diagonal sequence trick} (see for example \cite[Theorem 1.24]{Reed}).} We utilise properties of the RACIM for $(T_\omega^{\tilde{\varepsilon}})_{\omega\in\Omega}$ whose existence is guaranteed by {\textoverline{\textbf{(P6)}}}. That is, since $(\mu_{\omega}^{\tilde{\varepsilon}})_{\omega\in\Omega}$ is a RACIM for $(T_\omega^{\tilde{\varepsilon}})_{\omega\in\Omega}$, by following a similar calculation completed in the proof of \thrm{thrm:phi_lims}, for $i\in\{2,\cdots,m-1\}$
\begin{align}
    c_{i,\omega}^{\tilde{\varepsilon}} = \mu_\omega^{\tilde{\varepsilon}}(I_i) &= \mu_{\sigma^{-1}\omega}^{\tilde{\varepsilon}}((T_{\sigma^{-1}\omega}^{\tilde{\varepsilon}})^{-1}(I_i)) \nonumber \\
    &= \mu_{\sigma^{-1}\omega}^{\tilde{\varepsilon}}(I_i\setminus(H_{i,i+1,\sigma^{-1}\omega}^{\tilde{\varepsilon}} \cup H_{i,i-1,\sigma^{-1}\omega}^{\tilde{\varepsilon}}) \cup(H_{i+1,i,\sigma^{-1}\omega}^{\tilde{\varepsilon}} \cup H_{i-1,i,\sigma^{-1}\omega}^{\tilde{\varepsilon}})) \label{eqn:meas_union}\\
    &= c_{i,\sigma^{-1}\omega}^{\tilde{\varepsilon}}(1-\varepsilon(\beta_{i,i+1,\sigma^{-1}\omega}+\beta_{i,i-1,\sigma^{-1}\omega})) + c_{i+1,\sigma^{-1}\omega}^{\tilde{\varepsilon}}\tilde{\varepsilon}\beta_{i+1,i,\sigma^{-1}\omega} \nonumber\\
    &\quad+ c_{i-1,\sigma^{-1}\omega}^{\tilde{\varepsilon}}\tilde{\varepsilon}\beta_{i-1,i,\sigma^{-1}\omega}+o_{\tilde{\varepsilon}\to 0}(\tilde{\varepsilon}). \nonumber
\end{align}
Here we have used \lem{lem:measure of holes} and \eqn{eqn:measIi}. Similarly, one can obtain an expression for $c_{1,\omega}^{\tilde{\varepsilon}}$ and $c_{m,\omega}^{\tilde{\varepsilon}}$ with the exception in \eqn{eqn:meas_union} that $H_{1,0,{\sigma^{-1}\omega}}^{\tilde{\varepsilon}},H_{0,1,{\sigma^{-1}\omega}}^{\tilde{\varepsilon}}=\emptyset$ and $H_{m,m+1,{\sigma^{-1}\omega}}^{\tilde{\varepsilon}},H_{m+1,m,{\sigma^{-1}\omega}}^{\tilde{\varepsilon}}=\emptyset$, respectively. Thus, we may cast $v_\omega^{\tilde{\varepsilon}}$ (from \eqn{eqn:vweps}) in the form of a matrix equation where for each $n\in\mathbb{Z}$
\begin{equation}
    v_{\sigma^{n}\omega}^{\tilde{\varepsilon}}= M_{\sigma^{n-1}\omega}^{\tilde{\varepsilon}}v_{\sigma^{n-1}\omega}^{\tilde{\varepsilon}} + o_{\tilde{\varepsilon}\to 0}(\tilde{\varepsilon})
    \label{eqn:vnw}
\end{equation}
with the error term applied element-wise and $ M_{\omega}^{\tilde{\varepsilon}}$ is as in \eqn{eqn:M-matrix} with $\varepsilon$ replaced with $\tilde{\varepsilon}$. Taking  $\Delta,N : \Omega \to M_{m\times m}(\mathbb{R})$ as in \eqn{eqn:Delta} and \eqn{eqn:Mdecomp}, from \eqn{eqn:vnw}, 
\begin{align}
v_{\omega}^{\tilde{\varepsilon}}&= (I-\tilde{\varepsilon} \Delta_{\sigma^{-1}\omega})v_{\sigma^{-1}\omega}^{\tilde{\varepsilon}} +\tilde{\varepsilon} N_{\sigma^{-1}\omega} v_{\sigma^{-1}\omega}^{\tilde{\varepsilon}} +o_{\tilde{\varepsilon}\to 0}(\tilde{\varepsilon})\nonumber\\
&\stackrel{(\star)}{=} (I-\tilde{\varepsilon} \Delta_{\sigma^{-1}\omega})v_{\sigma^{-1}\omega}^{\tilde{\varepsilon}} +\tilde{\varepsilon} N_{\sigma^{-1}\omega} v_{\sigma^{-1}\omega}^{0} +o_{\tilde{\varepsilon}\to 0}(\tilde{\varepsilon}).\label{eqn:decompv}
\end{align}
At $(\star)$ we have used the fact that due to \eqn{eqn:measIi} and \eqn{eqn:tilde-subseq}, for any $k\in\mathbb{Z}$, $v_{\sigma^k\omega}^{\tilde{\varepsilon}} = v_{\sigma^k\omega}^0 + o_{\tilde{\varepsilon}\to 0}(1)$. Furthermore, since the entries of $N:\Omega \to M_{m\times m}(\mathbb{R})$ are $L^\infty(\mathbb{P})$ functions, the uniform error (over $\omega\in\Omega$) can be controlled. Fix $t>0$. Inductively using \eqn{eqn:decompv}, we find that for any $K_t^{\tilde{\varepsilon}}\in \mathbb{N}$ 
\begin{align}
    v_{\omega}^{\tilde{\varepsilon}}&= \left( \prod_{k=0}^{K_t^{\tilde{\varepsilon}}-1} (I-\tilde{\varepsilon} \Delta_{\sigma^{-k-1}\omega}) \right) v_{\sigma^{-K_t^{\tilde{\varepsilon}}}\omega}^{\tilde{\varepsilon}} \nonumber\\
    &\quad+\sum_{n=0}^{K_t^{\tilde{\varepsilon}}-1} \left( \prod_{k=0}^{n-1} (I-\tilde{\varepsilon} \Delta_{\sigma^{-k-1}\omega}) \right)(\tilde{\varepsilon} N_{\sigma^{-n-1}\omega}v_{\sigma^{-n-1}\omega}^{0} +o_{\tilde{\varepsilon}\to 0}(\tilde{\varepsilon})). \label{eqn:vKsum}
\end{align}
Set $K_t^{\tilde{\varepsilon}}= \frac{t}{\tilde{\varepsilon}}$. For $i\in\{1,\cdots, m\}$, observe that the $i^{\mathrm{th}}$ entry of \eqn{eqn:vKsum} is in the form of \eqn{eqn:Ksum} (with $\beta_R,\gamma\in L^\infty(\mathbb{P})$ replaced with functions, say $u_i,s_i\in L^\infty(\mathbb{P})$). Thus, by a similar argument to that made in the proof of \thrm{thrm:phi_lims}, $v_{\omega}^{\tilde{\varepsilon}}$ converges uniformly over $\omega\in\Omega$ away from a $\mathbb{P}$-null set to the same limit as 
\begin{equation}
    q_\omega^{\tilde{\varepsilon}}:=\sum_{n=0}^{\infty} \left( \prod_{k=0}^{n-1} (I-\tilde{\varepsilon} \Delta_{\sigma^{-k-1}\omega}) \right)(\tilde{\varepsilon} N_{\sigma^{-n-1}\omega}v_{\sigma^{-n-1}\omega}^{0} +o_{\tilde{\varepsilon}\to 0}(\tilde{\varepsilon}))\label{eqn:qomeps}
\end{equation}
as $\tilde{\varepsilon}\to 0$. Since $\Delta_\omega$ is a diagonal matrix and $N_\omega$ is as in \eqn{eqn:Mdecomp} for $\omega\in\Omega$, expanding \eqn{eqn:qomeps} we find that \footnotesize
\begin{align}
    q_\omega^{\tilde{\varepsilon}}&= \begin{pmatrix}
     \sum_{n=0}^\infty (\tilde{\varepsilon} \beta_{2,1,\sigma^{-n-1}\omega}c_{2,\sigma^{-n-1}\omega}+o_{\tilde{\varepsilon}\to 0}(\tilde{\varepsilon}))\prod_{k=0}^{n-1}(1-\tilde{\varepsilon} (\Delta_{\sigma^{-k-1}\omega})_{11})\\   
     \sum_{n=0}^\infty (\tilde{\varepsilon} (\beta_{1,2,\sigma^{-n-1}\omega}c_{1,\sigma^{-n-1}\omega}+\beta_{3,2,\sigma^{-n-1}\omega}c_{3,\sigma^{-n-1}\omega})+o_{\tilde{\varepsilon}\to 0}(\tilde{\varepsilon}))\prod_{k=0}^{n-1}(1-\tilde{\varepsilon} (\Delta_{\sigma^{-k-1}\omega})_{22})\\
     \vdots \\
     \sum_{n=0}^\infty (\tilde{\varepsilon} \beta_{m-1,m,\sigma^{-n-1}\omega}c_{m-1,\sigma^{-n-1}\omega}+o_{\tilde{\varepsilon}\to 0}(\tilde{\varepsilon}))\prod_{k=0}^{n-1}(1-\tilde{\varepsilon}(\Delta_{\sigma^{-k-1}\omega})_{mm})
    \end{pmatrix}\label{eqn:qomepssums}
\end{align}
\normalsize
where for $i\in\{1,\cdots ,m\}$, $c_{i,\omega}$ are as in \eqn{eqn:vw0}. Thus, by \lem{lem:weights}, since $\left(\int_\Omega \Delta_\omega \, d\mathbb{P}(\omega)\right)_{ii}\neq 0$ for $i\in\{1,\cdots ,m\}$ (since $\int_\Omega \Delta_\omega \, d\mathbb{P}(\omega)$ is invertible), for $\mathbb{P}$-a.e. $\omega\in\Omega$
$$\lim_{\tilde{\varepsilon}\to 0}q_{\omega}^{\tilde{\varepsilon}} =  \begin{pmatrix}
     \frac{\int_{\Omega} \beta_{2,1,\omega}c_{2,\omega}\, d\mathbb{P}(\omega)}{\int_\Omega \beta_{1,2,\omega}\, d\mathbb{P}(\omega)} \\
     \frac{\int_{\Omega} \beta_{1,2,\omega}c_{1,\omega} + \beta_{3,2,\omega}c_{3,\omega} \, d\mathbb{P}(\omega)}{\int_\Omega \beta_{2,1,\omega}+ \beta_{2,3,\omega} \, d\mathbb{P}(\omega)} \\
     \vdots \\
     \frac{\int_{\Omega} \beta_{m-1,m,\omega}c_{m-1,\omega}\, d\mathbb{P}(\omega)}{\int_\Omega \beta_{m,m-1,\omega}\, d\mathbb{P}(\omega)}
    \end{pmatrix}.$$
Therefore, $v_\omega^0 = \lim_{\tilde{\varepsilon}\to 0}q_\omega^{\tilde{\varepsilon}}:=v^0$ is independent of $\omega\in\Omega$ for $\mathbb{P}$-a.e. $\omega\in\Omega$. Using \eqn{eqn:qomeps} and noting that for $i\in\{1,\cdots ,m\}$, $\left(\int_\Omega \Delta_\omega \, d\mathbb{P}(\omega)\right)_{ii}\neq 0$, for $\mathbb{P}$-a.e. $\omega\in\Omega$,
\begin{align}
   v^0 &= \lim_{\tilde{\varepsilon}\to 0} \sum_{n=0}^{\infty} \left( \prod_{k=0}^{n-1} (I-\tilde{\varepsilon} \Delta_{\sigma^{-k-1}\omega}) \right)(\tilde{\varepsilon} N_{\sigma^{-n-1}\omega}v^{0} +o_{\tilde{\varepsilon}\to 0}(\tilde{\varepsilon}))\nonumber \\
   &\stackrel{(\star\star)}{=} \left(\int_{\Omega} \Delta_\omega\, d\mathbb{P}(\omega)\right)^{-1}\int_{\Omega}N_\omega\, d\mathbb{P}(\omega) v^0. \label{eqn:matequation}
\end{align}
At $(\star\star)$ we have used \lem{lem:weights}, since the entries of the matrix 
$$\sum_{n=0}^{\infty} \left( \prod_{k=0}^{n-1} (I-\tilde{\varepsilon} \Delta_{\sigma^{-k-1}\omega}) \right)(\tilde{\varepsilon} N_{\sigma^{-n-1}\omega}+o_{\tilde{\varepsilon}\to 0}(\tilde{\varepsilon}))$$
are in the form of \eqn{eqn:pweps} with $\left(\int_\Omega \Delta_\omega \, d\mathbb{P}(\omega)\right)_{ii}\neq 0$ for $i\in\{1,\cdots ,m\}$. Due to \eqn{eqn:measIi}, $v_\omega^{\tilde{\varepsilon}}$ converges uniformly over $\omega\in\Omega$ away from a $\mathbb{P}$-null set to the same limit as $q_\omega^{\tilde{\varepsilon}}$, we may conclude that $v_\omega^{\tilde{\varepsilon}}$ converges uniformly over $\omega\in\Omega$ away from a $\mathbb{P}$-null set to the non-trivial solution of \eqn{eqn:matequation}.
\end{proof}
\begin{remark}
    Similar to the discussion in \rem{rem:markov-connection}, if $\int_\Omega \Delta_\omega\, d\mathbb{P}(\omega)$ is invertible, and $\beta_{i,j}\in L^\infty(\mathbb{P})$, then \thrm{thrm:m-invdens} provides us with the limiting invariant measures of the $m$-state Markov chains in random environments driven by $\sigma:\Omega\to \Omega$ (as $\varepsilon\to0$), with transition matrices $(M_{\omega}^\varepsilon)_{\omega\in\Omega}$ given by \eqn{eqn:M-matrix} for each $\omega\in\Omega$.\footnote{Refer to \rem{rem:not-invert} as to how this invertibility assumption may be relaxed.} Namely, if $\int_\Omega \Delta_\omega\, d\mathbb{P}(\omega)$ is invertible, as $\varepsilon\to 0$, the random invariant measure of $(M_{\omega}^\varepsilon)_{\omega\in\Omega}$ converges uniformly over $\omega\in\Omega$ away from a $\mathbb{P}$-null set to the non-trivial solution, $v^0$, of \eqn{eqn:matrix equation}. This follows from the proof of \thrm{thrm:m-invdens} by setting the errors appearing in \eqn{eqn:H12}, \eqn{eqn:Hmm-1}, \eqn{eqn:Hii+1} and \eqn{eqn:Hi+1i} to zero. 
\end{remark}
\begin{remark}
    In the setting that $T^0$ admits $m\geq2$ initially invariant sets, the results of \Sec{sec:2nd_space} assert that the functions spanning the subsequent Oseledets spaces of $\mathcal{L}_\omega^\varepsilon$ with Lyapunov exponents approaching zero (as $\varepsilon\to 0$) converge in $L^1$ to a linear combination of the initially invariant densities for $\mathbb{P}$-a.e. $\omega\in\Omega$. Furthermore, the limiting function has integral zero.   
\end{remark}

\section{Example: Random paired tent maps}
\label{sec:example}
In this section, we show that our results may be used to approximate the limiting random invariant density, and the second Oseledets space for Perron-Frobenius operator cocycles associated with random paired tent maps depending on a parameter $\varepsilon$. This class of random dynamical systems was introduced by Horan in \cite{horan_lin,Horan} and has been used as a primary candidate of a non-autonomous system that admits metastability \cite{GTQ_quarantine}. For this reason, we benchmark our results against such a model. 
\subsection{The system of concern}
Let $I=[-1,1]$ and for $0\leq a,b\leq 1$ consider the paired tent map $T_{a,b}:I\to I$ given by
\begin{equation}
    T_{a,b}(x) := \begin{cases} 2(1+b)(x+1)-1, \ \ \ \ \ &x\in [-1,-1/2]\\
    -2(1+b)x-1, &x\in[-1/2,0)\\
    0, & x=0\\
    -2(1+a)x+1, &x\in(0,1/2]\\
    2(1+a)(x-1)+1, &x\in[1/2,1]
    \end{cases}
    \label{eqn:ptm}
\end{equation}
This map satisfies $T_{a,b}(-1)=-1$, $T_{a,b}(-1/2)=b$ and $\lim_{x\to 0^-}T(x)=-1$; $\lim_{x\to 0^+}T_{a,b}(x)=1$, $T_{a,b}(1/2)=-a$ and $T_{a,b}(1)=1$. The map $T_{0,0}$ comprises of tent maps on disjoint subintervals $I_L=[-1,0]$ and $I_R=[0,1]$. For small positive $a$ and $b$, there is a small amount of leakage between these subintervals: points near $-1/2$ are mapped to $I_R$ whilst points near $1/2$ are mapped to $I_L$. This behaviour may be seen in \fig{fig:ptm}.
\begin{figure}[H]

\begin{subfigure}{0.45\linewidth}
\begin{tikzpicture}[
  declare function={
    func(\x)= (\x <-1/2) * (2*\x+1)   +
              and(\x >= -1/2, \x < 0) * (-2*\x-1)     +
              and(\x > 0, \x <= 1/2) * (-2*\x+1)   +
              (\x > 1/2) * (2*\x-1);
  }
,scale=0.78]
\begin{axis}[%Increase number of samples to make plot more refined
  samples = 7, axis x line=middle, axis y line=middle,
  ymin=-1, ymax=1, ytick={-1,...,1}, ylabel=$T_{0,0}$,
  xmin=-1, xmax=1, xtick={-1,...,1}, xlabel=$x$
]
\addplot [blue, domain=-1:0, thick] {func(x)}; 
\addplot [blue, domain=0.001:1, thick] {func(x)}; 
\addplot[blue, only marks, mark=*, mark size=0.8] coordinates {(0,0)};
\end{axis}
\end{tikzpicture}
    \label{fig:ptma}
    \end{subfigure}\hfill
\begin{subfigure}{0.45\linewidth}
\begin{tikzpicture}[
  declare function={
    func(\x)= (\x <-1/2) * (2.2*\x+1.2)   +
              and(\x >= -1/2, \x < 0) * (-2.2*\x-1)     +
              and(\x > 0, \x <= 1/2) * (-2.2*\x+1)   +
              (\x > 1/2) * (2.2*\x-1.2);
  }
,scale=0.78]
\begin{axis}[%Increase number of samples to make plot more refined
  samples = 7, axis x line=middle, axis y line=middle,
  ymin=-1, ymax=1, ytick={-1,...,1}, ylabel=$T_{a,b}$,
  xmin=-1, xmax=1, xtick={-1,...,1}, xlabel=$x$
]
\addplot [blue, domain=-1:0, thick] {func(x)}; 
\addplot [blue, domain=0.001:1, thick] {func(x)}; 
\addplot[blue, only marks, mark=*, mark size=0.8] coordinates {(0,0)};
\addplot [mark=none, red, samples=2,dashed] {0.1};
\node[above right] at (axis cs: -1, 0.1) {$b$};

\addplot [mark=none, red, samples=2,dashed] {-0.1};
\node[below left] at (axis cs: 1, -0.1) {$-a$};
\end{axis}
\end{tikzpicture}
    \label{fig:ptmb}
    \end{subfigure}
    \caption{Paired tent map $T_{a,b}$ on $I=[-1,1]$ with $a=b=0$ (left) and $a=b=\frac{1}{10}$ (right)}
    \label{fig:ptm}
\end{figure}
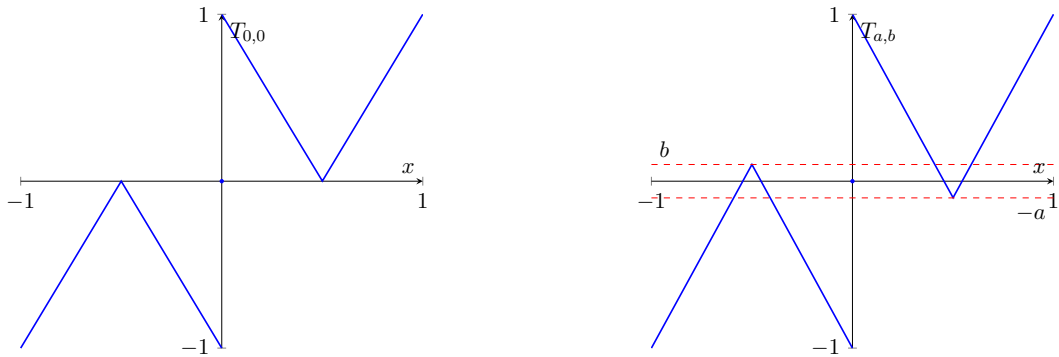
\noindent
Now consider a cocycle of paired tent maps driven by an ergodic, measure-preserving and invertible transformation $\sigma:\Omega \to \Omega$. The $\omega$-dependence is introduced in this system by making the leakage between intervals $I_L$ and $I_R$ random. This is guaranteed by considering the evolution of a point $x\in I$ under the dynamics $T_\omega := T_{a_\omega,b_\omega}$ where $a,b:\Omega \to [0,1]$ are measurable functions, and thus $a,b\in L^\infty(\mathbb{P})$. Since we are interested in the behaviour of such a system when small amounts of leakage can occur, we consider for sufficiently small $\varepsilon>0$ the map $T_\omega^\varepsilon := T_{\varepsilon a_\omega,\varepsilon b_\omega}$.\footnote{One can interpret the parameter $\varepsilon$ as a leakage controller. The larger you take $\varepsilon$, the more likely it is for points to map to and from the initially invariant subintervals.} Here, there is a low probability that a point will be mapped from $I_L$ to $I_R$ in one step and vice-versa. When $\varepsilon = 0$, $T^0 := T_{0,0}$ and the map consists of tent maps on disjoint subintervals. Furthermore, the dynamics is entirely deterministic with the $\varepsilon=0$ perturbation, removing the randomness of the system. 
\\
\\
The random maps $(T_\omega^\varepsilon)_{\omega\in\Omega} := (T_{\varepsilon a_\omega, \varepsilon b_\omega})_{\omega\in\Omega}$, driven by $\sigma$, are the primary focus of this section. We verify the conditions outlined in \Sec{sec:map+pert} in order to apply \thrm{thrm:phi_lims} and \thrm{thrm:cont2nd} to this random paired metastable system. The main results of this section are the following.
\begin{theorem}
 Let $\{(\Omega,\mathcal{F},\mathbb{P},\sigma,\BV(I),\mathcal{L}^\varepsilon)\}_{\varepsilon\geq 0}$ be a \jp{family} of random dynamical systems of paired tent maps $(T^\varepsilon_\omega)_{\omega\in\Omega}:= (T_{\varepsilon a_\omega, \varepsilon b_\omega})_{\omega\in\Omega}$ satisfying \hyperref[list:P1]{\textbf{(P1)}} where $a,b :\Omega \to [0,1]$ are measurable, and $\int_\Omega a_\omega +b_\omega\, d\mathbb{P}(\omega)\neq 0$. If $(\phi_\omega^\varepsilon)_{\omega\in\Omega}$ denotes the random invariant density of $(T_{\omega}^\varepsilon)_{\omega\in\Omega}$, then as $\varepsilon\to 0$
    $$\phi_\omega^\varepsilon \stackrel{L^1}{\to} \phi^0:= \frac{\int_{\Omega} a_\omega \, d\mathbb{P}(\omega)}{\int_{\Omega} a_\omega +b_\omega \, d\mathbb{P}(\omega)}\mathds{1}_{I_L}+\frac{\int_{\Omega} b_\omega \, d\mathbb{P}(\omega)}{\int_{\Omega} a_\omega +b_\omega \, d\mathbb{P}(\omega)}\mathds{1}_{I_R}$$
    uniformly over $\omega\in\Omega$ away from a $\mathbb{P}$-null set.
    \label{thrm:ptm-inv}
\end{theorem}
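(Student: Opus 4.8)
The plan is to verify that the cocycle $(T_\omega^\varepsilon)_{\omega\in\Omega} = (T_{\varepsilon a_\omega,\varepsilon b_\omega})_{\omega\in\Omega}$ satisfies the hypotheses \hyperref[list:I1]{\textbf{(I1)}}--\hyperref[list:I6]{\textbf{(I6)}} and \hyperref[list:P1]{\textbf{(P1)}}--\hyperref[list:P7]{\textbf{(P7)}} of \Sec{sec:map+pert} and then to apply \thrm{thrm:phi_lims} directly. The initial system is $T^0 = T_{0,0}$, which by \eqn{eqn:ptm} decouples into the full (symmetric, slope $\pm 2$) tent map on $I_L = [-1,0]$ and on $I_R = [0,1]$, with boundary point $b = 0$. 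Each branch of $T_{0,0}$ is affine, so \hyperref[list:I1]{\textbf{(I1)}} and \hyperref[list:I2]{\textbf{(I2)}} (with $\inf|(T^0)'| = 2 > 1$) hold, and invariance of $I_L, I_R$ under $T_{0,0}$, hence \hyperref[list:I3]{\textbf{(I3)}}, is read off from \eqn{eqn:ptm}.

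For \hyperref[list:I4]{\textbf{(I4)}}, each restricted map is conjugate (via a translation) to the standard full tent map, which has a unique ACIM equal to normalised Lebesgue measure; since $\leb(I_L) = \leb(I_R) = 1$, this gives $\phi_L = \mathds{1}_{I_L}$ and $\phi_R = \mathds{1}_{I_R}$. The critical set is $\mathcal{C}^0 = \{-1,-\tfrac12,0,\tfrac12,1\}$ and the infinitesimal holes are $H^0 = (T^0)^{-1}(\{0\})\setminus\{0\} = \{-\tfrac12,\tfrac12\}$; a direct orbit computation gives $T^{0\,(k)}(\mathcal{C}^0) = \{-1,0,1\}$ for every $k \geq 1$, which is disjoint from $H^0$, so \hyperref[list:I5]{\textbf{(I5)}} holds, and \hyperref[list:I6]{\textbf{(I6)}} is immediate because $\phi_L$ and $\phi_R$ are identically $1$ on their supports.

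Turning to the perturbation conditions, \hyperref[list:P1]{\textbf{(P1)}} is assumed. For \hyperref[list:P2]{\textbf{(P2)}} note that the critical set of $T_\omega^\varepsilon$ is again $\{-1,-\tfrac12,0,\tfrac12,1\}$, independent of $\omega$ and $\varepsilon$, and each branch is affine with coefficients depending linearly on $\varepsilon$, so all the $C^2$-smallness requirements are automatic. The main computation is for \hyperref[list:P3]{\textbf{(P3)}} and \hyperref[list:P4]{\textbf{(P4)}}: solving $T_\omega^\varepsilon(x) = 0$ on the two branches meeting at $-\tfrac12$ identifies $H_{L,\omega}^\varepsilon$ with the interval $\bigl(\tfrac{1}{2(1+\varepsilon b_\omega)} - 1,\, \tfrac{-1}{2(1+\varepsilon b_\omega)}\bigr)$, which collapses to $\{-\tfrac12\}$ uniformly in $\omega$ as $\varepsilon\to 0$ and has $\mu_L$-measure $\tfrac{\varepsilon b_\omega}{1+\varepsilon b_\omega} = \varepsilon b_\omega + o_{\varepsilon\to 0}(\varepsilon)$ with error bounded by $\varepsilon^2$ uniformly in $\omega$; the symmetric computation at $\tfrac12$ gives $\mu_R(H_{R,\omega}^\varepsilon) = \varepsilon a_\omega + o_{\varepsilon\to 0}(\varepsilon)$. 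Hence \hyperref[list:P3]{\textbf{(P3)}} and \hyperref[list:P4]{\textbf{(P4)}} hold with $\beta_{L,\omega} = b_\omega$ and $\beta_{R,\omega} = a_\omega$, both in $L^\infty(\mathbb{P})$ since $a,b$ are $[0,1]$-valued. The uniform Lasota--Yorke inequality \hyperref[list:P5]{\textbf{(P5)}} follows from the standard theory for piecewise affine expanding maps with a uniformly bounded number of branches and minimal slope $\geq 2$ (cf. \rem{rem:LY0} and \cite{EG_LY}); uniqueness of the RACIM \hyperref[list:P6]{\textbf{(P6)}} for small $\varepsilon > 0$ is covered by \cite{Unique-RACIM} and is known for these maps from \cite{Horan,GTQ_quarantine}; and \hyperref[list:P7]{\textbf{(P7)}} is case (b) since $0 \in \mathcal{C}^0$, with $T_\omega^\varepsilon(0^-) = -1 < 0 < 1 = T_\omega^\varepsilon(0^+)$ and $0 \in \mathcal{C}_\omega^\varepsilon$.

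With every hypothesis verified and $\int_\Omega \beta_{L,\omega} + \beta_{R,\omega}\, d\mathbb{P}(\omega) = \int_\Omega a_\omega + b_\omega\, d\mathbb{P}(\omega) \neq 0$, \thrm{thrm:phi_lims} applies and, upon substituting $\phi_L = \mathds{1}_{I_L}$, $\phi_R = \mathds{1}_{I_R}$, $\beta_{L,\omega} = b_\omega$, $\beta_{R,\omega} = a_\omega$, yields exactly the stated $L^1$-convergence of $\phi_\omega^\varepsilon$, uniform over $\omega$ away from a $\mathbb{P}$-null set. I expect the only genuinely delicate steps to be the hole computation underlying \hyperref[list:P4]{\textbf{(P4)}} --- pinning down the correct constants $\beta_{L,\omega},\beta_{R,\omega}$ and confirming that the $o_{\varepsilon\to 0}(\varepsilon)$ error is uniform in $\omega$ --- together with the two citations needed for \hyperref[list:P5]{\textbf{(P5)}} and \hyperref[list:P6]{\textbf{(P6)}}; the remaining verifications are routine bookkeeping.
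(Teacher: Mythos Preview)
Your approach is essentially the same as the paper's: verify \hyperref[list:I1]{\textbf{(I1)}}--\hyperref[list:I6]{\textbf{(I6)}} and \hyperref[list:P1]{\textbf{(P1)}}--\hyperref[list:P7]{\textbf{(P7)}} for the paired tent map cocycle and invoke \thrm{thrm:phi_lims}, and your verifications (critical set, holes, $\beta_{L,\omega}=b_\omega$, $\beta_{R,\omega}=a_\omega$, boundary case (b), etc.) match the paper's computations. The one place where the paper is noticeably more careful than your sketch is \hyperref[list:P5]{\textbf{(P5)}}: since the unperturbed map has minimal expansion exactly $2$, the naive Lasota--Yorke coefficient $2/\inf|T'|$ equals $1$ and is not contractive, so the paper appeals to Horan's \emph{balanced} Lasota--Yorke inequality for the second iterate rather than to generic results from \cite{EG_LY}; you correctly flag this step as needing a citation, and \cite{Horan} is the right one.
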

\begin{theorem}
In the setting of \thrm{thrm:ptm-inv}, let $(\psi_\omega^\varepsilon)_{\omega\in\Omega}$ denote the functions spanning the second Oseledets spaces of $(\mathcal{L}_{\omega}^\varepsilon)_{\omega\in\Omega}$. Choose the sign of $\psi_\omega^\varepsilon$ such that $\int_{I_L}\psi_\omega^\varepsilon\, \dleb(x) >0$ for  $\mathbb{P}$-a.e. $\omega\in\Omega$. Then as $\varepsilon\to 0$,
$$\psi_\omega^\varepsilon\stackrel{L^1}{\to}\psi_{}^0:= \frac{1}{2}\mathds{1}_{I_L} - \frac{1}{2}\mathds{1}_{I_R}$$
for $\mathbb{P}$-a.e. $\omega\in\Omega$.
\label{thrm:ptm_cont2nd}
\end{theorem}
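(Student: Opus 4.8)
The plan is to derive \thrm{thrm:ptm_cont2nd} from \thrm{thrm:cont2nd} by checking that the paired tent map family $(T_\omega^\varepsilon)_{\omega\in\Omega}=(T_{\varepsilon a_\omega,\varepsilon b_\omega})_{\omega\in\Omega}$ satisfies conditions \textbf{(I1)}--\textbf{(I6)} and \textbf{(P1)}--\textbf{(P7)}; these are exactly the hypotheses of \thrm{thrm:cont2nd}, and they coincide with the conditions one verifies en route to \thrm{thrm:ptm-inv}, so the proof will invoke that same verification and then simply read off the limiting function. The key preliminary observation is that, for $a=b=0$, the map $T^0=T_{0,0}$ restricts on $I_L=[-1,0]$ and on $I_R=[0,1]$ to a full-branch piecewise-linear tent map with $|(T^0)'|\equiv 2$; such a map preserves normalised Lebesgue measure, so the unique ACIMs of \textbf{(I4)} have densities $\phi_L=\mathds{1}_{I_L}$ and $\phi_R=\mathds{1}_{I_R}$.

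Concretely, I would record the following. Conditions \textbf{(I1)}--\textbf{(I3)} are immediate from \eqn{eqn:ptm} (piecewise affine, critical set $\mathcal{C}^0=\{-1,-\tfrac12,0,\tfrac12,1\}$, slope $2>1$, boundary point $b=0$ with $I_L,I_R$ invariant); \textbf{(I4)} is the classical uniqueness of the ACIM for the mixing full-branch tent map on each half; \textbf{(I5)}--\textbf{(I6)} hold since $H^0=(T^0)^{-1}(\{0\})\setminus\{0\}=\{-\tfrac12,\tfrac12\}$ while the forward $T^0$-orbit of $\mathcal{C}^0$ is $\{-1,0,1\}$, disjoint from $H^0$, and $\phi_\star\equiv 1$ on $I_\star$ is positive on $H^0\cap I_\star$. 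On the perturbation side: \textbf{(P1)} is assumed; \textbf{(P2)} holds because $\varepsilon\mapsto T_{\varepsilon a_\omega,\varepsilon b_\omega}$ is affine in $\varepsilon$ with $a,b\in L^\infty(\mathbb{P})$ and the critical set is unchanged; \textbf{(P5)} holds because every branch has slope $\geq 2$, giving a uniform Lasota--Yorke inequality; \textbf{(P6)} follows from the fibrewise-uniform covering property of the perturbed maps; and \textbf{(P7)}(b) holds since $0\in\mathcal{C}_\omega^\varepsilon$ for all $\omega,\varepsilon$ and $T^0(0^-)=-1<0<1=T^0(0^+)$. Finally \textbf{(P3)}--\textbf{(P4)} come from computing the holes explicitly: solving $T_{\varepsilon a_\omega,\varepsilon b_\omega}(x)\in I_\star^c$ shows $H_{L,\omega}^\varepsilon$ is the interval about $-\tfrac12$ of length $\varepsilon b_\omega/(1+\varepsilon b_\omega)$ and $H_{R,\omega}^\varepsilon$ the interval about $\tfrac12$ of length $\varepsilon a_\omega/(1+\varepsilon a_\omega)$; since $\phi_\star\equiv 1$ this gives $\mu_L(H_{L,\omega}^\varepsilon)=\varepsilon b_\omega+o_{\varepsilon\to 0}(\varepsilon)$ and $\mu_R(H_{R,\omega}^\varepsilon)=\varepsilon a_\omega+o_{\varepsilon\to 0}(\varepsilon)$, i.e. $\beta_{L,\omega}=b_\omega$ and $\beta_{R,\omega}=a_\omega$ in $L^\infty(\mathbb{P})$, with $\int_\Omega\beta_{L,\omega}+\beta_{R,\omega}\,d\mathbb{P}(\omega)=\int_\Omega a_\omega+b_\omega\,d\mathbb{P}(\omega)\neq 0$ by hypothesis.

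With all hypotheses in hand, \thrm{thrm:cont2nd} applies: after choosing the sign of $\psi_\omega^\varepsilon$ so that $\int_{I_L}\psi_\omega^\varepsilon\,\dleb(x)>0$ for $\mathbb{P}$-a.e.\ $\omega$ — a choice that is always legitimate, by the remark following \thrm{thrm:cont2nd}, since $\psi_\omega^\varepsilon$ converges to a non-trivial combination of $\phi_L$ and $\phi_R$ — one obtains $\psi_\omega^\varepsilon\stackrel{L^1}{\to}\tfrac12\phi_L-\tfrac12\phi_R=\tfrac12\mathds{1}_{I_L}-\tfrac12\mathds{1}_{I_R}$ for $\mathbb{P}$-a.e.\ $\omega$, which is exactly \thrm{thrm:ptm_cont2nd}. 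I expect the only non-routine work to sit in the shared verification feeding \thrm{thrm:ptm-inv} rather than anything specific to this statement: establishing the uniform-in-$(\omega,\varepsilon)$ Lasota--Yorke inequality \textbf{(P5)}, the existence and uniqueness of the RACIM \textbf{(P6)}, and the hyperbolicity of the Oseledets splitting of $\mathcal{L}^0$ (whose top space is two-dimensional, spanned by $\phi_L$ and $\phi_R$). These are standard for piecewise-linear expanding maps of slope $\geq 2$, and once they are available \thrm{thrm:ptm_cont2nd} follows from \thrm{thrm:cont2nd} exactly as \thrm{thrm:ptm-inv} follows from \thrm{thrm:phi_lims}.
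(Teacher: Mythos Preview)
Your proposal is correct and follows essentially the same approach as the paper: verify \hyperref[list:I1]{\textbf{(I1)}}--\hyperref[list:I6]{\textbf{(I6)}} and \hyperref[list:P1]{\textbf{(P1)}}--\hyperref[list:P7]{\textbf{(P7)}} for the paired tent map family (a verification shared with \thrm{thrm:ptm-inv}), then apply \thrm{thrm:cont2nd} and substitute $\phi_L=\mathds{1}_{I_L}$, $\phi_R=\mathds{1}_{I_R}$. The only place where the paper is more careful than your sketch is \hyperref[list:P5]{\textbf{(P5)}}: it notes that a naive Lasota--Yorke argument is complicated by small monotonic branches of the second iterate and instead invokes Horan's balanced Lasota--Yorke inequality (\prop{prop:unif-LY} and \prop{prop:tent-map-ly-ineq}) to obtain uniform constants, whereas you treat this as routine from the slope bound.
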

\begin{remark}
    For $\star \in \{ L,R \}$, we know that $\phi_\star = \mathds{1}_{I_\star}$ in the statements of \thrm{thrm:phi_lims} and \thrm{thrm:cont2nd} since the unique ACIMs of the unperturbed system supported on the initially invariant intervals are given by $\mu_\star = \leb|_{I_\star}$.
\end{remark}\noindent
The proof of \thrm{thrm:ptm-inv} and \thrm{thrm:ptm_cont2nd} are direct consequences of \thrm{thrm:phi_lims} and \thrm{thrm:cont2nd}, respectively. The remainder of this section is dedicated to verifying the conditions outlined in \Sec{sec:map+pert}, allowing us to apply \thrm{thrm:phi_lims} and \thrm{thrm:cont2nd} to random paired tent maps. 
\subsection{Conditions for the unperturbed map}
We begin by showing conditions \hyperref[list:I1]{\textbf{(I1)}}-\hyperref[list:I6]{\textbf{(I6)}} hold. Suppose that $\mathcal{L}^0$ is the Perron-Frobenius operator associated with the map $T^0:=T_{0,0}$ defined in \eqn{eqn:ptm} acting on $\BV(I)$. One can show that \hyperref[list:I1]{\textbf{(I1)}} is satisfied with critical set $\mathcal{C}^0 = \{-1,-\frac{1}{2},0,\frac{1}{2},1\}$. Furthermore, since $\inf_{x\in I\setminus \mathcal{C}^0}|(T^0)^\prime(x)|=2>1$, \hyperref[list:I2]{\textbf{(I2)}} holds. For \hyperref[list:I3]{\textbf{(I3)}}, we note that there exists a boundary point $b=0$ that splits $I=[-1,1]$ into subintervals $I_L=[-1,0]$ and $I_R=[0,1]$ which are invariant under $T^0$.\footnote{We remind the reader of why this is the case. Without loss of generality suppose $x\in I_L$, then $T_{0,0}(x)\in [-1,0]=I_L$ meaning $T_{0,0}^{-1}(I_L)\subset I_L$. An almost identical argument can be made for $I_R$.} On each of these invariant subintervals, the dynamics is given by the tent map. By \cite{LY_acim} and \cite{LY_unique}, for $\star\in\{L,R\}$, $T^0|_{I_\star}$ has a unique ACIM given by $\leb|_{I_\star}$ with density $\mathds{1}|_{I_\star}$ and thus \hyperref[list:I4]{\textbf{(I4)}} is satisfied. 
\\
\\
To verify \hyperref[list:I5]{\textbf{(I5)}}, we note that the infinitesimal holes for $T^0$ are $H^0=(T^0)^{-1}(\{b\})\setminus \{b\} = \{-\frac{1}{2},\frac{1}{2}\}$. Since $-1,0,1$ are \jp{all} fixed points of $T^0$ and elements of $\mathcal{C}^0$, and $x=\pm \frac{1}{2}$ are mapped to $0$ under $T^0$, we have that for each $k>0$ that $T^{0\, (k)}(\mathcal{C}^0)= \{-1,0,1\}$. Thus for each $k>0$, $T^{0\, (k)}(\mathcal{C}^0)\cap H^0=\emptyset$.
\begin{remark}
    We remind the reader that \hyperref[list:I5]{\textbf{(I5)}} is used to ensure that for $\star\in\{L,R\}$, the density $\phi_\star$ is continuous at all points in $H^0$. Verification of this condition may not be necessary if we know precisely what $\phi_\star$ is. Namely, in our case since $\phi_\star=\mathds{1}|_{I_\star}$, we know that $\phi_\star$ is continuous on $I_\star$ and hence on $H^0$.  
\end{remark}
\noindent
For \hyperref[list:I6]{\textbf{(I6)}}, since $H^0\cap I_L=\{-\frac{1}{2}\}$ and $H^0\cap I_R = \{\frac{1}{2}\}$, we know that since $\phi_\star = \mathds{1}|_{I_\star}$ for $\star\in\{L,R\}$, $\phi_\star$ is positive at each of the points in $H^0\cap I_L$ and $H^0\cap I_R$. 

\subsection{Conditions for the perturbed map}
Upon perturbing the initial system, we now consider the paired tent map cocycle $(T_\omega^\varepsilon)_{\omega\in\Omega}$. It remains to verify conditions \hyperref[list:P2]{\textbf{(P2)}}-\hyperref[list:P7]{\textbf{(P7)}}, from which, by enforcing \hyperref[list:P1]{\textbf{(P1)}}, \thrm{thrm:ptm-inv} and \thrm{thrm:ptm_cont2nd} hold.
\\
\\
One can show that \hyperref[list:P2]{\textbf{(P2)}} is satisfied. This follows primarily from the fact that the critical set for $T^\varepsilon:\Omega\times I\to I$ is identical to that of $T^0:I\to I$. In particular, $\mathcal{C}_\omega^\varepsilon=\{-1,-\frac{1}{2},0,\frac{1}{2},1\}$. Conditions \hyperref[list:P3]{\textbf{(P3)}} and \hyperref[list:P7]{\textbf{(P7)}} hold due to the following.
\noindent
\begin{lemma}
 In the setting of \thrm{thrm:ptm-inv}, let $H_{L,\omega}^\varepsilon$ and $H_{R,\omega}^\varepsilon$ denote the set of all points mapped by $T_\omega^\varepsilon$ from $I_L$ to $I_R$ and from $I_R$ to $I_L$, respectively. Then
\begin{align*}
    H_{L,\omega}^\varepsilon &= \left[-1+\frac{1}{2(1+\varepsilon b_\omega)},-\frac{1}{2(1+\varepsilon b_\omega)}\right]\\
    H_{R,\omega}^\varepsilon &= \left[\frac{1}{2(1+\varepsilon a_\omega)}, 1-\frac{1}{2(1+\varepsilon a_\omega)}\right].    
\end{align*}
Furthermore, conditions \hyperref[list:P3]{\textbf{(P3)}} and \hyperref[list:P7]{\textbf{(P7)}} are satisfied.
\label{lem:WLR}
\end{lemma}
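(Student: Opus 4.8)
The statement of \lem{lem:WLR} has two parts: computing the holes $H_{L,\omega}^\varepsilon$ and $H_{R,\omega}^\varepsilon$ explicitly, and then using these formulas to verify \hyperref[list:P3]{\textbf{(P3)}} and \hyperref[list:P7]{\textbf{(P7)}}. The plan is to treat the two parts in turn, with the hole computation being a direct inversion of the piecewise-linear map $T_\omega^\varepsilon = T_{\varepsilon a_\omega,\varepsilon b_\omega}$ defined via \eqn{eqn:ptm}, and the condition-checking being a short consequence of the resulting expressions.

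First I would compute $H_{L,\omega}^\varepsilon = I_L \cap (T_\omega^\varepsilon)^{-1}(I_R)$. Since $I_L = [-1,0]$ and $I_R = [0,1]$, I need the set of $x \in [-1,0]$ with $T_\omega^\varepsilon(x) \geq 0$ (strictly $>0$, up to the measure-zero endpoint). On $[-1,-1/2]$ the map is $x \mapsto 2(1+\varepsilon b_\omega)(x+1)-1$, which equals $0$ when $x = -1 + \frac{1}{2(1+\varepsilon b_\omega)}$, and is increasing; on $[-1/2,0)$ the map is $x \mapsto -2(1+\varepsilon b_\omega)x - 1$, which equals $0$ when $x = -\frac{1}{2(1+\varepsilon b_\omega)}$, and is decreasing. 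Combining these two branches, the points mapping into $I_R$ form the interval $\left[-1+\frac{1}{2(1+\varepsilon b_\omega)}, -\frac{1}{2(1+\varepsilon b_\omega)}\right]$, an interval centred at $-1/2$ (note $-1 + \frac{1}{2(1+\varepsilon b_\omega)}$ and $-\frac{1}{2(1+\varepsilon b_\omega)}$ average to $-\tfrac12 \cdot \frac{2(1+\varepsilon b_\omega)}{2(1+\varepsilon b_\omega)}$, wait — they average to $\frac12\left(-1 + \frac{1}{2(1+\varepsilon b_\omega)} - \frac{1}{2(1+\varepsilon b_\omega)}\right) = -\frac12$). The computation for $H_{R,\omega}^\varepsilon$ is symmetric: on $(0,1/2]$ the map is $x\mapsto -2(1+\varepsilon a_\omega)x+1$, zero at $x = \frac{1}{2(1+\varepsilon a_\omega)}$ and decreasing; on $[1/2,1]$ the map is $x \mapsto 2(1+\varepsilon a_\omega)(x-1)+1$, zero at $x = 1 - \frac{1}{2(1+\varepsilon a_\omega)}$ and increasing; hence $H_{R,\omega}^\varepsilon = \left[\frac{1}{2(1+\varepsilon a_\omega)}, 1 - \frac{1}{2(1+\varepsilon a_\omega)}\right]$, an interval centred at $1/2$.

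For \hyperref[list:P3]{\textbf{(P3)}}: each hole is a single interval (hence a union of finitely many intervals), and as $\varepsilon \to 0$ the endpoints of $H_{L,\omega}^\varepsilon$ converge to $-1 + \tfrac12 = -\tfrac12$ and $-\tfrac12$, so $H_{L,\omega}^\varepsilon \to \{-\tfrac12\} = H^0 \cap I_L$ in the Hausdorff metric; similarly $H_{R,\omega}^\varepsilon \to \{\tfrac12\} = H^0 \cap I_R$. The convergence is uniform over $\omega$ away from a $\mathbb{P}$-null set because $a,b \in L^\infty(\mathbb{P})$: the Hausdorff distance is bounded by $\frac{\varepsilon\|b\|_{L^\infty(\mathbb{P})}}{2(1+\varepsilon b_\omega)} \leq \frac{\varepsilon\|b\|_{L^\infty(\mathbb{P})}}{2}$ (and analogously for the right hole), which is independent of $\omega$ and vanishes as $\varepsilon \to 0$. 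For \hyperref[list:P7]{\textbf{(P7)}}: the boundary point is $b = 0$, which lies in $\mathcal{C}^0 = \mathcal{C}_\omega^\varepsilon = \{-1,-\tfrac12,0,\tfrac12,1\}$, so we are in case (b); from \eqn{eqn:ptm}, $\lim_{x\to 0^-}T_\omega^\varepsilon(x) = -1 < 0 < 1 = \lim_{x\to 0^+}T_\omega^\varepsilon(x)$, i.e. $T_\omega^\varepsilon(0^-) < 0 < T_\omega^\varepsilon(0^+)$ as required, and $0 \in \mathcal{C}_\omega^\varepsilon$ for all $\varepsilon \geq 0$. This completes the verification. The only mild obstacle is bookkeeping: one must be careful about which branch of the piecewise definition applies on each side of the critical points $-\tfrac12$ and $\tfrac12$, and about the (measure-zero) endpoint conventions where $T_\omega^\varepsilon$ hits exactly $0$; neither presents any real difficulty since the branches are affine and their slopes and signs are explicit.
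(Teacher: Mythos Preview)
Your proposal is correct and follows essentially the same approach as the paper: solve the linear equations for where each branch of \eqn{eqn:ptm} crosses $0$ to obtain the hole endpoints, then read off \hyperref[list:P3]{\textbf{(P3)}} and \hyperref[list:P7]{\textbf{(P7)}} from the explicit formulas. If anything, you supply more detail than the paper does---the paper simply asserts that measurability of $a,b$ yields \hyperref[list:P3]{\textbf{(P3)}} and that the holes emerging away from $b=0$ yields \hyperref[list:P7]{\textbf{(P7)}}, whereas you give the explicit Hausdorff-distance bound and check case (b) of \hyperref[list:P7]{\textbf{(P7)}} directly.
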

\begin{proof}
By \eqn{eqn:ptm}, the left and right endpoints of the interval $H_{L,\omega}^\varepsilon$ are determined by the solutions $x$ to the equations
\begin{align*}
 2(1+\varepsilon b_\omega)(x+1)-1&=0\\
    -2(1+\varepsilon b_\omega)x-1&=0,    
\end{align*}
respectively. Similarly the left and right endpoints of the interval $H_{R,\omega}^\varepsilon$ are determined by the solutions $x$ to the equations
\begin{align*}
 -2(1+\varepsilon a_\omega)x+1&=0\\
    2(1+\varepsilon a_\omega)(x-1)+1&=0,   
\end{align*}
respectively. Upon solving these systems of linear equations we obtain $H_{L,\omega}^\varepsilon$ and $H_{R,\omega}^\varepsilon$ as claimed. Since $a,b:\Omega\to [0,1]$ are measurable, one can show that \hyperref[list:P3]{\textbf{(P3)}} holds. Finally, \jp{we show that \hyperref[list:P7]{\textbf{(P7)}} holds. We note that the boundary point $b=0\in \mathcal{C}_0$, thus we verify condition \hyperref[list:P7]{\textbf{(P7)}}(b). Indeed, observe that $T^0(0^{-})=-1<b<1=T(0^+)$.\footnote{We recall that $f(0^s):= \lim_{x\to 0^s}f(x)$ for $s\in\{+,-\}$.} Furthermore, for all $\varepsilon>0$ and $\mathbb{P}$-a.e. $\omega\in\Omega$, $b\in \mathcal{C}_\omega^\varepsilon$. Therefore, \hyperref[list:P7]{\textbf{(P7)}}(b) holds}. 
\end{proof}
\noindent
\lem{lem:WLR} allows us to verify \hyperref[list:P4]{\textbf{(P4)}}.
\begin{lemma}
    In the setting of \thrm{thrm:ptm-inv} 
    \begin{align}
        \mu_L(H_{L,\omega}^\varepsilon)&=\varepsilon b_\omega +o_{\varepsilon\to 0}(\varepsilon),\label{eqn:muL}\\
        \mu_R(H_{R,\omega}^\varepsilon)&=\varepsilon a_\omega +o_{\varepsilon\to 0}(\varepsilon), \nonumber
    \end{align}
    where $a,b\in L^\infty(\mathbb{P})$. Hence \hyperref[list:P4]{\textbf{(P4)}} is satisfied.
    \label{lem:meas-ptm}
\end{lemma}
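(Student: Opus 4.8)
The plan is to compute $\mu_L(H_{L,\omega}^\varepsilon)$ and $\mu_R(H_{R,\omega}^\varepsilon)$ directly from the explicit description of the holes given in \lem{lem:WLR}, using the fact that $\mu_L = \leb|_{I_L}$ and $\mu_R = \leb|_{I_R}$ (which follows from \hyperref[list:I4]{\textbf{(I4)}} verified earlier for this example, since the unperturbed dynamics on each half is the full tent map whose unique ACIM is Lebesgue). Since $H_{L,\omega}^\varepsilon = \left[-1+\tfrac{1}{2(1+\varepsilon b_\omega)},\, -\tfrac{1}{2(1+\varepsilon b_\omega)}\right]$ is a single interval, its Lebesgue measure is just the difference of endpoints, namely
$$
\mu_L(H_{L,\omega}^\varepsilon) = \left(-\frac{1}{2(1+\varepsilon b_\omega)}\right) - \left(-1 + \frac{1}{2(1+\varepsilon b_\omega)}\right) = 1 - \frac{1}{1+\varepsilon b_\omega}.
$$
First I would record this identity, and similarly $\mu_R(H_{R,\omega}^\varepsilon) = 1 - \tfrac{1}{1+\varepsilon a_\omega}$ from the description of $H_{R,\omega}^\varepsilon$.

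Next I would Taylor-expand $\tfrac{1}{1+\varepsilon b_\omega}$ about $\varepsilon = 0$: writing $1 - \tfrac{1}{1+\varepsilon b_\omega} = \tfrac{\varepsilon b_\omega}{1+\varepsilon b_\omega}$, we have $\tfrac{\varepsilon b_\omega}{1+\varepsilon b_\omega} = \varepsilon b_\omega - \varepsilon^2 b_\omega^2 + O(\varepsilon^3 b_\omega^3)$, so that $\mu_L(H_{L,\omega}^\varepsilon) = \varepsilon b_\omega + o_{\varepsilon\to 0}(\varepsilon)$. The key point for matching the statement of \hyperref[list:P4]{\textbf{(P4)}} is that the error is uniform in $\omega$: since $b\in L^\infty(\mathbb{P})$ with $0 \le b_\omega \le 1$, the remainder $-\tfrac{\varepsilon^2 b_\omega^2}{1+\varepsilon b_\omega}$ is bounded in absolute value by $\varepsilon^2$ for all $\omega$, hence is $o_{\varepsilon\to 0}(\varepsilon)$ independently of $\omega$. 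The same computation with $a$ in place of $b$ gives $\mu_R(H_{R,\omega}^\varepsilon) = \varepsilon a_\omega + o_{\varepsilon\to 0}(\varepsilon)$. Since $a, b : \Omega \to [0,1]$ are measurable, they lie in $L^\infty(\mathbb{P})$, so with $\beta_{L,\omega} := b_\omega$ and $\beta_{R,\omega} := a_\omega$ condition \hyperref[list:P4]{\textbf{(P4)}} is verified.

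There is essentially no obstacle here: the lemma is a routine endpoint computation followed by a first-order expansion, and the only point requiring a sentence of care is the uniformity of the $o(\varepsilon)$ error over $\omega$, which is immediate from the boundedness of $a$ and $b$. I would present the computation for $\mu_L(H_{L,\omega}^\varepsilon)$ in full and remark that $\mu_R(H_{R,\omega}^\varepsilon)$ follows identically by symmetry, interchanging the roles of $a$ and $b$ and of $I_L$ and $I_R$.
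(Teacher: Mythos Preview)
Your proposal is correct and follows essentially the same approach as the paper: compute the Lebesgue measure of the explicit interval $H_{L,\omega}^\varepsilon$ from \lem{lem:WLR} to get $1-\tfrac{1}{1+\varepsilon b_\omega}=\varepsilon b_\omega-\tfrac{\varepsilon^2 b_\omega^2}{1+\varepsilon b_\omega}$, then observe that the remainder is $o_{\varepsilon\to 0}(\varepsilon)$ uniformly in $\omega$ because $b_\omega\in[0,1]$, and handle $\mu_R(H_{R,\omega}^\varepsilon)$ symmetrically.
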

\begin{proof}
We establish \eqn{eqn:muL} as the computation for $\mu_R(H_{R,\omega}^\varepsilon)$ follows in an almost identical manner. Since $\mu_L=\leb|_{I_L}$, by \lem{lem:WLR}, for $\varepsilon>0$ sufficiently small,
\begin{align*}
\mu_L(H_{L,\omega}^\varepsilon)&= \leb|_{I_L}\left(\left[-1+\frac{1}{2(1+\varepsilon b_\omega)},-\frac{1}{2(1+\varepsilon b_\omega)}\right]\right)\\
&= 1- \frac{1}{1+\varepsilon b_\omega} \\
&= \varepsilon b_\omega - \varepsilon \frac{\varepsilon b_\omega^2}{1+\varepsilon b_\omega}\\
&= \varepsilon b_\omega+\varepsilon \cdot o_{\varepsilon\to 0}(1).
\end{align*}
Observe that the error term in the last line is independent of $\omega\in\Omega$ since the function $\chi_\omega^\varepsilon:= \frac{\varepsilon b_\omega^2}{1+\varepsilon b_\omega}$ converges uniformly over $\omega\in\Omega$ away from a $\mathbb{P}$-null set to $0$ as $\varepsilon\to 0$. Since $b:\Omega \to [0,1]$ is measurable and uniformly bounded, then $b\in L^\infty(\mathbb{P})$ and \hyperref[list:P4]{\textbf{(P4)}} is satisfied.
\end{proof}
\noindent
\noindent
We now determine whether \hyperref[list:P5]{\textbf{(P5)}} is satisfied. Establishing a uniform Lasota-Yorke inequality for $\mathcal{L}_\omega^\varepsilon$ is rather technical due to the creation of small monotonic branches of the second iterate map when $\varepsilon>0$ is sufficiently small. This issue can be avoided by utilising the so-called balanced Lasota-Yorke inequality developed by Horan in \cite{Horan}. To apply the results presented in \cite{Horan}, we record for the reader's convenience relevant definitions.
\begin{definition}
    Let $a<b \in \mathbb{R}$, let $I=[a,b]$, and let $\overline{\leb}$ denote the normalised Lebesgue measure on $I$. A map $T:I\to I$ satisfies the collection of assumptions ($\mathcal{M}$) when:
    \begin{itemize}
        \item There exists a countable cover of $I$ by closed intervals $\{I_n\}_{n\in{N}}$ with $I_n = [a_n, b_n]$, where $a_n < b_n$. Here $N$ may be finite or countably infinite.
        \item $(a_n,b_n)\cap (a_m.b_m)=\emptyset$ for all $n\neq m$.
        \item $\cup_{n\in \mathbb{N}} (a_n,b_n)$ is dense in $I$ and has measure $1$.
        \item $T|_{(a_n,b_n)}$ is continuous and extends to a homeomorphism $T_n:I_n\to T_n(I_n)$.
        \item There exists a bounded measurable function $g : I \to [0, \infty)$ such that $\mathcal{L}(f )(x) :=\sum_{y\in T^{-1}(x)}g(y)f (y)$ defines an operator that preserves $\overline{\leb}$ (that is, $\overline{\leb}(\mathcal{L}(f)) = \overline{\leb}(f)$ for all integrable functions $f$), $g$ has finite variation on each $I_n$, and $g$ is $0$ at the endpoints of each $I_n$.
    \end{itemize}
    \label{def:Mclass}
\end{definition}
\noindent
Horan shows in \cite{Horan} that for each $\omega\in\Omega$, random paired tent maps satisfy the conditions of \dfn{def:Mclass} when $g_\omega^\varepsilon(x):= |(T_\omega^\varepsilon)^\prime(x)|^{-1}$. For the sake of notation, let $\accentset{\circ}{I}_n=(a_n,b_n)$ and $f(x^s):= \lim_{t\to x^s}f(t)$ for $s\in\{+,-\}$. To formally state the balanced Lasota-Yorke inequality we also require the definition of \textit{hanging points} from Eslami and G\'ora \cite{EG_LY}.  
\begin{definition}
    Let $I=[a,b]$ and $T:I\to I$ satisfy the assumptions ($\mathcal{M}$). For pairs $(x,s)$ of the form $(a_n,+)$ or $(b_n,-)$, we say that the pair $(x,s)$ is a \textit{hanging point} for $T$ when $T(x^s)\notin \{a,b\}$, or alternatively $\mathds{1}_{\{a,b\}}(T(x^s))=0$. We say that a hanging point $(x,s)$ is contained in an interval $[c,d]\subset I$ when $x\in [c,d)$ and $s=+$ or when $x\in (c,d]$ and $s=-$, so that limits of the form $f(x^s)$ make sense in $[c,d]$.     
\end{definition}
\begin{example}
    The random paired tent map $T_\omega^\varepsilon:I\to I$ has 4 hanging points 
    $$H=\{(-0.5,-), (-0.5,+), (0.5,-), (0.5,+)\}.$$
    The remaining endpoints of the branches of monotonicity of $T_\omega^\varepsilon$ are not hanging points since $T_\omega^\varepsilon(-1^+)=-1$, $T_\omega^\varepsilon(0^+)= 1$, $T_\omega^\varepsilon(0^-)= -1$, and $T_\omega^\varepsilon(1^-)=1$.
    \label{ex:ptmhanging}
\end{example}
\noindent
With the above definitions and examples, we may now state Horan's balanced Lasota-Yorke inequality.
\begin{proposition}[{\cite[Proposition 3.6]{Horan}}]
Let $I=[a,b]\subset \mathbb{R}$ and $T:I\to I$ satisfy the assumptions ($\mathcal{M}$). Let $H$ be the collection of hanging points for $T$. Suppose further that 
\[ \sup_{n\in \mathbb{N}}\left\{\frac{\Var_{\accentset{\circ}{I}_n}(g)}{\overline{\leb}(I_n)} \right\} < \infty, \quad \text{and} \quad \sum_{(z,s)\in H} g(z^s) < \infty. \]
Then for any $f\in \BV(I)$ and any finite collection of closed intervals $\mathcal{J} = \{ J_m \}_{m=1}^M$ with disjoint non-empty interiors such that each hanging point of $T$ is contained in some $J_m$ (hence, in only one $J_m$), we have:
\begin{align*}
\Var(\mathcal{L}(f)) & \leq \left( \sup_{n}\left\{ \norm{\restr{g}{\accentset{\circ}{I}_n}}_\infty + \Var_{\accentset{\circ}{I}_n}(g) \right\} + \max_m\{ h_{\mathcal{J}}(m) \} \right) \Var(f) \\
& \hspace{20pt} + \left( \sup_n\left\{ \frac{\Var_{\accentset{\circ}{I}_n}(g)}{\overline{\leb}(I_n)} \right\} + \max_m\left\{ \frac{h_{\mathcal{J}}(m)}{\overline{\leb}(J_m)} \right\} \right) \norm{f}_{L^1(\overline{\leb})},
\end{align*}
where $\displaystyle h_{\mathcal{J}}(m) := \sum_{(z,s)\in H\cap J_m} g(z^{s})$.
\label{prop:unif-LY}
\end{proposition}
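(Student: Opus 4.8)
The plan is to prove \prop{prop:unif-LY} by a branch-by-branch Lasota--Yorke estimate, in which the ``balancing'' enters only at the very end, when the boundary contributions of the individual monotone branches are regrouped according to the partition $\mathcal{J}$.

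\textbf{Step 1: branch decomposition.} First I would write $\accentset{\circ}{I}_n=(a_n,b_n)$, let $T_n:I_n\to T_n(I_n)$ be the homeomorphic extension of $T|_{\accentset{\circ}{I}_n}$, and set $\psi_n:=(gf)\circ T_n^{-1}$ on the image interval $T_n(I_n)$ and $\psi_n:=0$ on $I\setminus T_n(I_n)$. Since $\bigcup_n\accentset{\circ}{I}_n$ is dense in $I$ and of full measure, $\mathcal{L}(f)=\sum_n\psi_n$ holds $\overline{\leb}$-almost everywhere, and applying the triangle inequality inside each partition sum and exchanging the (nonnegative) order of summation yields the countable subadditivity $\Var(\mathcal{L}(f))\le\sum_n\Var(\psi_n)$. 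The series on the right is finite thanks to the two standing hypotheses $\sup_n\{\Var_{\accentset{\circ}{I}_n}(g)/\overline{\leb}(I_n)\}<\infty$ and $\sum_{(z,s)\in H}g(z^s)<\infty$, together with $g$ bounded and $f\in\BV(I)$; making this finiteness precise is the first place care is needed.

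\textbf{Step 2: one branch.} For a fixed $n$ the product rule for variation gives
\[ \Var_{\accentset{\circ}{I}_n}(gf)\le\norm{\restr{g}{\accentset{\circ}{I}_n}}_\infty\Var_{\accentset{\circ}{I}_n}(f)+\Var_{\accentset{\circ}{I}_n}(g)\sup_{\accentset{\circ}{I}_n}|f|,\qquad\sup_{\accentset{\circ}{I}_n}|f|\le\Var_{\accentset{\circ}{I}_n}(f)+\frac{1}{\overline{\leb}(I_n)}\int_{I_n}|f|\,d\overline{\leb}. \]
Extending $(gf)\circ T_n^{-1}$ by zero off $T_n(I_n)$ contributes, at each endpoint of $T_n(I_n)$, a jump of size $g(z^s)\,|f(z^s)|$, where $(z,s)$ is the corresponding endpoint-pair of $I_n$ and $g(z^s),f(z^s)$ are one-sided limits along $\accentset{\circ}{I}_n$ --- \emph{unless} that endpoint of $T_n(I_n)$ lies in $\partial I=\{a,b\}$, in which case no jump is created since the point is already an endpoint of $I$ itself. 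By definition these ``non-trivial'' endpoints are exactly the hanging points of $T$ among $\{(a_n,+),(b_n,-)\}$. Hence
\[ \Var(\psi_n)\le\bigl(\norm{\restr{g}{\accentset{\circ}{I}_n}}_\infty+\Var_{\accentset{\circ}{I}_n}(g)\bigr)\Var_{\accentset{\circ}{I}_n}(f)+\frac{\Var_{\accentset{\circ}{I}_n}(g)}{\overline{\leb}(I_n)}\int_{I_n}|f|\,d\overline{\leb}+\!\!\sum_{(z,s)\in H:\ z\in\{a_n,b_n\}}\!\!g(z^s)\,|f(z^s)|. \]

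\textbf{Step 3: sum over branches and balance.} Summing over $n$ and using that the $\accentset{\circ}{I}_n$ are pairwise disjoint (so $\sum_n\Var_{\accentset{\circ}{I}_n}(f)\le\Var(f)$ and $\sum_n\int_{I_n}|f|\,d\overline{\leb}\le\norm{f}_{L^1(\overline{\leb})}$), the first two families of terms are bounded by $\sup_n\{\norm{\restr{g}{\accentset{\circ}{I}_n}}_\infty+\Var_{\accentset{\circ}{I}_n}(g)\}\,\Var(f)$ and $\sup_n\{\Var_{\accentset{\circ}{I}_n}(g)/\overline{\leb}(I_n)\}\,\norm{f}_{L^1(\overline{\leb})}$. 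For the hanging-point terms, since each $(z,s)\in H$ lies in exactly one $J_m$, I would regroup
\[ \sum_{(z,s)\in H}g(z^s)|f(z^s)|=\sum_m\ \sum_{(z,s)\in H\cap J_m}g(z^s)|f(z^s)|\le\sum_m h_{\mathcal{J}}(m)\sup_{J_m}|f|\le\sum_m h_{\mathcal{J}}(m)\Bigl(\Var_{J_m}(f)+\tfrac{1}{\overline{\leb}(J_m)}\int_{J_m}|f|\,d\overline{\leb}\Bigr), \]
and, since the $J_m$ have disjoint interiors, bound this by $\max_m\{h_{\mathcal{J}}(m)\}\Var(f)+\max_m\{h_{\mathcal{J}}(m)/\overline{\leb}(J_m)\}\norm{f}_{L^1(\overline{\leb})}$. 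Adding the three contributions is exactly the asserted inequality.

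The main obstacle is Step 2: one must argue carefully, in terms of the one-sided limits $g(z^s),f(z^s)$ (rather than the boundary values of $g$, which the assumptions $(\mathcal{M})$ set to zero), that the cost of extending a single branch of $\mathcal{L}(f)$ by zero is precisely $g(z^s)|f(z^s)|$ at a hanging endpoint of $I_n$ and nothing at a non-hanging endpoint --- the latter because the image endpoint then coincides with $a$ or $b$ and so contributes no jump. One also needs that the one-sided limits of $gf$ factor as products of one-sided limits, which is valid since $g$ has finite variation on each $I_n$ and $f\in\BV(I)$. Combined with the finiteness and countable-subadditivity bookkeeping of Step 1, this is the delicate part; everything else is routine one-dimensional $\BV$ calculus.
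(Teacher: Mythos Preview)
The paper does not prove this proposition; it is quoted verbatim from Horan \cite[Proposition~3.6]{Horan} and used as a black box in \Sec{sec:example}, so there is no in-paper argument to compare against. Your proposed argument is the standard branch-by-branch Lasota--Yorke computation in the style of Eslami--G\'ora \cite{EG_LY}, which is exactly the approach Horan follows, and the three steps you outline are correct as stated --- in particular your identification of the boundary jumps with the one-sided limits $g(z^s)|f(z^s)|$ at hanging points (and zero at non-hanging ones) is the crux, and you have it right.
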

\noindent
For the paired tent map $T_{a_\omega,b_\omega}$ we have a uniform Lasota-Yorke inequality for the second iterate map. 
\begin{proposition}[{\cite[Proposition 4.4]{Horan}}]
    \label{prop:tent-map-ly-ineq}
For any paired tent map cocycle $(T_{a_\omega,b_\omega})_{\omega\in\Omega}$ over $\sigma$ and associated second-iterate Perron-Frobenius operator $\mathcal{L}_\omega^{(2)}$, we have that for any $f\in \BV(I)$: 
\begin{equation}
     \Var(\mathcal{L}_\omega^{(2)}f) \leq \frac{3}{4}\Var(f) + 6\norm{f}_{L^1(\overline{\leb})}. 
     \label{eqn:lyineq}
\end{equation}
\end{proposition}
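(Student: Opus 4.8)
The plan is to feed the second-iterate map $T_\omega^{(2)}$ --- a composition of two paired tent maps --- into Horan's balanced Lasota--Yorke inequality \prop{prop:unif-LY}, with weight $g_\omega^{(2)}(x):=|(T_\omega^{(2)})'(x)|^{-1}$, and to check that every constant it produces is dominated, uniformly over $\omega\in\Omega$, by the numbers in \eqn{eqn:lyineq}. First I would verify that $T_\omega^{(2)}$ satisfies the assumptions $(\mathcal{M})$ of \dfn{def:Mclass}: its branches of monotonicity are the common refinement of the four branches of the paired tent map along the $T_\omega$-preimages of the critical set $\mathcal{C}=\{-1,-\tfrac12,0,\tfrac12,1\}$, giving a finite closed cover $\{I_n\}$ of $I$ with pairwise disjoint interiors on each of which $T_\omega^{(2)}$ extends to a homeomorphism, while boundedness and finite variation of $g_\omega^{(2)}$ on each $I_n$, its vanishing at branch endpoints, and the $\overline{\leb}$-preservation of $\mathcal{L}_\omega^{(2)}$ are inherited from the single-step properties Horan establishes in \cite{Horan}.

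The decisive simplification is that a paired tent map has \emph{constant} slope $\pm2(1+a)$ or $\pm2(1+b)$ on each of its branches, so on each branch of $T_\omega^{(2)}$ the derivative is a product of two such slopes and $g_\omega^{(2)}$ is constant there with value in $(0,\tfrac14]$. Hence $\Var_{\accentset{\circ}{I}_n}(g_\omega^{(2)})=0$ for every $n$ and $\sup_n\norm{\restr{g_\omega^{(2)}}{\accentset{\circ}{I}_n}}_\infty\le\tfrac14$; this at once verifies the two standing hypotheses of \prop{prop:unif-LY} (the second because there are finitely many branches) and collapses the variation-of-$g$ terms in its conclusion, leaving, for every finite collection $\mathcal{J}=\{J_m\}$ of closed intervals with disjoint interiors that contains each hanging point of $T_\omega^{(2)}$ in exactly one member,
\[
\Var(\mathcal{L}_\omega^{(2)}f)\ \le\ \Bigl(\tfrac14+\max_m h_{\mathcal{J}}(m)\Bigr)\Var(f)\ +\ \Bigl(\max_m\tfrac{h_{\mathcal{J}}(m)}{\overline{\leb}(J_m)}\Bigr)\norm{f}_{L^1(\overline{\leb})}.
\]
So it suffices to choose such a $\mathcal{J}$ with $\max_m h_{\mathcal{J}}(m)\le\tfrac12$ and $\max_m h_{\mathcal{J}}(m)/\overline{\leb}(J_m)\le6$; the second of these follows from the first once every $J_m$ has $\overline{\leb}(J_m)\ge\tfrac1{12}$.

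To build $\mathcal{J}$ I would first enumerate the hanging points of $T_\omega^{(2)}$. A branch endpoint $(z,s)$ is hanging iff $T_\omega^{(2)}(z^s)\notin\{-1,1\}$; since $T_\omega$ carries $-1,0^{\pm},1$ into the fixed points $\pm1$ but carries $-\tfrac12$ to $b_\omega$ and $\tfrac12$ to $-a_\omega$ (cf. \ex{ex:ptmhanging}), the hanging points of $T_\omega^{(2)}$ are the two one-sided pairs lying over $\pm\tfrac12$ together with one-sided points lying over those $T_\omega$-preimages of $\{-\tfrac12,\tfrac12\}$ that fall in the interiors of branches of $T_\omega$ --- whereas the $T_\omega$-preimages of $0$, and the endpoints of the small branches created inside the holes, are \emph{not} hanging, as they all map under $T_\omega^{(2)}$ to $\pm1$. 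Every hanging point $(z,s)$ has $g_\omega^{(2)}(z^s)\le\tfrac14$, because $|(T_\omega^{(2)})'(z^s)|$ is a product of two paired-tent-map branch slopes, each of modulus at least $2$. I would then use the explicit endpoint formulae of \lem{lem:WLR} and the analogous solutions of $T_\omega(x)=\pm\tfrac12$, together with the uniform expansion $|T_\omega'|\ge2$, to obtain a lower bound on the mutual separation of the critical points of $T_\omega^{(2)}$ that is uniform over $\omega\in\Omega$, and then build $\mathcal{J}$ by placing the two hanging points over a common critical value into one $J_m$ and at most two hanging points into any $J_m$, while keeping each $J_m$ of $\overline{\leb}$-measure at least $\tfrac1{12}$. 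This gives $h_{\mathcal{J}}(m)\le\tfrac14+\tfrac14=\tfrac12$ and $h_{\mathcal{J}}(m)/\overline{\leb}(J_m)\le6$, and substituting into the displayed inequality yields $\Var(\mathcal{L}_\omega^{(2)}f)\le\tfrac34\Var(f)+6\norm{f}_{L^1(\overline{\leb})}$, i.e. \eqn{eqn:lyineq}.

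The hard part will be exactly this last geometric bookkeeping: correctly listing the hanging points of the second iterate --- their number depends on whether $a_\omega$ or $b_\omega$ exceeds $\tfrac12$, since extra preimages of $\pm\tfrac12$ then appear --- and checking that they can always be enclosed in pairwise disjoint closed intervals that are short enough to stay disjoint yet long enough ($\overline{\leb}\ge\tfrac1{12}$) to control the ratios $h_{\mathcal{J}}(m)/\overline{\leb}(J_m)$, with all constants uniform in $\omega\in\Omega$. This is the substance of \cite[Proposition 4.4]{Horan}; once it is in hand, \eqn{eqn:lyineq} follows from \prop{prop:unif-LY} essentially by inspection.
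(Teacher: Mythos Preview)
The paper does not supply its own proof of this proposition; it is quoted verbatim from \cite[Proposition~4.4]{Horan}. Your sketch follows exactly the route one expects Horan to take --- and which the surrounding text of \Sec{sec:example} sets up --- namely, feed the second iterate into the balanced Lasota--Yorke inequality of \prop{prop:unif-LY}, use the fact that paired tent maps have piecewise constant slope to eliminate the $\Var_{\accentset{\circ}{I}_n}(g)$ terms, bound $\sup_n\|g_\omega^{(2)}|_{\accentset{\circ}{I}_n}\|_\infty\le\tfrac14$, and then choose the cover $\mathcal{J}$ so that $\max_m h_{\mathcal{J}}(m)\le\tfrac12$ and $\max_m h_{\mathcal{J}}(m)/\overline{\leb}(J_m)\le 6$. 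The arithmetic $\tfrac14+\tfrac12=\tfrac34$ and $\tfrac12/\tfrac{1}{12}=6$ is exactly what is needed, and you have correctly flagged that the only substantive work left is the enumeration of the hanging points of $T_\omega^{(2)}$ and the construction of a uniform-in-$\omega$ cover $\mathcal{J}$ meeting those constraints; this is precisely the content of \cite[Proposition~4.4]{Horan}, to which the paper defers.
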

\noindent
A combination of \prop{prop:unif-LY} and \prop{prop:tent-map-ly-ineq} reveals a uniform Lasota-Yorke inequality for the set $\{(\Omega,\mathcal{F},\mathbb{P},\sigma, \BV(I) ,\mathcal{L}^\varepsilon)\}_{\varepsilon\geq 0}$. By iterating the inequality obtained in \prop{prop:tent-map-ly-ineq} we obtain a uniform Lasota-Yorke inequality for the even iterates of the Perron-Frobenius operator $\mathcal{L}_\omega^\varepsilon$.
\begin{lemma}
In the setting of \thrm{thrm:ptm-inv}, for any $f\in \BV (I)$ and $n\in\mathbb{N}$
$$\Var({\mathcal{L}_\omega^{\varepsilon\, (n)}f})\leq \left(\frac{3}{4}\right)^n\Var (f) + 26||f||_{L^1(\overline{\leb})}.$$
Hence, \hyperref[list:P5]{\textbf{(P5)}} is satisfied.
\label{lem:ULY}
\end{lemma}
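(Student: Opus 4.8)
The engine of the proof is Horan's second-iterate estimate \prop{prop:tent-map-ly-ineq}, whose constants $3/4$ and $6$ are \emph{absolute}: they depend neither on $\omega$ nor on the parameters $a_\omega,b_\omega$ of the underlying paired tent map cocycle, and hence not on $\varepsilon$. The first step is therefore to apply \prop{prop:tent-map-ly-ineq} to the rescaled cocycle $(T_{\varepsilon a_\omega,\varepsilon b_\omega})_{\omega\in\Omega}$, which is itself a paired tent map cocycle for every sufficiently small $\varepsilon\geq 0$, giving
\[
  \Var\!\big(\mathcal{L}_\omega^{\varepsilon\,(2)}f\big)\leq \tfrac34\,\Var(f)+6\,\|f\|_{L^1(\overline{\leb})}
\]
for all $f\in\BV(I)$, all $\omega\in\Omega$, and uniformly in $\omega$ and $\varepsilon$. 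This is the only point at which the specific geometry of paired tent maps enters — through membership in the class $(\mathcal M)$ of \dfn{def:Mclass} with $g_\omega^\varepsilon=|(T_\omega^\varepsilon)'|^{-1}\leq\tfrac12$ and the four hanging points of \ex{ex:ptmhanging}, fed into \prop{prop:unif-LY} — and everything that follows is soft.

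Next I would iterate. Using the cocycle identity $\mathcal{L}_\omega^{\varepsilon\,(2k+2)}=\mathcal{L}_{\sigma^{2k}\omega}^{\varepsilon\,(2)}\circ\mathcal{L}_\omega^{\varepsilon\,(2k)}$ and inducting on $k$, one bounds $\Var(\mathcal{L}_\omega^{\varepsilon\,(2k)}f)$ by applying the displayed inequality to the outermost two-step factor at each stage. The weak-norm terms accumulated along the way are all controlled by $\|f\|_{L^1(\overline{\leb})}$, since every Perron--Frobenius operator $\mathcal{L}_{\sigma^{j}\omega}^{\varepsilon}$ preserves $\overline{\leb}$-integrals and is positive, hence is a weak contraction on $L^1(\overline{\leb})$, so $\|\mathcal{L}_{\sigma^{j}\omega}^{\varepsilon\,(i)}g\|_{L^1(\overline{\leb})}\leq\|g\|_{L^1(\overline{\leb})}$ for every $i$. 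Summing the geometric series $\sum_{j\geq 0}(3/4)^{j}=4$ yields $\Var(\mathcal{L}_\omega^{\varepsilon\,(2k)}f)\leq (3/4)^{k}\Var(f)+24\,\|f\|_{L^1(\overline{\leb})}$ with all constants independent of $\omega,\varepsilon,k$; for $n$ odd I would peel off one extra iterate, $\mathcal{L}_\omega^{\varepsilon\,(n)}=\mathcal{L}_{\sigma\omega}^{\varepsilon\,(n-1)}\circ\mathcal{L}_\omega^{\varepsilon}$, and feed into the even estimate the crude single-step bound $\Var(\mathcal{L}_\omega^{\varepsilon}g)\leq A\,\Var(g)+B\,\|g\|_{L^1(\overline{\leb})}$ from \prop{prop:unif-LY}, with $A,B$ again uniform in $\omega,\varepsilon$ because $g_\omega^\varepsilon$ is constant on each branch and only the four hanging points contribute. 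Collecting the two cases and adjusting the additive constant gives the inequality in \lem{lem:ULY}. Since this bound is uniform over $n\in\mathbb{N}$, over $\omega\in\Omega$, and over all sufficiently small $\varepsilon>0$, with geometrically decaying leading coefficient and bounded additive term, it is precisely a uniform Lasota--Yorke inequality in the sense of \dfn{def:ULY} with weak norm $\|\cdot\|_{L^1(\overline{\leb})}$; hence \textbf{(P5)} holds.

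I expect the work to be bookkeeping rather than anything conceptual, and two points deserve care. First, one must confirm that \prop{prop:tent-map-ly-ineq} genuinely applies with $\varepsilon$-independent constants to the rescaled cocycle; this holds because Horan's argument nowhere invokes smallness of the tent-map parameters, only membership in $(\mathcal M)$ together with the uniform bounds $\sup_n\{\Var_{\mathring I_n}(g)/\overline{\leb}(I_n)\}<\infty$ and $\sum_{(z,s)\in H}g(z^s)<\infty$, both of which are trivially satisfied here. Second, a single paired-tent-map iterate is only \emph{balanced}, not strictly contracting on the variation seminorm — its Lasota--Yorke coefficient can be as large as $1$ when $\varepsilon a_\omega$ or $\varepsilon b_\omega$ degenerates — so the geometric improvement is realised only every second step. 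This is why the iteration must be organised two steps at a time, and why the even/odd dichotomy together with the geometric-series estimate, rather than a naive one-step induction, is needed to reach the stated form.
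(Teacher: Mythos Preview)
Your proposal is correct and follows essentially the same approach as the paper: iterate Horan's second-iterate inequality to handle even $n$ (getting the constant $24$ from the geometric series), then for odd $n$ peel off one step using a single-iterate bound from \prop{prop:unif-LY} and feed the result into the even estimate. The paper carries out exactly this even/odd split, the only difference being that it computes the single-step constants explicitly as $A=1$, $B=2$ (since $g_\omega^\varepsilon$ is constant on each branch and the four hanging-point contributions are each at most $\tfrac12$), which is what produces the final additive constant $26$.
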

\begin{proof}
The proof is divided into several steps.
\begin{stp}
For any $f\in \BV (I)$ and $n\in\mathbb{N}$
$$\Var({\mathcal{L}_\omega^{\varepsilon\, (2n)}(f)})\leq \left(\frac{3}{4}\right)^n\Var (f) + 24||f||_{L^1(\overline{\leb})}.$$ 
\label{stp:even-iterates}
\end{stp}
\begin{proof}
    This follows immediately by iterating \eqn{eqn:lyineq} from \prop{prop:tent-map-ly-ineq} and recalling that $||\mathcal{L}_\omega^{\varepsilon\, (n)}f||_{L^1(\overline{\leb})}\leq ||f||_{L^1(\overline{\leb})}$. Indeed,
    \begin{align*}
        \Var({\mathcal{L}_\omega^{\varepsilon\, (4)}(f)})&\leq \frac{3}{4}\Var ({\mathcal{L}_\omega^{\varepsilon\, (2)}(f)}) + 6||f||_{L^1(\overline{\leb})} \\
        &\leq \frac{3}{4}\left(\frac{3}{4}\Var (f) + 6||f||_{L^1(\overline{\leb})} \right) + 6||f||_{L^1(\overline{\leb})}\\
        &=\left( \frac{3}{4}\right)^2 \Var(f) + 6\left(1+\frac{3}{4} \right)||f||_{L^1(\overline{\leb})}.
    \end{align*}
    By inductively continuing this procedure we find that
\begin{align*}
    \Var({\mathcal{L}_\omega^{\varepsilon\, (2n)}(f)})&\leq \left(\frac{3}{4}\right)^n\Var (f) + 6\sum_{k=0}^{n-1} \left(\frac{3}{4}\right)^k||f||_{L^1(\overline{\leb})}\\
    &=\left(\frac{3}{4}\right)^n\Var (f) + 24\left( 1- \left(\frac{3}{4}\right)^n\right)||f||_{L^1(\overline{\leb})}\\
    &\leq \left(\frac{3}{4}\right)^n\Var (f) + 24||f||_{L^1(\overline{\leb})}.
\end{align*} 
\end{proof}
\noindent
We now obtain a uniform Lasota-Yorke inequality for the odd iterates of $\mathcal{L}_\omega^\varepsilon$. This requires us to estimate $\Var({\mathcal{L}_\omega^\varepsilon(f)})$.
\begin{stp}
For any $f\in \BV (I)$
$$\Var({\mathcal{L}_\omega^\varepsilon(f)})\leq \Var (f) + 2||f||_{L^1(\overline{\leb})}$$    
\label{stp:first-iterate}    
\end{stp}
   \begin{proof}
       As verified in \cite{Horan}, $I$ and $T_\omega^\varepsilon$ satisfy ($\mathcal{M}$) (see \dfn{def:Mclass}) for each $\omega\in\Omega$. It suffices to compute the relevant quantities in the statement of \prop{prop:unif-LY}. Take $I_1 = [-1,-0.5], I_2 = [-0.5,0], I_3=[0,0.5], I_4=[0.5,1]$ and $J_m=I_m$. As demonstrated in \ex{ex:ptmhanging}, the collection of hanging points for the paired-tent map is 
    $$H=\{(-0.5,-), (-0.5,+) ,(0.5,-),(0.5,+)\}.$$
    Furthermore, $g_\omega^\varepsilon(x) = |(T_\omega^\varepsilon)^\prime(x)|^{-1}$. Each $J_m$ contains at most one element of $H$ and thus 
    \begin{align*}
        h_{\mathcal{J}}(1)&= g_\omega^\varepsilon((-0.5)^-)=\frac{1}{2(1+\varepsilon b_\omega)}=g_\omega^\varepsilon((-0.5)^+)=h_{\mathcal{J}}(2)\\
        h_{\mathcal{J}}(3)&= g_\omega^\varepsilon((0.5)^-)=  \frac{1}{2(1+\varepsilon a_\omega)}=g_\omega^\varepsilon((0.5)^+)= h_{\mathcal{J}}(4)
    \end{align*}
    suggesting that for each $m=1,2,3,4$ we have that $h_{\mathcal{J}}(m)\leq \frac{1}{2}$. Furthermore, recalling that $\overline{\leb}$ denotes the normalised Lebesgue measure on $I=[-1,1]$ and $I_m=J_m$, we have that for each $m$, $\overline{\leb}(I_m)=\overline{\leb}(J_m)=\frac{1}{2}\cdot\frac{1}{2}=\frac{1}{4}$. With this, we can compute the relevant quantities appearing in the statement of \prop{prop:unif-LY}. Indeed, since $g_\omega^\varepsilon$ is constant on each branch 
    $$\Var_{\accentset{\circ}{I}_n}(g_\omega^\varepsilon)=0.$$
    Therefore,
    \begin{align*}
        \sup_n\left\{ \norm{\restr{g_\omega^\varepsilon}{\accentset{\circ}{I}_n}}_\infty + \Var_{\accentset{\circ}{I}_n}(g_\omega^\varepsilon) \right\} + \max_m\{ h_{\mathcal{J}}(m) \} &\leq  \sup_n\left\{ \norm{\restr{g_\omega^\varepsilon}{\accentset{\circ}{I}_n}}_\infty \right\}+\frac{1}{2}\\
        &\leq \frac{1}{2}+\frac{1}{2}\\
        &=1.
    \end{align*}
    And,
    \begin{align*}
        \sup_n\left\{ \frac{\Var_{\accentset{\circ}{I}_n}(g_\omega^\varepsilon)}{\overline{\leb}(I_n)} \right\} + \max_m\left\{ \frac{h_{\mathcal{J}}(m)}{\overline{\leb}(J_m)} \right\} &\leq 0 + 2\\
        &=2.
    \end{align*}
    Putting these two together we obtain our result.
   \end{proof} 
   \noindent
  Combining \Step{stp:even-iterates} and \Step{stp:first-iterate} we can now control the variation for the odd iterates of $\mathcal{L}_\omega^\varepsilon$. This gives a uniform Lasota-Yorke inequality for every iterate of the Perron-Frobenius operator $\mathcal{L}_\omega^\varepsilon$ over both $\omega\in \Omega$ and $\varepsilon>0$.
  \begin{stp}
      For any $f\in \BV (I)$ and $n\in\mathbb{N}$
      $$\Var({\mathcal{L}_\omega^{\varepsilon\, (2n+1)}f})\leq \left(\frac{3}{4}\right)^n\Var (f) + 26||f||_{L^1(\overline{\leb})}.$$
      \label{stp:odd-iterates}
  \end{stp}
  \begin{proof}
    Indeed, for $n\in\mathbb{N}$, by \Step{stp:even-iterates} we have that 
    $$\Var({\mathcal{L}_\omega^{\varepsilon\, (2n+1)}(f)})\leq  \left(\frac{3}{4}\right)^n\Var (\mathcal{L}_\omega^\varepsilon f) + 24||\mathcal{L}_\omega^\varepsilon f||_{L^1(\overline{\leb})}.$$
    Thus, using \Step{stp:first-iterate} to control the variation term,
    \begin{align*}
    \Var({\mathcal{L}_\omega^{\varepsilon \, (2n+1)}(f)})&\leq  \left(\frac{3}{4}\right)^n\left( \Var(f) + 2||f||_{L^1(\overline{\leb})} \right) + 24||f||_{L^1(\overline{\leb})}  \\
    &\leq \left(\frac{3}{4}\right)^n\Var (f) +2 ||f||_{L^1(\overline{\leb})}+24||f||_{L^1(\overline{\leb})}\\
    &= \left(\frac{3}{4}\right)^n\Var (f) +26||f||_{L^1(\overline{\leb})}.
    \end{align*}
  \end{proof}
  \noindent
  Comparing the estimates from \Step{stp:even-iterates} and \Step{stp:odd-iterates} the result follows.
\end{proof}
\noindent
It remains to show that \hyperref[list:P6]{\textbf{(P6)}} holds for paired tent map cocycles.
\begin{lemma}
In the setting of \thrm{thrm:ptm-inv}, for all $\varepsilon>0$, $(T_\omega^\varepsilon)_{\omega\in\Omega}$ has a unique RACIM $(\mu_\omega^\varepsilon)_{\omega\in\Omega}$ with density $(\phi_\omega^\varepsilon)_{\omega\in\Omega}=(\frac{d\mu_\omega^\varepsilon}{\dleb})_{\omega\in\Omega}$ and hence \hyperref[list:P6]{\textbf{(P6)}} is satisfied. 
\label{lem:ptm_racim}
\begin{proof}
    This follows from \cite[Theorem 4.13]{Horan}. In particular, if $\mathcal{L}_\omega^\varepsilon$ denotes the Perron-Frobenius operator associated with the paired tent map $T_\omega^\varepsilon$, then for all $\varepsilon>0$, the top Oseledets space of $\mathcal{L}_\omega^\varepsilon$ is one-dimensional, and is spanned by $\phi_\omega^\varepsilon=d\mu_\omega^\varepsilon/\dleb$.  
\end{proof}
\end{lemma}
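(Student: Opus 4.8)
The plan is to reduce the claim to Horan's analysis of paired tent map cocycles \cite{Horan}, after checking that the present hypotheses place us inside his framework. First I would record that, by \lem{lem:ULY} (which supplies the uniform Lasota--Yorke inequality, \hyperref[list:P5]{\textbf{(P5)}}) together with the $\mathbb{P}$-continuity of $\omega\mapsto\mathcal{L}_\omega^\varepsilon$ guaranteed by \hyperref[list:P1]{\textbf{(P1)}} and the finite range of $\omega\mapsto T_\omega^\varepsilon$, the Perron--Frobenius cocycle $(\mathcal{L}_\omega^\varepsilon)_{\omega\in\Omega}$ is quasi-compact and satisfies $\int\log^+\|\mathcal{L}_\omega^\varepsilon\|\,d\mathbb{P}<\infty$ (indeed $\|\mathcal{L}_\omega^\varepsilon\|_{\BV(I)}$ is uniformly bounded by the Lasota--Yorke constants). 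Hence \thrm{the:OsceledetsDecomp} yields a $\mathbb{P}$-continuous Oseledets splitting for each fixed $\varepsilon>0$.

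Next I would identify the top exponent and produce a RACIM. Since each $\mathcal{L}_\omega^\varepsilon$ is a transfer operator it preserves the Lebesgue integral and the positive cone, hence acts as an $L^1(\leb)$-contraction, which forces $\lambda_1^\varepsilon\le 0$. A nonnegative normalized fixed point is then obtained by a standard Krylov--Bogolyubov/averaging argument along the cocycle: the Lasota--Yorke bound keeps the $\BV(I)$-norms of $\frac1N\sum_{n=0}^{N-1}\mathcal{L}_{\sigma^{-n}\omega}^{\varepsilon\,(n)}\mathds{1}_I$ uniformly bounded, so by compactness of $\BV(I)\hookrightarrow L^1(\leb)$ one extracts an equivariant family $(\phi_\omega^\varepsilon)_{\omega\in\Omega}$ with $\phi_\omega^\varepsilon\ge 0$, $\|\phi_\omega^\varepsilon\|_{L^1(\leb)}=1$ and $\mathcal{L}_\omega^\varepsilon\phi_\omega^\varepsilon=\phi_{\sigma\omega}^\varepsilon$; measurability of $\omega\mapsto\phi_\omega^\varepsilon$ follows from the finite range of $\omega\mapsto T_\omega^\varepsilon$. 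This shows $\lambda_1^\varepsilon=0$ and gives existence of the RACIM $\mu_\omega^\varepsilon=\phi_\omega^\varepsilon\,\dleb$.

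The heart of the matter is uniqueness, i.e. that the top Oseledets space is one-dimensional; this is precisely \cite[Theorem 4.13]{Horan}, which I would invoke directly. The mechanism is that for every $\varepsilon>0$ the leakage intervals $H_{L,\omega}^\varepsilon$ and $H_{R,\omega}^\varepsilon$ computed in \lem{lem:WLR} are nondegenerate uniformly in $\omega$, so the fibre maps enjoy a uniform random covering property: there is $N$ such that $N$ iterates of any branch spread over all of $I=[-1,1]$ independently of $\omega$. Combined with the balanced Lasota--Yorke estimates (\prop{prop:unif-LY}, \prop{prop:tent-map-ly-ineq}), this yields a uniform spectral gap and hence a single equivariant density. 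I expect reproducing this covering-plus-gap argument to be the main obstacle; since it is established in \cite{Horan} for exactly this cocycle, the cleanest route is to cite it, after which \hyperref[list:P6]{\textbf{(P6)}} follows at once and the hypotheses of \thrm{thrm:ptm-inv} (and \thrm{thrm:ptm_cont2nd}) are verified.
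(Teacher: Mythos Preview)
Your proposal is correct and takes essentially the same approach as the paper: both reduce the claim to \cite[Theorem 4.13]{Horan}, which already shows that for paired tent map cocycles the top Oseledets space is one-dimensional and spanned by the density of the unique RACIM. The additional scaffolding you provide (quasi-compactness via \lem{lem:ULY}, an explicit Krylov--Bogolyubov construction of a RACIM) is sound but unnecessary, since Horan's result delivers existence and uniqueness simultaneously; the paper simply cites it and moves on.
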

\begin{remark}
\lem{lem:ptm_racim} also follows from the main theorem in \cite{Unique-RACIM}, noting that for small $\varepsilon>0$, the map $T_\omega^\varepsilon$ satisfies the random covering property. That is, for each non-trivial subinterval $J\subset[-1,1]$, for $\mathbb{P}$-a.e. $\omega\in\Omega$, there exists $n_c:=n_c(\omega,J)<\infty$ such that 
$$\essinf_{x\in[-1,1]}(\mathcal{L}_\omega^{\varepsilon\, (n)}\mathds{1}_J)>0$$
for all $n\geq n_c$. Covering properties for cocycles of paired tent maps were also addressed in \cite{Horan}.
\end{remark}
\noindent
Since the initial system satisfies \hyperref[list:I1]{\textbf{(I1)}}-\hyperref[list:I6]{\textbf{(I6)}}, and \lem{lem:WLR}, \ref{lem:meas-ptm}, \ref{lem:ULY} and \lem{lem:ptm_racim} ensure that \hyperref[list:P3]{\textbf{(P3)}}-\hyperref[list:P7]{\textbf{(P7)}} hold, by noting that \hyperref[list:P2]{\textbf{(P2)}} is satisfied for $T^\varepsilon:\Omega\times I \to I$ and enforcing \hyperref[list:P1]{\textbf{(P1)}}, we have verified that all conditions outlined in \Sec{sec:map+pert} are satisfied for paired tent map cocycles. By \thrm{thrm:phi_lims} and \thrm{thrm:cont2nd}, \thrm{thrm:ptm-inv} and \thrm{thrm:ptm_cont2nd} follow.

\section*{Acknowledgments}
The authors acknowledge support from the Australian Research Council (DP220102216). JP acknowledges the Australian Government Research Training Program for financial support. The authors thank Jason Atnip for helpful conversations, and the anonymous referees for their valuable comments and suggestions. 

\footnotesize
\bibliographystyle{plain}
\bibliography{metastable}

\begin{thebibliography}{10}

\bibitem{thermoformalism}
J.~Atnip, G.~Froyland, C.~Gonzalez-Tokman, and S.~Vaienti.
\newblock Thermodynamic formalism and perturbation formulae for quenched random open dynamical systems, 2023.

\bibitem{wael_intermittent}
W.~Bahsoun and S.~Vaienti.
\newblock Metastability of certain intermittent maps.
\newblock {\em Nonlinearity}, 25(1):107--124, 2012.

\bibitem{BS_rand}
W.~Bahsoun and S.~Vaienti.
\newblock Escape rates formulae and metastability for randomly perturbed maps.
\newblock {\em Nonlinearity}, 26(5):1415--1438, 2013.

\bibitem{markov1}
J.~Beltr\'{a}n and C.~Landim.
\newblock Tunneling and metastability of continuous time {M}arkov chains.
\newblock {\em J. Stat. Phys.}, 140(6):1065--1114, 2010.

\bibitem{markov2}
J.~Beltr\'{a}n and C.~Landim.
\newblock Tunneling and metastability of continuous time {M}arkov chains {II}, the nonreversible case.
\newblock {\em J. Stat. Phys.}, 149(4):598--618, 2012.

\bibitem{BH_Metastability}
A.~Bovier and F.~den Hollander.
\newblock {\em Metastability}, volume 351 of {\em Grundlehren der mathematischen Wissenschaften [Fundamental Principles of Mathematical Sciences]}.
\newblock Springer, Cham, 2015.
\newblock A potential-theoretic approach.

\bibitem{BEGK_Meta}
A.~Bovier, M.~Eckhoff, V.~Gayrard, and M.~Klein.
\newblock Metastability in stochastic dynamics of disordered mean-field models.
\newblock {\em Probab. Theory Related Fields}, 119(1):99--161, 2001.

\bibitem{Unique-RACIM}
J.~Buzzi.
\newblock Exponential decay of correlations for random {L}asota-{Y}orke maps.
\newblock {\em Comm. Math. Phys.}, 208(1):25--54, 1999.

\bibitem{CGOV_metastable}
M.~Cassandro, A.~Galves, E.~Olivieri, and M.~E. Vares.
\newblock Metastable behavior of stochastic dynamics: a pathwise approach.
\newblock {\em J. Statist. Phys.}, 35(5-6):603--634, 1984.

\bibitem{Crimmins}
H.~Crimmins.
\newblock Stability of hyperbolic {O}seledets splittings for quasi-compact operator cocycles.
\newblock {\em Discrete Contin. Dyn. Syst.}, 42(6):2795--2857, 2022.

\bibitem{D_Stab}
E.~B. Davies.
\newblock Dynamical stability of metastable states.
\newblock {\em J. Functional Analysis}, 46(3):373--386, 1982.

\bibitem{D_icing}
E.~B. Davies.
\newblock Metastability and the {I}sing model.
\newblock {\em J. Statist. Phys.}, 27(4):657--675, 1982.

\bibitem{D_MarkovI}
E.~B. Davies.
\newblock Metastable states of symmetric {M}arkov semigroups. {I}.
\newblock {\em Proc. London Math. Soc. (3)}, 45(1):133--150, 1982.

\bibitem{D_MarkovII}
E.~B. Davies.
\newblock Metastable states of symmetric {M}arkov semigroups. {II}.
\newblock {\em J. London Math. Soc. (2)}, 26(3):541--556, 1982.

\bibitem{D_spec}
E.~B. Davies.
\newblock Spectral properties of metastable {M}arkov semigroups.
\newblock {\em J. Functional Analysis}, 52(3):315--329, 1983.

\bibitem{DFH}
M.~{Dellnitz}, G.~{Froyland}, C.~{Horenkamp}, K.~{Padberg-Gehle}, and A.~S. {Gupta}.
\newblock {Seasonal variability of the subpolar gyres in the Southern Ocean: a numerical investigation based on transfer operators}.
\newblock {\em Nonlinear Processes in Geophysics}, 16(6):655--663, 2009.

\bibitem{SD}
D.~Dolgopyat and P.~Wright.
\newblock The diffusion coefficient for piecewise expanding maps of the interval with metastable states.
\newblock {\em Stoch. Dyn.}, 12(1):1150005, 13, 2012.

\bibitem{EG_LY}
P.~Eslami and P.~G\'{o}ra.
\newblock Stronger {L}asota-{Y}orke inequality for one-dimensional piecewise expanding transformations.
\newblock {\em Proc. Amer. Math. Soc.}, 141(12):4249--4260, 2013.

\bibitem{gibbs}
A.~Ferguson and M.~Pollicott.
\newblock Escape rates for {G}ibbs measures.
\newblock {\em Ergodic Theory Dynam. Systems}, 32(3):961--988, 2012.

\bibitem{FW_RPDS}
M.~I. Freidlin and A.~D. Wentzell.
\newblock {\em Random perturbations of dynamical systems}, volume 260 of {\em Grundlehren der mathematischen Wissenschaften [Fundamental Principles of Mathematical Sciences]}.
\newblock Springer, Heidelberg, third edition, 2012.
\newblock Translated from the 1979 Russian original by Joseph Sz\"{u}cs.

\bibitem{FGTQ_LYmapStab}
G.~Froyland, C.~Gonz\'{a}lez-Tokman, and A.~Quas.
\newblock Stability and approximation of random invariant densities for {L}asota-{Y}orke map cocycles.
\newblock {\em Nonlinearity}, 27(4):647--660, 2014.

\bibitem{FLQ_coherent}
G.~Froyland, S.~Lloyd, and A.~Quas.
\newblock Coherent structures and isolated spectrum for {P}erron-{F}robenius cocycles.
\newblock {\em Ergodic Theory Dynam. Systems}, 30(3):729--756, 2010.

\bibitem{FLQ_semi}
G.~Froyland, S.~Lloyd, and A.~Quas.
\newblock A semi-invertible {O}seledets theorem with applications to transfer operator cocycles.
\newblock {\em Discrete Contin. Dyn. Syst.}, 33(9):3835--3860, 2013.

\bibitem{FPE}
G.~Froyland, K.~Padberg, M.~H. England, and A.~M. Treguier.
\newblock Detection of coherent oceanic structures via transfer operators.
\newblock {\em Phys. Rev. Lett.}, 98:224503, 2007.

\bibitem{FSM}
G.~Froyland, N.~Santitissadeekorn, and A.~Monahan.
\newblock Optimally coherent sets in geophysical flows: A transfer-operator approach to delimiting the stratospheric polar vortex.
\newblock {\em Phys. Rev. E}, 82:056311, 2010.

\bibitem{FSN}
G.~Froyland, N.~Santitissadeekorn, and A.~Monahan.
\newblock Transport in time-dependent dynamical systems: finite-time coherent sets.
\newblock {\em Chaos}, 20(4):043116, 10, 2010.

\bibitem{open}
G.~Froyland and O.~Stancevic.
\newblock Escape rates and {P}erron-{F}robenius operators: Open and closed dynamical systems.
\newblock {\em Discrete and Continuous Dynamical Systems - B}, 14(2):457--472, 2010.

\bibitem{GS_entropy}
G.~Froyland and O.~Stancevic.
\newblock Metastability, {L}yapunov exponents, escape rates, and topological entropy in random dynamical systems.
\newblock {\em Stoch. Dyn.}, 13(4):1350004, 26, 2013.

\bibitem{FSS}
G.~Froyland, R.~M. Stuart, and E.~van Sebille.
\newblock How well-connected is the surface of the global ocean?
\newblock {\em Chaos}, 24(3):033126, 10, 2014.

\bibitem{GM_meta}
B.~Gaveau and M.~Moreau.
\newblock Metastable relaxation times and absorption probabilities for multidimensional stochastic systems.
\newblock {\em J. Phys. A}, 33(27):4837--4850, 2000.

\bibitem{BS_nonequ}
B.~Gaveau and L.~S. Schulman.
\newblock Theory of nonequilibrium first-order phase transitions for stochastic dynamics.
\newblock {\em J. Math. Phys.}, 39(3):1517--1533, 1998.

\bibitem{GTHW_metastable}
C.~Gonz\'{a}lez-Tokman, B.~R. Hunt, and P.~Wright.
\newblock Approximating invariant densities of metastable systems.
\newblock {\em Ergodic Theory Dynam. Systems}, 31(5):1345--1361, 2011.

\bibitem{GTQ_quarantine}
C.~Gonz\'{a}lez-Tokman and A.~Quas.
\newblock Lyapunov exponents for transfer operator cocycles of metastable maps: a quarantine approach.
\newblock {\em Trans. Moscow Math. Soc.}, 82:65--76, 2021.

\bibitem{GTQ_stab}
C.~Gonz\'{a}lez-Tokman and A.~Quas.
\newblock Stability and collapse of the {L}yapunov spectrum for {P}erron-{F}robenius operator cocycles.
\newblock {\em J. Eur. Math. Soc. (JEMS)}, 23(10):3419--3457, 2021.

\bibitem{horan_lin}
J.~Horan.
\newblock Dynamical spectrum via determinant-free linear algebra, 2020.

\bibitem{Horan}
J.~Horan.
\newblock Asymptotics for the second-largest {L}yapunov exponent for some {P}erron-{F}robenius operator cocycles.
\newblock {\em Nonlinearity}, 34(4):2563--2610, 2021.

\bibitem{K_StochStab}
G.~Keller.
\newblock Stochastic stability in some chaotic dynamical systems.
\newblock {\em Monatsh. Math.}, 94(4):313--333, 1982.

\bibitem{hitting}
G.~Keller.
\newblock Rare events, exponential hitting times and extremal indices via spectral perturbation.
\newblock {\em Dyn. Syst.}, 27(1):11--27, 2012.

\bibitem{stab_spec}
G.~Keller and C.~Liverani.
\newblock Stability of the spectrum for transfer operators.
\newblock {\em Ann. Scuola Norm. Sup. Pisa Cl. Sci. (4)}, 28(1):141--152, 1999.

\bibitem{KL_escape}
G.~Keller and C.~Liverani.
\newblock Rare events, escape rates and quasistationarity: some exact formulae.
\newblock {\em J. Stat. Phys.}, 135(3):519--534, 2009.

\bibitem{Kingman}
J.~F.~C. Kingman.
\newblock Subadditive ergodic theory.
\newblock {\em Ann. Probability}, 1:883--909, 1973.

\bibitem{K_Brownian}
H.~A. Kramers.
\newblock Brownian motion in a field of force and the diffusion model of chemical reactions.
\newblock {\em Physica}, 7:284--304, 1940.

\bibitem{Landim_mon}
C.~Landim.
\newblock Metastable {M}arkov chains.
\newblock {\em Probab. Surv.}, 16:143--227, 2019.

\bibitem{LY_acim}
A.~Lasota and J.~A. Yorke.
\newblock On the existence of invariant measures for piecewise monotonic transformations.
\newblock {\em Trans. Amer. Math. Soc.}, 186:481--488 (1974), 1973.

\bibitem{LY_unique}
T.~Y. Li and J.~A. Yorke.
\newblock Ergodic transformations from an interval into itself.
\newblock {\em Trans. Amer. Math. Soc.}, 235:183--192, 1978.

\bibitem{Liverani_DOC}
C.~Liverani.
\newblock Decay of correlations for piecewise expanding maps.
\newblock {\em J. Statist. Phys.}, 78(3-4):1111--1129, 1995.

\bibitem{Liv_CLTDet}
C.~Liverani.
\newblock Central limit theorem for deterministic systems.
\newblock In {\em International {C}onference on {D}ynamical {S}ystems ({M}ontevideo, 1995)}, volume 362 of {\em Pitman Res. Notes Math. Ser.}, pages 56--75. Longman, Harlow, 1996.

\bibitem{OV_Largedev}
E.~Olivieri and M.~E. Vares.
\newblock {\em Large deviations and metastability}, volume 100 of {\em Encyclopedia of Mathematics and its Applications}.
\newblock Cambridge University Press, Cambridge, 2005.

\bibitem{Oseledets}
V.~I. Oseledec.
\newblock A multiplicative ergodic theorem. {C}haracteristic {L}japunov, exponents of dynamical systems.
\newblock {\em Trudy Moskov. Mat. Ob\v{s}\v{c}.}, 19:179--210, 1968.

\bibitem{PL_Waals}
O.~Penrose and J.~L. Lebowitz.
\newblock Rigorous treatment of metastable states in the van der {W}aals-{M}axwell theory.
\newblock {\em J. Statist. Phys.}, 3:211--236, 1971.

\bibitem{moving_average}
E.~Pfaffelhuber.
\newblock Moving shift averages for ergodic transformations.
\newblock {\em Metrika}, 22:97--101, 1975.

\bibitem{Reed}
M.~Reed and B.~Simon.
\newblock {\em Methods of modern mathematical physics. {I}. {F}unctional analysis}.
\newblock Academic Press, New York-London, 1972.

\bibitem{conformations2}
C.~Sch\"{u}tte, W.~Huisinga, and P.~Deuflhard.
\newblock Transfer operator approach to conformational dynamics in biomolecular systems.
\newblock In {\em Ergodic theory, analysis, and efficient simulation of dynamical systems}, pages 191--223. Springer, Berlin, 2001.

\bibitem{conformations}
C.~Sch\"{u}tte and M.~Sarich.
\newblock {\em Metastability and {M}arkov state models in molecular dynamics}, volume~24 of {\em Courant Lecture Notes in Mathematics}.
\newblock Courant Institute of Mathematical Sciences, New York; American Mathematical Society, Providence, RI, 2013.
\newblock Modeling, analysis, algorithmic approaches.

\bibitem{S_quantum}
G.~L. Sewell.
\newblock {\em Quantum theory of collective phenomena}.
\newblock Monographs on the Physics and Chemistry of Materials. The Clarendon Press, Oxford University Press, New York, 1986.
\newblock Oxford Science Publications.

\bibitem{IOC_TP}
P.~Thieullen.
\newblock Fibr\'{e}s dynamiques asymptotiquement compacts. {E}xposants de {L}yapounov. {E}ntropie. {D}imension.
\newblock {\em Ann. Inst. H. Poincar\'{e} Anal. Non Lin\'{e}aire}, 4(1):49--97, 1987.

\bibitem{H_Edynamique}
J.~H. van~’t Hoff.
\newblock {\em \'Etudes de dynamique chimique}.
\newblock F. Muller, Amsterdam, 1884.

\bibitem{Yin_Zhang_CTMC}
G.~G. Yin and Q.~Zhang.
\newblock {\em Continuous-time {M}arkov chains and applications}, volume~37 of {\em Stochastic Modelling and Applied Probability}.
\newblock Springer, New York, second edition, 2013.
\newblock A two-time-scale approach.

\end{thebibliography}

\end{document}